\numberwithin{equation}{section}
\theoremstyle{plain}
\newtheorem{theorem}[equation]{Theorem}
\newtheorem{lemma}[equation]{Lemma}
\newtheorem{proposition}[equation]{Proposition}
\newtheorem{corollary}[equation]{Corollary}
\theoremstyle{remark}
\newtheorem{remark}[equation]{Remark}
\theoremstyle{definition}
\newtheorem{definition}[equation]{Definition}
\newcommand{\bP}{\mathbb{P}}
\newcommand{\bR}{\mathbb{R}}
\newcommand{\bZ}{\mathbb{Z}}
\newcommand{\bF}{\mathbb{F}}
\newcommand{\bC}{\mathbb{C}}
\newcommand{\calC}{\mathcal{C}}
\newcommand{\calM}{\mathcal{M}}
\newcommand{\calO}{\mathcal{O}}
\newcommand{\calP}{\mathcal{P}}
\newcommand{\calI}{\mathcal{I}}
\newcommand{\calX}{\mathcal{X}}
\newcommand{\calD}{\mathcal{D}}
\newcommand{\calW}{\mathcal{W}}
\newcommand{\Aut}{\mathrm{Aut}}
\newcommand{\Orth}{\mathrm{O}}
\newcommand{\Bir}{\mathrm{Bir}}
\newcommand{\Bl}{\mathrm{Bl}}
\newcommand{\Span}{\mathrm{span}}
\newcommand{\Hilb}{\mathrm{Hilb}}
\newcommand{\Mov}{\mathrm{Mov}}
\newcommand{\Cone}{\mathrm{Cone}}
\newcommand{\im}{\mathrm{Im}}
\newcommand{\id}{\mathrm{Id}}
\newcommand{\Gr}{\mathrm{Gr}}
\newcommand{\pex}{\mathrm{pex}}
\newcommand{\flop}{\mathrm{flop}}
\newcommand{\rank}{\mathrm{rank}}
\newcommand{\Nef}{\mathrm{Nef}}
\newcommand{\NS}{\mathrm{NS}}
\newcommand{\Bis}{\mathrm{Bis}}
\newcommand{\Mon}{\mathrm{Mon}}
\newcommand{\git}{/\kern-0.2em/}
\newcommand{\prim}{\mathrm{prim}}
\newcommand{\Amp}{\mathrm{Amp}}
\title[Lines on cubic fourfolds containing pairs of cubic scrolls]{Birational geometry of Fano varieties of lines on cubic fourfolds containing pairs of cubic scrolls}
\author{Corey Brooke}
\address{Department of Mathematics and Statistics, Carleton College, 1 North College St, Northfield, MN, 55057}
\email{cbrooke@carleton.edu}
\author{Sarah Frei}
\address{Department of Mathematics MS 136, Rice University, 6100 S. Main St, Houston, TX 77005, USA}
\email{sarah.frei@rice.edu}
\author{Lisa Marquand}
\address{Courant Institute,
  251 Mercer Street,
  New York, NY 10012, USA}
\email{lisa.marquand@nyu.edu}
\author{Xuqiang Qin}
\address{Department of Mathematics, 329 Phillips Hall, Chapel Hill, NC 27599}
\email{russellqin@gmail.com}
\begin{document}

\maketitle

\begin{abstract}
    We characterize the birational geometry of some hyperk\"ahler fourfolds of Picard rank $3$ obtained as the Fano varieties of lines on cubic fourfolds containing pairs of cubic scrolls. In each of the two cases considered, we identify all of the birational models, relating each model to familiar geometric constructions, and give explicit birational maps between them. We also provide structural results about the birational automorphism groups, giving generators in both cases and a full set of relations in one case. Finally, as a byproduct of our analysis, we obtain non-isomorphic cubic fourfolds whose Fano varieties of lines are birationally equivalent.
\end{abstract}

\section{Introduction}

Cubic fourfolds are a central object in algebraic geometry, studied with respect to rationality questions and for their connections to hyperk\"ahler manifolds. In a very general cubic fourfold, any algebraic surface is homologous to a complete intersection, but a countably infinite collection of divisors $\calC_d$ in the moduli space of cubic fourfolds parametrize cubics containing extra algebraic surfaces \cite{hassett}. Cubics contained in these divisors are often more interesting from the point of view of rationality questions: it is conjectured that the rational cubic fourfolds are contained in the union of certain divisors $\calC_d$.  The Fano variety of lines $F$ on a cubic fourfold $X$ is a hyperk\"ahler manifold by \cite{MR818549}, and when $X\in \calC_d$, typically $F$ exhibits richer birational geometry, which can be studied via the Global Torelli theorem (due to Huybrechts, Markman and Verbitsky). Here, we focus on the Fano varieties of lines on cubic fourfolds belonging to the family $\calC_{12}$, whose general member contains a cubic scroll. 

The divisor $\calC_{12}$ was first studied in \cite{flops}, where the authors studied the birational models of the corresponding Fano variety of lines, and exhibited a birational transformation of infinite order. More recently, it has been proved that $\calC_{12}$ contains many geometrically interesting families of cubic fourfolds. For example, there is a ten dimensional family of cubics admitting an involution fixing a line pointwise, denoted by $\calM_{\phi_2}$ in the notation of \cite{marquand2022cubic}. For $X\in \calM_{\phi_2},$ the middle algebraic cohomology of $X$ is spanned by classes represented by cubic scrolls, along with the square of the hyperplane class. Any pair of cubic scrolls spanning different hyperplane sections of $X$ is either \textbf{syzygetic} or \textbf{non-syzygetic}, meaning they intersect in three or one points, respectively (\Cref{defn: syz}). Further, such a cubic fourfold is conjecturally irrational \cite[Theorem 1.2]{marquand2022cubic}, and one could hope this is reflected in the birational geometry of the associated hyperk\"ahler manifolds. 

Motivated by this, we study the birational geometry of  the Fano variety $F$ of lines on a very general cubic fourfold $X$ containing either a syzygetic or a non-syzygetic pair of cubic scrolls. In both cases, $F$ has Picard rank three, and we describe the movable and ample cones of $F$ by identifying the wall divisors following techniques from \cite{MR3423735}.
We first consider a cubic fourfold $X$ with a syzygetic pair of cubic scrolls --- in this case, the movable cone is bounded by infinitely many walls corresponding to prime exceptional divisors (see \Cref{subsec: HK}). We prove the following result:
    \begin{theorem}\label{thm:syzygeticintro}
    Let $F$ be the Fano variety of lines on a very general cubic fourfold $X$ containing a syzygetic pair of cubic scrolls. Then $F$ has \textbf{five} isomorphism classes of birational hyperk\"ahler models, represented by the following:
    \begin{itemize}
        \item $F$ itself, and
        \item four non-isomorphic Mukai flops of $F$,  each isomorphic to a double EPW sextic.
    \end{itemize}
    \end{theorem}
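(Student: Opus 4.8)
The plan is to run the standard program for the birational geometry of hyperk\"ahler fourfolds of $K3^{[2]}$-type (which $F$ is, by Beauville--Donagi): compute the N\'eron--Severi lattice $N := \NS(F)$ with its Beauville--Bogomolov--Fujiki form, enumerate the wall divisors, decompose the movable cone into the nef cones of the birational models, and identify each model geometrically. The Global Torelli theorem of Huybrechts--Markman--Verbitsky together with Markman's Hodge-theoretic description of $\mathrm{Bir}(F)$ make this purely lattice-theoretic: since $X$ is very general, $N$ carries no unexpected Hodge classes, the wall-and-chamber structure of $\mathrm{Mov}(F)$ depends only on $N$, and $\mathrm{Bir}(F)$ is the group of monodromy Hodge isometries preserving $\mathrm{Mov}(F)$.

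First I would pin down $N$. For $X$ containing a syzygetic pair of cubic scrolls $S_1, S_2$, the lattice of algebraic surfaces on $X$ has rank $3$, generated by $h^2, [S_1], [S_2]$, with Gram matrix determined by the self-intersections $S_i \cdot S_i$ together with the syzygetic condition $S_1 \cdot S_2 = 3$ (\Cref{defn: syz}). Transporting along the Abel--Jacobi isometry onto the primitive part of $H^2(F,\bZ)$ and adjoining the Pl\"ucker polarization $g$ (with $q(g) = 6$) gives an explicit rank-$3$ lattice, whose discriminant form and isometry group I would record.

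Next, by Mongardi's criterion \cite{MR3423735}, the wall divisors of $N$ are the primitive classes $D$ with $q(D) < 0$ satisfying $q(D) = -2$, or $q(D) = -10$ and $\mathrm{div}(D) = 2$; I would solve these quadratic equations in $N$ and sort the solutions. The classes of prime exceptional divisors bound $\mathrm{Mov}(F)$ --- and the distinctive feature of the syzygetic case is that there are infinitely many of them, so $\mathrm{Mov}(F)$ is a non-polyhedral cone --- while the remaining wall divisors, each giving a small contraction of a plane $\bP^2 \subset F$ and hence a Mukai flop, subdivide $\mathrm{Mov}(F)$ into the nef cones of the models. Combining the Morrison--Kawamata cone conjecture for hyperk\"ahlers (Markman--Yoshioka, Amerik--Verbitsky) with the explicit lattice $N$ and the action of its isometry group, I would show this chamber decomposition has exactly five orbits under $\mathrm{Bir}(F)$, with representatives $\mathrm{Nef}(F)$ and the four chambers met across the flopping walls bounding it, yielding $F$ together with four Mukai flops $F_1, \dots, F_4$. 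That these are pairwise non-isomorphic and none isomorphic to $F$ I would deduce from a discrete invariant of the corresponding nef chambers preserved under any isomorphism of models --- e.g.\ the squares and divisibilities of the primitive generators of their extremal rays and the incidence pattern of each chamber with the wall divisors.

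Finally, for the identification with double EPW sextics I would exhibit in each $\mathrm{Nef}(F_i)$ a polarization $H_i$ with $q(H_i) = 2$ and $\mathrm{div}(H_i) = 1$; by O'Grady's construction \cites{OG1,OG2,OG3} together with Torelli, a very general $K3^{[2]}$-type fourfold carrying such a polarization is a double EPW sextic, so each $F_i$ is one. As in the proof of \Cref{mainthm:1scroll}, I would then make this explicit by composing the birational maps $X \dashrightarrow Z_i$ to Gushel--Mukai fourfolds containing a plane (\cite{DIM}, \cite[Proposition~5.6]{KuzPerry}) with the conic construction of \cite{IM11}, producing $F_i \dashrightarrow \mathrm{EPW}_i$ directly. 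I expect the main obstacle to be proving the decomposition has \emph{exactly} five chamber-orbits, which requires both a complete and correct enumeration of the wall divisors in $N$ and tight control of the $\mathrm{Bir}(F)$-action on $\mathrm{Mov}(F)$; keeping the four EPW sextics provably distinct under the above birational bookkeeping is a close second.
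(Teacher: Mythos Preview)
Your overall strategy is the same as the paper's: compute $\NS(F)$, enumerate the $(-2)$- and $(-10)$-walls, decompose $\Mov(F)$ into chambers, and count $\Bir(F)$-orbits. The paper does exactly this in Section~4, and your identification of the wall types and the observation that there are infinitely many prime exceptional walls are both correct.

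There is, however, a real gap in how you propose to distinguish the four flops. Your plan is to use ``a discrete invariant of the corresponding nef chambers\dots e.g.\ the squares and divisibilities of the primitive generators of their extremal rays and the incidence pattern of each chamber with the wall divisors.'' This cannot work: in the lattice $\NS(F)\cong\langle 6\rangle\oplus\langle -4\rangle\oplus\langle -4\rangle$, the four chambers $\Nef(F_1),\Nef(F_1^\vee),\Nef(F_2),\Nef(F_2^\vee)$ are pairwise related by the dihedral group $D_8$ of lattice isometries generated by $\lambda_i\mapsto -\lambda_i$ and $\lambda_1\leftrightarrow\lambda_2$. Any invariant computed purely from the chamber inside $\NS(F)$ is preserved by these isometries, so all four flop chambers look identical. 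What the paper does instead---and what your proposal is missing---is to determine exactly which isometries of $\NS(F)$ extend to Hodge monodromy operators on the full $H^2(F,\bZ)$: for very general $X$ this reduces to the condition that the isometry act by $\pm\id$ on a specific index-two subgroup of the discriminant group (the paper's Lemma~\ref{lemma:actionondisc}). None of the $D_8$-isometries above satisfies this, and that is precisely why the four flops are non-isomorphic. You allude to this when you say $\Bir(F)$ consists of ``monodromy Hodge isometries,'' but you never give a usable criterion for membership, and your proposed invariant sidesteps rather than addresses the issue.

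A smaller point: your lattice-theoretic identification of the flops as double EPW sextics (``exhibit a polarization $H_i$ with $q(H_i)=2$'') is incomplete as stated. A degree-$2$ polarized $K3^{[2]}$-fourfold is a double EPW sextic only away from certain Heegner divisors in the period domain, and since $F_i$ has Picard rank $3$ it sits in a Noether--Lefschetz locus, not at a very general point. You would need to check it avoids the bad divisors. The paper bypasses this by constructing the birational map $F\dashrightarrow \widetilde{W}^\vee$ geometrically via the Gushel--Mukai fourfold (Section~\ref{sec:epw}), which you also mention; that route works without the lattice check.
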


In the non-syzygetic case, the movable cone coincides with the positive cone, but surprisingly the birational geometry of $F(X)$ is more complicated. This is partially explained by the fact that cubics containing a non-syzygetic pair of cubic scrolls automatically contain a third family of cubic scrolls (see \Cref{lemma:extrascrolls}).

    \begin{theorem}\label{thm:nonsyzygeticintro}
    Let $F$ be the Fano variety of lines on a very general cubic fourfold $X$ containing a non-syzygetic pair of cubic scrolls. Then $F$ has \textbf{eight} isomorphism classes of birational hyperk\"ahler models, represented by the following: 
    \begin{itemize}
        \item $F$ itself,
        \item six non-isomorphic Mukai flops of $F$, each isomorphic to a double EPW sextic, and
        \item a Mukai flop of $F$ along a pair of disjoint planes in $F$, isomorphic to the Fano variety of lines on another cubic fourfold containing a non-syzygetic pair of cubic scrolls.
    \end{itemize}
    \end{theorem}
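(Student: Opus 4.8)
The plan is to apply the Global Torelli theorem for hyperk\"ahler manifolds of $K3^{[2]}$-type together with Mongardi's numerical description of wall divisors, working with the rank-three lattice $N := \NS(F)$ of the non-syzygetic configuration, and then to match each chamber of the movable cone with an explicit geometric model. I may assume (as recorded above, and established in \Cref{subsec: HK}) that $\rho(F) = 3$ and that $\Mov(F)$ is the full positive cone $\mathcal{C}_F$; in particular $F$ has no prime exceptional divisors, so all birational hyperk\"ahler models of $F$ are obtained by iterated Mukai flops and the only walls subdividing $\Mov(F) = \mathcal{C}_F$ are flopping walls.

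\textbf{Step 1 (the chamber count).} From the Gram matrix of $N$ --- which records the Beauville--Bogomolov pairings among the Pl\"ucker class, the two given cubic scrolls, and the third family of scrolls from \Cref{lemma:extrascrolls} --- I would enumerate the wall divisors, i.e.\ the primitive classes $v\in N$ with $q(v)=-2$, or with $q(v)=-10$ and $\mathrm{div}(v)=2$. These are governed by Pell-type equations, so there are infinitely many, but the resulting hyperplane arrangement in $\mathcal{C}_F$ is locally finite and has finitely many chambers modulo the Hodge-monodromy group $\Mon^2_{\mathrm{Hdg}}(F)$ acting on $N$; by the Torelli theorem, the number of such orbits equals the number of isomorphism classes of birational hyperk\"ahler models of $F$. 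The technical core here is to identify $\Mon^2_{\mathrm{Hdg}}(F)$ explicitly as a finite-index subgroup of $O^+(N)$, build a rational polyhedral fundamental domain for its action on $\mathcal{C}_F$, and verify that the wall divisors cut it into exactly eight chambers. Along the way I would classify each wall by the type of flopping contraction it induces --- a Mukai flop along a single plane versus one along a disjoint pair of planes --- reading this off from the square and divisibility of the wall class and from which extremal curve classes are effective, following \cite{MR3423735}.

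\textbf{Step 2 (identifying the models).} The chamber containing the Pl\"ucker polarization is $F$ itself. For the six flops along a single plane, I would show that each of the corresponding rank-two saturated sublattices of $N$ contains a $\calC_{12}$-type sublattice, and then reuse the chain of birational equivalences behind \Cref{mainthm:1scroll}: first to a Gushel--Mukai fourfold containing a plane, after \cites{DIM, KuzPerry}, and then to a double EPW sextic via the conic construction of \cite{IM11}. That these six EPW sextics are pairwise non-isomorphic (and not isomorphic to $F$) reduces to showing the six sublattices lie in distinct $\Mon^2_{\mathrm{Hdg}}(F)$-orbits, equivalently that the associated Lagrangian data differ. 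For the eighth model, the relevant wall is spanned by the classes dual to a pair of disjoint planes; its Mukai flop has the same transcendental Hodge structure as $F$, and I would identify its N\'eron--Severi lattice with a second primitive embedding $N\hookrightarrow H^4(X',\bZ)$ for a cubic fourfold $X'$ again carrying a non-syzygetic pair of cubic scrolls --- the existence of such an $X'$ being equivalent, by Torelli for cubic fourfolds, to the existence of that embedding, with $X'\not\cong X$ since the two embeddings are not $\Mon^2(H^4)$-equivalent. This simultaneously produces the promised non-isomorphic cubics with birationally equivalent Fano varieties.

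\textbf{Main obstacle.} The hardest step is controlling this infinite wall-and-chamber structure precisely enough to be sure the count is \emph{eight}: this means (i) pinning down $\Mon^2_{\mathrm{Hdg}}(F)$ exactly --- in particular deciding whether isometries acting as $-\id$ on the transcendental lattice occur, since these govern which chambers get identified --- and (ii) carrying out the fundamental-domain bookkeeping, and (iii) checking that the pair-of-planes flop genuinely yields a new cubic fourfold rather than $F$ or one of the six EPW models. The third family of scrolls, absent in the one-scroll and syzygetic cases, is exactly what forces this extra model and makes both the lattice arithmetic and the geometric identification more delicate.
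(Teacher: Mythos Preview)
Your outline is sound in spirit but differs from the paper's route and contains one slip. The paper does not build an abstract fundamental domain for $\Mon^2_{\mathrm{Hdg}}(F)$; instead it uses the six geometrically defined birational involutions $\iota_i,\iota_i^\vee$ (Remark~\ref{remark:iota definition}, Lemma~\ref{lemma:nonsyzygeticiotas}) to navigate explicitly between chambers, and proves the count inductively: any Mukai flop of $F$ is one of $F_i,F_i^\vee$; any further flop yields a model isomorphic to $F$ or to $F_{12}$ (Lemmas~\ref{lemma:nefdoubleflops} and~\ref{lemma:adjacenttoFij}); and so on (Proposition~\ref{prop:nonsyzygeticmodelcount}). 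Non-isomorphism of the eight is checked by hand via the discriminant-group criterion of Lemma~\ref{lemma:nonsyzygeticadmissible} (Proposition~\ref{prop:nonsyzygeticmodels}). For the eighth model, rather than an abstract lattice-embedding argument the paper exhibits an ample class $g'=R_2\cdot g$ of square $6$ and divisibility $2$ on $F_{12}$ and checks that the corresponding period point avoids the Heegner divisors $\calD_{6,2}^{(2)}\cup\calD_{6,6}^{(2)}$ excluded from the image of the cubic-fourfold period map. Your approach would also work but is less constructive; the paper's buys the generators of $\Bir(F)$ (Theorem~\ref{theo:nonsyzygeticbir}) as a byproduct.

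The slip: you cannot distinguish a flop along one plane from a flop along a disjoint pair ``from the square and divisibility of the wall class''---in $K3^{[2]}$-type every flopping wall has $q=-10$, $\mathrm{div}=2$. The paper instead determines geometrically which pairs of Lagrangian planes are disjoint (Proposition~\ref{proposition:nonsyzygeticplanes}), by reducing to incidence of twisted cubics on the smooth cubic surface $H_i\cap H_j\cap X$; the pair-of-planes models $F_{ij}$ then correspond to chambers meeting $\Nef(F)$ only along an edge, not a face.
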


As a consequence, we obtain the first examples of the following phenomenon:

    \begin{corollary}
        There exist non-isomorphic cubic fourfolds with birationally equivalent Fano varieties of lines.
    \end{corollary}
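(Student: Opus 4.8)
The plan is to extract both ingredients directly from \Cref{thm:nonsyzygeticintro} rather than to argue from scratch. First I would fix a very general cubic fourfold $X$ containing a non-syzygetic pair of cubic scrolls and set $F = F(X)$. By the last bullet of \Cref{thm:nonsyzygeticintro}, among the eight birational hyperk\"ahler models of $F$ there is a Mukai flop $F'$ of $F$ along a pair of disjoint planes, together with an isomorphism $F' \cong F(X')$ for some cubic fourfold $X'$ (again containing a non-syzygetic pair of cubic scrolls). Since a Mukai flop is in particular a birational map, we obtain a birational equivalence $F(X) \dashrightarrow F' \cong F(X')$. This already supplies the two birationally equivalent Fano varieties of lines, so the only thing left to verify is that $X$ and $X'$ are not isomorphic as varieties.

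For that step I would invoke the (unconditional, elementary) functoriality of the Fano variety of lines on smooth cubic fourfolds: if $g\colon X \to X'$ is an isomorphism, then, since $\Pic$ of a smooth cubic fourfold is $\bZ\cdot\calO(1)$ by the Lefschetz hyperplane theorem, $g$ pulls the hyperplane class back to the hyperplane class, hence is induced by a linear automorphism of $\bP^5$ carrying $X$ to $X'$, which in turn induces an isomorphism $F(X)\cong F(X')$. Taking the contrapositive, it suffices to prove $F(X)\not\cong F(X')$. But $F(X')\cong F'$ is one of the eight models listed in \Cref{thm:nonsyzygeticintro}, and those eight models are pairwise non-isomorphic as hyperk\"ahler manifolds; in particular $F'$ is not isomorphic to the model represented by $F$ itself. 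Hence $F(X)\not\cong F(X')$, and therefore $X\not\cong X'$, which is the assertion of the corollary.

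The genuine content lies entirely in \Cref{thm:nonsyzygeticintro}, so the only point one should really worry about is the claim $F'\not\cong F$; this is precisely what the lattice-theoretic analysis of the movable and nef cones of $F$ is designed to pin down, namely that the Hodge isometry of $H^2(F,\bZ)$ realizing the flop along the two disjoint planes is not induced by an automorphism of $F$. Once that is in place the corollary follows formally as above. (One could instead try to show $X\not\cong X'$ directly by comparing the primitive Hodge structures of $X$ and $X'$ via the Beauville--Donagi isomorphism and Voisin's Torelli theorem for cubic fourfolds, but this is more laborious and unnecessary, since only the easy implication $X\cong X'\Rightarrow F(X)\cong F(X')$ is needed.)
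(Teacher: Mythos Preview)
Your proposal is correct and follows essentially the same route as the paper: the paper deduces the corollary from the detailed version of \Cref{thm:nonsyzygeticintro} (the proposition at the end of \S\ref{subsec:nonsyzmodels}), concluding $X\not\simeq X'$ from $F\not\simeq F_{12}$ exactly via the implication you spell out. Your justification of $X\cong X'\Rightarrow F(X)\cong F(X')$ through linearity of any isomorphism of cubic fourfolds is more explicit than the paper's one-line ``forces $X\not\simeq X'$,'' but the logic is identical.
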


The relationship between these cubics is explored more fully in two later papers. In \cite{BFM}, they are shown to be birationally equivalent Fourier--Mukai partners, and the authors of \cite{BGM} use Gale duality to relate the equations of the two cubics.

These results are of interest for three main reasons. First, they provide examples of hyperk\"ahler fourfolds of Picard rank three whose birational geometry is explicitly understood; to our knowledge, this has only been done in the significantly easier case of Picard rank two.
Second, our techniques also allow us to deduce information about the birational automorphism group of $F$, even though understanding the structure of $\Bir(F)$ is in general more difficult than enumerating the birational models. Specifically, we obtain the following structural result for $\Bir(F)$:

\begin{theorem}\label{theo:bir}
    Let $F$ be the Fano variety of lines on $X$, a very general cubic fourfold containing a syzygetic or non-syzygetic pair of cubic scrolls. Then:
    \begin{enumerate}
        \item The infinite order group $\Bir(F)$ is generated by the covering involutions of the double EPW sextics obtained as Mukai flops of $F.$
        \item In particular, in the syzygetic case we have:
        \[
    \Bir(F)\cong\langle a,b,c,d\;|\; a^2=b^2=c^2=d^2=1\rangle.
    \]
    \end{enumerate}
\end{theorem}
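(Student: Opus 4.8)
The plan is to deduce both parts from the wall-and-chamber decomposition of the movable cone $\Mov(F)$ established in the proofs of \Cref{thm:syzygeticintro,thm:nonsyzygeticintro}, together with a tree/ping-pong analysis of its dual graph. The Hodge-theoretic input is the Global Torelli theorem and Markman's description of the monodromy group: since $F$ is very general, $\pm\id$ are the only Hodge isometries of the transcendental lattice $T(F)$ and $\Aut_0(F)=1$, so pullback on $\NS(F)$ gives a homomorphism $\rho\colon\Bir(F)\to\Orth(\NS(F))$ whose image is the group of monodromy Hodge isometries preserving both $\Mov(F)$ and the positive cone; under $\rho$, $\Bir(F)$ permutes the chambers $\phi^{\ast}\Nef(F')$ of $\Mov(F)$ attached to the birational models $\phi\colon F\dashrightarrow F'$, with chamber orbits in bijection with isomorphism classes of models. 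I would first check that $\rho$ is injective --- equivalently, that no nontrivial Hodge isometry acts as the identity on $\NS(F)$ while fixing an ample class --- which is a finite arithmetic check on the discriminant form of the rank-three lattice $\NS(F)$; in the syzygetic case it is also forced a posteriori, a nontrivial such isometry being a central involution of $\Bir(F)$ whereas $\mathbb{Z}/2\ast\mathbb{Z}/2\ast\mathbb{Z}/2\ast\mathbb{Z}/2$ is centerless.

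Next I would realize each covering involution as an explicit isometry of $\NS(F)$. For a model $\phi_i\colon F\dashrightarrow F_i$ with $F_i$ a double EPW sextic, the covering involution $\iota_i\in\Aut(F_i)$ is biregular and antisymplectic and fixes the polarization $h_i$ defining the EPW sextic; transporting it through $\phi_i$ yields $\widetilde{\iota}_i\in\Bir(F)$ of order two, which acts on $\NS(F)$ fixing $\phi_i^{\ast}h_i$ and stabilizing the chamber $\mathcal{N}_i:=\phi_i^{\ast}\Nef(F_i)\subset\Mov(F)$. Since $\iota_i$ is biregular, $\widetilde{\iota}_i$ preserves $\mathcal{N}_i$; since $\widetilde{\iota}_i\notin\Aut(F)$ it does not fix $\Nef(F)$; and by \Cref{thm:syzygeticintro,thm:nonsyzygeticintro} the chamber $\mathcal{N}_i$ has exactly two walls interior to $\Mov(F)$. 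Hence $\widetilde{\iota}_i$ must interchange the two chambers adjacent to $\mathcal{N}_i$, acting on the decomposition as a ``reflection through $\mathcal{N}_i$''. I would compute each $\widetilde{\iota}_i$ explicitly in $\Orth(\NS(F))$ via $\phi_i^{\ast}$ from the known action of $\iota_i$ on $\NS(F_i)$ coming from the double-cover structure.

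The core argument is combinatorial. Since $\Mov(F)$ is convex, every interior wall separates it, so the dual graph $\mathcal{T}$ of the chamber decomposition (vertices = chambers, edges = shared interior walls) is a tree, locally finite by the wall analysis of \Cref{thm:syzygeticintro,thm:nonsyzygeticintro}; $\Bir(F)$ acts on $\mathcal{T}$, and the stabilizer of the vertex $\Nef(F)$ equals $\Aut(F)=1$, since a birational automorphism preserving the nef cone is biregular. In the syzygetic case I would check that $\mathcal{T}$ is exactly the Bass--Serre tree of $\mathbb{Z}/2\ast\mathbb{Z}/2\ast\mathbb{Z}/2\ast\mathbb{Z}/2$: $\Nef(F)$ has four interior walls, leading to chambers $\mathcal{N}_1,\dots,\mathcal{N}_4$ representing the four non-isomorphic double-EPW models, so its $\Bir(F)$-orbit is an orbit of valence-four vertices, the four EPW types give four orbits of valence-two vertices, and $\widetilde{\iota}_i$ fixes $\mathcal{N}_i$ (adjacent to $\Nef(F)$) and swaps its two neighbours. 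Letting $Y_i\subset\mathcal{T}$ be the branch hanging off $\Nef(F)$ through $\mathcal{N}_i$, the inclusions $\widetilde{\iota}_i\bigl(Y_j\cup\{\Nef(F)\}\bigr)\subseteq Y_i$ for $j\neq i$ are precisely the ping-pong hypotheses for a free product of four copies of $\mathbb{Z}/2$, giving $\langle\widetilde{\iota}_1,\dots,\widetilde{\iota}_4\rangle\cong\mathbb{Z}/2\ast\mathbb{Z}/2\ast\mathbb{Z}/2\ast\mathbb{Z}/2$; an induction on tree distance to $\Nef(F)$ using the same inclusions shows $\langle\widetilde{\iota}_1,\dots,\widetilde{\iota}_4\rangle$ acts transitively on the valence-four vertices, and since $\Bir(F)$ does too with trivial vertex stabilizer, $\Bir(F)=\langle\widetilde{\iota}_1,\dots,\widetilde{\iota}_4\rangle$. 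This yields (2), hence (1) in the syzygetic case.

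In the non-syzygetic case the same scheme applies --- reduce an arbitrary chamber to $\Nef(F)$ by walking along $\mathcal{T}$ and using the $\widetilde{\iota}_i$ to turn around at EPW vertices --- but now there are six EPW types and, by \Cref{lemma:extrascrolls,thm:nonsyzygeticintro}, an extra chamber type $\Nef(F(X'))$ coming from the Mukai flop along a pair of disjoint planes, so $\mathcal{T}$ is no longer the Bass--Serre tree of a free product of $\mathbb{Z}/2$'s. I expect the main obstacle to be showing that this extra flop is redundant as a generator, i.e.\ that crossing an $X\leftrightarrow X'$ wall is realized by a word in the $\widetilde{\iota}_i$; this should follow by an explicit computation with the isometries recorded in the second step, exploiting the symmetry between $X$ and $X'$ (both being very general cubics containing a non-syzygetic pair of cubic scrolls, so that the chamber structure and covering involutions at $F(X')$ mirror those at $F(X)$). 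This step, together with the injectivity of $\rho$ from the first paragraph, is where the argument genuinely uses the concrete lattice data rather than the soft tree formalism.
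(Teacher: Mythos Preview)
Your tree claim is the crux, and it is not justified as stated: convexity of $\Mov(F)$ does \emph{not} imply that the chamber dual graph $\mathcal T$ is a tree (the cross decomposition of $\bR^2$ by two lines already gives a $4$-cycle). In the syzygetic case $\mathcal T$ \emph{is} a tree, but for a reason you do not invoke: by \Cref{proposition:syzygeticplanes} every pair of Lagrangian planes in $F$ meets, so no two flopping walls cross in the interior of $\Mov(F)$; equivalently, every codimension-two face of every chamber lies on a prime exceptional wall, i.e.\ on $\partial\Mov(F)$ (cf.\ \Cref{lemma:nef,lemma:nefFisyzygetic}). Granting this, your ping-pong/Bass--Serre argument is a clean alternative to the paper's approach, which instead proves freeness and the four-orbit statement by an explicit coordinate reduction on $\Delta_{\flop}$ (\Cref{lemma:syzygetic-10s}) and reads off the relations from uniqueness of the reducing word (\Cref{coro:gensrelations}).

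In the non-syzygetic case your argument breaks: by \Cref{proposition:nonsyzygeticplanes} there \emph{are} disjoint pairs of planes, so two flopping walls meet in the interior of $\Mov(F)=\overline{\mathrm{Pos}(F)}$ (for instance $(1,-2,0)^\perp\cap(1,0,2)^\perp=\Span(2,-1,1)$), and around each such ray the four chambers $\Nef(F),\Nef(F_i),\Nef(F_j^\vee),\Nef(F_{ij})$ form a $4$-cycle in $\mathcal T$. Thus $\mathcal T$ is not a tree, there is no canonical geodesic to $\Nef(F)$, and the ``turn around at EPW vertices'' reduction needs an independent height function---this is not merely the redundancy of the $X\leftrightarrow X'$ wall-crossing that you flag. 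The paper supplies exactly such a height: it shows (\Cref{lemma:square6}) that acting by the six $\iota_i^*,(\iota_i^\vee)^*$ one can always strictly decrease the first coordinate of a square-$6$ class until it is at most $3$, and then finishes by the model census. Your proposal would become correct if you replaced the tree-walk in the non-syzygetic case by this kind of explicit lattice reduction; the tree picture is a genuine simplification only in the syzygetic case.
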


Finally, the birational equivalence between $F$ and double EPW sextics continues a beautiful story of associations between cubics in $\calC_{12}$ and Gushel--Mukai fourfolds. 
In \cite[Section 7.2]{DIM}, the authors show that a general $X\in \calC_{12}$ is birational to a Hodge-theoretically associated to a GM fourfold $Z$ containing a plane; in \cite[Theorem 5.8]{KuzPerry}, $X$ and $Z$ are moreover shown to be Fourier--Mukai partners. The fourfold $Z$ in turn determines a double EPW sextic, a hyperk\"ahler manifold introduced by O'Grady (\cite{OG1, OG2, OG3}). 
In \cite{IM11}, the authors show these manifolds can be constructed from considering conics on the general Gushel--Mukai fourfold. This result was extended to all smooth ordinary Gushel--Mukai fourfolds in \cite[Theorem 7.12]{DK24}. Combining both the birational map between $X$ and $Z$ and this geometric description of the double EPW sextic in terms of conics on $Z$, we construct birational maps between $F$ and two non-isomorphic double EPW sextics explicitly. Along with correcting the census of birational models of $F$ given in \cite{flops}, this yields a clearer geometric interpretation of each model and of the birational automorphism group, persisting even when $X$ specializes to both the syzygetic and non-syzygetic cases.

\subsection*{Outline} In Section~\ref{sec:prelims}, we recall the relevant definitions and results about cubic threefolds and fourfolds containing cubic scrolls, the Fano variety of lines on such a cubic fourfold, and the birational geometry of hyperk\"ahler manifolds of K3$^{[2]}$-type. We also introduce Gushel--Mukai fourfolds and double EPW sextics.
In Section~\ref{sec:onescroll}, we study the birational geometry of the Fano variety of lines $F$ on a very general cubic fourfold $X$ containing a cubic scroll, proving that $F$ is birational to two non-isomorphic double EPW sextics. In particular, Theorem~\ref{mainthm:1scroll} corrects the census of birational models of $F$ originally provided in \cite[Theorem 7.4]{flops}. In Sections~\ref{sec:syzygetic} and~\ref{sec:nonsyzygetic}, we prove Theorems~\ref{thm:syzygeticintro} and~\ref{thm:nonsyzygeticintro} (Theorems~\ref{thm:syzmain} and~\ref{theo:nonsyzygeticmain}), respectively, as well as Theorem~\ref{theo:bir} (Theorems~\ref{theo:birFsyz} and~\ref{theo:nonsyzygeticbir}), by carrying out an analysis of the chambers of the movable cone of $F$ when $X$ contains a syzygetic and non-syzygetic, respectively, pair of cubic scrolls. Explicit examples in Appendix~\ref{appendix:examples} illustrate generic behavior of such cubic fourfolds.

\subsection*{Acknowledgments} We thank Nicolas Addington, Brendan Hassett, Alex Perry, Jack Petok, and Yuri Tschinkel for valuable conversations. In particular, Nicolas Addington suggested the possible connection to double EPW sextics explored in Section~\ref{sec:epw}, and Brendan Hassett suggested techniques that streamlined our arguments enumerating the birational models of $F$.
The computations in Appendix~\ref{appendix:examples} were done in Magma \cite{Magma}.
Finally, we thank the anonymous referees and Alexander Kuznetsov for their comments and remarks that helped the exposition.

This material is based upon work supported by the NSF Grant DMS-1929284 while the authors were in residence at the Institute for Computational and Experimental Research in Mathematics in Providence, RI, during the Hyperk\"ahler Manifolds and Special Cubic Fourfolds Collaborate@ICERM Program. The authors were also partially supported by the Hausdorff Research Institute for Mathematics, while in residence for the Junior Trimester Program on Algebraic geometry: derived categories, Hodge theory, and Chow groups, funded by the Deutsche Forschungsgemeinschaft (DFG, German Research Foundation) under Germany’s Excellence Strategy – EXC-2047/1 – 390685813. S.F.~was supported in part by an AMS-Simons travel grant and NSF grant DMS-2401601. L.M. was supported in part by NSF grant DMS-2503390.

\section{Preliminaries}\label{sec:prelims}
We consider complex cubic threefolds and fourfolds that contain rational normal cubic scrolls, hereafter referred to as \emph{cubic scrolls}. 
\begin{definition}
    A \emph{cubic scroll} $T\subset \bP^4$ is an irreducible non-degenerate surface of degree 3. Equivalently, $T$ is isomorphic to the blow up of $\bP^2$ in a point embedded into $\bP^4$.
\end{definition}

In \Cref{subsection:threefolds} we recall generalities on nodal cubic threefolds $Y\subset \bP^4$ that contain cubic scrolls. In particular, we recall the description of the components of the Fano variety of lines $F(Y)$ for such a threefold due to \cite{flops}. In \Cref{subsec: special cubic}, we focus on smooth cubic fourfolds $X\subset \bP^5$ containing cubic scrolls, defining the notion of syzygyetic and non-syzygetic pairs of scrolls (\Cref{defn: syz}). In \Cref{subsec: Fano prelims}, we describe the Abel--Jacobi map relating the cohomology of $X$ to its Fano variety of lines. In \Cref{subsec: HK}, we describe the structure of the movable and ample cone of a hyperk\"ahler manifold of K3$^{[2]}$-type (\Cref{thm: Mov and Amp}). Finally, in \Cref{subsec: GM4folds} we recall the relevant results on Gushel--Mukai fourfolds and associated double EPW sextics, emphasizing a geometric point of view.

\subsection{Cubic threefolds containing cubic scrolls}\label{subsection:threefolds}

Let $Y$ be a cubic threefold containing a cubic scroll $T$, as studied in detail in \cite{flops}. A generic such cubic threefold has six nodes in general position. Conversely, any cubic threefold with six nodes in general position contains a cubic scroll. Indeed, such a cubic threefold is determinental, and the cubic scrolls in $Y$ correspond to the degeneracy loci of 2 by 3 and 3 by 2 minors of this determinental matrix. We summarize some of their geometric properties in this section.

By \cite[Proposition 4.7]{flops}, $Y$ contains another cubic scroll $T^\vee$ residual to $T$ in a quadric, and two nets of cubic scrolls homologous to $T$ and $T^\vee$ respectively. 
Each node of $Y$ is contained in all of the cubic scrolls in $Y$, and the complete linear system $|\calI_{T/Y}(2)|$ induces a rational map $q\colon Y\dashrightarrow\bP^2$, resolved by passing to a small resolution $Y^+$ of $Y$ as in the diagram below.
\begin{center}
        \begin{equation}\label{diag:smallresolution}
        \begin{tikzcd}
            & Y^+ \arrow[dr,"p"] \arrow[dl,"f" above] & \\ 
            Y \arrow[rr,dashed,"q"] & & \bP^2
        \end{tikzcd}
        \end{equation}
\end{center} 
As noted in \cite[Remark 7.1]{cheltsov2024equivariant}, the map $p$ is a $\bP^1$-bundle, and for a general line $\ell\subset \bP^2$, $f_*p^*(\ell)$ is a cubic scroll homologous to $T^\vee$. 

\begin{remark}\label{remark:lines in T}
    If $T'\subset Y$ is a smooth cubic scroll homologous to $T^\vee$, there are two types of lines in $T'$: an exceptional line $E$ with $E^2=-1$ under the intersection form and a pencil of lines $L$ with $L^2=0$. The first type is a section of the $\bP^1$-bundle $q|_{T'}\colon T'\to\bP^1$ whereas the second is a fiber.
\end{remark}

Similarly, the linear system $|\calI_{T^\vee/Y}(2)|$ induces a map $q^\vee\colon Y\dashrightarrow\bP^2$ resolved by a small resolution $Y^-$ which differs from $Y^+$ by six Atiyah flops \cite[Section~7]{cheltsov2024equivariant}. 

The Fano variety of lines on $Y$ has three components:

\begin{proposition}\label{prop: P, S, Pv}\cite[Proposition 4.1]{flops}
    The Fano variety of lines $F(Y)$ on $Y$ decomposes as $P\cup S'\cup P^\vee$ where $P,P^\vee\simeq\bP^2$ and $S'$ is a singular surface whose normalization is a smooth cubic surface.
\end{proposition}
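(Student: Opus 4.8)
The plan is to build the three components by hand: the two planes directly from the $\bP^1$-bundle structures already recorded in \eqref{diag:smallresolution}, and the surface $S'$ by projecting $Y$ away from one of its six nodes and reducing to a classical question about secant lines of a curve in $\bP^3$. First I would produce $P^\vee$. By \cite[Remark 7.1]{cheltsov2024equivariant} the map $p\colon Y^+\to\bP^2$ is a $\bP^1$-bundle, and for a general line $\ell\subset\bP^2$ the surface $f_*p^*\ell$ is a cubic scroll homologous to $T^\vee$ whose pencil of lines with $L^2=0$ (in the notation of \Cref{remark:lines in T}) consists exactly of the $f$-images of the fibers of $p$ over $\ell$. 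Hence $x\mapsto\bigl[f(p^{-1}(x))\bigr]$ defines a morphism $\iota\colon\bP^2\to F(Y)$, and I claim $\iota$ is a closed immersion. The map $f$ is an isomorphism away from the six curves it contracts over the nodes; no fiber of $p$ is one of these curves, since otherwise $f_*p^*\ell$ would fail to be a surface; and two distinct fibers of $p$ are disjoint in $Y^+$, so their $f$-images are distinct lines, except possibly for the finitely many fibers meeting a contracted curve, a degeneration that does not occur (or is checked directly) when the nodes are in general position. Thus $P^\vee:=\iota(\bP^2)\cong\bP^2$ is a two-dimensional irreducible closed subvariety of $F(Y)$, and the symmetric argument applied to $Y^-$ and $q^\vee$ produces a second plane $P\cong\bP^2$, consisting of the ruling lines of the scrolls homologous to $T$.

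To obtain $S'$ and to see that these pieces exhaust $F(Y)$, I would project from one node $o_1$. Since $o_1$ is a node, this realizes $Y$ as the image of $\bP^3$ under the linear system of cubics through the sextic $C_1 = Q\cap\{R=0\}$, where $Q\subset\bP^3$ is the (smooth) tangent cone to $Y$ at $o_1$ and $R$ is the cubic part of a local equation. The five remaining nodes of $Y$ force $C_1$ to be singular at the five corresponding points, and an arithmetic-genus count then forces the $(3,3)$-divisor $C_1$ on $Q\cong\bP^1\times\bP^1$ to split as $C_1 = C'\cup C''$, with $C'$ of bidegree $(1,2)$ and $C''$ of bidegree $(2,1)$, meeting transversally in those five points. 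Lines on $Y$ through $o_1$ correspond to points of $C_1$; lines on $Y$ avoiding $o_1$ correspond to lines of $\bP^3$ meeting $C_1$ in exactly two points. Partitioning the latter by incidence with $C'$ and $C''$ yields the bisecants of $C'$, a $\bP^2$ that one checks coincides with $P$; the bisecants of $C''$, a $\bP^2$ coinciding with $P^\vee$; and the lines meeting each of $C'$ and $C''$ exactly once. The closure of this last family is $S'$, and its normalization is the blow-up of $C'\times C''\cong\bP^1\times\bP^1$ at the five points $(x,x)$ with $x\in C'\cap C''$, which is $\bP^2$ blown up at six points in general position, i.e.\ a smooth cubic surface. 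All the curves of lines through a node lie in the closures of $P$, $S'$, and $P^\vee$, so $F(Y)=P\cup S'\cup P^\vee$; and $S'$ is genuinely singular, with two smooth branches crossing precisely along the lines meeting $C'\cup C''$ a third time, which is exactly where its normalization fails to be injective.

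The step I expect to fight with is the analysis around the node $o_1$: one must argue that ``general position'' of the six nodes forces $C_1$ to break into a $(1,2)$- and a $(2,1)$-curve and no worse, identify the two nets of cubic scrolls with the two families of quadrics (hence of secant lines) associated to $C'$ and $C''$, and then control the boundary behavior over the five coincident points and over the five other nodes carefully enough to conclude that the normalization of $S'$ is exactly the smooth cubic surface rather than merely birational to it. By contrast, the construction of $P$ and $P^\vee$ from the $\bP^1$-bundles, and the genus- and intersection-number bookkeeping on $\bP^1\times\bP^1$, should be routine.
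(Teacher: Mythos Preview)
The paper does not prove this proposition; it is quoted from \cite[Proposition~4.1]{flops}. The argument you outline is essentially the account the paper summarizes immediately afterward in \Cref{remark:fanodescription} (following \cite{dolgnodal}): projecting from a node $y$ produces a union of two twisted cubics $C_y\cup C_y^\vee$ (your $C'\cup C''$) parametrizing the lines through $y$, and $F(Y)\cong\Bis(C_y\cup C_y^\vee)$ with $P=\Bis(C_y)$, $P^\vee=\Bis(C_y^\vee)$, and $S'$ the closure of the locus of lines meeting each component once. Your additions---the arithmetic-genus count forcing the $(3,3)$-curve on the smooth tangent quadric to split as $(1,2)+(2,1)$, and the identification of the normalization of $S'$ with $\Bl_5(\bP^1\times\bP^1)\cong\Bl_6\bP^2$---supply details the paper leaves to the cited references. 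Your separate construction of $P$ and $P^\vee$ from the $\bP^1$-bundles of \eqref{diag:smallresolution} is correct but redundant once the bisecant description is in hand, and matching the two descriptions is an extra step you do not carry out.

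One point worth tightening: the claim that ``the five remaining nodes of $Y$ force $C_1$ to be singular at the five corresponding points'' is true but not immediate. Writing $F=x_0Q+R$ with the chosen node at $[1:0:\cdots:0]$, another node $y_i=[a_0:\cdots:a_4]$ satisfies $\partial_{x_0}F(y_i)=Q(a_1,\dots,a_4)=0$ and $F(y_i)=0$ gives $R(a_1,\dots,a_4)=0$, so the projection of $y_i$ lies on $C_1$; the remaining partials give $\nabla R=-a_0\nabla Q$ there, so $\nabla Q$ and $\nabla R$ are dependent and $C_1$ is singular at that point. This is the link between the extra nodes and the splitting that your sketch asserts but does not justify.
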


More specifically, the normalization $S\to S'$ resolves fifteen apparent double points on $S'$, corresponding to the fifteen lines passing through two nodes of $Y$ \cite[Proposition 4.6]{flops}.

\begin{remark}\label{remark:fanodescription}
We summarize another account of the components of $F(Y)$, as outlined in \cite{dolgnodal}. Given a node $y\in Y$, a union of twisted cubic curves $C_y$ and $C_y^\vee$ parametrizes the lines on $Y$ passing through $y$ \cite[Lemma 4.4]{flops}. Hence the surface in $Y$ swept out by the lines through $y$ decomposes as a union of cones $A_y\cup A_y^\vee$ over twisted cubics; by \cite[Proposition 4.7]{flops}, the surfaces $A_y$ and $A_y^\vee$ are homologous to $T$ and $T^\vee$, respectively. One obtains a birational map
\[
\Hilb^2(C_y\cup C_y^\vee)\dashrightarrow F(Y)
\] by taking the residual line to the two lines determined by $\xi\in\Hilb^2(C_y\cup C_y^\vee).$
Considering instead bisecant lines, which intersect $C_y\cup C_y^\vee$ with multiplicity at least $2$, Dolgachev upgrades this to an isomorphism
\[
\Bis(C_y\cup C_y^\vee)\xlongrightarrow{\sim} F(Y)
\]
in \cite[Section 4]{dolgnodal}. The planes $P,P^\vee\subset F(Y)$ can be identified with $\Bis(C_y)$ and $\Bis(C_y^\vee)$, i.e. the set of lines bisecant to $C_y$ and $C_y^\vee$, respectively. The surface $S'$ consists of lines meeting both $C_y$ and $C_y^\vee$. 
\end{remark}

\subsection{Special cubic fourfolds in $\calC_{12}$}\label{subsec: special cubic}

For a smooth cubic fourfold $X$, we define the lattice of algebraic cycles (using the fact that cubic fourfolds satisfy the integral Hodge conjecture \cite[Theorem 1.4]{voisinHodge}):
\[
A(X)=H^4(X,\bZ)\cap H^{2,2}(X,\bC).
\]
Letting $\eta_X$ be the square of the hyperplane class, we also define
\[
H^4(X,\bZ)_{prim}=\langle\eta_X\rangle^\perp\subset H^4(X,\bZ)
\]
and
\[
A(X)_{prim}=H^4(X,\bZ)_{prim}\cap A(X).
\]
Each of the above is a lattice under the intersection pairing. For a very general cubic fourfold, $A(X)_{prim}=0$, but countably many divisors $\calC_d$ in the moduli space of cubic fourfolds parametrize those with $A(X)_{prim}\neq0$. Whenever $\eta_X\in K\subset A(X)$ where $K$ is a primitive sublattice of rank two, and the discriminant of the intersection form on $K$ is $d$, we say $X\in\calC_d$, and $K$ is called a labeling of $X$.

We focus in particular on smooth cubic fourfolds $X\subset \bP^5$ that contain a cubic scroll $T$, or equivalently contain a hyperplane section $Y=X\cap H$ with six nodes in general position \cite[Prop. 6.1]{flops}. A cubic scroll $T\subset X$ determines a labelling $K_{12}\hookrightarrow A(X)$ (see \cite[Section 4.13]{hassett}), so $X\in \calC_{12}.$ Indeed, the intersection pairing on $\langle\eta_X,T\rangle\subset A(X)$ is given by:
\begin{center}
\begin{tabular}{l|ll}
& $\eta_X$ & $T$  \\ \hline
$\eta_X$ & $3$   & $3$   \\
$T$ & $3$   & $7$  \\
\end{tabular}
\end{center}
As a consequence of Section~\ref{subsection:threefolds}, we immediately see the following:

\begin{lemma}\cite[Lemma 2.11]{hassett-thesis}
    Let $X$ be a general cubic fourfold containing a cubic scroll $T$. Then:
    \begin{enumerate}
        \item there is a net of scrolls homologous to $T$, all contained in $H$,
        \item there exists a residual cubic scroll $T^\vee\subset X$ spanning the same hyperplane as $T$ such that $[T]+[T^\vee]=2\eta_X\in A(X).$ 
    \end{enumerate}
\end{lemma}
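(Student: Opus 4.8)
The plan is to deduce both statements from the geometry of the nodal cubic threefold $Y = X\cap H$, where $H = \langle T\rangle\subset\bP^5$ is the hyperplane spanned by $T$. Since $X$ is a general cubic fourfold containing a cubic scroll, $Y$ has six nodes in general position by \cite[Prop.~6.1]{flops}, so everything recalled in \Cref{subsection:threefolds} applies to $Y$. In particular, \cite[Proposition~4.7]{flops} provides a residual cubic scroll $T^\vee\subset Y$ and two nets of cubic scrolls in $Y$, one with members homologous to $[T]$ and one with members homologous to $[T^\vee]$.

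Statement (1) is then immediate: the net of cubic scrolls homologous to $T$ supplied by \cite[Proposition~4.7]{flops} --- explicitly, the residual surfaces to $T^\vee$ in the members of the net of quadrics $|\calI_{T^\vee/Y}(2)|$ --- consists of surfaces contained in $Y = X\cap H$, hence in $H$.

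For statement (2), fix a general quadric $Q\in|\calI_{T/Y}(2)|$ and let $T^\vee\subset Y$ be the surface residual to $T$ in $Q\cap Y$; by \cite[Proposition~4.7]{flops} it is a cubic scroll, and as a non-degenerate surface in $H\cong\bP^4$ it spans $H$, as does $T$. To compute $[T]+[T^\vee]$, lift $Q$ to a quadric hypersurface $\widetilde Q\subset\bP^5$ with $\widetilde Q\cap H = Q$, which is possible because $H^0(\bP^5,\calO(2))\to H^0(H,\calO(2))$ is surjective. Then
\[
T\cup T^\vee \;=\; Y\cap Q \;=\; X\cap H\cap\widetilde Q
\]
is a complete intersection of a cubic, a hyperplane, and a quadric in $\bP^5$, hence a Cohen--Macaulay scheme of pure dimension $2$ and degree $6$ with no embedded components. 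Since $T$ and $T^\vee$ are distinct reduced cubic scrolls, each of degree $3$, both contained in this complete intersection, the degree count $3+3=6$ forces its associated $2$-cycle to be exactly $[T]+[T^\vee]$. On the other hand, that cycle class equals $h\cdot(2h) = 2\eta_X$ in $H^4(X,\bZ)$, where $h$ is the hyperplane class of $X$. Hence $[T]+[T^\vee] = 2\eta_X$ in $A(X)$.

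The only genuinely non-formal ingredients are the two cited results from \cite{flops}, namely that a general cubic fourfold containing a scroll has a six-nodal hyperplane section in general position and that the residual surfaces in the relevant quadrics are again cubic scrolls rather than degenerate. Granting these, the remaining computation is routine; the one point demanding a little care is the scheme-theoretic equality $X\cap H\cap\widetilde Q = T\cup T^\vee$, where one uses that the complete intersection, being Cohen--Macaulay of degree $6$, carries no thickening along either $T$ or $T^\vee$, so that its cycle class is precisely $[T]+[T^\vee]$.
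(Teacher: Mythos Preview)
Your proposal is correct and follows exactly the approach the paper indicates: the paper gives no formal proof, merely stating the lemma ``as a consequence of Section~\ref{subsection:threefolds}'' and citing Hassett's thesis, and your argument is precisely the unpacking of that remark, drawing on \cite[Propositions~4.7 and~6.1]{flops} recalled there. The additional care you take with the cycle computation $[T]+[T^\vee]=2\eta_X$ via the complete intersection $X\cap H\cap\widetilde Q$ is a welcome elaboration of what the paper leaves implicit.
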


In much of what follows, we take $X$ to contain two cubic scrolls $T_1$ and $T_2$ spanning different hyperplanes $H_1$ and $H_2$. For a very general such $X$, the classes $\eta_X$, $[T_1]$, and $[T_2]$ span $A(X)$, and the intersection form is given by
\begin{center}
$M_\tau:=$
\begin{tabular}{l|lll}
& $\eta_X$ & $T_1$ & $T_2$  \\ \hline
$\eta_X$ & $3$   & $3$ &  $3$ \\
$T_1$ & $3$   & $7$ & $\tau$ \\
$T_2$ & $3$ & $\tau$ & $7$ \\
\end{tabular}
\end{center}
where $\tau\in\{1,2,3,4,5\}$ \cite[Lemma 4.11]{marquand2022cubic}. Note that if $[T_1]\cdot [T_2]=\tau,$ then $[T_1^\vee]\cdot [T_2]=6-\tau$ since $[T_1]+[T_1^\vee]=2\eta_X$; hence we can take $\tau\in\{1,2,3\}$.

\begin{remark}
    Note that there is a small error in \cite[Lemma 4.11]{marquand2022cubic}, which originally ruled out $\tau=2,4$: such an intersection gives a vector $v= 2\eta_X-[T_1]-[T_2]$ with $v^2=6$, but $\mathrm{div} (v)=1$ in the full primitive cohomology $H^4(X, \bZ)$.
\end{remark} 

\begin{lemma}\label{lem:Sigmasmooth}
    Let $X\subset \bP^5$ be a general cubic fourfold containing two cubic scrolls $T_1, T_2$ spanning distinct hyperplanes $H_1, H_2\subset \bP^5$. Then the cubic surface $\Sigma \coloneqq H_1\cap H_2\cap X$ is smooth.
\end{lemma}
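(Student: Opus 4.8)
The plan is to reduce the smoothness of $\Sigma$ to a Bertini-type transversality statement for the nodal cubic threefold $Y_1 = X \cap H_1$ and then control genericity using the moduli of $X$. Write $\Pi := H_1 \cap H_2 \cong \bP^3$, so $\Sigma = X \cap \Pi = Y_1 \cap \Pi$. A point $p \in \Sigma$ is singular on $\Sigma$ precisely when the gradient covector $\nabla f(p)$ (where $X = V(f)$) lies in the span of the two linear forms cutting out $H_1, H_2$, i.e. when the embedded tangent hyperplane $\mathbb{T}_p X \subset \bP^5$ contains $\Pi$, equivalently $\mathbb{T}_p X$ lies in the pencil of hyperplanes through $\Pi$ spanned by $H_1$ and $H_2$. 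Unwinding this, there are exactly two ways it can occur: either (i) $\mathbb{T}_p X = H_1$, so $p$ is one of the six nodes of $Y_1$ --- which by the results of \Cref{subsection:threefolds} necessarily lies on the scroll $T_1$, hence on the curve $C_1 := T_1 \cap \Pi = T_1 \cap H_2$ (a hyperplane section of the scroll $T_1$) --- or (ii) $\mathbb{T}_p X \ne H_1$, in which case $p$ is a smooth point of $Y_1$ with $\mathbb{T}_p Y_1 = \mathbb{T}_p X \cap H_1 = \Pi$, i.e. $\Pi$ is tangent to $Y_1$ at $p$ (the symmetric possibility $\mathbb{T}_p X = H_2$, meaning $p$ is a node of $Y_2 = X\cap H_2$ lying on $H_1$, is a special case of this). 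In other words, $\Sigma$ is smooth if and only if $\Pi$ is a \emph{smooth} hyperplane section of the projective threefold $Y_1 \subset H_1 \cong \bP^4$: it avoids the six nodes and is transverse to $Y_1$ elsewhere.

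Since $Y_1$ is smooth away from its six nodes, the classical Bertini theorem shows that the smooth hyperplane sections of $Y_1$ form a dense open subset $U(Y_1) \subseteq (H_1)^\vee$: a general hyperplane misses the six nodes, and a general hyperplane section of the smooth locus is smooth. So the lemma reduces to showing that, for general $X$ containing $T_1$ and $T_2$, the hyperplane $\Pi = H_1 \cap H_2$ lies in $U(Y_1)$. I would establish this with a parameter count on the incidence variety of tuples $(T_1, T_2, X, p)$ with $(T_1,T_2,X)$ admissible and $p$ a \emph{bad point} of $\Sigma$ in one of the two senses above, showing its dimension is strictly smaller than that of the family of admissible triples, so the bad triples form a proper closed subset. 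Concretely: with $T_1, T_2$ fixed, the cubics through $T_1 \cup T_2$ form a linear system of projective dimension at least $12$; the six nodes of $Y_1$ are forced onto the fixed surface $T_1$ but move as $X$ varies in this system, so requiring one of them to land on the fixed curve $C_1 \subset T_1$, or requiring $\Pi$ to become tangent to $Y_1$, is a codimension $\geq 1$ condition.

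The main obstacle is making this genericity step rigorous, since the restriction map $H^0(\bP^5, \calI_{T_1 \cup T_2}(3)) \to H^0(\Pi, \calI_{C_1\cup C_2}(3))$ need not be surjective, so $\Sigma$ is not a priori a generic cubic surface through $C_1 \cup C_2$; one must genuinely verify that the positions of the six nodes of $Y_1$ are not constrained to meet $C_1$ and that $\Pi$ is not forced to be tangent to $Y_1$. Because smoothness of $\Sigma$ is a Zariski-open condition and the parameter space of admissible triples $(T_1, T_2, X)$ is irreducible, the cleanest way to discharge this is to exhibit a single admissible triple with $\Sigma$ smooth; such examples are produced in Appendix~\ref{appendix:examples}, which then completes the proof.
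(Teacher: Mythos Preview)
Your argument is correct, and its conclusive step---smoothness is Zariski-open on an irreducible parameter space, so a single explicit example suffices, and such examples appear in Appendix~\ref{appendix:examples}---is precisely the paper's entire proof. The paper offers nothing beyond that sentence. Your preceding analysis (identifying singular points of $\Sigma$ with either nodes of $Y_1$ landing on $H_2$ or tangencies of $\Pi = H_1\cap H_2$ to the smooth locus of $Y_1$, then invoking Bertini and sketching a parameter count) is all correct and gives a clear conceptual picture of what could go wrong, but it is logically superfluous once you fall back on the example, and none of it appears in the paper. One small caveat: your assertion that the space of admissible triples $(T_1,T_2,X)$ is irreducible should be read per intersection type $\tau\in\{1,2,3\}$, as these give distinct components; the paper handles this by providing a separate example for each $\tau$ it treats, and your appeal to the appendix implicitly does the same.
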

\begin{proof}
    We provide examples of such cubics in \Cref{appendix:examples} when $\tau=1,3$. Since smoothness is an open condition, and the moduli spaces of cubics with markings by $M_{\tau}$ for $\tau = 1, 3$ are both irreducible by \cite[Theorem~1.1]{yang2021lattice}, the result follows in those cases. A similar example proves the claim for $\tau=2$ but is not included because we do not treat such cubics in this paper.
\end{proof}
 Thus, one can interpret the intersection of $T_1$ and $T_2$ in terms of the intersection of two twisted cubics on $\Sigma=X\cap H_1\cap H_2$, (see \cite[Appendix B.1]{marquand2022cubic}). This observation motivates the following definition:
 \begin{definition}\label{defn: syz}
     We say that $T_1$ and $T_2$ are \emph{syzygetic scrolls} provided $[T_1]\cdot [T_2]=3,$ and \emph{azygetic scrolls} provided $[T_1]\cdot [T_2]=2$ or $4$.
     If $[T_1]\cdot [T_2]=1$ or $5$ we say the pair is \emph{non-syzygetic}.
 \end{definition}
Indeed, the twisted cubics $T_1\cap H_2$ and $T_2\cap H_1$ on the cubic surface $\Sigma=X\cap H_1\cap H_2$ form a syzygetic duad in the first case, as defined in \cite[Lemma 9.1.5]{dolg}, and an azygetic duad in the second. We study these twisted cubics in Lemma~\ref{lemma:gammaintersection}.

In the non-syzygetic case, there are exactly two other algebraic cycles with the same numerics as a cubic scroll, namely:
\begin{align}\label{eqn:T3def}
\begin{split}
    [T_3]&=3\eta_X-[T_1]-[T_2],\\
    [T_3^\vee]&=[T_1]+[T_2]-\eta_X.
\end{split}
\end{align} 
\begin{lemma}\label{lemma:extrascrolls}
    The classes $[T_3]$ and $[T_3^\vee]$ are represented by cubic scrolls in $X$.
\end{lemma}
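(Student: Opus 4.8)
The plan is to realize $[T_3]$ and $[T_3^\vee]$ as genuine cubic scrolls by exploiting the extra family of scrolls produced on a hyperplane section, exactly as in the case of a single scroll. Recall that a cubic scroll $T_i\subset X$ corresponds to a hyperplane section $Y_i=X\cap H_i$ with six nodes in general position, and that $Y_i$ carries \emph{two} nets of cubic scrolls, homologous to $[T_i]$ and to $[T_i^\vee]=2\eta_X-[T_i]$. So the classes $[T_i]$ and $[T_i^\vee]$ are automatically effective and represented by cubic scrolls inside $H_i$. The point of \Cref{lemma:extrascrolls} is that $[T_3]$ and $[T_3^\vee]$, which a priori are only integral Hodge classes with the right numerics, are in fact realized this way — necessarily as the two nets of scrolls on a \emph{third} hyperplane section $Y_3=X\cap H_3$.

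First I would verify the numerics: using the intersection table with $\tau=1$ (the case $\tau=5$ being obtained by replacing $T_1$ by $T_1^\vee$), one computes $[T_3]^2=[T_3^\vee]^2=7$, $[T_3]\cdot\eta_X=[T_3^\vee]\cdot\eta_X=3$, and $[T_3]+[T_3^\vee]=2\eta_X$, so that $[T_3]$ and $[T_3^\vee]$ are residual in a quadric and each spans a hyperplane $H_3$ with $[T_3]\cdot H_3=\eta_X$. This already shows $[T_3]$ and $[T_3^\vee]$ have exactly the Hodge-theoretic and numerical profile of a cubic-scroll class together with its residual. Next I would show the linear system exhibiting $T_3$ is nonempty: since $[T_3]=3\eta_X-[T_1]-[T_2]$ and the scrolls $T_1,T_2$ each lie in their respective hyperplane sections, one can locate $H_3$ explicitly — for instance as the hyperplane residual to $H_1,H_2$ in the appropriate sense, or via the Appendix~\ref{appendix:examples} examples for $\tau=1$ where one writes down $X$ and checks by direct computation that $X\cap H_3$ has six nodes in general position, hence contains a cubic scroll in class $[T_3]$ by \cite[Prop. 6.1]{flops}. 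By semicontinuity/openness this persists for very general $X$ containing a non-syzygetic pair.

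An alternative and perhaps cleaner route avoids Appendix computations: it suffices to produce, for a single smooth cubic fourfold $X$ containing a non-syzygetic pair $T_1,T_2$, a cubic scroll in class $[T_3]$ (then $[T_3^\vee]=2\eta_X-[T_3]$ is its residual in the quadric cut out by $|\calI_{T_3/X}(2)|$, automatically a cubic scroll). One can construct such an $X$ directly: take a cubic threefold $Y_3$ with six nodes in general position containing a chosen cubic scroll $T_3$, then deform $Y_3$ to a smooth cubic fourfold $X$ while keeping $T_3\subset X$ and arranging $X$ to also contain $T_1,T_2$ with the prescribed intersection numbers — the existence of enough moduli for this is the content of the examples, but can also be argued by a dimension count on the relevant Hilbert schemes of configurations of three scrolls.

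The main obstacle is passing from "the class $[T_3]$ has the numerics of a scroll" to "the class $[T_3]$ is effective and its general member is a smooth cubic scroll." The Hodge class $[T_3]$ being algebraic does not by itself guarantee an irreducible surface of the right degree in that class; one must actually exhibit the surface. The most economical way to do this is to reduce to \cite[Prop. 6.1]{flops} by showing the hyperplane section $X\cap H_3$ (where $H_3=\langle T_3\rangle$ is forced by $[T_3]\cdot\eta_X=3$ and $[T_3]\cdot[T_3^\vee]$ having the right value) has six nodes in general position — which is an open condition verified on the explicit $\tau=1$ examples in \Cref{appendix:examples} — and then invoking the two-nets structure of \Cref{subsection:threefolds} to get scrolls in both $[T_3]$ and $[T_3^\vee]$. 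So the proof is: (i) check the numerics force $[T_3],[T_3^\vee]$ to be residual classes each spanning a hyperplane; (ii) on the Appendix examples, check $X\cap H_3$ has six general nodes; (iii) conclude by openness and \cite[Prop. 6.1]{flops}.
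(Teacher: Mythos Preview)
Your approach is genuinely different from the paper's, and it has a gap at the ``semicontinuity/openness'' step.

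The paper does not use the Appendix examples at all. Instead, it argues by specialization from the \emph{generic} point of the Hodge locus $B'\subset B$ of the class $[T_3]$ in a local universal family of marked cubic fourfolds. At a very general $b\in B'$ one has $A(X_b)=\langle\eta_{X_b},[T_3]\rangle$, so $X_b$ is a generic member of $\calC_{12}$ with respect to this labeling and therefore contains a cubic scroll in class $[T_3]$ by \cite[Section~4.1.2]{hassett-thesis}. Specializing this scroll to $X$ yields an effective cycle of degree $3$ in class $[T_3]$. The crucial last step --- which your proposal omits entirely --- is to argue that this limiting cycle is still a cubic scroll: since $X$ contains no plane the cycle is irreducible, and since $[T_3]^2=7$ it cannot be a cone, so it is a smooth cubic scroll. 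The claim for $[T_3^\vee]$ then follows from $[T_3^\vee]=2\eta_X-[T_3]$.

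Your proposed route via the explicit example plus ``openness'' does not close the argument as written. The condition ``$X$ contains a cubic scroll in class $[T_3]$'' is not open on the non-syzygetic locus: the relevant Hilbert scheme is proper, so the image is closed, and the locus of \emph{smooth} scrolls is merely constructible. To pass from a single example to the very general $X$ you would need either an unobstructedness/flatness argument for the relative Hilbert scheme at the example point (giving openness of the image near $X_0$) together with irreducibility of the non-syzygetic locus, or else a properness/specialization argument --- at which point you are forced to confront exactly the degeneration issue the paper handles with the ``no planes, $[T_3]^2=7$'' step. Your sentence ``$H_3=\langle T_3\rangle$ is forced by $[T_3]\cdot\eta_X=3$'' is also circular: it presupposes that $T_3$ exists as a surface spanning a hyperplane, which is what is to be proved. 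The paper's Hodge-locus specialization avoids all of this and, as a bonus, works for \emph{any} smooth $X$ containing a non-syzygetic pair (not containing a plane), not just the very general one.
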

\begin{proof}
Let $\calX\rightarrow B$ be a local universal family of marked cubic fourfolds, where we identify each lattice $A(X_b)_{\prim}$ with a sublattice of $H^4(X_0,\bZ)_{\prim}$, with $X_0=X$. We let $B'\subset B$ denote the Hodge locus of the class $[T_3]$; i.e. the locus parametrizing fibers $\calX_b$ where $[T_3]$ remains algebraic, so $[T_3]\in A(X_b)$. If $b\in B'$ is very general, then the cubic $X_b$ has $A(X_b)\cong \langle \eta_{X_b}, [T_3]\rangle$ and $X_b$ contains a cubic scroll $T'$, necessarily with class $[T_3]$ \cite[Section 4.1.2]{hassett-thesis}. Then by specialization, in $X$ the class $[T_3]$ is also represented by an effective cycle of degree 3 in $\bP^5.$ Since $X$ does not contain a plane, this is necessarily irreducible, and since $[T_3]^2=7$ the only option is for $[T_3]$ to be represented by a cubic scroll. 
Since $[T_3^\vee]=2\eta_X-[T_3]$, we know $[T_3^\vee]$ is also represented by a cubic scroll.
\end{proof}

We will also make use of the following fact:

\begin{proposition}\label{prop:Aut(X)=0}
    For $X$ a general cubic fourfold in $\calC_{12}$ or a general cubic fourfold containing a pair of cubic scrolls, $\Aut(X)=0$.
\end{proposition}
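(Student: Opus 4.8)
The plan is to use the standard fact that for a smooth cubic fourfold $X$, every automorphism acts faithfully on $H^4(X,\bZ)$ preserving both the intersection form and the Hodge structure (this is a consequence of Global Torelli for cubic fourfolds, due to Voisin, together with the fact that $\Aut(X)$ injects into $\PGL_6$ and acts faithfully on cohomology). Since automorphisms act by isometries fixing $\eta_X$, they induce isometries of the primitive lattice $H^4(X,\bZ)_{prim}$ that moreover preserve the Hodge decomposition. For a \emph{general} member of $\calC_{12}$ (respectively a general cubic containing a pair of cubic scrolls), the algebraic lattice $A(X)_{prim}$ has rank $1$ (resp.\ rank $2$), and the transcendental lattice $T(X) = A(X)_{prim}^\perp$ carries an irreducible Hodge structure — this is the point where genericity enters, via a Noether–Lefschetz / big monodromy argument: outside a further countable union of proper closed subsets, $\Aut$ of the Hodge structure on $T(X)$ is $\{\pm 1\}$.

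The key steps, in order, would be: (1) reduce to showing that $g \in \Aut(X)$ acts trivially on $H^4(X,\bZ)$, using faithfulness of the action (no nontrivial automorphism of a smooth hypersurface of degree $\ge 3$ and dimension $\ge 1$ acts trivially on $H^{\mathrm{prim}}$ — in fact one can cite that $\Aut(X) \hookrightarrow \mathrm{O}(H^4(X,\bZ))$); (2) observe $g$ fixes $\eta_X$ and hence restricts to an isometry of $H^4(X,\bZ)_{prim}$ preserving the Hodge structure, so it preserves the orthogonal decomposition $A(X)_{prim} \oplus^{?} T(X)$ (rationally) and in particular acts on $T(X)$ by a Hodge isometry; (3) invoke that for very general $X$ in the relevant family the only Hodge isometries of $T(X)$ are $\pm\id$; (4) rule out $g|_{T(X)} = -\id$: this would force $g$ to act as $-\id$ on $H^4(X,\bZ)_{prim}\otimes\bQ$ up to the action on $A(X)_{prim}$, but $-\id$ on $H^4_{prim}$ is not realized by an automorphism of a cubic fourfold since $\eta_X$ is fixed and the Lefschetz trace / Lefschetz fixed point count gives a contradiction, or more simply because $-\id$ on $H^4_{prim}$ does not preserve the positive cone structure compatibly with an actual isomorphism fixing the generic point of the polarized lattice — alternatively, one can note $b_4(X) = 23$ is odd so on $H^4_{prim}$ of rank $22$ the candidate $-\id$ combined with $+1$ on $\langle\eta_X\rangle$ would have to be realized geometrically, and one checks against the classification of automorphism groups of cubic fourfolds (e.g.\ the tables in the literature on symplectic/antisymplectic automorphisms) that no cubic fourfold with $A(X)_{prim}$ of rank $\le 2$ and this Hodge-theoretic shape admits a nontrivial automorphism; (5) conclude $g = \id$, hence $\Aut(X) = 0$.

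An alternative and perhaps cleaner route, which I would prefer to present, is purely dimension-theoretic: the family of cubic fourfolds containing a cubic scroll (resp.\ a fixed-numerics pair of cubic scrolls) is irreducible of known dimension, and the locus of cubic fourfolds admitting a nontrivial automorphism is a countable union of proper closed subvarieties of the moduli space of cubic fourfolds (this is classical — cubic fourfolds with automorphisms form a proper closed substack, cf.\ the work of Fu and others classifying automorphism groups of cubic fourfolds). One then just needs to check that neither $\calC_{12}$ nor the scroll-pair loci are contained in that bad locus, which follows by exhibiting a single example with trivial automorphism group — and indeed the explicit cubic fourfolds constructed in \Cref{appendix:examples} serve exactly this purpose. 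So the argument is: invoke that $\{X : \Aut(X) \ne 0\}$ is a countable union of proper closed subsets of the $20$-dimensional moduli space; observe $\calC_{12}$ is irreducible of dimension $19$ and the scroll-pair families are irreducible of dimension $10$; hence it suffices to produce one cubic in each family with no automorphisms, which is done in the appendix (trivial automorphism group can be verified, e.g., because the algebraic lattice together with a chosen scroll class has no nontrivial isometries fixing $\eta_X$, or by a direct Magma computation on the defining equation).

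The main obstacle is step (4) / checking the bad locus is proper: one must be careful that $\Aut$ of the \emph{Hodge structure} can be larger than $\Aut(X)$, and that the relevant "bad locus" statement is genuinely a \emph{proper} (not all of) subvariety of each of our families; I would resolve this by leaning on the explicit examples in \Cref{appendix:examples} rather than on an abstract Torelli argument, since those examples already have to be constructed for other purposes in the paper and a one-line automorphism check there closes the proposition cleanly.
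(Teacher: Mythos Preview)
Your second (preferred) approach is in the same spirit as the paper's, but the paper's execution is much cleaner and avoids any appeal to explicit examples. The paper simply cites that the locus of cubic fourfolds admitting an order-$p$ automorphism has dimension at most $14$ (this is \cite[Theorem~3.8]{Gonz}); since $\calC_{12}$ has dimension $19$ and the scroll-pair loci have dimension $18$ (they are codimension-$2$ Noether--Lefschetz loci in the $20$-dimensional moduli space --- not dimension $10$ as you wrote; you appear to have conflated them with the family $\calM_{\phi_2}$), a general member of either lies outside every such locus. That is the entire argument. In particular no example is needed, and in fact the cubics in \Cref{appendix:examples} are never checked to have trivial automorphism group, so the closure step you propose is not actually available from the appendix as written.

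Your first, Hodge-theoretic approach is unnecessary here and, as you yourself flag, has a real gap at step (4): ruling out that a nontrivial $g\in\Aut(X)$ acts by $-\id$ on $T(X)$ is not immediate, and the sketches you offer (Lefschetz trace, positive-cone considerations, classification tables) are each incomplete or would require substantially more input than the two-line dimension comparison. The dimension bound from \cite{Gonz} sidesteps all of this.
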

\begin{proof}
    The locus of cubic fourfolds with an order $p$ automorphism has dimension at most dimension 14 (see \cite[Theorem 3.8]{Gonz}). It follows that a general $X$ is not contained in any of these loci.
\end{proof}

\begin{remark}
    Our main motivation for studying the cubics containing syzygetic and non-syzygetic pairs of scrolls stems from the desire to investigate the rationality of cubics $X$ with an involution fixing a line pointwise. The twelve dimensional moduli space of such cubics $\calM_{\phi_2}$ is contained in $\calC_{12}$: however, a general $X\in \calM_{\phi_2}$ contains 240 classes represented by cubic scrolls, and $\rank A(X)_{prim}=8$ \cite[Theorem 4.5]{marquand2022cubic}. Thus the results discussed here can be viewed as a step towards understanding cubics contained in this family, to be treated in future work. 
\end{remark}

\subsection{The Fano variety of lines on a cubic fourfold}\label{subsec: Fano prelims}
Let $X$ be a smooth cubic fourfold and $F$ its Fano variety of lines,  a hyperk\"ahler fourfold of K3$^{[2]}$-type \cite{MR818549}. 

The Abel--Jacobi map
\[
\alpha\colon H^4(X,\bZ)\rightarrow H^2(F,\bZ)
\]
restricts to an isomorphism $H^4(X,\bZ)_{prim}\rightarrow H^2(F,\bZ)_{prim}$ compatible with the Hodge filtrations and (up to sign) with the quadratic forms on each lattice; more specifically, we have
\[
q(\alpha(x),\alpha(y))=-x\cdot y
\]
where $q$ is the Beauville--Bogomolov--Fujiki form (BBF form) of $F$. We write $v^2\coloneqq q(v,v)$ for $v \in H^2(F,\bZ)$. Letting $g$ be the Pl\"ucker polarization on $F$, we have that $\alpha(\eta_X)=g$ and, moreover, $g^2=6$ \cite[Proposition 6]{MR818549}. 

If $A(X)=\langle \eta_X,T\rangle$ where $T$ is a cubic scroll, then setting $\lambda=\alpha(T-\eta_X)$ and applying the compatibilities of the Abel--Jacobi map as described above, we see that the BBF form on $\NS(F)$ is given by
\begin{center}
$J_{12}=$
\begin{tabular}{r|rr}
& $g$ & $\lambda$  \\ \hline
$g$ & $6$   & $0$   \\
$\lambda$ & $0$   & $-4$  \\
\end{tabular}
\end{center}
and the discriminant group of $\NS(F)$ is
\[
D_{\NS(F)}\cong\bZ/6\times\bZ/4,
\]
with factors generated by $[\frac g6]$ and $[\frac{\lambda}4]\in D_{\NS(F)}$.

Next, suppose $A(X)=\langle \eta_X,T_1,T_2\rangle$ where $T_1$ and $T_2$ are cubic scrolls spanning different hyperplanes. Let $\lambda_i=\alpha(T_i-\eta_X)$. If $T_1$ and $T_2$ are a syzygetic pair, then the BBF form on $\NS(F)$ is 
\begin{center}
$J_{syz}=$
\begin{tabular}{r|rrr}
& $g$ & $\lambda_1$ & $\lambda_2$ \\ \hline
$g$ & $6$   & $0$ & $0$   \\
$\lambda_1$ & $0$   & $-4$ & $0$ \\
$\lambda_2$ & $0$ & $0$ & $-4$\\
\end{tabular}
\end{center}
and the discriminant group
\[
D_{\NS(F)}\cong\bZ/6\times(\bZ/4)^2
\]
has factors generated by $[\frac g6]$, $[\frac{\lambda_1}4]$, and $[\frac{\lambda_2}4]$. On the other hand, if $T_1$ and $T_2$ are a non-syzygetic pair labeled so that $[T_1]\cdot[T_2]=1$, the BBF form on $\NS(F)$ is
\begin{center}
$J_{nonsyz}=$
\begin{tabular}{r|rrr}
& $g$ & $\lambda_1$ & $\lambda_2$ \\ \hline
$g$ & $6$   & $0$ & $0$   \\
$\lambda_1$ & $0$   & $-4$ & $2$ \\
$\lambda_2$ & $0$ & $2$ & $-4$\\
\end{tabular}
\end{center} 
and the discriminant group 
\[
D_{\NS(F)}\cong\bZ/6\times\bZ/2\times\bZ/6
\]
has factors generated by $[\frac g6]$, $[\frac{\lambda_1}2]$, and $[\frac{\lambda_1}3+\frac{\lambda_2}6]$.

\subsection{Birational geometry of hyperk\"ahler fourfolds}\label{subsec: HK}
In order to study the birational geometry of $F$, we use the Global Torelli theorem for hyperk\"ahler manifolds of K3$^{[2]}$-type due to Verbitsky \cite{VerbTorelli}. It was reformulated by Huybrechts \cite{HuyTorelli} and Markman \cite{markman}; in particular, we use Markman's Hodge-theoretic version. We denote by $\Mon^2_{Hdg}(F)$ the subgroup of monodromy operators in $\Mon^2(F)$ that preserve the Hodge structure. Recall that for a manifold $F$ of K3$^{[2]}$-type, $\Mon^2(F)$ is equal to the subgroup of elements of $\Orth^+(H^2(F,\bZ))$ that act by $\pm \id$ on the discriminant group \cite[Lemma 9.2]{markman}.
\begin{theorem}\cite[Theorem 1.3]{markman}\label{thm:torelli}
    Let $F$ be a projective hyperk\"ahler manifold.
    Let $g\in \Mon^2_{Hdg}(F)$. Then there exists $f\in \Bir(F)$ such that $f^*=g$ if and only if $g^*\Mov(F)=\Mov(F)$. Further, $f\in \Aut(F)$ if and only if $g^*\Amp(F)=\Amp(F)$.
\end{theorem}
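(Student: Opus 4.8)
The final statement is Markman's Hodge-theoretic formulation of the Global Torelli theorem, valid for \emph{arbitrary} projective hyperk\"ahler $F$; the plan is to deduce it from Verbitsky's Global Torelli theorem together with the chamber decomposition of the movable cone, using no feature special to any one deformation type. First observe that $g\in\Mon^2_{Hdg}(F)$ is, in particular, a parallel-transport operator from $F$ to itself that is an isometry of the weight-two Hodge structure; since $g$ preserves the Hodge decomposition it preserves $\NS(F)=H^{1,1}(F)\cap H^2(F,\bZ)$ and acts on the cones $\Mov(F),\Amp(F)\subset\NS(F)_{\bR}$. I would dispose of the two backward implications first, as they are elementary: any $f\in\Bir(F)$ is an isomorphism in codimension one, so $f^*$ preserves $\Mov(F)$ (intrinsically the closure of the cone spanned by movable divisor classes), and any $f\in\Aut(F)$ additionally carries K\"ahler classes to K\"ahler classes and so preserves $\Amp(F)$. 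The content is therefore the two forward implications.

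For these I would invoke two deep inputs. The first is the Hodge-theoretic Torelli theorem of Verbitsky, in the refined form of Huybrechts and Markman: if $X_1,X_2$ are deformation-equivalent IHS manifolds and $\psi\colon H^2(X_1,\bZ)\to H^2(X_2,\bZ)$ is a parallel-transport operator that is a Hodge isometry carrying the ample cone of $X_1$ \emph{onto} that of $X_2$, then $\psi=\phi^*$ for an isomorphism $\phi\colon X_2\xrightarrow{\sim}X_1$. The second is the tiling of $\Mov(F)^\circ$ due to Boucksom, Huybrechts and Markman (valid for all projective hyperk\"ahler manifolds): the interior of $\Mov(F)$ is the disjoint union, over birational models $\mu\colon F\dashrightarrow F'$, of the chambers $\mu^*\Amp(F')$, where $\mu^*\colon H^2(F',\bZ)\to H^2(F,\bZ)$ is the (parallel-transport) Hodge isometry induced by $\mu$; each such chamber is a translate of a genuine ample cone, and the walls separating them are cut out by the (monodromy-invariant) MBM classes.

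Granting these, the reduction is formal. Suppose $g^*\Mov(F)=\Mov(F)$. Since $g^*$ is an isometry permuting the MBM classes, it permutes the walls and hence the chambers of the tiling, so $g^*\Amp(F)$ is again a tiling chamber, say $g^*\Amp(F)=\mu^*\Amp(F')$ for some model $\mu\colon F\dashrightarrow F'$. Then $\psi\coloneqq(\mu^*)^{-1}\circ g^*\colon H^2(F,\bZ)\to H^2(F',\bZ)$ is a parallel-transport Hodge isometry carrying $\Amp(F)$ onto $\Amp(F')$; by the Torelli input it equals $\phi^*$ for an isomorphism $\phi\colon F'\xrightarrow{\sim}F$. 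Setting $f\coloneqq\phi\circ\mu\colon F\dashrightarrow F$ gives $f^*=\mu^*\circ\phi^*=\mu^*\circ(\mu^*)^{-1}g^*=g^*$, proving the first forward implication. For the second, if moreover $g^*\Amp(F)=\Amp(F)$, then the chamber $g^*\Amp(F)$ is $\Amp(F)$ itself, so one may take $\mu=\id$ and $F'=F$; then $f=\phi\in\Aut(F)$, as required.

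The hard part will be the Torelli input itself, i.e. Verbitsky's theorem that the period map is generically injective (realized as a Hausdorff reduction of Teichm\"uller space mapping bijectively to the period domain) together with its birational and biregular refinements; this is the analytic and Hodge-theoretic heart of the matter, and I would cite it rather than reprove it. The second genuine difficulty is the chamber tiling of $\Mov(F)^\circ$ in full generality---that every ample class of every birational model contributes a chamber and that these exhaust the interior of $\Mov(F)$---which rests on Boucksom's divisorial Zariski decomposition and Huybrechts' description of the birational K\"ahler cone. Once both black boxes are in place, the argument above is purely a matter of bookkeeping with cones and parallel-transport operators, and in particular it never appeals to an explicit description of $\Mon^2_{Hdg}(F)$ or to the classification of wall divisors.
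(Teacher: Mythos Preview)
The paper does not prove this statement at all: it is quoted verbatim from Markman's survey and used as a black box (note the citation \texttt{[Theorem 1.3]\{markman\}} attached to the theorem environment, and the absence of any proof environment following it). So there is nothing in the paper to compare your argument against.

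That said, your sketch is essentially Markman's own argument in \cite{markman}: reduce to Verbitsky's Global Torelli via the chamber decomposition of the interior of the movable cone, using that a Hodge monodromy operator permutes the chambers and hence carries $\Amp(F)$ to some $\mu^*\Amp(F')$, then apply the biregular Torelli statement to $(\mu^*)^{-1}\circ g$. The only point worth tightening is the claim that $g$ permutes the chambers: you invoke that $g$ permutes the MBM classes, but in Markman's original setup (which predates the MBM formalism) this is phrased instead via the monodromy-invariance of the set of stably prime exceptional and wall divisors. Either way the conclusion is the same, and your outline is correct.
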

\begin{remark}\label{remark: when bir is reg}
    In particular, it follows that a birational map $f\colon F\dashrightarrow F'$ between hyperk\"ahler manifolds is regular (hence an isomorphism) if and only if $f^*\omega$ is ample for an ample class $\omega$.
\end{remark}
The structure of the nef and movable cones of hyperk\"ahler manifolds are well known: for K3$^{[m]}$-type, they are described in \cite[\S 9]{markman} and \cite[\S 1]{BHT} using the extended Mukai-lattice. Here, we focus on the simpler case of manifolds of K3$^{[2]}$-type.

Let $F$ be a projective hyperk\"ahler manifold of K3$^{[2]}$-type. Let $\overline{\mathrm{Pos}(F)}$ be the component of the cone $\{x\in \NS(F)\otimes \bR \mid x^2 \geq 0\}$ that contains an ample class. We denote by $\Mov(F)\subset \overline{\mathrm{Pos}(F)}$ the (closed) convex cone generated by classes of line bundles on $F$ whose base locus has codimension at least $2$. By \cite[Theorem 7]{movingcones},
$$\Mov(F) = \overline{\bigcup_{f\colon F\dashrightarrow F'} f^*\Nef(F')},$$ where $f\colon F\dashrightarrow F'$ is a birational map with $F'$ a hyperk\"ahler manifold.

\begin{remark}
    For hyperk\"ahler manifolds with $b_2\geq 5$, there are finitely many birational models (see \cite{MR3436156}, \cite{MR3679618}, and  \cite{AV20} for the removal of $b_2\neq5$ assumption). In particular, this is the case for hyperk\"ahler manifolds of K3$^{[2]}$-type.
\end{remark}

We define the following set of divisors:
\begin{align*}
    \calW_{\pex}&\coloneqq \{\rho \in \NS(F) \mid \rho^2=-2 \}\\
    \calW_{\flop}&\coloneqq \{\rho\in \NS(F) \mid \rho^2=-10, \textrm{div}(\rho)=2\}.
\end{align*}
A divisor $\rho\in \calW_{\pex}\cup \calW_{\flop}$ is called a wall divisor \cite[Definition 1.2, Proposition 2.12]{MR3423735}, and $\rho\in \calW_{\pex}$ is moreover referred to as a stably prime exceptional divisor. The following structure theorem for $\Mov(F)$ and $\Amp(F)$ in the case of hyperk\"ahler manifolds of K3$^{[2]}$-type combines the results of Markman \cite[Lemma 6.22, Proposition 6.10, 9.12, Theorem 9.17]{markman} and  Bayer, Hassett and Tschinkel \cite[Theorem 1]{BHT}.

\begin{theorem}\cite[Theorem 3.16]{debarre2020hyperkahler}\label{thm: Mov and Amp}
    Let $F$ be a hyperk\"ahler manifold of K3$^{[2]}$-type.

    \begin{enumerate}
        \item The interior of $\Mov(F)$ is the connected component of $$\overline{\mathrm{Pos}(F)}\setminus \bigcup_{\rho\in \calW_{\pex}} \rho^\perp$$ that contains the class of an ample divisor.
        \item The ample cone $\Amp(F)$ is the connected component of 
        $$\overline{\mathrm{Pos}(F)}\setminus \bigcup_{\rho\in \calW_{\pex}\cup \calW_{\flop}} \rho^\perp$$ that contains the class of an ample divisor.
    \end{enumerate}
\end{theorem}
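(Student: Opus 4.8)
Since this is a statement attributed to \cite{debarre2020hyperkahler}, combining results of Markman and of Bayer--Hassett--Tschinkel, the proof I would give is essentially a synthesis of known results rather than a new argument. The plan is to explain how parts (1) and (2) follow from the structure theory of the movable and ample cones of manifolds of K3$^{[2]}$-type, via the extended Mukai lattice and the theory of wall divisors.

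First I would set up the general framework: for $F$ of K3$^{[2]}$-type, fix the Beauville--Bogomolov--Fujiki form on $H^2(F,\bZ)$ and recall that $\Mon^2(F)$ is the group of isometries in $\Orth^+$ acting by $\pm\id$ on the discriminant group. The key input is Markman's description (\cite[\S 6, \S 9]{markman}): the walls of the movable cone inside $\overline{\mathrm{Pos}(F)}$ are exactly the hyperplanes $\rho^\perp$ for $\rho$ a stably prime exceptional class, and by the classification of such classes for K3$^{[2]}$-type one has $\rho^2=-2$ — this is precisely the set $\calW_{\pex}$. For part (1), I would then invoke \cite[Theorem 7]{movingcones} (cited in the excerpt) together with the fact that $\Mov(F)$ decomposes into chambers indexed by birational models, each chamber being the pullback of a nef cone; the interior of the movable cone is therefore the connected component of $\overline{\mathrm{Pos}(F)}\setminus\bigcup_{\rho\in\calW_{\pex}}\rho^\perp$ containing an ample class, since prime exceptional divisors are exactly the obstruction to a class being movable.

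For part (2), the additional walls inside $\Mov(F)$ that separate the nef cones of different birational models correspond to the Mukai flops (and their higher analogues), and the Bayer--Hassett--Tschinkel analysis \cite[Theorem 1]{BHT} identifies these flopping walls: for K3$^{[2]}$-type the relevant classes $\rho$ have $\rho^2=-10$ with divisibility $2$ (the $\bP^2$-flop case), giving the set $\calW_{\flop}$. Combining the walls of $\Mov(F)$ (the $\calW_{\pex}$ walls) with the interior flopping walls (the $\calW_{\flop}$ walls), the nef cone of $F$ itself is cut out as the connected component of $\overline{\mathrm{Pos}(F)}\setminus\bigcup_{\rho\in\calW_{\pex}\cup\calW_{\flop}}\rho^\perp$ containing an ample class. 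I would note that in both statements "connected component containing an ample class" is well-defined because the ample cone is open, nonempty, and contained in $\overline{\mathrm{Pos}(F)}$ by general theory of hyperk\"ahler manifolds.

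The main obstacle — or rather the main thing to be careful about — is the classification of stably prime exceptional and flopping wall classes for K3$^{[2]}$-type: one must verify that there are no classes of other square or divisibility that contribute walls. This is exactly where the numerical lattice-theoretic input from Markman and from Bayer--Hassett--Tschinkel is essential, and for a very general $F$ with $b_2>5$ it guarantees finiteness of the chamber decomposition (as remarked in the excerpt, citing \cite{MR3436156, MR3679618}). Since all of this is established in the cited references, the proof reduces to assembling these statements; accordingly, I would simply cite \cite[Theorem 3.16]{debarre2020hyperkahler} and the underlying sources rather than reprove the wall classifications.
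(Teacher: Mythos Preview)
Your proposal is correct and matches the paper's own treatment: the paper does not give a proof but simply states the theorem as a citation of \cite[Theorem 3.16]{debarre2020hyperkahler}, noting that it combines results of Markman \cite[Lemma 6.22, Proposition 6.10, 9.12, Theorem 9.17]{markman} and Bayer--Hassett--Tschinkel \cite[Theorem 1]{BHT}. Your synthesis of how the $(-2)$-walls cut out $\Mov(F)$ and the additional $(-10,\mathrm{div}=2)$-walls cut out $\Amp(F)$ is exactly the content being cited, so there is nothing to add.
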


Note that each connected component of 
\[
\textrm{Int}(\Mov(F))\setminus \bigcup_{\rho\in \calW_{\flop}} \rho^\perp
\]
corresponds to $f^*(\Amp(F'))$ for a birational map $f\colon F\dashrightarrow F'$ with $F'$ a hyperk\"ahler manifold.

To obtain this chamber decomposition of $\overline{\mathrm{Pos}(F)}$ and of $\Mov(F),$ Markman uses a more explicit description of the group $\Mon^2_{Hdg}(F)$ as follows.
Let $W_{Exc}\subset \Mon^2_{Hdg}(F)$ be the subgroup generated by reflections $R_\rho$ for all $\rho\in \calW_{\pex}$, and let $\Mon_{Bir}^2\subset \Mon^2_{Hdg}(F)$ be the subgroup generated by monodromy operators induced from birational transformations of $F$.
Let $\pi\colon \Mon_{Hdg}^2(F)\rightarrow \Orth(\NS(F))$ be the restriction homomorphism. We record some useful facts:

\begin{proposition}\label{prop: monodromy}
Let $F$ be a hyperk\"ahler manifold of K3$^{[2]}$-type. Then:
\begin{enumerate}
    \item  $\Mon^2_{Hdg}(F)= W_{Exc}\rtimes \Mon_{Bir}^2$;
    \item $\Mov(F)$ is a fundamental domain for the action of $W_{Exc}$ on $\overline{\mathrm{Pos}(F)}$;
    \item The kernel of $\pi$ is a subgroup of $\Mon^2_{Aut}$, the set of monodromy operators induced by automorphisms of $F$.
\end{enumerate}
\end{proposition}
\begin{proof}
    The first statement is \cite[Theorem 6.18]{markman}. The second statement is \cite[Lemma 6.22]{markman}. For the third, \cite[Lemma 6.23]{markman} asserts that the kernel of $\pi$ is a subgroup of $\Mon_{Bir}^2.$ By \cite{Fujiki}, a birational map acting trivially on $\NS(F)$ is regular.
\end{proof}

\begin{remark}\label{remark:bir embeds}
    If $F$ is the Fano variety of lines on a cubic fourfold $X$ with $\Aut(X)=0$, then $\Bir(F) \to \Orth(\NS(F))$ is an embedding. Indeed, $\Aut(F,g)=\Aut(X)=0$, so no automorphisms of $F$ act trivially on $\NS(F)$ (see \cite[Proposition 4]{charles2012remark}). In particular, by Proposition~\ref{prop:Aut(X)=0}, this always applies for the cubic fourfolds we study. 
\end{remark}

\subsection{GM fourfolds and double EPW sextics}\label{subsec: GM4folds}
Later, we will exhibit birational models of the Fano variety $F$ of lines on a general cubic fourfold $X\in\calC_{12}$ as double Eisenbud-Popescu-Walter (EPW) sextics constructed from smooth Gushel--Mukai (GM) fourfolds. In this section we recall the relevant background and terminology.

Let $V_5$ be a complex vector space of dimension $5$.
\begin{definition}
    An (ordinary) GM fourfold $Z$ is a smooth transverse intersection of the form
\[
    Z \coloneqq \Gr(2,V_5) \cap \bP^8\cap Q\subset \bP(\bigwedge\nolimits^2V_5),
\]
    where $\Gr(2,V_5)$ is given the Pl\"ucker embedding, $\bP^8$ is a linear subspace, and $Q$ is a quadric.
\end{definition}
Such a fourfold is a Fano variety with Picard number $1$, index $2$ and degree $10$ \cite{MukaiFanoclassification}. In \cite{IM11}, the authors relate a GM fourfold $Z$ to an EPW sextic $W$ in the following way. Let $I\coloneqq |\calO_Z(2)|$ be the linear system of quadrics in $\bP^8$ containing $Z$, and let $\mathrm{Disc}(Z) \subset I$ be the irreducible component of the discriminant hypersurface that parametrizes singular quadrics that are not restrictions of the Pl\"ucker quadrics.

\begin{theorem}[{\cite[Proposition 2.4]{IM11}, \cite[Proposition 3.18]{DK1}}]
    Let $Z$ be an ordinary GM fourfold. The component $\mathrm{Disc}(Z) \subset I$ of the discriminant locus is an EPW sextic.
\end{theorem}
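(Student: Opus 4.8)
The plan is to reduce the statement to O'Grady's description of EPW sextics by exhibiting the Lagrangian data of $Z$ and matching $\mathrm{Disc}(Z)$ with the associated sextic. Recall that an EPW sextic is a hypersurface
\[
Y_A \;=\; \bigl\{\, [v]\in\bP(V_6) \;:\; A\cap(v\wedge\bigwedge\nolimits^2 V_6)\neq 0 \,\bigr\}\subset\bP(V_6),
\]
attached to a $6$-dimensional vector space $V_6$ and a Lagrangian subspace $A\subset\bigwedge\nolimits^3 V_6$ for the symplectic pairing $\bigwedge\nolimits^3 V_6\times\bigwedge\nolimits^3 V_6\to\bigwedge\nolimits^6 V_6\cong\bC$; O'Grady proved that $Y_A$ is a sextic hypersurface whenever it is a proper subset of $\bP(V_6)$. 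Following \cite{DK1}, one attaches to the ordinary GM fourfold $Z$ such data: a $6$-dimensional space $V_6$ with $I\cong\bP(V_6)$, a hyperplane $V_5\subset V_6$ cut out by the restrictions to $Z$ of the Pl\"ucker quadrics of $\Gr(2,V_5)$, and a Lagrangian $A\subset\bigwedge\nolimits^3 V_6$ with $A\cap\bigwedge\nolimits^3 V_5=0$, this last transversality reflecting the smoothness and ordinariness of $Z$. Concretely, $H^0(\bP^8,\calI_Z(2))$ is $6$-dimensional --- it is spanned by the $5$-dimensional space of restricted Pl\"ucker quadrics (the restriction being injective, since a rank-$6$ quadratic form on $\bigwedge\nolimits^2 V_5$ cannot vanish on a hyperplane) together with the one extra quadric $Q$ --- so that $I=\bP^5\cong\bP(V_6)$, and $A$ is read off from $Q$.

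With this data fixed, the key step is to identify the discriminant. I claim that for $[v]\in\bP(V_6)=I$, writing $q_v$ for the corresponding quadric on $\bP^8$, the ``essential'' corank of $q_v$ --- the corank beyond what is automatic for every member of the Pl\"ucker sub-system --- equals $\dim\bigl(A\cap(v\wedge\bigwedge\nolimits^2 V_6)\bigr)$, and indeed the corresponding kernel directions are canonically identified with that intersection. Granting this, a point of $I$ lies on a component of the discriminant hypersurface other than the Pl\"ucker hyperplane $\bP(V_5)$ precisely when $[v]\in Y_A$; hence $\mathrm{Disc}(Z)=Y_A$ as hypersurfaces in $\bP(V_6)$, and the latter is an EPW sextic.

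I expect this identification to be the main obstacle. It is the computational core of \cite[Prop.~2.4]{IM11} and \cite[Prop.~3.18]{DK1}: proving it means writing the family of quadrics through $Z$ explicitly in terms of $(V_6,V_5,A)$ and carrying out a careful computation in the exterior algebra of $V_6$ to match kernels of the quadratic forms $q_v$ with the Lagrangian intersections $A\cap(v\wedge\bigwedge\nolimits^2 V_6)$. As a numerical cross-check --- and an alternative route to the degree --- the full discriminant of the $\bP^5$ of quadrics through $Z$ is the restriction to $I$ of the vanishing of a $9\times 9$ symmetric determinant, hence has degree $9$ (it is not all of $I$, since a general quadric through $Z$ is smooth), while along the Pl\"ucker hyperplane $\bP(V_5)$ the generic corank of $q_v$ is $3$, so $\bP(V_5)$ is carried with multiplicity $3$; the residual hypersurface $\mathrm{Disc}(Z)$ therefore has degree $9-3=6$, consistent with its being an EPW sextic.
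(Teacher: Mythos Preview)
The paper does not supply a proof of this theorem: it is stated with attributions to \cite[Proposition~2.4]{IM11} and \cite[Proposition~3.18]{DK1} and then used as a black box. So there is no ``paper's own proof'' to compare against, and your proposal is really a sketch of the argument in those references rather than an alternative to anything in the present paper.

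As a sketch of the cited results, your outline is accurate. The Debarre--Kuznetsov Lagrangian data $(V_6,V_5,A)$ attached to $Z$, the identification $I\cong\bP(V_6)$ via $H^0(\bP^8,\calI_Z(2))\cong V_6$, and the characterization of $Y_A$ through Lagrangian intersections are all correct. Your degree cross-check is also right: the Pl\"ucker quadrics of $\Gr(2,V_5)$ have rank $6$ in $\bP^9$, hence corank $3$ when restricted to a general $\bP^8$, so the Pl\"ucker hyperplane $\bP(V_5)\subset I$ contributes with multiplicity $3$ to the degree-$9$ discriminant, leaving degree $6$ for the residual component.

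That said, what you have written is an outline, not a proof. You correctly flag the central claim---that the essential corank of $q_v$ equals $\dim\bigl(A\cap(v\wedge\bigwedge^2 V_6)\bigr)$---as the ``main obstacle,'' but you do not carry it out; you only say it ``means writing the family of quadrics through $Z$ explicitly \ldots\ and carrying out a careful computation.'' If you intend this as a self-contained proof rather than a summary of \cite{IM11} and \cite{DK1}, that computation is precisely what must be supplied. One also needs the transversality $A\cap\bigwedge^3 V_5=0$ (which you mention) to ensure $Y_A\neq\bP(V_6)$, so that O'Grady's result applies and $Y_A$ really is a sextic hypersurface rather than all of $\bP(V_6)$.
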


Taking an appropriate double cover of $W\coloneqq \mathrm{Disc}(Z)$, one obtains a double EPW sextic associated to $Z$ coinciding with the one constructed and studied by O'Grady in \cite{OG1, OG2, OG3}. We instead review the construction of the double EPW sextic dual to this double cover. Let $W^\vee\subset I^\vee$ be the hypersurface dual to $\mathrm{Disc}(Z)$; when $Z$ contains no decomposable vectors, $W^\vee$ is again an EPW sextic \cite[Corollary 3.6]{OG4} (see also \cite[Proposition B.3]{DK1}). We make use of the following geometric interpretation of $W^\vee$, explained in \cite{IM11} (see the construction of the map $\alpha$ preceding Proposition 4.9); see \cite{DK24} (specifically Sections 4 and 7.6) for a generalization.

\begin{lemma}\label{lemma:epw singular quadrics}
The EPW sextic $W^\vee$ satisfies the following properties:
\begin{enumerate}
    \item\label{item: point to quadric} Let $w\in W^\vee$ be a general point. Then $w$ corresponds to a unique singular quadric threefold $Q_w\subset\bP(\wedge^2V_4)\subset\bP^8$ for some $V_4\subset V_5$, and vice versa.
    \item\label{item: conic to quadric} Let $C\subset Z$ be a general conic not contained in any plane in $Z$. Then there is a unique point $w\in W^\vee$ such that $Q_w$ contains the plane spanned by $C$.
 \end{enumerate}
\end{lemma}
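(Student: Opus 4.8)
The plan is to obtain this by recalling the construction of the map $\alpha$ preceding \cite[Proposition 4.9]{IM11}, reproving only the genericity content hidden in the words ``general'', ``unique'', and ``vice versa''. First I would fix the ambient picture: the linear system $I=|\calO_Z(2)|$ of quadrics through $Z$ in $\bP^8$ is a $\bP^5$ containing the $\bP^4$ of restricted Pl\"ucker quadrics; the discriminant $W=\mathrm{Disc}(Z)\subset I$ is the EPW sextic attached to $Z$; and by \cite[Corollary 3.6]{OG4} (see also \cite[Proposition B.3]{DK1}) its projective dual $W^\vee\subset I^\vee\cong\bP^5$ is again an EPW sextic, hence a reduced irreducible sextic fourfold. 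Everything below takes place inside these identifications.

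For part~(1), I would parametrize the candidate quadrics explicitly. For a hyperplane $V_4\subset V_5$, the span of $\Gr(2,V_4)$ is $\bP(\wedge^2V_4)\cong\bP^5$, meeting $\bP^8$ in a $\bP^4$ for general $V_4$; inside this $\bP^4$ the surface $S_{V_4}\coloneqq Z\cap\bP(\wedge^2V_4)$ is a quadric section of the smooth quadric $\Gr(2,V_4)\cap\bP^4$, hence a degree-$4$ del Pezzo surface, so the quadrics through $S_{V_4}$ form a pencil whose (generically five) singular members are singular quadric threefolds $Q'\subset\bP(\wedge^2V_4)\subset\bP^8$. Restriction from $\bP^8$ to this $\bP^4$ maps $I$ onto the pencil and realizes each such $Q'$ as a hyperplane in $I$, i.e.\ a point $w(V_4,Q')\in I^\vee$; as $V_4$ ranges over $\bP(V_5^\vee)$ and $Q'$ over the five singular members, these points sweep out a fourfold in $I^\vee$. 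The crucial step, which I would take from \cite{IM11} rather than reprove, is that this fourfold is exactly $W^\vee$ and that the correspondence $(V_4,Q')\mapsto w$ is generically injective (a general $w$ recovers $V_4$ as the rank-$4$ support of the skew form it determines, and then one sets $Q_w\coloneqq Q'$); given this, the forward direction, the ``vice versa'', and uniqueness all follow from irreducibility of $W^\vee$ together with the dimension count.

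For part~(2), let $C\subset Z$ be general with $\Pi\coloneqq\langle C\rangle\not\subset Z$. I would first show, by tracking ranks of skew forms, that the $3$-dimensional subspace of $\wedge^2V_5$ lying over $\Pi$ consists of forms of rank $\le 4$ sharing a common rank-$4$ support $V_4$ (its rank-$2$ locus being $C$ itself), so that $\Pi\subset\bP(\wedge^2V_4)$ --- in fact inside the $\bP^4$ of part~(1) --- for a \emph{unique} $V_4$. Since $C$ is general, $Z\cap\Pi=C=S_{V_4}\cap\Pi$, so a member of the pencil through $S_{V_4}$ contains $\Pi$ if and only if it satisfies one further linear condition; this singles out a unique member $Q'$, which is necessarily singular because it contains the plane $\Pi$ while a smooth quadric threefold contains no plane. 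Hence $Q'$ is one of the five singular members, the point $w\coloneqq w(V_4,Q')$ lies on $W^\vee$, and $\Pi\subset Q_w$. For uniqueness, any $w'\in W^\vee$ with $\Pi\subset Q_{w'}$ satisfies $\Pi\subset Q_{w'}\subset\bP(\wedge^2V_4(w'))$, which forces $V_4(w')=V_4$; then $Q_{w'}$ is a member of the same pencil containing $\Pi$, so $Q_{w'}=Q'$ and $w'=w$.

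The main obstacle is not the projective-geometry bookkeeping --- the del Pezzo/pencil computation, the skew-form support argument, the fact that a smooth quadric threefold contains no plane --- but the single imported step: identifying the explicit fourfold of singular quadric threefolds in part~(1) with the dual EPW sextic $W^\vee$, and establishing generic bijectivity of the associated correspondences. That identification rests on the Lagrangian description of EPW sextics and the detailed analysis of \cite{IM11}, so in the write-up I would cite it and devote the remaining effort to making the genericity statements precise.
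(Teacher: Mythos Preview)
Your proposal is correct, and for part~(2) it coincides almost verbatim with the paper's argument: find the unique $V_4$ containing $\langle C\rangle$ (the paper cites the ``not a $\rho$-conic'' discussion in \cite[Section~4.4]{IM11} rather than your skew-form rank computation, but the content is the same), look at the pencil of quadric threefolds through $S_{V_4}$, and observe that exactly one member contains the plane $\langle C\rangle$, forcing it to be singular.

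For part~(1), however, the paper takes a cleaner dual route. Rather than parametrizing singular quadrics over $\bP(V_5^\vee)$, sweeping out a fourfold in $I^\vee$, and then importing from \cite{IM11} the identification of that fourfold with $W^\vee$, the paper works directly from the definition of $W^\vee$ as the projective dual of $W$: a point $w\in W^\vee$ is a hyperplane $H\subset I$ tangent to $W$, the base locus of $H$ is a quadric threefold $Q_w$, and tangency to the discriminant $W$ is exactly the condition that $Q_w$ be singular. The existence of a unique $V_4$ with $Q_w\subset\bP(\wedge^2V_4)$ is then quoted from \cite[Proposition~4.7]{IM11}. The converse runs symmetrically: a singular $Q'\subset\bP(\wedge^2V_4)$ determines a hyperplane of quadrics containing it, singularity forces tangency to $W$, hence a point of $W^\vee$. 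This avoids the step you flagged as the ``main obstacle'' --- the identification of your parametric fourfold with $W^\vee$ is automatic once you read $W^\vee$ as tangent hyperplanes. Your approach has the virtue of making the $5:1$ structure over $\bP(V_5^\vee)$ visible (five singular members of each del Pezzo pencil), which the paper's duality argument leaves implicit.
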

\begin{proof}
    Recall that there is a distinguished hyperplane $H_P\subset I$ spanned by the Pl\"ucker quadrics that contain $Z=\Gr(2,V_5)\cap \bP^8\cap Q\subset \bP(\wedge^2V_5).$
    A point $w\in W^\vee$ determines a distinct hyperplane $H\subset I$. The intersection $H\cap H_P\cong \bP^3=\bP(V_4)$ determines a $V_4\subset V_5$, after identifying the space of Pl\"ucker quadrics $H_P$ with $\bP(V_5)$. 
    Writing $H=\bP(V_4\oplus \langle\tilde{Q}\rangle)$ for some non-Pl\"ucker quadric $\tilde{Q}$, one finds that the base locus of $H$ when restricted to the ambient $\bP^8$ is the union of $Z$ and a quadric threefold $Q_w\subset \tilde{Q}\cap \bP^8$. 
    Since $H$ is tangent to $W\subset I$, $Q_w$ is singular. 
    Moreover, for general $w$, \cite[Proposition 4.7]{IM11} the hyperplane $V_4\subset V_5$ is unique, and $Q_w:=\tilde{Q} \cap\bP(\wedge^2V_4)\cap \bP^8\subset\bP(\wedge^2V_5)$, where $\tilde{Q}$ is a member of $H$.

    Conversely, suppose $Q'\subset\bP(\wedge^2V_4)$ is a singular quadric threefold. Then the complete linear system of quadrics containing $Q'$ determines a hyperplane $H\subset I$ which is tangent to $W$ since $Q'$ is singular. Hence we obtain a point $w\in W^\vee$, proving (\ref{item: point to quadric}).

    For (\ref{item: conic to quadric}), let $C\subset Z$ be a conic such that the plane $\langle C\rangle$ is not contained in $Z$. A general such conic $C$ is not a $\rho$-conic (see the discussion at the beginning of \cite[Section 4.4]{IM11}), so there is a unique $V_4\subset V_5$ such that $\bP(\wedge^2 V_4)$ contains $C$. Let $P_{V_4}$ denote the intersection of the Pl\"ucker quadric $G(2,V_4)$ with $\bP^8\subset \bP^9$, and let $Q_{V_4}\coloneqq Q\cap \bP(\wedge^2 V_4)\cap \bP^8$. 
    This gives a pencil $\langle P_{V_4}, Q_{V_4}\rangle$ of quadric threefolds in $\bP(\wedge^2 V_4)$, uniquely determined by $Z$ and $C$.
    Since $\langle C\rangle\not\subset Z$, there is a unique quadric $Q_{C}$ in the pencil containing this plane (for more details, see the discussion preceding Proposition 4.9 of \cite{IM11}). The quadric $Q_{C}$ is thus a singular quadric threefold, and by (\ref{item: point to quadric}) there exists a unique $w\in W^\vee$ such that $Q_{C}=Q_w.$
\end{proof}

\begin{remark}\label{rem:doubleEPWrulings}
In light of the above, a general point on the double EPW sextic $\widetilde{W}^\vee$ associated to $W^\vee$ can be regarded as a ruling on one of the quadric threefolds $Q_w$. The covering involution on $\widetilde{W}^\vee$ exchanges the rulings of each $Q_w$.    
\end{remark}

\section{Cubic fourfolds containing one cubic scroll}\label{sec:onescroll}

Let $X$ be a smooth cubic fourfold containing a cubic scroll $T$, and let $F$ be its Fano variety of lines. We assume $X$ is \emph{very general}, by which we mean $\rank(A(X))=2$, $\Aut(X)=0$, and $\mathrm{End_{Hdg}}(T(F))=\{\pm1\}$; the third assumption can be made in light of \cite[Section 14]{bayerfluckiger2024k3surfacesrealcomplex}. In \cite{flops}, the authors compute the ample and movable cone of $F$, and they exhibit a birational automorphism of $F$ of infinite order. They also describe hyperk\"ahler fourfolds birational to $F$ as follows. The scroll $T$ spans a hyperplane $H$, and by Proposition \ref{prop: P, S, Pv}, the Fano variety of lines on $Y\coloneqq X \cap H$ is a  subvariety of $F$ decomposing as \(F(Y)=P\cup S'\cup P^\vee\),
where $P$ and $P^\vee$ are Lagrangian planes. Let $F_1$ and $F_1^\vee$ be the Mukai flops of $F$ along $P$ and $P^\vee$, respectively.

We expand on their study using the more recent techniques outlined in \Cref{subsec: HK} for studying the birational geometry of hyperk\"ahler manifolds. More specifically, we prove the following: 

\begin{theorem}\label{mainthm:1scroll}
       Let $F$ be the Fano variety of lines of a very general member $X\in \calC_{12}$. Then $F$ has three isomorphism classes of birational hyperk\"ahler models, represented by $F$ itself and two non-isomorphic Mukai flops, both of which are isomorphic to double EPW sextics. Moreover, $\Bir(F)$ is generated by the covering involutions on these two double EPW sextics.
    \end{theorem}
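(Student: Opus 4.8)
The plan is to translate the statement into lattice theory via the Torelli theorem for hyperk\"ahler manifolds, Markman's description of the monodromy group, and the Morrison--Kawamata cone conjecture (established in this setting by Amerik--Verbitsky), and then to carry out an explicit computation on the rank $2$ lattice $\NS(F)$. The relevant package of results gives an isomorphism $\Bir(F)\cong\{g\in\Mon^2_{\mathrm{Hdg}}(F):g(\overline{\Mov}(F))=\overline{\Mov}(F)\}$, identifies the birational hyperk\"ahler models of $F$ with the chambers into which the flopping walls cut $\overline{\Mov}(F)$, modulo the $\Bir(F)$-action, and --- since $F$ is of $K3^{[2]}$-type --- describes the walls via the Bayer--Hassett--Tschinkel/Mongardi classification: a primitive $v\in\NS(F)$ with $q(v)=-2$ produces a $\mathbb{P}^2$-type (Mukai) flopping wall, while classes with $q(v)=-2$, $\mathrm{div}(v)=2$ or $q(v)=-10$, $\mathrm{div}(v)=2$ produce the divisorial walls bounding $\overline{\Mov}(F)$ inside the positive cone $\calC_F$. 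I would start from the explicit Gram matrix of $\NS(F)$ (a rank $2$ lattice of signature $(1,1)$), as recorded in \cite{flops}.

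Next I would compute the wall-and-chamber decomposition explicitly; classifying the negative classes above amounts to solving Pell-type equations in this rank $2$ lattice. Since \cite{flops} already produces a birational transformation of infinite order, $\Bir(F)$ is infinite, which forces $\overline{\Mov}(F)$ to contain no divisorial walls at all --- otherwise, being rational, any such walls would bound a cone with only finitely many chambers, making $\Bir(F)$ finite --- so $\overline{\Mov}(F)=\calC_F\cap\NS(F)_\bR$, tiled by infinitely many chambers along flopping walls accumulating at its two irrational boundary rays. By the cone conjecture this tiling has finitely many $\Bir(F)$-orbits of chambers, and the computation should show there are exactly three: the chamber $\Amp(F)$, flanked by exactly two flopping walls, and the two chambers $C_1,C_2$ obtained by crossing those walls. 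I would then exhibit in each $C_i$ a class $h_i$ with $q(h_i)=2$, $\mathrm{div}(h_i)=1$ and verify, via the same explicit lattice, that $h_i^\perp$ contains no $(-2)$-class meeting $h_i$ in $\pm1$; by O'Grady's characterization the associated model is then a smooth double EPW sextic $Y_i$, so $F$ has exactly the three models $F,Y_1,Y_2$, and $Y_1\not\cong Y_2$ because no Hodge monodromy isometry carries $C_1$ to $C_2$ --- a discriminant-form check on $\NS(F)$.

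For the description of $\Bir(F)$, each $Y_i$ carries its covering involution $\iota_i$, which is anti-symplectic by O'Grady; transporting $\iota_i$ through the Mukai flop $F\dashrightarrow Y_i$ yields a birational involution of $F$ acting on $H^2(F,\bZ)\cong H^2(Y_i,\bZ)$ by the reflection $x\mapsto(x,h_i)h_i-x$ fixing $h_i$. These are Hodge monodromy isometries preserving $\overline{\Mov}(F)$, hence lie in $\Bir(F)$, and --- this is the key verification --- $\langle\iota_1^*,\iota_2^*\rangle$ acts on the chamber tiling of $\overline{\Mov}(F)$ transitively on the chambers of each of the three types. Given any $g\in\Bir(F)$, after composing with a suitable element of $\langle\iota_1^*,\iota_2^*\rangle$ I may assume $g$ fixes the chamber $\Amp(F)$, i.e.\ $g\in\Aut(F)$; it then remains to check that $\Aut(F)=\{\mathrm{id}\}$ and, more precisely, that no nontrivial element of $\Bir(F)$ acts trivially on $\NS(F)$. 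For very general $X$ the transcendental Hodge structure admits only $\pm1$ as Hodge isometries, which reduces both points to verifying that the relevant gluing across the discriminant form of $\NS(F)$ does not occur and that $\Amp(F)$ is asymmetric under the monodromy group. Since $\Bir(F)\hookrightarrow\Orth(H^2(F,\bZ))$, this yields $\Bir(F)=\langle\iota_1,\iota_2\rangle$.

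The hard part, I expect, is not any single computation but the "nothing extra" statements: that the explicit analysis of $\NS(F)$ produces exactly three models and not more; that the two exotic chambers are honest (smooth) double EPW sextics rather than degenerate or altogether different models, which hinges on the period of $X$ avoiding O'Grady's exceptional loci; and, above all, that $\Bir(F)$ is precisely $\langle\iota_1,\iota_2\rangle$ --- equivalently, that no Hodge monodromy isometry of $H^2(F,\bZ)$ fixing $\overline{\Mov}(F)$ escapes this group, and that $\Bir(F)\to\Orth(\NS(F))$ is injective. All of these come down to divisibility and discriminant-form bookkeeping for one specific rank $2$ lattice, but that bookkeeping must be done with care.
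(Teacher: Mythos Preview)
Your overall plan---reduce to lattice theory via Torelli and Markman, then compute on the rank-$2$ lattice $\NS(F)$---is exactly the paper's strategy for the first two claims (three models, generators of $\Bir(F)$), and the discriminant-form bookkeeping you anticipate is precisely what the paper carries out (Lemmas~\ref{lemma:actionondiscriminant} and~\ref{lemma:monodromygenerators}, Proposition~\ref{prop:model census}). However, you have the roles of the wall classes reversed: for $K3^{[2]}$-type it is the classes with $q(v)=-2$ that are \emph{prime exceptional} (they bound $\Mov(F)$ inside the positive cone), while the classes with $q(v)=-10$, $\mathrm{div}(v)=2$ give the \emph{flopping} walls cutting $\Mov(F)$ into nef chambers (see \Cref{thm: Mov and Amp}). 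Here the form $6a^2-4b^2$ does not represent $-2$ (reduce modulo $3$), so $\Mov(F)=\overline{\mathrm{Pos}(F)}$ directly, and the $(-10)$-classes give the infinitely many flopping walls. Your deduction of $\Mov(F)=\overline{\mathrm{Pos}(F)}$ from $|\Bir(F)|=\infty$ is not valid as stated: nothing prevents a rank-$2$ movable cone from having one rational and one irrational boundary ray while still containing infinitely many chambers.

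For the identification of the two flops as double EPW sextics, your route is genuinely different from the paper's. You propose to locate in each adjacent chamber a class $h_i$ with $q(h_i)=2$ (indeed $g\pm\lambda$ works, and your reflection formula $x\mapsto q(x,h_i)h_i-x$ does recover $(\iota^\vee)^*=R_2$), check its divisibility in $H^2(F,\bZ)$, and invoke the description of the image of the period map for double EPW sextics to conclude. This should go through, but it requires importing those period-map results and verifying that the period point of a very general $X\in\calC_{12}$ avoids the relevant Heegner divisors. The paper instead works geometrically: it uses the birational map $X\dashrightarrow Z_T$ to a Gushel--Mukai fourfold from \cite{DIM} and \cite{KuzPerry} to send lines on $X$ to conics on $Z_T$, which in turn determine rulings on singular quadric threefolds and hence points on the dual double EPW sextic $\widetilde W^\vee$; the key step (Proposition~\ref{prop:flop is epw}) shows that this birational map conjugates $\iota^\vee$ to the covering involution of $\widetilde W^\vee$, so it becomes regular precisely on $F_1^\vee$. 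Your approach is cleaner lattice theory; the paper's buys an explicit geometric description of the birational map and of the involutions as covering involutions.
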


In \Cref{subsec:NSFlattice}, we study isometries of $\NS(F)$, and in particular those induced by birational automorphisms of $F$. In \Cref{subsec:birmodels1scroll}, we provide a correction to Hassett and Tschinkel's count on the number of non-isomorphic birational hyperk\"ahler models, showing that $F$ has exactly three birational models up to isomorphism, represented by $F$, $F_1$, and $F_1^\vee$; we also complete their description of $\Bir(F)$. Finally, in \Cref{sec:epw},  we prove that $F_1$ (and similarly $F_1^\vee$) 
is in fact isomorphic to a double EPW sextic associated to the pair $(X,T^\vee)$ (similarly, the pair $(X, T)$).

\subsection{The Lattice $\NS(F)$}\label{subsec:NSFlattice}
 
Recall from Section~\ref{subsec: Fano prelims} that $\NS(F)$ is isomorphic to the lattice $J_{12}$. The discriminant group is $\bZ/6\times\bZ/4$, with factors generated by $[\frac g6]$ and $[\frac{\lambda}4]$. The BBF form represents $-10$ but not $-2$, so $F$ contains no prime exceptional divisors, and $\Mov(F)=\overline{\mathrm{Pos}(F)}$.

As noted in \cite[Section 7]{flops}, the isometry group of $\NS(F)$ is the product $\{\pm1\}\times\Gamma$, where $\Gamma$ is the infinite dihedral group
\[
\Gamma=\langle R_1,R_2\;|\; R_1^2=R_2^2=1\rangle;
\]
explicitly, the generators are
\[
R_1=\begin{pmatrix*}[r] 1 & 0 \\ 0 & -1 \end{pmatrix*}\;\; \mathrm{and} \;\; R_2=\begin{pmatrix*}[r] 5 & -4 \\ 6 & -5 \end{pmatrix*}.
\]
We determine which isometries are induced by birational automorphisms of $F$:

\begin{lemma}\label{lemma:actionondiscriminant}
    An isometry $\varphi\in \Orth(\NS(F))$ is induced by a birational automorphism of $F$ if and only if $\varphi$ preserves the positive cone and acts on the subgroup $H$ of $D_{\NS(F)}$ generated by $[\frac g3]$ and $[\frac\lambda4]$ by $\pm\id$.
\end{lemma}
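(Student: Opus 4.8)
The plan is to combine the Torelli-type criterion of Theorem~\ref{thm:torelli} with Markman's description of $\Mon^2(F)$ for K3$^{[2]}$-type and the explicit structure of $\Orth(\NS(F)) = \{\pm 1\}\times \Gamma$. First I would set up the forward direction: if $\varphi$ is induced by $f\in\Bir(F)$, then $\varphi = f^*$ is the restriction to $\NS(F)$ of a monodromy operator $g\in\Mon^2_{Hdg}(F)$ with $g^*\Mov(F) = \Mov(F)$. Since $\Mov(F) = \overline{\mathrm{Pos}(F)}$ here (the BBF form represents no $-2$ class), the condition $g^*\Mov(F)=\Mov(F)$ just says $\varphi$ preserves the positive cone; this gives one of the two claimed conditions for free. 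For the discriminant condition, recall that $\Mon^2(F)$ consists of the elements of $\Orth^+(H^2(F,\bZ))$ acting by $\pm\id$ on the full discriminant group $D_{H^2(F,\bZ)}$. I would then trace through how this $\pm\id$ condition on $D_{H^2(F,\bZ)}$ descends to a condition on $D_{\NS(F)}$: writing $H^2(F,\bZ) = \NS(F)\oplus T(F)$ (rationally — really a finite-index overlattice), the gluing identifies a subgroup of $D_{\NS(F)}$ with a subgroup of $D_{T(F)}$, and $g$ acting by $\pm\id$ on $D_{H^2}$ forces $\varphi$ to act by $\pm\id$ on the glued part while the transcendental part $T(F)$ (with $\mathrm{End}_{Hdg}(T(F)) = \{\pm 1\}$) constrains which sign. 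The upshot is that $\varphi$ must act by $\pm\id$ on the relevant subgroup $H = \langle [\tfrac g3], [\tfrac\lambda4]\rangle$ — here $[\tfrac g3]$ rather than $[\tfrac g6]$ because $2\eta_X$-type divisibility considerations (the class $g$ has divisibility $2$ in $H^4$-coordinates, so only the $3$-torsion part of the $g$-factor glues nontrivially). I should double-check which exact subgroup survives by computing the discriminant form and its isotropic subgroups, but this is the standard overlattice bookkeeping.

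For the converse, suppose $\varphi\in\Orth(\NS(F))$ preserves $\overline{\mathrm{Pos}(F)}$ and acts by $\pm\id$ on $H$. I would lift $\varphi$ to an isometry $g$ of $H^2(F,\bZ)$ by setting $g|_{\NS(F)} = \varphi$ and $g|_{T(F)} = \pm\id$ with the sign chosen to match $\varphi$ on the glued subgroup (possible precisely because of the $\pm\id$-on-$H$ hypothesis; this is a standard extension-of-isometries-across-a-primitive-embedding argument, cf. Nikulin). One checks $g\in\Orth^+$ (it preserves $\mathrm{Pos}$ on $\NS$ and acts by $\pm\id$ on transcendental periods, hence preserves orientation) and $g$ acts by $\pm\id$ on $D_{H^2(F,\bZ)}$, so $g\in\Mon^2(F)$; since $g$ preserves the Hodge structure (it is $\pm\id$ on $T(F)$, which carries the transcendental Hodge structure, and $\NS(F)$ is all $(1,1)$), we get $g\in\Mon^2_{Hdg}(F)$. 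Then $g^*\Mov(F) = g^*\overline{\mathrm{Pos}(F)} = \overline{\mathrm{Pos}(F)} = \Mov(F)$ by the positive-cone hypothesis, so Theorem~\ref{thm:torelli} produces $f\in\Bir(F)$ with $f^* = g$, hence $f^*|_{\NS(F)} = \varphi$.

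The main obstacle I anticipate is the precise identification of the subgroup $H$ and the compatibility of signs between the $\NS(F)$-part and the $T(F)$-part under the gluing: one must verify that "$\varphi$ acts by $\pm\id$ on $H$" is exactly the obstruction to extending $\varphi$ to a monodromy operator, neither too strong nor too weak. This requires computing the discriminant form $q_{\NS(F)}$ on $D_{\NS(F)} = \bZ/6\oplus\bZ/4$, identifying $q_{T(F)} \cong -q_{\NS(F)}$, and pinning down which isotropic subgroup of $D_{\NS(F)}\oplus D_{T(F)}$ realizes the overlattice $H^2(F,\bZ)$ — then reading off that the $\pm\id$-on-discriminant condition for the glued isometry restricts to $\pm\id$ on exactly $H = \langle[\tfrac g3],[\tfrac\lambda4]\rangle$. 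Everything else is a routine application of Theorem~\ref{thm:torelli} and the explicit form of $\Orth(\NS(F)) = \{\pm1\}\times\Gamma$ (one could alternatively verify the lemma by checking the four generators $\pm R_1, \pm R_2$ or products directly against the criterion, which may in fact be the cleanest writeup).
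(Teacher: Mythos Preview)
Your proposal is correct and follows essentially the same approach as the paper: Torelli together with $\Mov(F)=\overline{\mathrm{Pos}(F)}$, the very-general hypothesis forcing $f^*|_{T(F)}=\pm\id_{T(F)}$, and Nikulin's overlattice gluing to pass between $D_{T(F)}$ and the index-two subgroup $H\subset D_{\NS(F)}$. One minor point: your detour through ``$g$ acts by $\pm\id$ on $D_{H^2(F,\bZ)}$'' is vacuous since $D_{H^2(F,\bZ)}\cong\bZ/2$, and the paper instead argues the forward direction directly from the restriction $f^*|_{T(F)}=\pm\id$ (a Hodge isometry of $T(F)$, hence $\pm\id$ by assumption) to the induced $\pm\id$ on $H\cong D_{T(F)}$.
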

\begin{proof}
Recall that $\Mov(F)=\overline{\mathrm{Pos}(F)}$, so if $\varphi \in \Orth(\NS(F))$ satisfies $\varphi=g^*$ for some $g \in \Bir(F)$, then \Cref{thm:torelli} implies that $\varphi$ preserves the positive cone. 
Moreover, $\varphi$  restricts to a Hodge isometry of $T(F)$, and since $F$ is very general, the only Hodge isometries of $T(F)$ are $\pm \id_{T(F)}$. Such an isometry necessarily acts on the discriminant group $H'\coloneqq D_{T(F)}$ by $\pm\id$.
The overlattice $H^2(F,\bZ)\supset \NS(F)\oplus T(F)$ corresponds to an index two subgroup $H\subset D_{\NS(F)},$ via Nikulin's theory of overlattices \cite[Propositions 1.4.1, 1.4.2]{nikulin}.
In particular, it follows that $(H, q_{\NS(F)}) \cong (H', -q_{T(F)}).$ Further, $D_{H^2(F,\bZ)}$ is generated by the class $[\frac{g}{2}]$, and $H\cong [\frac{g}{2}]^\perp\subset D_{\NS(F)}$. Thus $H$ is as claimed, and $\varphi$ acts on $H$ by $\pm \id$.

Conversely, if $\varphi$ preserves $\overline{\mathrm{Pos}(F)}=\Mov(F)$ and acts on $H$ by $\pm \id$, then by \Cref{thm:torelli} it suffices to show that $\varphi \in \im (\pi\colon \Mon^2_{Hdg}(F)\to \Orth(\NS(F)))$. The action of $\varphi$ on $H$ along with Nikulin's theory of overlattices implies that $\varphi$ extends to an isometry $\widetilde{\varphi}$ of $H^2(F,\bZ)$, acting as $\pm \id_{T(F)}$. Since by construction $\widetilde{\varphi}$ preserves $T(F)$, it preserves the Hodge structure, hence comes from $\Mon^2_{Hdg}(F)$. 
\end{proof}

\begin{lemma}\label{lemma:monodromygenerators}
    The subgroup of $\Orth(\NS(F))$ consisting of isometries induced by birational automorphisms of $F$ is generated by the reflections $R_2$ and $R_1R_2R_1$. Moreover, $\Bir(F)\cong\langle a,b\;|\; a^2=b^2=1\rangle$.
\end{lemma}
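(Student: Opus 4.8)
The plan is to combine \Cref{lemma:actionondiscriminant} with the explicit description of $\Orth(\NS(F)) = \{\pm 1\} \times \Gamma$ to identify exactly which isometries come from birational automorphisms, and then to deduce the group presentation via the Global Torelli theorem (\Cref{thm:torelli}) together with the embedding $\Bir(F) \hookrightarrow \Orth(\NS(F))$ from \Cref{remark:bir embeds}. First I would compute the action of the generators $R_1$, $R_2$ of $\Gamma$, and of $-\id$, on the discriminant group $D_{\NS(F)} \cong \bZ/6 \oplus \bZ/4$ with generators $[\frac g6]$, $[\frac\lambda4]$, paying attention to the subgroup $H$ generated by $[\frac g3]$ and $[\frac\lambda4]$. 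A direct calculation: $R_1$ sends $g \mapsto g$, $\lambda \mapsto -\lambda$, so it acts on $H$ by $\id$ on $[\frac g3]$ and by $-\id$ on $[\frac\lambda4]$ — this is \emph{not} $\pm\id$ on all of $H$, so $R_1 \notin \im(\pi)$. Similarly $R_2$ sends $g \mapsto 5g + 6\lambda$ and $\lambda \mapsto -4g - 5\lambda$; modulo the relevant denominators one checks $[\frac g3] \mapsto \pm[\frac g3]$ and $[\frac\lambda4] \mapsto \pm[\frac\lambda4]$ consistently with a single sign, so $R_2 \in \im(\pi)$ (after possibly composing with $-\id$, which acts by $-\id$ on everything and hence is harmless for the $H$-condition but must be checked against the positive-cone condition).

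Next I would determine which elements of $\{\pm1\}\times\Gamma$ both preserve the positive cone and act by $\pm\id$ on $H$. Write a general element of $\Gamma$ as an alternating word in $R_1, R_2$; the condition ``acts by $\pm\id$ on $H$'' is multiplicative, and since $R_2$ satisfies it while $R_1$ does not, the words satisfying it are exactly those lying in the index-two subgroup $\Gamma' \le \Gamma$ generated by $R_2$ and $R_1 R_2 R_1$ (this is the standard index-two subgroup of the infinite dihedral group; note $(R_1R_2R_1)^2 = 1$ and $\Gamma' \cong \langle a, b \mid a^2 = b^2 = 1\rangle$ is again infinite dihedral). One then checks that both $R_2$ and $R_1R_2R_1$ (or one of $\pm$ their composites) preserve the positive cone — equivalently preserve $\overline{\mathrm{Pos}(F)} = \Mov(F)$, which holds since $\Mov(F)$ is the whole positive cone here. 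The possible stray factor of $-\id$ does not preserve the positive cone, so it is excluded; combining, the image of $\pi$ restricted to birational-induced isometries is exactly $\langle R_2, R_1R_2R_1\rangle$.

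Finally, for the group presentation: by \Cref{remark:bir embeds}, $\Bir(F) \to \Orth(\NS(F))$ is injective (using $\Aut(X) = 0$ from \Cref{prop:Aut(X)=0}), so $\Bir(F)$ is isomorphic to its image, which we have just identified as the subgroup generated by the two involutions $R_2$ and $R_1R_2R_1$. Setting $a = R_2$ and $b = R_1R_2R_1$, these satisfy $a^2 = b^2 = 1$, and since $\Gamma'$ is the infinite dihedral group with no further relations, $\Bir(F) \cong \langle a, b \mid a^2 = b^2 = 1\rangle$. The main obstacle I anticipate is the bookkeeping in the discriminant-group computation — in particular verifying that $R_2$ genuinely acts by a \emph{single} consistent sign on the subgroup $H$ (not $+\id$ on one factor and $-\id$ on the other, which is what disqualifies $R_1$), and correctly handling the role of the central $-\id$ factor throughout; once that is pinned down, the group-theoretic conclusion about the infinite dihedral group and its index-two subgroup is routine.
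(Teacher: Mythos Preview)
Your proposal is correct and follows essentially the same approach as the paper: compute the actions of $R_1$ and $R_2$ on the subgroup $H\subset D_{\NS(F)}$, observe that $R_2$ acts by $-\id$ while $R_1$ does not act by $\pm\id$, and conclude via multiplicativity that the birationally induced isometries are exactly the words with an even number of $R_1$'s, i.e.\ the index-two infinite dihedral subgroup $\langle R_2, R_1R_2R_1\rangle$. Your explicit handling of the $-\id$ factor and the positive-cone condition is slightly more detailed than the paper's, which leaves these points implicit in the appeal to \Cref{lemma:actionondiscriminant}.
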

\begin{proof}
    The induced actions of $R_1$ and $R_2$ on the subgroup
    $
    \left\langle\left[\frac g3\right]\right\rangle \times\left\langle\left[\frac\lambda4\right]\right\rangle\subset D_{\NS(F)}
    $
    are given, respectively, by the matrices
    \[
    \begin{pmatrix*}[r] 1 & 0 \\ 0 & -1 \end{pmatrix*}
    \text{ and }
    \begin{pmatrix*}[r] -1 & 0 \\ 0 & -1\end{pmatrix*}.
    \]
    By Lemma~\ref{lemma:actionondiscriminant}, it is easy to see that an isometry of $\NS(F)$ is induced by a birational automorphism of $F$ if and only if, written as a word in $R_1$ and $R_2$, the generator $R_1$ appears an even number of times. Since the induced actions of $R_1$ and $R_2$ on $D_{\NS(F)}$ are involutions that commute, this proves the first claim.

    The second claim follows from the complete description of generators and relations on $\Gamma\subset\Orth(\NS(F))$ and the fact that $\Bir(F)$ embeds in $\Orth(\NS(F))$ by Remark~\ref{remark:bir embeds}.
\end{proof}

\begin{remark}\label{remark:iota definition}
    The isometries of $\NS(F)$ given by $R_1R_2R_1$ and $R_2$ are induced by birational involutions $\iota$ and $\iota^\vee$, becoming regular on $F_1$ and $F_1^\vee$, respectively (cf.~\cite[proof of Theorem~7.3]{flops} for computing $\iota^*$ and $(\iota^\vee)^*$). We review the geometric descriptions of these involutions, given in \cite[Theorem 6.2]{flops}. Recall that $T\subset X\subset \bP^5$ spans a hyperplane $H\subset\bP^5$, and $Y\coloneqq H\cap X$ has $F(Y)=P\cup S'\cup P^\vee$ (see Proposition \ref{prop: P, S, Pv}). 
    Given a line $[m]\in F\setminus F(Y)$, the point $p=H\cap m$ lies on unique lines $\ell$ and $\ell^\vee$ in the families $P$ and $P^\vee$ of $F(Y)$, respectively \cite[Corollary 4.2]{flops}. Let $\Pi_m=\Span\langle m,\ell\rangle$ and $\Pi_m^\vee=\Span\langle m,\ell^\vee\rangle$. Then the decompositions
    \[
    \Pi_m\cap X=m\cup\ell\cup\iota^\vee(m)
    \]
    and
    \[
    \Pi_m^\vee\cap X=m\cup\ell^\vee\cup\iota(m)
    \]
    define $\iota$ and $\iota^\vee$ away from $F(Y)$. In fact, $\iota$ extends over $P^\vee\setminus(P\cup S')$, and $\iota^\vee$ extends over $P\setminus(P^\vee\cup S')$.
    \end{remark}
    
\subsection{Census of birational models of $F$.}\label{subsec:birmodels1scroll}

The movable cone of $F$ contains infinitely many chambers, corresponding to the nef cones of birational models of $F$, as outlined in \cite[Theorem 7.4]{flops} and \Cref{subsec: HK}. To enumerate the chambers, we follow \cite[Section 7]{flops} by enumerating the wall divisors, i.e. $v\in\NS(F)$ with $v^2=-10$.  Let $\rho_1=g-2\lambda$, $\rho_2=3g-4\lambda$, $\rho_i^\vee=R_1(\rho_i)$, and for all integers $n$,
\[
\rho_{2n+i}=(R_1R_2)^n\rho_i
\]
where we interpret $\rho_{i}=\rho_{-i}^\vee$ for $i<0$. Using standard propagation techniques, one sees that all the classes $v$ with $v^2=-10$ and $q(v,g)\ge0$ are of the form $\rho_i$ or $\rho_i^\vee$ for some $i$. 

Let $\alpha_i$ be the class spanning $\rho_i^\perp$ and pairing positively with $g$; similarly, let $\alpha_i^\vee$ span $(\rho_i^\vee)^\perp$ and pair positively with $g$. Then the walls of $\Mov(F)$ are spanned by the $\alpha_i$ and $\alpha_i^\vee$.
In \cite[Proposition 7.2]{flops}, it is shown that $\Nef(F)=\Cone(\alpha_1,\alpha_1^\vee)$, and by \cite[Proposition 7.3]{flops}, we have $\Nef(F_1)=\Cone(\alpha_1,\alpha_2)$ and $\Nef(F_1^\vee)=\Cone(\alpha_1^\vee,\alpha_2^\vee)$. Moreover, for all $n$, $\Cone(\alpha_n,\alpha_{n+1})$ and $\Cone(\alpha_n^\vee,\alpha_{n+1}^\vee)$ are chambers of $\Mov(F)$ representing nef cones of birational models $F_n$ and $F_n^\vee$.

Essentially by construction,
\[
R_1R_2\cdot\Nef(F_n)=\Nef(F_{n+2}),
\]
so the authors of \cite{flops} conclude in Theorem 7.4 that $F_n\simeq F_{n+2}$ for all $n$ (again interpreting $F_n=F_{-n}^\vee$ when $n<0$). In particular, this would mean that $F$ has at most two birational models up to isomorphism, represented by $F$ and $F_1$. 
However, by Lemma~\ref{lemma:monodromygenerators}, we see that $R_1R_2$ is \textbf{not} induced by a birational automorphism of $F$ so by Theorem~\ref{thm:torelli} need not send the nef cone of one model to the nef cone of some isomorphic model. Instead, we provide the following correction to \cite[Theorem 7.4]{flops}:

\begin{proposition}\label{prop:model census}
    Up to isomorphism, $F$ admits exactly three birational hyperk\"ahler models, represented by $F$, $F_1$, and $F_1^\vee$.
\end{proposition}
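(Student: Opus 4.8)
The plan is to count the $W_{Bir}$-orbits among the chambers $\{\Nef(F_n), \Nef(F_n^\vee)\}_{n\in\bZ}$ that tile $\Mov(F)$, using the Global Torelli theorem (Theorem~\ref{thm:torelli}) in the form: two models $F_m$, $F_n$ are isomorphic precisely when some $\varphi\in\Orth(\NS(F))$ induced by a birational automorphism of $F$ carries $\Nef(F_m)$ to $\Nef(F_n)$. By Lemma~\ref{lemma:monodromygenerators}, the group of such $\varphi$ is generated by $R_2$ and $R_1R_2R_1$; equivalently, by the two birational involutions $\iota$, $\iota^\vee$ of Remark~\ref{remark:iota definition}, which become regular on $F_1^\vee$ and $F_1$. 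So the first step is purely group-theoretic: understand the action of $\langle R_2, R_1R_2R_1\rangle$ on the index set of walls $\{\alpha_i, \alpha_i^\vee\}$.

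First I would record the action of the generators on the chambers explicitly. Since $\Nef(F_1) = \Cone(\alpha_1,\alpha_2)$ and $\iota$ is regular on $F_1$, the isometry $\iota^* = R_1R_2R_1$ fixes $\Nef(F_1)$ and reflects $\Mov(F)$ across the wall $\alpha_1^\perp$ (interpreted in $\NS(F)\otimes\bR$), i.e.\ it swaps $\Cone(\alpha_1,\alpha_2)$ with $\Cone(\alpha_1,\alpha_1^\vee)=\Nef(F)$, swaps $\Nef(F)$ with $\Nef(F_1^\vee)$, and in general carries $\Nef(F_n)\leftrightarrow\Nef(F_{n-1}^\vee)$ (with the convention $F_0 = F_0^\vee = F$). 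Similarly $\iota^\vee$ is regular on $F_1^\vee$, so $(\iota^\vee)^* = R_2$ fixes $\Nef(F_1^\vee)=\Cone(\alpha_1^\vee,\alpha_2^\vee)$ and reflects across $\alpha_1^{\vee\perp}$, carrying $\Nef(F_n^\vee)\leftrightarrow\Nef(F_{n-1})$. From these two reflections one computes that the subgroup $W_{Bir}$ acts on the set of chambers as the infinite dihedral group generated by the two reflections in the walls $\alpha_1^\perp$ and $\alpha_1^{\vee\perp}$, and its orbits on $\{\Nef(F_n),\Nef(F_n^\vee)\}$ are exactly three: $\{\Nef(F)\}$, the orbit of $\Nef(F_1)$, and the orbit of $\Nef(F_1^\vee)$. (Concretely: $\Nef(F)$ is fixed by neither reflection but is swapped among itself only — it lies ``between'' the two reflecting walls — while $\Nef(F_1)$ and $\Nef(F_1^\vee)$ generate the two outward-propagating families, and no element of $W_{Bir}$ mixes the $n$ and $n^\vee$ families with each other or with $F$.)

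It then remains to check that the three representatives $F$, $F_1$, $F_1^\vee$ are genuinely pairwise non-isomorphic, which amounts to checking there is no $\varphi$ induced by a birational automorphism of $F$ taking $\Nef(F)$, $\Nef(F_1)$, $\Nef(F_1^\vee)$ to one another; but this is immediate from the orbit computation since these three chambers lie in distinct $W_{Bir}$-orbits, and any isomorphism $F_i\xrightarrow{\sim} F_j$ would, composed with the birational maps $F\dashrightarrow F_i$ and $F\dashrightarrow F_j$, yield exactly such a $\varphi$ by Theorem~\ref{thm:torelli} and Remark~\ref{remark: when bir is reg}. One should also note $F_1\not\cong F_1^\vee$ is consistent with — and in fact follows from — the asymmetry encoded in the discriminant-group condition of Lemma~\ref{lemma:actionondiscriminant}: the only isometry swapping the two families is $R_1$ (up to composing with elements of $W_{Bir}$), and $R_1$ is not induced by a birational automorphism because it acts on $[\lambda/4]\in D_{\NS(F)}$ nontrivially while fixing $[g/3]$.

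The main obstacle is bookkeeping rather than conceptual: one must be careful that the recursion $\rho_{2n+i} = (R_1R_2)^n\rho_i$ and the sign/positivity conventions for $\alpha_i$ versus $\alpha_i^\vee$ are consistent with the geometric normalizations $\Nef(F_1) = \Cone(\alpha_1,\alpha_2)$, $\Nef(F_1^\vee) = \Cone(\alpha_1^\vee,\alpha_2^\vee)$ from \cite{flops}, so that the reflections $R_2$ and $R_1R_2R_1$ are correctly identified as the reflections fixing those particular chambers. Once the dictionary between the two involutions and the two outer walls of $\Nef(F)$ is pinned down, the orbit count is forced; the key point that makes the correction to \cite[Theorem~7.4]{flops} necessary is precisely that $R_1R_2\notin W_{Bir}$, so $R_1R_2$ does \emph{not} identify $F_n$ with $F_{n+2}$.
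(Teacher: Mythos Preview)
Your overall strategy---compute the orbits of $\langle\iota^*,(\iota^\vee)^*\rangle$ on the set of chambers and invoke Torelli---is exactly the paper's. The problem is that your description of how $\iota^*$ acts on the chambers is wrong, and in fact internally inconsistent.

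You correctly state that $\iota$ is regular on $F_1$, so $\iota^*$ preserves $\Nef(F_1)=\Cone(\alpha_1,\alpha_2)$. But then you say $\iota^*$ ``reflects across the wall $\alpha_1^\perp$'' and ``swaps $\Cone(\alpha_1,\alpha_2)$ with $\Cone(\alpha_1,\alpha_1^\vee)=\Nef(F)$'', which directly contradicts the previous sentence. The correct picture is that $\iota^*$, being a nontrivial involution fixing the chamber $\Nef(F_1)$, must swap its two boundary rays $\alpha_1\leftrightarrow\alpha_2$; it is therefore reflection about the \emph{axis} of $\Nef(F_1)$, not about one of its walls. Consequently $\iota^*$ exchanges the chambers adjacent to $\Nef(F_1)$ on either side, namely $\Nef(F)\leftrightarrow\Nef(F_2)$, and more generally (indexing by $n\in\bZ$ with $F_{-n}=F_n^\vee$) it sends $F_n\mapsto F_{2-n}$. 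Likewise $(\iota^\vee)^*$ sends $F_n\mapsto F_{-2-n}$. Your formula $\Nef(F_n)\leftrightarrow\Nef(F_{n-1}^\vee)$ is the reflection $n\mapsto 1-n$, which does \emph{not} fix $n=1$.

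This matters for the orbit count. With the correct reflections $n\mapsto 2-n$ and $n\mapsto -2-n$, the translation subgroup is generated by $n\mapsto n+4$, and one gets the three orbits $\{n\equiv 0\pmod 2\}$, $\{n\equiv 1\pmod 4\}$, $\{n\equiv 3\pmod 4\}$, represented by $F$, $F_1$, $F_1^\vee$. With your reflections $n\mapsto 1-n$ and $n\mapsto -1-n$, the translation is $n\mapsto n+2$ and the reflection $n\mapsto 1-n$ merges the even and odd classes into a single orbit---so your description, followed consistently, would yield only one model, not three. Relatedly, your claim that ``no element of $W_{Bir}$ mixes the $n$ and $n^\vee$ families'' is false: for instance $\iota^*$ sends $\Nef(F_1^\vee)$ to $\Nef(F_3)$.

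Once the action is corrected, the paper's argument goes through exactly as you outline: $(R_1R_2)^2=\iota^*(\iota^\vee)^*$ gives $F_n\simeq F_{n+4}$, direct computation gives $\iota^*\Nef(F)=\Nef(F_2)$ and $(\iota^\vee)^*\Nef(F)=\Nef(F_2^\vee)$ so $F\simeq F_2\simeq F_2^\vee$, and the three representatives are distinguished by checking that the candidate isometries ($R_1$, $R_2R_1$) fail the discriminant-group criterion of Lemma~\ref{lemma:monodromygenerators}.
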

\begin{proof}
    First, note $(R_1R_2)^2=\iota^*\circ(\iota^\vee)^*$ by Remark~\ref{remark:iota definition}, so from
    \[
    (R_1R_2)^2\cdot\Nef(F_n))=\Nef(F_{n+4}),
    \]
    we deduce $F_n\simeq F_{n+4}$ for all $n$. Moreover, it is straightforward to calculate
    \[
    \iota^*\Nef(F)=\Nef(F_2)\quad\quad \text{and} \quad\quad     (\iota^\vee)^*\Nef(F)=\Nef(F_2^\vee),
    \]
   
    so $F_2\simeq F\simeq F_2^\vee$, by Remark~\ref{remark: when bir is reg}. It follows that $F$ has at most three birational hyperk\"ahler models up to isomorphism, represented by $F$, $F_1$, and $F_1^\vee$. We now show that these three are non-isomorphic. 

    To distinguish $F$ from $F_1$ and $F_1^\vee$, note that $F_1$ and $F_1^\vee$ both admit nontrivial involutions whereas $F$ does not: indeed, the only nontrivial automorphism of $\Nef(F)$ preserving the positive cone is $R_1$ which, by Lemma~\ref{lemma:monodromygenerators}, is not induced by a birational automorphism of $F$. To distinguish $F_1$ from $F_1^\vee$, note that the only two isometries of $\NS(F)$ sending $\Nef(F_1)$ to $\Nef(F_1^\vee)$ are $R_1$ and $R_2R_1$. Again, by Lemma~\ref{lemma:monodromygenerators}, neither of these lattice automorphisms are induced by a birational automorphism of $F$.
\end{proof}

\subsection{Connection with Gushel--Mukai fourfolds}\label{sec:epw}

Here, we show that $F_1$ and $F_1^\vee$ are isomorphic to  double EPW sextics. This relates each birational model of $F$ to a familiar family of hyperk\"ahler fourfolds and explains the birational involutions $\iota$ and $\iota^\vee$ on $F$---they are induced by the covering involutions associated to the double EPW sextics.

Since $X$ is a smooth cubic fourfold containing a smooth cubic scroll $T$, we can apply the following construction, due to \cite[Section 7.2]{DIM} and \cite[Proposition 5.6]{KuzPerry}. The complete linear system of quadrics containing the scroll $T$ induces a rational map $q\colon X\dashrightarrow\bP^8$ which is birational onto its image, a GM fourfold $Z_T\subset \Gr(2,V_5)$ containing a plane $\Pi$.  Projection from $\Pi$ induces a rational inverse $f\colon {Z_T}\dashrightarrow X$. We obtain the following diagram, where $Y^+\to Y$ is the small resolution from~\eqref{diag:smallresolution}, and $E$ is the exceptional divisor.

\begin{center}
    \begin{tikzcd}
        Y^+ \arrow[d] \arrow[rrr,"\sim"] \arrow[dr,hookrightarrow]  & & & E \arrow[dl,hookrightarrow] \arrow[d]\\
         Y  \arrow[d,hookrightarrow]  & \Bl_TX \arrow[dl] \arrow[r,"\sim"] & \Bl_\Pi Z_T \arrow[dr] & \Pi \arrow[d,hookrightarrow] \\ 
         X \arrow[rrr,dashed,"q"] & & & Z_T 
    \end{tikzcd}
\end{center}

Note that the isomorphism $\Bl_TX\simeq \Bl_\Pi Z_T$ identifies $Y^+$ with $E$, so the image of $Y^+$ in $Z_T$ is $\Pi$.

Let $W$ be the EPW sextic associated to $Z_T$ and $W^\vee$ its dual, as defined in \Cref{subsec: GM4folds}. While $Z_T$ depends on the choice of scroll $T$ in its homology class, $W$ does not \cite[Proposition 7.2]{DIM}. Let $\widetilde{W}^\vee$ be the double cover of $W^\vee$, also introduced in \Cref{subsec: GM4folds}, equipped with its covering involution $\tau$.

\begin{lemma}\label{lemm:QcapPi}
    Let $Q_w$ be a singular quadric threefold associated via~\Cref{lemma:epw singular quadrics} to a point $w\in W^\vee$. Then either $\Pi\subset Q_w$ or $\Pi$ and $Q_w$ meet in a point. For general $w$, the quadric $Q_w$ is unique, and $\Pi$ and $Q_w$ meet in one point which is not the cone point of $Q_w$.
\end{lemma}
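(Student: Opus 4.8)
The plan is to describe $\Pi$ explicitly as a subvariety of $\Gr(2,V_5)$ and then intersect it with $Q_w$ using the description of $Q_w$ from \Cref{lemma:epw singular quadrics}. First I would recall, from the construction of the birational map $q\colon X\dashrightarrow Z_T$ in \cite[Section 7.2]{DIM} and \cite[Proposition 5.6]{KuzPerry}, that $\Pi$ is a \emph{$\rho$-plane}: there is a three-dimensional subspace $V_3\subset V_5$ with
\[
\Pi=\{[\ell]\in\Gr(2,V_5)\mid \ell\subset V_3\}.
\]

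Let $V_4=V_4(w)\subset V_5$ be the hyperplane associated to $w$ by \Cref{lemma:epw singular quadrics}, so $Q_w\subset\bP(\wedge^2V_4)$. Since $\Pi\subset\Gr(2,V_5)\cap\bP^8$ and $\Gr(2,V_5)\cap\bP(\wedge^2V_4)=\Gr(2,V_4)$, we get
\[
\Pi\cap Q_w\subseteq\Pi\cap\bP(\wedge^2V_4)=\Pi\cap\Gr(2,V_4)=\{[\ell]\mid \ell\subset V_3\cap V_4\},
\]
and I would then split into two cases. If $V_3\subset V_4$, then $\Pi\subset\Gr(2,V_4)\cap\bP^8=P_{V_4}$; also $\Pi\subset Z_T\subset Q$ together with $\Pi\subset\bP(\wedge^2V_4)$ gives $\Pi\subset Q\cap\bP(\wedge^2V_4)=Q_{V_4}$. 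As $Q_w$ belongs to the pencil $\langle P_{V_4},Q_{V_4}\rangle$ (see the proof of \Cref{lemma:epw singular quadrics}), it follows that $\Pi\subset Q_w$. If instead $V_3\not\subset V_4$, then $\dim(V_3\cap V_4)=2$, so the right-hand side above is the single point $[\ell_0]$ with $\ell_0=V_3\cap V_4$; and $[\ell_0]\in\Pi$ lies on both $P_{V_4}$ and $Q_{V_4}$ for the same reasons, hence on $Q_w$, so $\Pi\cap Q_w=\{[\ell_0]\}$. This proves the dichotomy of the first sentence.

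For general $w$: uniqueness of $Q_w$ is part of \Cref{lemma:epw singular quadrics}. To see that the second case is the generic one, observe that $w\mapsto V_4(w)$ defines a dominant rational map $W^\vee\dashrightarrow\bP(V_5^\vee)$ — both are four-dimensional, and the fibre over a hyperplane $V_4$ consists of the finitely many singular members of the pencil of quadric threefolds $\langle P_{V_4},Q_{V_4}\rangle$ — so, since $\{V_4\mid V_3\subset V_4\}$ is a line in $\bP(V_5^\vee)$, a general $w$ has $V_3\not\subset V_4(w)$ and $\Pi\cap Q_w=\{[\ell_0]\}$. Finally I would check that $[\ell_0]$ is not the cone point of $Q_w$. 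For general $w$ the quadric $Q_w$ has corank one, so it is a cone with a single vertex $v_w$ over a smooth quadric surface, and every plane contained in $Q_w$ passes through $v_w$. By the conic correspondence of \cite{IM11} (see also \Cref{lemma:epw singular quadrics}) we may take $Q_w=Q_C$ for a conic $C\subset Z_T$ with $\langle C\rangle\not\subset Z_T$ and $\langle C\rangle\subset Q_w$; then $v_w\in\langle C\rangle$. The conics on $Z_T$ whose linear span meets $\Pi$ form a proper closed subfamily of the (irreducible) family of conics on $Z_T$, and since that correspondence dominates $W^\vee$, for general $w$ we may choose $C$ with $\langle C\rangle\cap\Pi=\emptyset$; as $[\ell_0]\in\Pi$, this forces $[\ell_0]\notin\langle C\rangle\ni v_w$, hence $[\ell_0]\ne v_w$.

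The step I expect to be the main obstacle is this last one — arranging that the conic attached to a general $w$ has span disjoint from $\Pi$, which is precisely what excludes the cone point — together with the standard but necessary input that $Q_w$ has corank exactly one for general $w$. The other point requiring care is the identification of $\Pi$ as a $\rho$-plane: for a $\sigma$-plane $\bP(V_1\wedge V_4')$ one would instead have $\Pi\cap\bP(\wedge^2V_4)=\emptyset$ for general $V_4$, and the lemma as stated would fail.
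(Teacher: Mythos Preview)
Your treatment of the dichotomy is essentially the paper's: both identify $\Pi=\bP(\wedge^2V_3)$ for some three-dimensional $V_3\subset V_5$ and compute $\Pi\cap\bP(\wedge^2V_4)=\bP(\wedge^2(V_3\cap V_4))$, splitting on whether $V_3\subset V_4$. You are slightly more careful in justifying the reverse inclusion $\Pi\cap\bP(\wedge^2V_4)\subset Q_w$ via membership in the pencil; the paper leaves this implicit. The genericity of the single-point case is a dimension count in both proofs, though the paper phrases it as ``only a two-dimensional family of $V_4$ contain $V_3$'' (in fact a $\bP^1$, but either way proper), while you route it through dominance of $w\mapsto V_4(w)$.

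Where you genuinely diverge is the cone-point claim. The paper cites the proof of \cite[Lemma~2.3]{IM11}: each point of $\bP^8$ is the cone point of $Q_w$ for at most one $w\in W^\vee$, so the locus of $w$ whose cone point lies in the two-dimensional $\Pi$ is itself at most two-dimensional. That is a single citation plus a dimension bound. Your route through the conic correspondence---choose $C$ with $\langle C\rangle\subset Q_w$ and $\langle C\rangle\cap\Pi=\emptyset$, then use that every plane in a corank-one quadric threefold passes through the vertex to place $v_w\in\langle C\rangle$---is valid, but it stacks several auxiliary inputs (generic corank one; dominance of the conic-to-$W^\vee$ map; irreducibility of the relevant family of conics and properness of the locus whose span meets $\Pi$). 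Each is mild, but none is free, and the irreducibility claim in particular deserves a word since $Z_T$ contains a plane. The paper's argument is shorter and more robust; yours has the merit of staying within the conic picture already set up in \Cref{lemma:epw singular quadrics} rather than appealing to a separate lemma of \cite{IM11}.
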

\begin{proof}
    By \cite[Section 7.2]{DIM}, $\Pi=\bP(\wedge^2V_3)\subset Z_T$ for some three-dimensional space $V_3\subset V_5$. For general $Q_w$, there is a unique hyperplane $V_4\subset V_5$ such that $Q_w= \bP(\wedge^2V_4)\cap \tilde{Q}$ for a (non Pl\"ucker) quadric $\tilde{Q}\subset \bP^8$ containing $Z_T$ by Lemma~\ref{lemma:epw singular quadrics}(\ref{item: point to quadric}). 
    We have $\Pi\cap Q_w=\Pi\cap\bP(\wedge^2V_4)\cap \tilde{Q}$ and note that $\tilde{Q}$ contains $Z_T$ and thus $\Pi.$ Hence $\Pi\cap Q_w=\bP(\wedge^2(V_3\cap V_4))$ which is either a point or all of $\Pi$ depending on whether $V_3\cap V_4$ is dimension two or three. Only a two-dimensional family of hyperplanes in $V_4$ contain $V_3$. Moreover, by the proof of \cite[Lemma 2.3]{IM11}\footnote{In Section 2 of \cite{IM11}, the authors include a generality assumption on the quadric $Q$ used to define the GM fourfold $Z_T$ and the double EPW sextic $\widetilde{W}^\vee$ to ensure that $Z_T$ is smooth and of the correct dimension; here, that generality assumption is satisfied.}, each point in $\bP^8$ is the cone point of $Q_w$ for at most one $w\in{W}^\vee$; in particular, the cone point of $Q_w$ belongs to $\Pi$ for at most a two-dimensional locus in ${W}^\vee$.
\end{proof}

\begin{proposition}
    $F$ and $\widetilde{W}^\vee$ are birational.
\end{proposition}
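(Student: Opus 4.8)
The plan is to exhibit an explicit birational map between $F$ and $\widetilde{W}^\vee$ by composing the birational identification of $X$ with the GM fourfold $Z_T$ (which induces a correspondence between lines on $X$ and conics on $Z_T$) with the geometric description of $\widetilde{W}^\vee$ in terms of conics on $Z_T$ recalled in \Cref{lemma:epw singular quadrics}(\ref{item: conic to quadric}) and \Cref{rem:doubleEPWrulings}. Concretely: a general line $m\subset X$ corresponds, under the birational map $f^{-1}=q\colon X\dashrightarrow Z_T$ resolved by $\Bl_T X\simeq \Bl_\Pi Z_T$, to its strict transform, a conic $C_m\subset Z_T$; since $m$ is general it meets $T$ in one point, so $C_m$ is a smooth conic not contained in a plane of $Z_T$ and with $\langle C_m\rangle\not\subset Z_T$. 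By \Cref{lemma:epw singular quadrics}(\ref{item: conic to quadric}) there is a unique $w\in W^\vee$ with $\langle C_m\rangle\subset Q_w$, and by \Cref{lemm:QcapPi} the quadric $Q_w$ is a genuine quadric threefold whose two rulings are distinct; the plane $\langle C_m\rangle$ lies in exactly one ruling, determining a point of $\widetilde{W}^\vee$ lying over $w$. This assignment $[m]\mapsto([C_m]\text{-ruling})$ is the map I would use.

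The key steps, in order: (1) Show the strict transform under $\Bl_T X\simeq \Bl_\Pi Z_T$ sends a general line $m\subset X$ (one meeting $T$ transversally in a single point, hence disjoint from the six nodes and not contained in $Y$) to a smooth conic $C_m\subset Z_T$ — this is a degree computation after blowing up, using that projection from $\Pi$ is the inverse $f$, so $f$ maps $C_m$ back to the line $m$ by projection from the plane, forcing $\deg C_m = 2$. (2) Verify genericity: for general $m$, the conic $C_m$ is not a $\rho$-conic and $\langle C_m\rangle\not\subset Z_T$, so \Cref{lemma:epw singular quadrics}(\ref{item: conic to quadric}) applies and produces a well-defined $w\in W^\vee$; combine with \Cref{lemm:QcapPi} to ensure the ruling is well-defined, giving a rational map $\Psi\colon F\dashrightarrow\widetilde{W}^\vee$ defined on a dense open set. (3) Construct the inverse: a general point of $\widetilde{W}^\vee$ is a ruling of some quadric threefold $Q_w\subset\bP(\wedge^2V_4)$; I would recover a conic (or the plane $\langle C\rangle$) in $Z_T$ cut out by $Q_w$ lying in that ruling, then push forward to a line in $X$ via projection from $\Pi$, checking this inverts $\Psi$ generically. (4) Conclude that $\Psi$ is a birational map, since a rational map between projective varieties of the same dimension that is generically injective with a generically-defined rational inverse is birational; alternatively invoke that both are hyperk\"ahler fourfolds and $\Psi$ is dominant and generically finite of degree one.

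The main obstacle I expect is step (3), constructing and verifying the inverse — specifically pinning down how a ruling of $Q_w$ singles out a \emph{unique} conic on $Z_T$ (rather than a pencil of conics spanning planes in that ruling) and tracing through the projection from $\Pi$ to land on a \emph{single} line in $X$. This requires understanding the geometry of the pencil $\langle P_{V_4}, Q_{V_4}\rangle$ from the proof of \Cref{lemma:epw singular quadrics} well enough to see that intersecting $Z_T\subset\Gr(2,V_5)$ with planes in a fixed ruling of $Q_w$ sweeps out a one-parameter family whose image under projection from $\Pi$ is a single line; a dimension count (both $F$ and $\widetilde{W}^\vee$ are fourfolds, and the plane $\langle C_m\rangle$ plus a choice of ruling should match the data of $[m]$) makes the generic degree-one claim plausible, but making it rigorous is the delicate part. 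A secondary subtlety is confirming that the correspondence $[m]\leftrightarrow C_m$ genuinely extends to a \emph{birational} (not merely dominant) map, i.e. that distinct general lines give distinct rulings; this follows once we know $C_m$ determines $m$ by projection from $\Pi$ and $C_m$ is recovered from $(w,\text{ruling})$.
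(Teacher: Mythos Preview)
Your approach is exactly the paper's approach, and your forward map $\Psi$ coincides with the paper's $\beta$. The only place where you stop short is precisely the obstacle you flag in step~(3): how a ruling of $Q_w$ singles out a \emph{unique} plane, hence a unique conic on $Z_T$.

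The missing observation is that you already have the tool in hand. You invoke \Cref{lemm:QcapPi} only to guarantee that the two rulings of $Q_w$ are distinct, but that lemma does more: for general $w$ it asserts that $Q_w\cap\Pi$ is a \emph{single point}, not the cone point. On a singular quadric threefold, a point away from the vertex lies on exactly one plane in each ruling. Hence the point $Q_w\cap\Pi$ selects a unique plane $P_w$ in the chosen ruling; intersecting $P_w$ with $Z_T$ gives a conic $C_w$ that meets $\Pi$, and projection from $\Pi$ sends $C_w$ to a line in $X$. This is the inverse, and checking it undoes $\Psi$ is immediate once you note that $C_m$ meets $\Pi$ (since $m$ meets $H$), so $\langle C_m\rangle$ is forced to be the plane in its ruling through $Q_m\cap\Pi$. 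With this in place your proposal is complete and matches the paper's argument.
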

\begin{proof}
    We define a rational map $\beta\colon F\dashrightarrow\widetilde{W}^\vee$ as follows. 
    Recall that $H\subset \bP^5$ is the hyperplane spanned by $T$.
    A general line $m\subset X\subset \bP^5$ meets $H$ in a point $p$, and we claim that $C_m=q(m)$ is a conic in $Z_T$ meeting $\Pi$ in a point.
    Indeed, $m$ does not pass through a singular point of $Y$ (or else we would have $m\subset Y)$, so the strict transform of $m$ in $\Bl_TX$ meets $Y^+$ in a point. 
    Under the identification $Y^+\cong E$, we see that the strict transform of $m$ meets the exceptional divisor of $\Bl_\Pi Z_T\to Z_T$ in a point; hence $C_m$ meets $\Pi$ in a point. 
    By \cite[Proposition 3.7]{DK24} $Z_T$ contains at most finitely many planes other than $\Pi$, each of which pulls back to a surface in $X$, the image of a general line $m\subset X$ is not contained in any plane of $Z_T$.
     By \Cref{lemma:epw singular quadrics}(\ref{item: conic to quadric}) there exists a unique singular quadric $Q_m\subset \bP(\wedge^2V_4)$ for some $V_4\subset \bP^5$ containing the plane spanned by $C_m$. Thus $C_m$ determines a ruling of $Q_m$ and hence a point $\beta(m)\in\widetilde{W}^\vee$ by Remark~\ref{rem:doubleEPWrulings}.

    To show $\beta$ is a birational equivalence, we describe its inverse. A general point $w\in \widetilde{W}^\vee$ specifies a ruling on a singular quadric threefold $Q_w$. Lemma~\ref{lemm:QcapPi} implies there is a unique plane $P_w$ in that ruling of $Q_w$ containing the point $Q_w\cap \Pi$. The plane $P_w$ meets $Z_T$ in a conic $C_w$ intersecting $\Pi$, so projection from $\Pi$ yields a line in $X$, which we take as $\beta^{-1}(w)$. One can check that this construction is inverse to the construction of $\beta$.
\end{proof}

Let us briefly mention how the rational map $\beta$ we constructed relates to recent work in \cite{DK24}, generalizing a construction from \cite{IM11}. In \cite[Theorem 7.12]{DK24}, the authors prove that the Hilbert scheme of conics on $Z_T$ has multiple irreducible components, one for each plane in $Z_T$ and another denoted $\overline{G_1^0(Z_T)}$. They moreover show that the component $\overline{G_1^0(Z_T)}$ is birational to a fivefold $G_1^+(Z_T)$ admitting a morphism $f^+\colon G_1^+(Z_T)\to \widetilde{W}^\vee$ whose general fiber is $\bP^1$. Since $q$ sends a general line on $X$ to a conic on $Z_T$ not lying on any plane of $Z_T$, it induces a rational map $F\dashrightarrow G_1^+(Z_T)$. The arguments above imply that the image of this rational map is a rational section of $f^+$.

\begin{proposition}\label{prop:flop is epw}
    The birational map $\beta$ induces an isomorphism $F_1^\vee\simeq\widetilde{W}^\vee$.
\end{proposition}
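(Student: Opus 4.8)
The plan is to invoke the Global Torelli theorem (Remark~\ref{remark: when bir is reg}): since we already know $\beta\colon F\dashrightarrow\widetilde{W}^\vee$ is birational, it suffices to produce a birational map $F_1^\vee\dashrightarrow\widetilde{W}^\vee$ pulling back an ample class to an ample class, or equivalently, to check that the composite $F_1^\vee\dashrightarrow F\dashrightarrow\widetilde{W}^\vee$ sends the nef cone of $F_1^\vee$ isomorphically onto the nef cone of $\widetilde{W}^\vee$. Concretely, I would transport the problem to $\NS(F)$ via the isometry $\beta^*\colon \NS(\widetilde{W}^\vee)\to\NS(F)$ and the birational map realizing $F_1^\vee$ as a Mukai flop of $F$ along $P^\vee$; the goal is to verify that $\beta^*\Nef(\widetilde{W}^\vee)=\Nef(F_1^\vee)=\Cone(\alpha_1^\vee,\alpha_2^\vee)$ in the notation of Section~\ref{subsec:birmodels1scroll}.

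The key steps are as follows. First, identify which wall of $\Mov(F)$ the map $\beta$ contracts: a general line $m\in F$ maps to a point of $\widetilde{W}^\vee$, but over the locus $P\subset F(Y)$ (lines bisecant to $C_y$, which project to $\Pi$ after $q$) the construction degenerates, because for $[\ell]\in P$ the conic $q(\ell)$ either lies in $\Pi$ or the associated singular quadric is non-unique — this is exactly the failure described in Lemma~\ref{lemm:QcapPi}. Hence $\beta$ is an isomorphism away from $P$ and contracts (or rather, is not defined along) $P$, which is precisely the locus flopped to produce $F_1^\vee$. This matches the geometric description of $\iota^\vee$ in Remark~\ref{remark:iota definition}, which extends over $P\setminus(P^\vee\cup S')$ and becomes regular on $F_1^\vee$. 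Second, use this to conclude that $\beta$ factors as $F\dashrightarrow F_1^\vee\xrightarrow{\ \sim\ }\widetilde{W}^\vee$ up to the ambiguity of composing with an automorphism of $\widetilde{W}^\vee$; since $\widetilde{W}^\vee$ is a double EPW sextic its only nontrivial automorphism is the covering involution $\tau$, and composing with $\tau$ if necessary we get an honest isomorphism $F_1^\vee\simeq\widetilde{W}^\vee$. Third, to make this rigorous rather than heuristic, I would check directly that $\beta^*\omega$ is nef for $\omega$ ample on $\widetilde{W}^\vee$ by computing the class $\beta^*$ of the natural polarization of $\widetilde{W}^\vee$ (the pullback of the hyperplane class on $W^\vee\subset\bP(I^\vee)$) in terms of $g$ and $\lambda$, and verifying it lies in $\Cone(\alpha_1^\vee,\alpha_2^\vee)$; the BBF form and discriminant data computed in Section~\ref{subsec: Fano prelims}, together with the constraint that $\beta^*$ is an isometry sending the positive cone to the positive cone, pin this class down up to finitely many possibilities, and the geometry (which curves $\beta$ contracts) selects the correct one.

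The main obstacle I anticipate is the second and third steps — namely, pinning down the polarization class $\beta^*\omega\in\NS(F)$ precisely enough to locate it inside the correct chamber, and ruling out the possibility that $\beta$ factors through $F_1$ rather than $F_1^\vee$ (equivalently, matching the roles of $T$ versus $T^\vee$ and of $P$ versus $P^\vee$ consistently with the conventions of Section~\ref{sec:onescroll}). This is a bookkeeping-heavy but essentially linear-algebraic task: one needs the degree of the EPW polarization, the behavior of $q$ on a general line and on the planes $P$, $P^\vee$, and the identification (from the diagram with $Y^+\simeq E$) of the image of $F(Y)$ in $Z_T$. Once the class is known, membership in $\Cone(\alpha_1^\vee,\alpha_2^\vee)$ is a direct inequality check, and Theorem~\ref{thm:torelli} (via Remark~\ref{remark: when bir is reg}) upgrades the birational map to an isomorphism. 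An alternative, possibly cleaner, route avoiding explicit class computations is to argue purely geometrically: show $\beta$ is defined and injective on $F\setminus P$, show $\beta^{-1}$ is defined and injective on the complement of the image of $P$, identify that image with the flopped $\bP^2\subset F_1^\vee$ via the explicit conic-to-plane correspondence of Lemma~\ref{lemma:epw singular quadrics}, and invoke the fact that a birational map between hyperk\"ahler manifolds that is an isomorphism in codimension one and regular along a flopping $\bP^2$ is globally an isomorphism.
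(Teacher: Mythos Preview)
Your approach via nef cones and Torelli is in principle sound, but the proposal contains a factual error and, more seriously, underestimates the main difficulty. The error: $F_1^\vee$ is by definition the Mukai flop of $F$ along $P^\vee$, not along $P$ (see the opening of Section~\ref{sec:onescroll}); so if $\beta$ were undefined precisely along $P$ as you claim, it would factor through $F_1$, not $F_1^\vee$. Your justification for singling out $P$ is also not convincing: every line in $Y=H\cap X$ lies entirely in $H$, so the construction of $\beta$ degenerates along all of $F(Y)=P\cup S'\cup P^\vee$, not just along $P$. More fundamentally, the step you label ``bookkeeping'' --- deciding whether $\beta$ factors through $F_1$ or $F_1^\vee$ --- is not a linear-algebraic afterthought but the entire content of the proposition. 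All three of your proposed routes (locating $\beta^*\omega$ in $\Cone(\alpha_1^\vee,\alpha_2^\vee)$, identifying the indeterminacy locus, tracking the flopped plane) collapse to this same dichotomy, and none can be resolved without a specific geometric input tying the conventions for $T,T^\vee,P,P^\vee$ to the geometry of $Z_T$ and $\Pi$.

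The paper supplies exactly that input, but by a different and more direct route: it shows that $\beta$ intertwines the birational involution $\iota^\vee$ on $F$ with the covering involution $\tau$ on $\widetilde{W}^\vee$, i.e.\ $\beta\circ\iota^\vee\circ\beta^{-1}=\tau$ on a dense open set. Since $\iota^\vee$ is regular precisely on $F_1^\vee$ (and on neither $F$ nor $F_1$), this immediately forces $\widetilde{W}^\vee\simeq F_1^\vee$. The computation is concrete: for general $w\in W^\vee$, the singular quadric $Q_w$ meets $\Pi$ in a single point $x$ (Lemma~\ref{lemm:QcapPi}); the two rulings of $Q_w$ give conics $C$ and $\tau(C)$ through $x$, meeting again at a second point $y\notin\Pi$; projecting from $\Pi$ yields lines $m=\beta^{-1}([C])$ and $m'=\beta^{-1}(\tau([C]))$ in $X$ intersecting at $q^{-1}(y)\notin H$. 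The decisive observation is that the fiber of $Y^+\to\Pi$ over $x$ maps to a line $\ell\subset Y$ satisfying $\ell^2=0$ inside a scroll homologous to $T^\vee$, so by \cite[Proposition~4.7]{flops} one has $[\ell]\in P$ (not $P^\vee$). Since $m,m',\ell$ pairwise intersect but not all at one point, they are coplanar, and Remark~\ref{remark:iota definition} gives $\iota^\vee(m)=m'$. This pins down the $P$/$P^\vee$ ambiguity cleanly, with no class computations needed.
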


\begin{proof}
    It suffices to show that $\iota^\vee$ becomes regular on $\widetilde{W}^\vee$, and in particular, we show $\beta\circ\iota^\vee\circ\beta^{-1}$ and the covering involution $\tau$ agree on an open set. 
    
    Let $w\in W^\vee$ be a point defining a singular quadric threefold $Q_w$. By Lemma~\ref{lemm:QcapPi}, for general $Q_w$ we have $Q_w\cap\Pi=\{x\}$ where $x$ is not the cone point of $Q_w$. Note also that $x$ corresponds to the 2-dimensional subspace $U_2:=V_3\cap V_4$; indeed $x=\bP(\wedge^2U_2)\subset\bP(\wedge^2V_4)\cap \bP^8$. Hence there is a unique plane in each ruling of $Q_w$ containing $x$; these planes intersect $Z$ in conics $C$ and $\tau(C)$. The conics $C$ and $\tau(C)$ intersect in two points, counting multiplicity---once at $x$ and again along the line joining $x$ to the cone point of $Q_w$.

    First, consider the case where $C\cap\tau(C)$ consists of distinct points $x$ and $y$. Note that $y\not\in\Pi$. Projecting from $\Pi$, we obtain lines $m=q^{-1}(C)$ and $m'=q^{-1}(\tau(C))$ intersecting at $q^{-1}(y)\not\in H$. Moreover, by Remark~\ref{remark:lines in T} the fiber of $Y^+\to\Pi$ over $x$ is a line whose image $\ell$ in $Y$ is contained in a cubic scroll $T'$ homologous to $T^\vee$, and $\ell^2=0$ on $T'$. Since any two of $m$, $m'$, and $\ell$ intersect, but not all at the same point, the three lines are coplanar.

    Alternatively, $C$ and $\tau(C)$ are tangent at $x$. In that case, the total transform of $C$ and $\tau(C)$ under the projection $\Bl_\Pi Z_T\to X$ again consists of a  triple of lines $m=q^{-1}(C)$, $m'=q^{-1}(\tau(C))$, and a line $\ell\subset Y$ whose image under $Y^+\to\Pi$ is $x$. The common intersection point of these three lines is the point on $\ell$ corresponding to the normal direction to $\Pi$ at $x$ given by the shared tangent directions of $C$ and $\tau(C)$ at $x$. As before, $\ell$ is contained in a cubic scroll $T'$ homologous to $T^\vee$, and $\ell^2=0$ on $T'$. Moreover, the three lines $m$, $m'$, and $\ell$ are coplanar, since $x$ is an Eckardt point of the cubic surface obtained by intersecting $X$ with the $\bP^3$ obtained by projecting $\langle Q_w\rangle$ from $x$.
    
    In either case, $[\ell]\in P$ by  \cite[Proposition 4.7]{flops}, and $m$, $m'$, and $\ell$ are coplanar. Therefore, recalling the definition of $\iota^\vee$ from Remark~\ref{remark:iota definition}, we have $\iota^\vee(m)=m'$. In other words, $\iota^\vee(\beta^{-1}([C]))=\beta^{-1}(\tau([C]))$, proving the claim.
\end{proof}

Starting instead with $\Bl_{T^\vee}X$, one obtains an isomorphism between $F_1$ and a double EPW sextic whose covering involution induces $\iota$. By Lemma~\ref{lemma:monodromygenerators} and Remark~\ref{remark:iota definition}, the covering involutions on the two double EPW sextics generate $\Bir(F)$. This observation, together with Propositions~\ref{prop:model census} and Proposition~\ref{prop:flop is epw}, completes the proof of Theorem~\ref{mainthm:1scroll}.

\begin{remark}
    It would be interesting to know whether $Z_T$ and $Z_{T^\vee}$ are dual GM fourfolds, or equivalently if the double EPW sextics associated as above to $(X,T)$ and $(X,T^\vee)$ are dual (see \cite[Definition~3.26]{DK1}). By \cite[Theorem~1.6]{KuzPerryCatCones}, dual GM fourfolds are Fourier--Mukai partners. Kuznetsov and Perry in \cite[Theorem~5.8]{KuzPerry} show directly that $X$ and $Z_T$ have derived equivalent Kuznetsov components, which implies the same is true of $Z_T$ and $Z_{T^\vee}$.  
\end{remark}

\section{A syzygetic pair of cubic scrolls}\label{sec:syzygetic}

Let $X\subset \bP^5$ be a smooth cubic fourfold containing a syzygetic pair of cubic scrolls $T_1$ and $T_2$. We also assume $X$ is \emph{very general}, meaning $\rank(A(X))=3$, $\Aut(X)=0$, and $\mathrm{End_{Hdg}}(T(F))=\{\pm1\}$. In this section, we completely describe the birational geometry of the Fano variety $F$ of lines on $X$. Specifically, we prove the following:

\begin{theorem}\label{thm:syzmain}
    Let $F$ be the Fano variety of lines on a very general cubic fourfold $X$ containing a syzygetic pair of cubic scrolls $T_1, T_2$. Then $F$ has five isomorphism classes of birational hyperk\"ahler models, represented by $F$ itself and four non-isomorphic Mukai flops of $F$. Moreover, each of the four Mukai flops of $F$ can be realized as a double EPW sextic.
\end{theorem}

Furthermore, we determine the birational automorphism group of $F$:

\begin{theorem}\label{theo:birFsyz}
    Let $F$ be as above. Then 
    \[
    \Bir(F)\cong\langle a,b,c,d\;|\; a^2=b^2=c^2=d^2=1\rangle.
    \]
    Moreover, the four generators can be identified with the covering involutions on the double EPW sextics obtained as Mukai flops of $F$.
\end{theorem}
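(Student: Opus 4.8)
The plan is to translate the statement into the combinatorics of the chamber decomposition of $\Mov(F)$ produced in the course of establishing \Cref{thm:syzmain}, and then to recognize that decomposition as the Bass--Serre tree of a free product of four copies of $\bZ/2$. Since $\Aut(X)=0$ (\Cref{prop:Aut(X)=0}), the map $\Bir(F)\to\Orth(\NS(F))$ is injective (\Cref{remark:bir embeds}), and combining \Cref{prop: monodromy} with \Cref{thm:torelli} identifies its image with the group of isometries of $\NS(F)$ that preserve the positive cone, preserve $\Mov(F)$, and act on the glue subgroup of $D_{\NS(F)}$ by $\pm\mathrm{id}$ --- the exact analogue of the characterization used in \Cref{lemma:actionondiscriminant}. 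This group permutes the chambers of $\Mov(F)$, each being the transported nef cone of a birational model; the walls of $\Nef(F)$ are the flopping walls $(g\pm2\lambda_1)^\perp$ and $(g\pm2\lambda_2)^\perp$, and crossing them yields the four Mukai flops $F_1,\dots,F_4$, each a double EPW sextic with covering involution $\tau_i$. Let $\iota_i\in\Bir(F)$ be the birational involution induced by $\tau_i$; its action on $\NS(F)$ fixes $\Nef(F_i)$ and interchanges its two walls.

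Next I would verify that the dual graph $T$ of the chamber decomposition of $\Mov(F)$ is a tree. Since $\Mov(F)$ is a convex sub-cone of the positive cone, this amounts to checking that distinct flopping walls $\rho^\perp$, $\rho\in\calW_{\flop}$, never cross in the interior of $\Mov(F)$ --- a finite (Pell-type) computation in $J_{syz}$; for instance $(g+2\lambda_1)^\perp\cap(g+2\lambda_2)^\perp$ already lies on the prime-exceptional wall $(g+\lambda_1+\lambda_2)^\perp\subset\partial\Mov(F)$. With $T$ a tree, $\Bir(F)$ acts on it, and the quotient $T/\Bir(F)$ is the star with central vertex $F$ and four leaves $F_1,\dots,F_4$: the five isomorphism classes of models are exactly those of \Cref{thm:syzmain}, each $\Nef(F_i)$ is a digon whose two walls lead back to $F$-type chambers (forced because $\iota_i$ is a nontrivial automorphism of $F_i$ interchanging them), and hence every flopping wall is $\Bir(F)$-equivalent to a wall of $\Nef(F)$. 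The vertex stabilizers are $\Stab(\Nef(F))=\Aut(F)=\Aut(X)=0$ --- using $\Aut(X)=0$ together with the fact that $g$ is the unique class of $\Nef(F)$ lying in the positive cone with square $6$ and divisibility $6$ --- and $\Stab(\Nef(F_i))=\Aut(F_i)=\langle\iota_i\rangle\cong\bZ/2$, while the edge stabilizers are trivial because no isometry in the image of $\Bir(F)$ can interchange two non-isomorphic adjacent models.

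Bass--Serre theory then gives $\Bir(F)$ as the fundamental group of this graph of groups: a tree with vertex groups $1,\bZ/2,\bZ/2,\bZ/2,\bZ/2$ and trivial edge groups, so $\Bir(F)\cong\bZ/2*\bZ/2*\bZ/2*\bZ/2=\langle a,b,c,d\mid a^2=b^2=c^2=d^2=1\rangle$, with $a,b,c,d$ the classes of $\iota_1,\dots,\iota_4$. Equivalently, one can argue directly: $\langle\iota_1,\dots,\iota_4\rangle$ acts transitively and freely on the $F$-type chambers, hence equals $\Bir(F)$, and a ping-pong argument on $T$ shows that every reduced word in the $\iota_i$ is nontrivial. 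Since each $\iota_i$ is by construction the birational self-map of $F$ corresponding to the covering involution of the double EPW sextic $F_i$, the second assertion follows as well.

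The step I expect to be the main obstacle is establishing the tree structure of the chamber complex --- concretely, the non-crossing of flopping walls inside $\Mov(F)$ and the fact that each $\Nef(F_i)$ is a digon with both walls leading to $F$-type chambers. This is exactly the input that rules out a free ($\bZ$ or HNN) factor and forces the free-product answer, and it relies on the explicit description of $\Mov(F)$ obtained for \Cref{thm:syzmain}; a secondary point is pinning down $\Aut(F_i)=\bZ/2$ exactly, which follows from the digon shape of $\Nef(F_i)$ together with the discriminant constraint.
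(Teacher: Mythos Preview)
Your proposal is correct but takes a genuinely different route from the paper. The paper never invokes Bass--Serre theory; instead it runs a direct reduction argument on the set $\Delta_{\flop}$ of flopping-wall classes (\Cref{lemma:syzygetic-10s}): for any $v=(a,b,c)\in\Delta_{\flop}$ with $|a|>1$, exactly one of $\iota_1^*,(\iota_1^\vee)^*,\iota_2^*,(\iota_2^\vee)^*$ strictly decreases $|a|$, and iterating this produces a \emph{unique} reduced word carrying $v$ to one of the four classes with $|a|=1$. Uniqueness gives the free-product relations (\Cref{coro:gensrelations}); surjectivity of $\Gamma\to\Bir(F)$ is then shown separately (\Cref{prop:iotasgenerate}) by pulling an arbitrary $\varphi^*\Nef(F)$ back to one of the five central chambers and invoking \Cref{lemma:isogeniespreservingF}.

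Your Bass--Serre packaging is cleaner conceptually, but note that the step you flag as the main obstacle --- that the dual chamber graph is a tree --- is exactly equivalent to the paper's reduction argument: a unique descending path from any chamber to $\Nef(F)$ is the same as simple connectivity of the dual graph. Your ``finite Pell-type computation'' is not literally finite (there are infinitely many flopping walls), so in practice you would prove non-crossing by the same inductive descent the paper uses; checking only $(g+2\lambda_1)^\perp\cap(g+2\lambda_2)^\perp$ is not enough. Also, $\Nef(F_i)$ is not a digon: by \Cref{lemma:nefFisyzygetic} it is a quadrilateral with two flopping walls and two prime exceptional walls on $\partial\Mov(F)$, so its vertex in the dual graph has degree $2$ --- which is what you actually need. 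Finally, for the stabilizer of $\Nef(F)$ you should appeal to \Cref{lemma:isogeniespreservingF} rather than divisibility of $g$ (which is $2$ in $H^2(F,\bZ)$, not $6$). With these adjustments the argument goes through; the paper's approach has the advantage of yielding the explicit orbit structure on $\Delta_{\flop}$ and $\Delta_{\pex}$ along the way, while yours would generalize more readily to other Picard lattices once the chamber graph is known to be a tree.
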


The outline is as follows: in Section~\ref{subsec:syzygetic planes}, we study the arrangement of Lagrangian planes in $F$ and show that any two planes in $F$ parametrizing lines contained in the hyperplanes spanned by 
$T_1$ and $T_2$ must intersect. 
We then identify which isometries of $\NS(F)$ are induced by birational automorphisms of $F$ in Section~\ref{subsec:syzygetic lattice}, including identifying how the involutions from Remark~\ref{remark:iota definition} act. After enumerating the (infinitely many) walls of the movable cone of $F$ in Section~\ref{subsec:syzygetic walls}, we enumerate the birational models and identify the birational automorphism group in Section~\ref{subsec:syzygetic models}. 

\subsection{Lagrangian planes in $F$}\label{subsec:syzygetic planes}

Let $H_i$ be the hyperplane spanned by $T_i$, and $Y_i=X\cap H_i$. Recall from Section~\ref{subsection:threefolds} that 
\[
F(Y_i)=P_i\cup S_i'\cup P_i^\vee
\]
where $P_i$ and $P_i^\vee$ are Lagrangian planes. The intersection $F(Y_1)\cap F(Y_2)$ parametrizes lines on the cubic surface $\Sigma=X\cap H_1\cap H_2$. For a general cubic fourfold $X$ satisfying the hypotheses above, $\Sigma$ is smooth by Lemma~\ref{lem:Sigmasmooth}. The following lemma implies the intersection $F(Y_1)\cap F(Y_2)$ is transverse in general.

\begin{lemma}\label{lemma:transverse}
    Let $X$ be a smooth cubic fourfold containing a line $L$, let $F$ be the Fano variety of lines on $X$, and let $H_1$ and $H_2$ be distinct hyperplanes containing $L$. Suppose further the threefolds $Y_i=X\cap H_i$ and the surface $\Sigma=Y_1\cap Y_2$ are all smooth along $L$. Then $F(Y_1)$ and $F(Y_2)$ in $F$ intersect transversely at $[L]$.
\end{lemma}
\begin{proof}
    Since the intersection $\Sigma=Y_1\cap Y_2$ is transverse, and since $\Sigma$, $Y_i$, and $X$ are smooth along $L$, there is a short exact sequence of normal bundles 
    \[
    0\to N_{L/\Sigma} \to N_{L/Y_1}\oplus N_{L/Y_2} \to N_{L/X}\to0.
    \]
    Moreover, $N_{L/\Sigma}\simeq\mathcal{O}_L(-1)$ since $\Sigma$ is a cubic surface, so the long exact sequence in cohomology yields an isomorphism
    \[
    H^0(L,N_{L/Y_1})\oplus H^0(L,N_{L/Y_2}) \cong H^0(L,N_{L/X}).
    \]
    Identifying the cohomology groups above with tangent spaces, we find that 
    \[
    T_{[L]}F(Y_1)\oplus T_{[L]}F(Y_2)\cong T_{[L]}F(X),
    \]
    as needed.
\end{proof}

We also compare intersections on $X$ to intersections on $\Sigma$:

\begin{lemma}\label{lemma:gammaintersection}
    Let $X$ be as above, and let $y_i\in Y_i$ be a node. Further, let $A_i\cup A_i^\vee$ be the surface swept out by lines on $X$ through $y_i$, with $[A_i]=[T_i]$ and $[A_i^\vee]=[T_i^\vee]$ as in Remark~\ref{remark:fanodescription}, and let $\gamma_i=A_i\cap \Sigma$ and $\gamma_i^\vee=A_i^\vee\cap \Sigma$. Then $\gamma_i$ and $\gamma_i^\vee$ are twisted cubic curves, and $[T_1]\cdot[T_2]=[\gamma_1]\cdot[\gamma_2]$ where the first intersection pairing happens on $X$ and the second on $\Sigma$.
\end{lemma}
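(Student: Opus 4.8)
The plan is to prove \Cref{lemma:gammaintersection} in two parts: first identify $\gamma_i$ and $\gamma_i^\vee$ as twisted cubics, then compare the two intersection numbers.

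For the first part, recall from \Cref{remark:fanodescription} that $A_i$ is a cone over a twisted cubic $C_{y_i}$ with vertex $y_i$, and $[A_i] = [T_i]$. Since $A_i$ is a cone with vertex $y_i$ and $\Sigma = Y_1 \cap Y_2$ is a hyperplane section of $Y_i$ by $H_j$ (with $j \neq i$), the intersection $\gamma_i = A_i \cap \Sigma = A_i \cap H_j$ is a hyperplane section of the cubic scroll $A_i$ (viewed inside the hyperplane $H_i$). First I would argue that, for general $X$, the node $y_i$ does not lie on $H_j$: indeed $y_i \in H_j$ would force $y_i \in \Sigma$, but $\Sigma$ is smooth by \Cref{lem:Sigmasmooth}, and one checks (using the explicit examples in \Cref{appendix:examples}, or a dimension count) that this does not happen for general $X$. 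Given $y_i \notin H_j$, the hyperplane $H_j \cap H_i$ inside $H_i \cong \bP^4$ is a $\bP^3$ not passing through the cone point $y_i$, so it meets the cubic scroll $A_i$ in a degree-$3$ curve that is a section of the ruling, hence a twisted cubic; this is exactly $\gamma_i$, and similarly for $\gamma_i^\vee$ using $A_i^\vee$.

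For the second part, the identity $[T_1] \cdot [T_2] = [\gamma_1]\cdot[\gamma_2]$ follows by combining two ingredients already available. On the one hand, $\Sigma$ is smooth (hence a smooth cubic surface) by \Cref{lem:Sigmasmooth}, and $F(Y_1) \cap F(Y_2) = F(\Sigma)$ meets transversely by \Cref{lemma:transverse} (the hypothesis that $L$ avoids the nodes of $Y_i$ holds for a general line of $\Sigma$ by the same genericity argument as above, since $\Sing(Y_i)$ is finite and $\Sigma$ is not a cone). On the other hand, by \Cref{remark:fanodescription}, $P_i, P_i^\vee \subset F(Y_i)$ and the classes $[A_i] = [T_i]$, $[A_i^\vee] = [T_i^\vee]$ arise as the cones $A_y, A_y^\vee$ swept out by lines through the node $y_i$. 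The point is that the surface swept out in $\Sigma$ by lines meeting $\gamma_i$ — equivalently, the analogous decomposition of $F(\Sigma)$ coming from a point of the twisted cubic $\gamma_i$ — is precisely the trace of the cone decomposition on $Y_i$. Concretely, I would compute $[\gamma_i]\cdot[\gamma_j]$ on the smooth cubic surface $\Sigma$ directly from the numerics: $\gamma_i$ is a twisted cubic on $\Sigma$, so $[\gamma_i]^2 = 1$ and $[\gamma_i]\cdot[\gamma_i^\vee] = 3$ with $[\gamma_i] + [\gamma_i^\vee]$ proportional to the hyperplane class (as $A_i + A_i^\vee \sim 2\eta$ on $Y_i$ cuts out $\gamma_i + \gamma_i^\vee \sim 2 h_\Sigma$), and then match these against the intersection table for $\langle \eta_X, [T_1], [T_2]\rangle$ in \Cref{subsec: special cubic}. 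Since the restriction map on algebraic cycles $A(Y_i) \to \Pic(\Sigma)$ (or rather the composition $A(X) \to A(Y_i) \to \Pic(\Sigma)$) respects intersection numbers up to the codimension bookkeeping — intersecting two surfaces in the fourfold $X$ versus intersecting the two curves they cut on the surface $\Sigma$ — the equality $[T_1]\cdot[T_2] = [\gamma_1]\cdot[\gamma_2]$ drops out.

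The main obstacle I expect is making the last comparison rigorous, i.e. justifying that intersecting $T_1$ with $T_2$ inside $X$ computes the same number as intersecting $\gamma_1$ with $\gamma_2$ inside $\Sigma$. The subtlety is that $T_1$ and $T_2$ span different hyperplanes, so their scheme-theoretic intersection in $X$ is not literally contained in $\Sigma$, and one must argue that the excess or the discrepancy vanishes — most cleanly by noting that all the cubic scrolls in the homology class of $T_i$ inside $Y_i$ pass through all six nodes of $Y_i$ (see \Cref{subsection:threefolds}), while $\Sigma$ avoids those nodes, so after moving $T_1, T_2$ within their linear systems the intersection becomes transverse and supported away from the singularities, where the projection formula applies cleanly. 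Once this is set up, the numerology is routine.
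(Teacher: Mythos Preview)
Your argument for the first part is correct and matches the paper's: the smoothness of $\Sigma$ forces the nodes $y_i$ of $Y_i$ to avoid $H_j$, so the hyperplane section $A_i\cap H_j$ misses the cone point and is a smooth twisted cubic. (You do not need to appeal to genericity or the explicit examples here; the hypothesis that $\Sigma$ is smooth already rules out $y_i\in H_j$, since $y_i\in H_j$ would mean $y_i\in\Sigma$ is a singular point of $Y_i$, hence of $\Sigma$.)

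For the second part, however, you are overcomplicating matters and your proposed route has a real gap. The ``obstacle'' you flag---that the scheme-theoretic intersection of the scrolls might not lie in $\Sigma$---is a non-issue: since $A_1\subset H_1$ and $A_2\subset H_2$, any point of $A_1\cap A_2$ automatically lies in $H_1\cap H_2\cap X=\Sigma$, and in fact
\[
A_1\cap A_2=(A_1\cap H_2)\cap(A_2\cap H_1)=\gamma_1\cap\gamma_2
\]
scheme-theoretically. Together with $[A_i]=[T_i]$, this immediately gives $[T_1]\cdot[T_2]=[\gamma_1]\cdot[\gamma_2]$. That is the entire content of the paper's proof of the second claim.

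By contrast, your plan to ``compute $[\gamma_i]\cdot[\gamma_j]$ on $\Sigma$ directly from the numerics'' cannot succeed: knowing that each $\gamma_i$ is a twisted cubic with $\gamma_i+\gamma_i^\vee\sim 2h_\Sigma$ pins down $[\gamma_i]^2$ and $[\gamma_i]\cdot[\gamma_i^\vee]$, but it does \emph{not} determine $[\gamma_1]\cdot[\gamma_2]$, since $\gamma_1$ and $\gamma_2$ come from independent nodal hyperplane sections and their relative position in $\Pic(\Sigma)$ is exactly what the lemma is meant to identify. Likewise, invoking \Cref{lemma:transverse} and the decomposition of $F(\Sigma)$ is unnecessary here; those tools are used downstream in \Cref{proposition:syzygeticplanes}, not in the proof of this lemma.
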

\begin{proof}
Since by Lemma~\ref{lem:Sigmasmooth} $\Sigma$ is smooth, none of the nodes of $Y_1$ lie on $Y_2$. Hence the linear section $\gamma_1=A_1\cap H_2$ does not pass through the cone point of $A_1$, so it is a smooth twisted cubic curve. The same is true of $\gamma_2$, $\gamma_1^\vee$, and $\gamma_2^\vee$. 

For the intersection numbers, we use the following cartesian square
    \[\xymatrix{\Sigma \ar[r]^{j_1} \ar[d]^{j_2} & Y_1 \ar[d]^{i_1} \\ Y_2 \ar[r]^{i_2} & X }\]
    and note that $[\gamma_i] = j_i^*[T_i]$. Using the base change formula, we have
    \[[\gamma_1]\cdot [\gamma_2] = j_1^*[T_1]\cdot j_2^*[T_2]= j_{2*}j_1^*[T_1]\cdot [T_2]= i_2^*i_{1*}[T_1]\cdot[T_2]= i_{1*}[T_1]\cdot i_{2*}[T_2]= [T_1]\cdot[T_2],\]
    as claimed.
\end{proof}

Recall from Remark~\ref{remark:fanodescription} that a line $\ell\subset Y_i$ is bisecant to $A_i$ or $A_i^\vee$ if and only if $[\ell]\in P_i$ or $P_i^\vee$, respectively. Conversely, $[\ell]\in S_i'$ if and only if $\ell$ meets both $A_i$ and $A_i^\vee$. So, one can tell which components of $F(Y_1)$ and $F(Y_2)$ a line $\ell\subset\Sigma$ lies in by looking at the incidence relations between $\ell$ and the curves $\gamma_i$ and $\gamma_i^\vee$. 

\begin{proposition}\label{proposition:syzygeticplanes}
Let $X$ be a cubic fourfold as above with $[T_1]\cdot[T_2]=3$. Then
    \begin{itemize}
        \item $\deg(P_1\cap P_2)=\deg(P_1\cap P_2^\vee)=\deg(P_1^\vee\cap P_2)=\deg(P_1^\vee\cap P_2^\vee)=1$,
        \item $\deg(P_1\cap S_2')=\deg(P_1^\vee\cap S_2')=\deg(S_1'\cap P_2)=\deg(S_1'\cap P_2^\vee)=4$,
        \item $\deg(S_1'\cap S_2')=7$,
    \end{itemize}
    and these intersections are all transverse when $\Sigma=X\cap H_1\cap H_2$ is smooth.
\end{proposition}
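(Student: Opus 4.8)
The plan is to reduce all nine degrees to a single combinatorial count on the smooth cubic surface $\Sigma$, then carry out that count using the classical geometry of the $27$ lines.

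\emph{Step 1: reduction to a count, and transversality.} First I would observe that, since $\Sigma$ is smooth, no node of $Y_i$ lies on $H_j$ for $j\neq i$ (otherwise $\Sigma=Y_i\cap H_j$ would be singular there), so no node of $Y_1$ or $Y_2$ lies on $\Sigma$ and every line $L\subset\Sigma$ avoids $\Sing(Y_1)\cup\Sing(Y_2)$. Hence $[L]$ is a smooth point of $F(Y_i)$ (see the proof of \Cref{lemma:transverse}), lying in exactly one of $P_i,S_i',P_i^\vee$, and by \Cref{lemma:transverse} the surfaces $F(Y_1)$ and $F(Y_2)$ meet transversally at $[L]$. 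So $F(Y_1)\cap F(Y_2)=F(\Sigma)$ is a reduced set of $27$ points, and each intersection of a component of $F(Y_1)$ with a component of $F(Y_2)$ is a reduced finite scheme whose degree is the number of the $27$ lines of $\Sigma$ contained in both components. It then remains to sort the $27$ lines by the pair of components containing them.

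\emph{Step 2: detecting the component from $\Sigma$.} Next I would fix a node $y_i\in Y_i$ and, following \Cref{remark:fanodescription}, let $A_i\cup A_i^\vee$ be the cone over $C_{y_i}\cup C_{y_i}^\vee$ swept by the lines of $Y_i$ through $y_i$, with $[A_i]=[T_i]$, $[A_i^\vee]=[T_i^\vee]$, and set $\gamma_i=A_i\cap\Sigma$, $\gamma_i^\vee=A_i^\vee\cap\Sigma$, twisted cubics (\Cref{lemma:gammaintersection}). For $L\subset\Sigma$, the plane $\Pi_L=\langle L,y_i\rangle$ meets $Y_i$ in $L$ plus a conic through $y_i$, which (being singular at $y_i\notin L$) is a union $\ell'\cup\ell''$ of two lines through $y_i$, each a ruling of $A_i$ or of $A_i^\vee$; these are exactly the lines of $Y_i$ through $y_i$ meeting $L$, and a ruling $\overline{y_i p}$ of $A_i$ meets $L$ iff $p\in A_i\cap L=\gamma_i\cap L$. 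Thus the number of $\ell',\ell''$ that are rulings of $A_i$ equals $L\cdot\gamma_i$ (and likewise for $A_i^\vee$ and $\gamma_i^\vee$, so $L\cdot\gamma_i+L\cdot\gamma_i^\vee=2$). Translating Dolgachev's description $P_i=\Bis(C_{y_i})$, $P_i^\vee=\Bis(C_{y_i}^\vee)$, and $S_i'=\{$lines meeting both $C_{y_i}$ and $C_{y_i}^\vee\}$, I would deduce, for general $X$,
\[
[L]\in P_i\iff L\cdot\gamma_i=2,\qquad [L]\in S_i'\iff L\cdot\gamma_i=1,\qquad [L]\in P_i^\vee\iff L\cdot\gamma_i=0.
\]

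\emph{Step 3: the count.} By \Cref{lemma:gammaintersection}, $[\gamma_1]\cdot[\gamma_2]=[T_1]\cdot[T_2]=3$. Writing $\Sigma=\Bl_{p_1,\dots,p_6}\bP^2$ with $\Pic(\Sigma)=\langle h,e_1,\dots,e_6\rangle$, and using that $W(E_6)$ acts transitively on twisted-cubic classes while permuting the $27$ lines, I may assume $[\gamma_1]=h$; then $h\cdot[\gamma_2]=3$ forces $[\gamma_2]=3h-2e_1-e_2-e_3-e_4-e_5$ up to relabeling. Evaluating $(L\cdot\gamma_1,L\cdot\gamma_2)$ on the $27$ lines $e_j$, $h-e_j-e_k$, $2h-\sum_{l\neq j}e_l$ yields
\[
\bigl(\#\{L:L\cdot\gamma_1=a,\ L\cdot\gamma_2=b\}\bigr)_{a,b\in\{0,1,2\}}=\begin{pmatrix}1&4&1\\4&7&4\\1&4&1\end{pmatrix},
\]
and combining with Step 2: the corner entries give $\deg(P_1\cap P_2)=\deg(P_1\cap P_2^\vee)=\deg(P_1^\vee\cap P_2)=\deg(P_1^\vee\cap P_2^\vee)=1$; the four entries $(0,1),(1,0),(1,2),(2,1)$ give $\deg(P_1\cap S_2')=\deg(P_1^\vee\cap S_2')=\deg(S_1'\cap P_2)=\deg(S_1'\cap P_2^\vee)=4$; and the center gives $\deg(S_1'\cap S_2')=7$. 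Transversality is Step 1.

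The hard part is Step 2: translating Dolgachev's bisecant description of the components of $F(Y_i)$ into the single intersection number $L\cdot\gamma_i$ on $\Sigma$, and verifying that for general $X$ the degenerate configurations that would spoil the equivalences — a coincidence $\ell'=\ell''$, a line through $y_i$ lying on both cones, or a tangency of one of the $27$ lines with $\gamma_i$ — do not occur.
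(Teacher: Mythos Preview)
Your proof is correct and follows essentially the same route as the paper: reduce to the smooth $\Sigma$ case, use \Cref{lemma:transverse} for transversality, translate the bisecant description of \Cref{remark:fanodescription} into the intersection numbers $L\cdot\gamma_i$ on $\Sigma$, pin down $[\gamma_1]=h$ and $[\gamma_2]=3h-2e_1-e_2-\cdots-e_5$, and read off the incidence table. Your Step~2 spells out the residual-conic argument behind the bisecant criterion more explicitly than the paper does, and your Step~3 records the full $3\times3$ count that the paper leaves as ``follows readily,'' but the strategy is the same.
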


\begin{proof}
    It suffices to calculate the intersection degrees for any cubic fourfold satisfying the hypotheses, so by Lemma~\ref{lem:Sigmasmooth} we may assume that $\Sigma$ is smooth. In particular, $\Sigma$ being smooth requires that $H_1$ contains none of the nodes of $Y_2$, and $H_2$ contains none of the nodes of $Y_1$. Then, as mentioned in Lemma~\ref{lemma:transverse}, $F(Y_1)$ and $F(Y_2)$ intersect transversely. Hence we need only count points in the intersections set-theoretically.
    
    As in Lemma~\ref{lemma:gammaintersection}, let  $\gamma_i=A_i\cap \Sigma$ and $\gamma_i^\vee=A_i^\vee\cap \Sigma$. 
    By Remark~\ref{remark:fanodescription}, $\ell$ is bisecant to $\gamma_i$ or $\gamma_i^\vee$ if and only if $[\ell]\in P_i$ or $P_i^\vee$, respectively; $\ell$ meets both $\gamma_i$ and $\gamma_i^\vee$ if and only if $[\ell]\in S_i'$. Since $A_1\cup A_1^\vee=X\cap Q$ for some quadric $Q$, we see that 
    \[
    \gamma_1\cup\gamma_1^\vee=(A_1\cap H_1)\cup (A_1^\vee\cap H_1)\sim\Sigma\cap Q\sim-2K_\Sigma.
    \]
    As noted in \cite{dolgnodal}, whose notation for the lines on a cubic surface we adopt, the linear system $|\gamma_1|$ defines a morphism $f\colon \Sigma\to\bP^2$ blowing down a sixer $E_1,\dots,E_6$. Writing $E_0=f^*\mathcal{O}(1)$, the calculation above yields
    \[
    \gamma_1\sim E_0\;\;\text{and}\;\;\gamma_1^\vee\sim 5E_0-2\sum_{i=1}^6E_i.
    \]
 Now, write $\gamma_2\sim \sum_{i=0}^6a_iE_i.$ By Lemma~\ref{lemma:gammaintersection}, $a_0=[\gamma_1]\cdot [\gamma_2]=3$. Similarly,
    \[
    3=[\gamma_1^\vee]\cdot[\gamma_2]=15+2\sum_{i=1}^6a_i.
    \]
Since $\gamma_2$ is a twisted cubic, we also have $1=\gamma_2^2=9-a_1^2-\dots-a_6^2,$ so, possibly after relabeling $E_1,\dots,E_6$, we obtain
    \[
    \gamma_2\sim3E_0-2E_1-E_2-E_3-E_4-E_5.
    \]
    From here, the claim follows readily using the incidence relations for lines on $\Sigma$.
\end{proof}

In particular, since $P_1\cap P_1^\vee$ and $P_2\cap P_2^\vee$ are nonempty (see for example \cite[Proposition 4.6]{flops}), any two of the four Lagrangian planes in $F$ studied here intersect. This clarifies what we prove later: after flopping any one of the planes in $F$, one can no longer flop any of the other three.

\subsection{The lattice $\NS(F)$}\label{subsec:syzygetic lattice}

Letting $\lambda_i=\alpha(T_i-\eta_X)$, recall from Section~\ref{subsec: Fano prelims} that $\NS(F)$ with the BBF form is isomorphic to the lattice $J_{syz}$ and has discriminant group $D_{\NS(F)}\cong\bZ/6\times(\bZ/4)^2$ with factors generated by $[\frac g6]$, $[\frac{\lambda_1}4]$, and $[\frac{\lambda_2}4]$. The BBF form represents both $-2$ and $-10$.

The isometry group of $\NS(F)$ with basis $\{g,\lambda_1,\lambda_2\}$ is generated by $\pm1$ and the four reflections below. We calculated the isometry group $\Orth(\NS(F))$ via the algorithm outlined in \cite{Mertens} and implemented using the Magma package \texttt{AutHyp.m}.
\[
R_1=\begin{pmatrix*}[r] 5 & 4 & 0 \\ -6 & -5 & 0 \\ 0 & 0 & -1 \end{pmatrix*},\hspace{0.2cm} R_2=\begin{pmatrix*}[r] 1 & 0 & 0 \\ 0 & -1 & 0 \\ 0 & 0 & 1 \end{pmatrix*},\hspace{0.2cm} \]

\[R_3=\begin{pmatrix*}[r] 1 & 0 & 0 \\ 0 & 0 & 1 \\ 0 & 1 & 0 \end{pmatrix*},\hspace{0.2cm} R_4=\begin{pmatrix*}[r] 7 & -4 & 4 \\ 6 & -3 & 4 \\ -6 & 4 & -3 \end{pmatrix*}.
\]

The following lemma helps distinguish which isometries of the lattice $\NS(F)$ are induced by birational automorphisms of $F$.

\begin{lemma}\label{lemma:actionondisc}
    Any isometry $\varphi\in \Orth(\NS(F))$ is induced by a birational automorphism of $F$ if and only if $\varphi$ preserves $\Mov(F)$ and acts on the index two subgroup
    \[
    H\coloneqq \left\langle\left[\frac{g}3\right]\right\rangle\times\left\langle\left[\frac{\lambda_1}4\right]\right\rangle\times\left\langle\left[\frac{\lambda_2}4\right]\right\rangle
    \]
    of $D_{\NS(F)}$ by $\pm\id$.
\end{lemma}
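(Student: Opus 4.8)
The plan is to mimic the proof of Lemma~\ref{lemma:actionondiscriminant} from the one-scroll case, adapting it to the rank-three setting. The underlying mechanism is identical: since $F$ is very general, the only Hodge isometries of the transcendental lattice $T(F)$ are $\pm\id_{T(F)}$, and via Nikulin's theory of overlattices an isometry of $\NS(F)$ coming from $\Mon^2_{\mathrm{Hdg}}(F)$ must act on the relevant glue group by $\pm\id$. The key structural input I would isolate first is the identification of $H$ as (an isometric copy of) $D_{T(F)}$ up to sign: the overlattice $H^2(F,\bZ)\supset \NS(F)\oplus T(F)$ corresponds, by \cite[Propositions 1.4.1, 1.4.2]{nikulin}, to an isotropic subgroup which projects isomorphically onto an index-two subgroup $H'\subset D_{\NS(F)}$, and $(H',q_{\NS(F)}|_{H'})\cong (D_{T(F)},-q_{T(F)})$. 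So I would need to check that the subgroup $H$ as written — generated by $[\frac g3]$, $[\frac{\lambda_1}{4}]$, $[\frac{\lambda_2}{4}]$ — is indeed that index-two subgroup; concretely, $D_{\NS(F)}\cong\bZ/6\oplus(\bZ/4)^2$ has order $96$, $H$ has order $48$, and one should verify it is the correct index-two subgroup, i.e. the one that is isotropic-complementary to the glue (or equivalently, that it matches $D_{T(F)}$ under the discriminant-form identification). This can be done by a direct finite computation with the discriminant quadratic form.

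The forward direction then runs as follows. If $\varphi\in\Orth(\NS(F))$ is induced by a birational automorphism $g$ of $F$, Theorem~\ref{thm:torelli} forces $\varphi^*\Mov(F)=\Mov(F)$. Moreover $g^*$ extends to a monodromy operator on $H^2(F,\bZ)$ restricting to a Hodge isometry of $T(F)$; very-generality gives that this restriction is $\pm\id_{T(F)}$, which acts on $D_{T(F)}$ by $\pm\id$, hence on $H$ by $\pm\id$ under the identification above. For the converse, suppose $\varphi$ preserves $\Mov(F)$ and acts on $H$ by $\pm\id$. By Theorem~\ref{thm:torelli} it suffices to lift $\varphi$ to an element of $\Mon^2_{\mathrm{Hdg}}(F)$. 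Using Nikulin's gluing, the fact that $\varphi$ acts by $\pm\id$ on $H\cong D_{T(F)}$ lets me extend $\varphi\oplus(\pm\id_{T(F)})$ to an isometry $\widetilde\varphi$ of $H^2(F,\bZ)$. This $\widetilde\varphi$ preserves $T(F)$ and acts on it by $\pm\id$, hence preserves the Hodge structure, so $\widetilde\varphi\in\Mon^2_{\mathrm{Hdg}}(F)$ provided it lies in $\Orth^+$ and acts by $\pm\id$ on the full discriminant group $D_{H^2(F,\bZ)}$ — but for K3$^{[2]}$-type that discriminant group is $\bZ/2$, on which every isometry acts trivially, and the $\Orth^+$ (sign/spinor norm) condition can be arranged because preserving $\Mov(F)\subset\overline{\mathrm{Pos}(F)}$ forces $\varphi$ to preserve the positive cone, pinning down the correct component. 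Then $\widetilde\varphi|_{\NS(F)}=\varphi$, so $\varphi\in\im(\pi)$ and Theorem~\ref{thm:torelli} applies.

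The main obstacle I anticipate is purely bookkeeping rather than conceptual: correctly identifying $H$ with $D_{T(F)}$ up to sign and checking the $\pm\id$-on-glue conditions in the rank-three lattice, where the discriminant group is larger and less symmetric than in the one-scroll case. In particular one must be careful that the index-two subgroup named in the statement is genuinely the one arising from the overlattice embedding, and that "acts by $\pm\id$ on $H$" is the right (necessary and sufficient) condition rather than, say, a condition on all of $D_{\NS(F)}$; this is exactly the subtlety that distinguishes $\Mon^2(F)$ (which only requires $\pm\id$ on the $\bZ/2$-discriminant of $H^2$) from naive lattice-isometry groups, and it is where the Markman description of $\Mon^2(F)$ in \cite[Lemma 9.2]{markman} and Proposition~\ref{prop: monodromy} do the real work. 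Everything else — the extension via Nikulin, the very-generality reduction on $T(F)$, the appeal to Theorem~\ref{thm:torelli} — is structurally the same as Lemma~\ref{lemma:actionondiscriminant}, and I would phrase the proof to emphasize that parallel, dispatching the finite discriminant-form verification to a short computation (or to the Magma package already cited for computing $\Orth(\NS(F))$).
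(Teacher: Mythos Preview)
Your proposal is correct and follows essentially the same approach as the paper. The paper's own proof is a one-line reference back to Lemma~\ref{lemma:actionondiscriminant}, noting only that in the syzygetic case $\Mov(F)\subsetneq\overline{\mathrm{Pos}(F)}$ (since $\calW_{\pex}\neq\varnothing$), so the condition ``preserves $\Mov(F)$'' genuinely strengthens ``preserves $\overline{\mathrm{Pos}(F)}$''; you handle this correctly by invoking Theorem~\ref{thm:torelli} directly for $\Mov(F)$ in both directions, and your additional remarks on the $\Orth^+$ condition and the $\bZ/2$-discriminant of $H^2(F,\bZ)$ are accurate elaborations the paper leaves implicit.
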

\begin{proof}
    This is similar to \Cref{lemma:actionondiscriminant}, the only difference being that preserving $\overline{\mathrm{Pos}(F)}$ is no longer equivalent to preserving $\Mov(F)$.
\end{proof}

Let $F_i$ and $F_i^\vee$ denote the flops of $F$ along $P_i$ and $P_i^\vee$, respectively. As mentioned in Remark~\ref{remark:iota definition}, each of these models admits a regular involution: $\iota_i$ and $\iota_i^\vee$, respectively. These involutions can also be regarded as birational involutions on $F$.

\begin{lemma}\label{lemma:syzygeticiotas}
    The birational involutions $\iota_i$ and $\iota_i^\vee$ for $i=1,2$ act on $\NS(F)$ by $\iota_1^*=R_1$, $(\iota_1^\vee)^*=R_2R_1R_2$, $\iota_2^*=R_3R_1R_3$, and $(\iota_2^\vee)^*=R_3R_2R_1R_2R_3$.
\end{lemma}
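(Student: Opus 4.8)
The plan is to reduce the computation of each involution's action on $\NS(F)$ to the one-scroll case treated in Section~\ref{sec:onescroll}, where the action of $\iota$ and $\iota^\vee$ was determined in \cite[proof of Theorem~7.3]{flops}. The key geometric observation is that $\iota_i$ and $\iota_i^\vee$ are defined purely in terms of the hyperplane section $Y_i = X \cap H_i$ and its Fano variety $F(Y_i) = P_i \cup S_i' \cup P_i^\vee$, exactly as in Remark~\ref{remark:iota definition}. Concretely, for a general line $[m] \in F$, the point $H_i \cap m$ lies on unique lines $\ell \in P_i$ and $\ell^\vee \in P_i^\vee$, and the residual-line construction in the planes $\langle m, \ell\rangle$ and $\langle m, \ell^\vee\rangle$ defines $\iota_i^\vee$ and $\iota_i$, respectively. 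This construction does not see $T_2$ at all except through the embedding of lattices $\NS(F) \hookleftarrow \langle g, \lambda_i\rangle \cong J_{12}$.

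First I would fix $i=1$ and consider the primitive sublattice $N_1 = \langle g, \lambda_1\rangle \subset \NS(F)$, which is isometric to $J_{12}$, with orthogonal complement $N_1^\perp = \langle \lambda_2 \rangle = \langle 4 \rangle$ (using the diagonal form of $J_{syz}$). The involution $\iota_1$, being the same birational involution as in Section~\ref{sec:onescroll} applied to the pair $(X, T_1)$, acts on $N_1$ by the matrix computed there --- namely the one denoted $R_1 R_2 R_1$ in Lemma~\ref{lemma:monodromygenerators}, which on the basis $\{g, \lambda_1\}$ is the reflection in the $(-10)$-class $\rho_2 = 3g - 4\lambda_1$. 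I claim $\iota_1$ acts trivially on $N_1^\perp = \langle\lambda_2\rangle$: since $\iota_1^*$ is a monodromy operator and hence an isometry acting by $\pm\id$ on the discriminant group $D_{\NS(F)}$, and since it fixes the sublattice generated by $g$-related classes, compatibility with the overlattice structure forces it to fix $\lambda_2$ up to sign; a direct check using that $\iota_1$ fixes $F(Y_2)$ pointwise (as $\iota_1$ only moves lines meeting $H_1$ transversely, and lines in $Y_2$ that also lie in $F(Y_1)$ get fixed while generic lines of $F(Y_2)$ are unaffected by the $H_1$-construction) pins down the sign as $+1$. Then $\iota_1^* = (\text{reflection in } 3g-4\lambda_1) \oplus \id_{\langle\lambda_2\rangle}$, and one verifies this equals $R_1$ by comparing the two $3\times 3$ matrices on the basis $\{g,\lambda_1,\lambda_2\}$. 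The analogous argument with $(X,T_1^\vee)$ in place of $(X,T_1)$ gives $(\iota_1^\vee)^*$; comparing matrices shows $(\iota_1^\vee)^* = R_2 R_1 R_2$, which makes sense because $R_2$ swaps the roles of $\lambda_1$ and $\lambda_1^\vee = -\lambda_1$ is not quite right --- rather, $R_2$ is the reflection interchanging $\Nef(F_1)$ and $\Nef(F_1^\vee)$ in the sub-picture, so conjugating $\iota_1^*$ by it produces the "dual" involution. For $i=2$, I would invoke the evident symmetry: the isometry $R_3$ interchanges $\lambda_1 \leftrightarrow \lambda_2$ and fixes $g$, hence interchanges the whole geometric setup for $T_1$ with that for $T_2$. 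Therefore $\iota_2^* = R_3 \iota_1^* R_3^{-1} = R_3 R_1 R_3$ and $(\iota_2^\vee)^* = R_3 (\iota_1^\vee)^* R_3^{-1} = R_3 R_2 R_1 R_2 R_3$, using $R_3 = R_3^{-1}$.

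The main obstacle I anticipate is making rigorous the claim that $\iota_i^*$ is block-diagonal with respect to the splitting $\NS(F) \otimes \mathbb{Q} = (N_i \otimes \mathbb{Q}) \oplus (N_i^\perp \otimes \mathbb{Q})$ --- that is, that the birational involution built from $Y_i$ genuinely acts as identity on the $\lambda_j$-direction ($j\neq i$) and not by some isometry mixing the blocks. The cleanest route is the Hodge-theoretic / monodromy argument sketched above: $\iota_i^*$ lies in $\Mon^2_{Hdg}(F)$, so it acts by $\pm\id$ on $D_{\NS(F)}$, it is determined by its action on the $J_{12}$-block from the one-scroll analysis, and the lattice $J_{syz} = \langle 6\rangle \oplus \langle -4\rangle \oplus \langle -4\rangle$ is rigid enough that these constraints leave only the four listed matrices as possibilities; one then eliminates the wrong sign on the $\langle\lambda_j\rangle$ block by the geometric observation that $F(Y_j) \subset F$ is preserved (indeed fixed pointwise on a dense open subset, or at least setwise) by $\iota_i$, which forces $\iota_i^*$ to fix the class $\lambda_j$ itself and not merely its line. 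Once block-diagonality is established, the remaining verifications are the routine matrix identities $R_1 = (\text{refl. in } 3g-4\lambda_1)\oplus\id$, etc., which I would simply state can be checked directly.
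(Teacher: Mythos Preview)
Your overall strategy---reduce to the one-scroll computation on the sublattice $\langle g,\lambda_1\rangle\cong J_{12}$ and then determine the action on $\lambda_2$ separately---is reasonable, and your symmetry argument using $R_3$ to pass from $i=1$ to $i=2$ is fine. But the sign determination on $\lambda_2$ is wrong, and the geometric justification you give for it does not hold.

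You claim $\iota_1^*$ fixes $\lambda_2$, arguing that $\iota_1$ fixes $F(Y_2)$ (pointwise or setwise). This is false: a generic line $m\subset Y_2$ meets $H_1$ in a single point $p$, and the residual-line construction sends $m$ to a line in the plane $\langle m,\ell^\vee\rangle$ with $\ell^\vee\in P_1^\vee$; there is no reason this residual line lies in $H_2$. In fact the correct answer is $\iota_1^*(\lambda_2)=-\lambda_2$, as you can read off directly from the $(3,3)$-entry of $R_1$. So your proposed verification ``$(\text{top-left }2\times2\text{ block})\oplus\id_{\langle\lambda_2\rangle}=R_1$'' would fail if carried out. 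Relatedly, your block-diagonality claim is true but your justification for it is vague; ``compatibility with the overlattice structure'' alone does not force $\iota_1^*$ to preserve the splitting $\langle g,\lambda_1\rangle\oplus\langle\lambda_2\rangle$.

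The paper's argument avoids both issues. It starts from the fact (\cite[Proposition~6.5]{flops}) that $\iota_1^*$ fixes the class $g-\lambda_1$, then lists the four involutive isometries of $\NS(F)$ with this fixed vector. Two of them fix $g$ and are discarded because $\iota_1$ is not regular on $F$; the remaining two differ only in the sign on $\lambda_2$, and Lemma~\ref{lemma:actionondisc} (the $\pm\id$ constraint on the index-two subgroup of $D_{\NS(F)}$) forces the sign to be $-1$. This is exactly the discriminant-group step you gestured at but then overrode with the incorrect geometric claim; had you trusted it, you would have found the right sign.
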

\begin{proof}
    We give the proof for $\iota_1^*$. By \cite[Proposition~6.5]{flops}, $\iota_1^*$ fixes the class $g-\lambda_1$. Using this fact, along with the properties that $\iota_1^*$ preserves the BBF form and $(\iota_1^*)^2=\id$, a direct computation verifies that the only involutions of $\NS(F)$ fixing $g-\lambda_1$ are given by the matrices
    \[
    \begin{pmatrix*}[r] 1 & 0 & 0\\ 0 & 1 & 0 \\ 0 & 0 & \pm1 \end{pmatrix*}\;\text{and}\;\begin{pmatrix*}[r] 5 & 4 & 0\\ -6 & -5 & 0 \\ 0 & 0 & \pm1 \end{pmatrix*}.
    \]
    The first two candidates are impossible: they preserve $g$ and therefore preserve $\Nef(F)$ whereas $\iota_1$ is not regular on $F$. To distinguish between the two remaining candidates, we inspect actions on the discriminant group of $\NS(F)$. The matrix
    \[
    \begin{pmatrix*}[r]  5 & 4 & 0\\ -6 & -5 & 0 \\ 0 & 0 & \pm1  \end{pmatrix*}
    \]
    acts on $\langle[\frac g3]\rangle\times\langle[\frac{\lambda_1}4]\rangle\times\langle[\frac{\lambda_2}4]\rangle$ by
    \[
    \begin{pmatrix*}[r] -1 & 0 & 0 \\ 0 & -1 & 0 \\ 0 & 0 & \pm1 \end{pmatrix*}.
    \]
    By Lemma~\ref{lemma:actionondisc}, the sign in the last entry of this matrix must be negative, yielding the desired result. The other three actions are calculated similarly, using that $\iota_2^*$ fixes $g-\lambda_2$ and that $(\iota_i^\vee)^*$ fixes $g+\lambda_i$.
\end{proof}

\subsection{Wall and chamber structure of $\Mov(F)$}\label{subsec:syzygetic walls}

In order to study the birational geometry of $F$ and, in particular, to enumerate the birational hyperk\"ahler models of $F$ up to isomorphism, we give a description of the wall and chamber decomposition of the movable cone of $F$. 

As before, we fix the orthogonal basis $\{g,\lambda_1,\lambda_2\}$ on $\NS(F)$, and we denote a class 
\[
v=ag+b\lambda_1+c\lambda_2\in \NS(F)\otimes \bR
\]
by the vector $v=(a,b,c)$.

\begin{lemma}\label{lemma:nef}
    The nef cone of $F$ is bounded by the four walls $(1,\pm2,0)^\perp$ and $(1,0,\pm2)^\perp$. The only other walls in $\Mov(F)$ intersecting $\Nef(F)$ are the walls $(1,1,1)^\perp$, $(1,1,-1)^\perp$, $(1,-1,1)^\perp$, and $(1,-1,-1)^\perp$, each of which intersects $\Nef(F)$ in codimension two.
\end{lemma}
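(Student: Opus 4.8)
The plan is to deduce the lemma from Theorem~\ref{thm: Mov and Amp}(2), which realizes $\Amp(F)$ as the connected component containing the ample class $g=(1,0,0)$ of $\overline{\mathrm{Pos}(F)}$ with the hyperplanes $\rho^\perp$ ($\rho\in\calW_{\pex}\cup\calW_{\flop}$) removed. Working in the orthogonal basis $\{g,\lambda_1,\lambda_2\}$, in which the BBF form is $\mathrm{diag}(6,-4,-4)$, I would introduce the closed cone
\[
C\coloneqq\{(a,b,c):a\ge0,\ 4|b|\le3a,\ 4|c|\le3a\},
\]
a cone over a square with extremal rays through $(4,\pm3,\pm3)$ and containing $g$ in its interior, and prove: (i) $C\subseteq\overline{\mathrm{Pos}(F)}$; (ii) the eight classes $(1,\pm1,\pm1),(1,\pm2,0),(1,0,\pm2)$ all lie in $\calW_{\pex}\cup\calW_{\flop}$, and are the only wall divisors whose orthogonal hyperplane meets $C\smallsetminus\{0\}$; (iii) of these eight hyperplanes, $(1,\pm2,0)^\perp$ and $(1,0,\pm2)^\perp$ cut out the four facets of $C$, whereas $(1,\pm1,\pm1)^\perp$ meet $C$ only along its four extremal rays. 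Granting (i)--(iii): no wall divisor meets $\mathrm{int}(C)$, while all of $\partial C\smallsetminus\{0\}$ lies in the wall arrangement, so $\mathrm{int}(C)$ is precisely the chamber containing $g$; thus $\Amp(F)=\mathrm{int}(C)$, $\Nef(F)=C$, and the codimension statements follow from (iii).

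Part (i) is a one-line estimate: for $v=(a,b,c)\in C$ one has $v^2=6a^2-4b^2-4c^2\ge\tfrac32a^2\ge0$. The first half of (ii) is the direct computation $\rho^2=-2$ for $\rho=(1,\pm1,\pm1)$, and $\rho^2=-10$, $\mathrm{div}(\rho)=2$ for $\rho=(1,\pm2,0),(1,0,\pm2)$.

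The crux is the second half of (ii). Let $\rho\in\calW_{\pex}\cup\calW_{\flop}$ with $\rho^\perp$ meeting $C\smallsetminus\{0\}$. Since $g$ is ample, $g\notin\rho^\perp$, so after replacing $\rho$ by $-\rho$ I may write $\rho=(x,y,z)$ with $q(\rho,g)=6x>0$, i.e.\ $x\ge1$. Every nonzero $(a,b,c)\in C$ has $a>0$; writing out $q(\rho,(a,b,c))=6xa-4yb-4zc=0$ and bounding $|yb+zc|\le(|y|+|z|)\tfrac{3a}{4}$ gives the key inequality $2x\le|y|+|z|$. Together with $(|y|+|z|)^2\le2(y^2+z^2)$ and the identity $2(y^2+z^2)=3x^2-\tfrac12\rho^2\le3x^2+5$ (valid since $\rho^2\in\{-2,-10\}$), this forces $4x^2\le3x^2+5$, so $x\in\{1,2\}$. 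The value $x=2$ is impossible by an integrality check ($2(y^2+z^2)\in\{13,17\}$), and $x=1$ yields exactly $y^2+z^2\in\{2,4\}$, hence the eight classes above. Part (iii) is then elementary: on $C$, the equation $3a=4b$ describes a two-dimensional face, whereas $3a=2b+2c\le2|b|+2|c|\le3a$ forces equality throughout, i.e.\ $b=c=\tfrac34a$, giving only the ray $\bR_{\ge0}(4,3,3)$; the remaining cases follow by the evident symmetries of $C$.

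I expect the only genuine obstacle to be the finiteness argument in (ii) — one must ensure no wall divisor other than the eight listed meets the \emph{closed} cone $C$, not merely a neighborhood of $g$ — but the inequality $2x\le|y|+|z|$ together with $\rho^2\in\{-2,-10\}$ makes this immediate once set up. Everything else is bookkeeping with the explicit form $\mathrm{diag}(6,-4,-4)$.
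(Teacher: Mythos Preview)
Your proof is correct and follows essentially the same approach as the paper's: both identify the candidate cone $C$ and then bound the first coordinate of any wall divisor $\rho=(x,y,z)$ whose hyperplane meets $C$. Your inequality chain $4x^2\le(|y|+|z|)^2\le 2(y^2+z^2)=3x^2-\tfrac12\rho^2$ is equivalent to the paper's distance computation in the cross-section $\{x=4\}$; the only minor difference is that the paper invokes \cite[Proposition~7.2]{flops} (small contractions) to place the four $(-10)$-walls on $\partial\Nef(F)$, whereas you verify directly that $(1,\pm2,0),(1,0,\pm2)\in\calW_{\flop}$ and argue purely lattice-theoretically via Theorem~\ref{thm: Mov and Amp}(2).
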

\begin{proof}
By \cite[Proposition~7.2]{flops}, we know that each of the walls $(1,\pm2,0)^\perp$ and $(1,0,\pm2)^\perp$ induces a small contraction of $F$, hence lies on the boundary of the nef cone. We prove that the only walls intersecting the chamber enclosed by the above walls are $(1,1,1)^\perp$, $(1,1,-1)^\perp$, $(1,-1,1)^\perp$, and $(1,-1,-1)^\perp$.
    
    We embed $\NS(F)\otimes\bR$ into $\bR^3$ using our basis and call the coordinate functions $x$, $y$, and $z$. Consider the cross-section of $\Mov(F)$ by the plane $x=4$: the cross-section of $\Cone((1,\pm2,0)^\perp,(1,0,\pm2)^\perp)$ is the square bounded by the four lines $y=\pm3$ and $z=\pm3$. In this slice, any wall of $\Nef(F)$ is a line cutting through this square so a point on the line has Euclidean distance at most $\sqrt{18}$ from the origin in this $yz$-plane.

    First, suppose $v=(a,b,c)$ satisfies $v^2=-2$. Any point on the line 
    \[
    (v^\perp)\cap\{x=4\}=\{6a-by-cz=0\}
    \]
    has Euclidean distance 
    \[
    \frac{6|a|}{\sqrt{b^2+c^2}}=\frac{6\sqrt2|a|}{\sqrt{3a^2+1}}
    \]
    from the origin in this $yz$-plane. This distance is no more than $\sqrt{18}$ if and only if $|a|\le1$. The only four such classes with $|a|\le1$ are $(1,1,1)^\perp$, $(1,1,-1)^\perp$, $(1,-1,1)^\perp$, and $(1,-1,-1)^\perp$, which intersect the boundary of $\Cone((1,\pm2,0)^\perp,(1,0,\pm2)^\perp)$.

    A similar calculation shows that no other classes with $v^2=-10$ intersect $\Cone((1,\pm2,0)^\perp,(1,0,\pm2)^\perp)$.
\end{proof}

We observe a nice geometric consequence:
\begin{corollary}
    The four planes in $F$ coming from components of $F(Y_i)$ where $Y_i=X\cap H_i$ for $i=1,2$ are the only Lagrangian planes in $F$.
\end{corollary}

Next, we describe the chambers of $\Mov(F)$ neighboring $\Nef(F)$.

\begin{lemma}\label{lemma:nefFisyzygetic}
The nef cones of the four flops of $F$ are 
\begin{align*}
    \Nef(F_1)&=\Cone((1,-2,0)^\perp,(1,-1,1)^\perp,(1,-1,-1)^\perp,(3,-4,0)^\perp), \\
    \Nef(F_1^\vee)&=\Cone((1,2,0)^\perp,(1,1,1)^\perp,(1,1,-1)^\perp,(3,4,0)^\perp), \\
    \Nef(F_2)&=\Cone((1,0,-2)^\perp,(1,1,-1)^\perp,(1,-1,-1)^\perp,(3,0,-4)^\perp), \\
    \Nef(F_2^\vee)&=\Cone((1,0,2)^\perp,(1,1,1)^\perp,(1,-1,1)^\perp,(3,0,4)^\perp).
\end{align*}
In particular, the chamber $\Nef(F_i)$ shares a face with two other chambers of $\Mov(F)$: $\Nef(F)$ and $\iota_i^*\Nef(F)$. Similarly, $\Nef(F_i^\vee)$ shares a face with the two chambers $\Nef(F)$ and $(\iota_i^\vee)^*\Nef(F)$.
\end{lemma}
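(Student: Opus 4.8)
The plan is to compute each of the four nef cones directly using the structure theorem \Cref{thm: Mov and Amp} together with the wall divisors already identified in \Cref{lemma:nef}, and then to observe the adjacency relations via the involutions from \Cref{lemma:syzygeticiotas}. First I would recall from \Cref{subsec:syzygetic planes} (specifically \Cref{proposition:syzygeticplanes}) that any two of the four Lagrangian planes $P_1, P_1^\vee, P_2, P_2^\vee$ meet, so that each flop $F_i$ (resp.\ $F_i^\vee$) is a genuine Mukai flop along the single plane $P_i$ (resp.\ $P_i^\vee$) and no other of these planes survives in $F_i$. Concretely, the wall of $\Nef(F)$ corresponding to flopping $P_1$ is $(1,-2,0)^\perp$ (a $\calW_{\flop}$ wall by the computation $v^2=-10$, $\mathrm{div}(v)=2$), so $\Nef(F_1)$ is the chamber of $\Mov(F)$ on the other side of $(1,-2,0)^\perp$ from $\Nef(F)$. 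To pin down this chamber I would enumerate its walls: by \Cref{thm: Mov and Amp}, these are the hyperplanes $\rho^\perp$ with $\rho\in\calW_{\pex}\cup\calW_{\flop}$ that bound the relevant connected component. Using the same cross-section argument as in \Cref{lemma:nef} (slicing by $x=4$ and bounding the distance of $\rho^\perp$ from the origin), the only candidate wall divisors near $\Nef(F_1)$ are the $-2$ classes $(1,-1,1)^\perp$ and $(1,-1,-1)^\perp$ (which are $\calW_{\pex}$ walls, hence genuine boundaries of $\Mov(F)$), the flop wall $(1,-2,0)^\perp$ shared with $\Nef(F)$, and the further flop wall $(3,-4,0)^\perp$; one checks $(3,-4,0)$ has square $54-64=-10$ and divisibility $2$. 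This yields the stated cone for $\Nef(F_1)$, and the analogues for $F_1^\vee, F_2, F_2^\vee$ follow either by the same computation or, more efficiently, by applying the reflections $R_2$ (swapping $\lambda_1\mapsto-\lambda_1$), $R_3$ (swapping $\lambda_1\leftrightarrow\lambda_2$) which permute the four planes and hence the four flops.

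For the adjacency statement, I would argue as follows. The chamber $\Nef(F_1)$ has four walls; two of them, $(1,-1,1)^\perp$ and $(1,-1,-1)^\perp$, lie on $\partial\Mov(F)$ (they are orthogonal to $\pex$ classes) and so bound no further chamber inside $\Mov(F)$. The remaining two walls, $(1,-2,0)^\perp$ and $(3,-4,0)^\perp$, are $\flop$ walls and therefore each separates $\Nef(F_1)$ from exactly one other chamber of $\Mov(F)$. Across $(1,-2,0)^\perp$ lies $\Nef(F)$ by construction. Across $(3,-4,0)^\perp$ lies $\iota_1^*\Nef(F)$: indeed $\iota_1^*=R_1$ by \Cref{lemma:syzygeticiotas}, and one checks that $R_1$ fixes the wall $(3,-4,0)^\perp$ (equivalently, $R_1$ fixes the class $g-\lambda_1$ spanning the ray $(1,-2,0)^\perp\cap(1,-1,1)^\perp\cap\ldots$ — more precisely $R_1\cdot(3,-4,0)=(3,-4,0)$ up to scalar) and sends $\Nef(F)$ to the chamber on the far side of this wall from $\Nef(F_1)$. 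Since $R_1$ is an involution fixing $\Nef(F_1)$ and permuting its two flop walls — fixing $(3,-4,0)^\perp$ and swapping $\Nef(F)$ with $R_1\Nef(F)$ — the two chambers adjacent to $\Nef(F_1)$ across flop walls are precisely $\Nef(F)$ and $\iota_1^*\Nef(F)$, and symmetrically for the other three flops using $(\iota_1^\vee)^*=R_2R_1R_2$, $\iota_2^*=R_3R_1R_3$, $(\iota_2^\vee)^*=R_3R_2R_1R_2R_3$.

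The main obstacle I anticipate is the bookkeeping in the wall enumeration: one must be careful to verify that the four listed hyperplanes really are \emph{all} the walls of each chamber, i.e.\ that no additional $-2$ or $-10$-divisibility-$2$ class has orthogonal complement cutting through the interior of $\Cone((1,-2,0)^\perp,(1,-1,1)^\perp,(1,-1,-1)^\perp,(3,-4,0)^\perp)$. This is handled by the distance estimate in the $x=4$ slice exactly as in the proof of \Cref{lemma:nef}: any wall of the chamber is a line through the (bounded) cross-section, so a point on it has bounded distance from the origin, and the arithmetic constraints $v^2\in\{-2,-10\}$ then force $|a|$ small, leaving only finitely many classes to check by hand. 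A secondary subtlety is confirming that $\Nef(F_i)$, as defined by the flop along $P_i$, genuinely coincides with the chamber adjacent to $\Nef(F)$ across the corresponding wall — this uses \Cref{proposition:syzygeticplanes} to ensure the wall $(1,-2,0)^\perp$ corresponds to contracting only $P_1$ (and not some other cycle), together with the standard fact that flopping a $\bP^2$ in a hyperk\"ahler fourfold of K3$^{[2]}$-type is a Mukai flop crossing a single $\calW_{\flop}$ wall. Once these two points are secured, the explicit cones and the adjacency statement both drop out.
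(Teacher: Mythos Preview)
Your overall strategy is sound, but there is a concrete computational slip in the adjacency paragraph: $R_1$ does \emph{not} fix the wall $(3,-4,0)^\perp$. One has $R_1\cdot(3,-4,0)=(-1,2,0)$, so $R_1$ \emph{swaps} the two flop walls $(1,-2,0)^\perp$ and $(3,-4,0)^\perp$ of $\Nef(F_1)$. (The fact that $R_1$ fixes $g-\lambda_1=(1,-1,0)$ is correct but not equivalent to fixing $(3,-4,0)^\perp$; the fixed ray $(1,-1,0)$ lies on neither flop wall.) Once this is corrected, your adjacency argument works: since $\iota_1$ is regular on $F_1$, $R_1=\iota_1^*$ preserves $\Nef(F_1)$ and interchanges its two flop walls, hence interchanges the two neighboring chambers; as one of them is $\Nef(F)$, the other is $\iota_1^*\Nef(F)$.

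Your route to the cone description---direct wall enumeration via the $x=4$ cross-section---is different from the paper's and more laborious. The paper instead uses the involution at the outset: from $\iota_1^*(1,-2,0)=-(3,-4,0)$ and regularity of $\iota_1$ on $F_1$ one gets the second flop wall for free, and the two prime exceptional walls $(1,-1,\pm1)^\perp$ bound $\Mov(F)$, yielding the containment $\Nef(F_1)\subset\Cone((1,-2,0)^\perp,(1,-1,1)^\perp,(1,-1,-1)^\perp,(3,-4,0)^\perp)$. Equality then follows without any new enumeration: any further wall cutting the interior of this cone would have to meet the interior of either $\Nef(F)$ or of $\iota_1^*\Nef(F)$, contradicting \Cref{lemma:nef}. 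Your approach, by contrast, requires re-running the distance bound for a region farther from the origin (forcing you to check $-10$ classes up to $|a|=5$ and to verify that, e.g., $(5,-6,\pm2)^\perp$ only touches the chamber at a vertex). Both arguments are valid; the paper's exploits the $\iota_1^*$-symmetry of $\Nef(F_1)$ to recycle the computation already done in \Cref{lemma:nef}.
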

\begin{proof}
    First, we give the proof for $F_1$. By \cite[Proposition 7.2]{flops}, the wall  $(1,-2,0)^\perp$ of $\Nef(F)$ corresponds to the small contraction of $F$ along $P_1$, and $F_1$ is the flop of $F$ along $P_1$, so $(1,-2,0)^\perp$ lies on the boundary of $\Nef(F_1)$. Moreover, $\iota_1$ is regular on $F_1$ so acts on $\Nef(F_1)$ by an involution; using Lemma~\ref{lemma:syzygeticiotas}, we calculate $\iota_1^*(1,-2,0)=(-3,4,0)$, so $(3,-4,0)^\perp$ must also lie on the boundary of $\Nef(F_1)$. Since the prime exceptional classes cut the movable cone out of the positive cone, we have
    \[
    \Nef(F_1)\subset\Cone((1,-2,0)^\perp,(1,-1,1)^\perp,(1,-1,-1)^\perp,(3,-4,0)^\perp).
    \]
    Using Lemma~\ref{lemma:nef}, we see that any other wall of $\Nef(F_1)$ intersecting the interior of the cone above would also cut into $\Nef(F)$ or $\iota_1^*\Nef(F)$, a contradiction. Hence 
     \[
    \Nef(F_1)=\Cone((1,-2,0)^\perp,(1,-1,1)^\perp,(1,-1,-1)^\perp,(3,-4,0)^\perp).
    \]
    The three other cone descriptions are proved similarly.
\end{proof}

Figure~\ref{fig:syznefcone} illustrates the movable cone with labels on the chambers we have described so far. Figure~\ref{fig:nefzoom} gives more detail further out from the central chamber.

\begin{figure}
\begin{center}
\begin{tikzpicture}[xscale=4.8, yscale=4]
        \draw[scale=1, domain=-1.183:-0.317,variable=\x,red] plot ({\x},{\x+1.5});
        \draw[scale=1, domain=0.317:1.183,variable=\x,red] plot ({\x},{-\x+1.5});
        \draw[scale=1, domain=-1.183:-.317,variable=\x,red] plot ({\x},{-\x-1.5});
        \draw[scale=1, domain=0.317:1.183,variable=\x,red] plot ({\x},{\x-1.5});
        \draw[scale=1, domain=0.0326:0.1887,variable=\x,red] plot ({\x},{(-1/11)*\x+27/22});
        \draw[scale=1, domain=0.0326:0.1887,variable=\x,red] plot ({\x},{(1/11)*\x-27/22});
        \draw[scale=1, domain=0.6275:0.7914,variable=\x,red] plot ({\x},{(-5/7)*\x+3/2});
        \draw[scale=1, domain=0.6275:0.7914,variable=\x,red] plot ({\x},{(5/7)*\x-3/2});
        \draw[scale=1, domain=0.9347:1.0518,variable=\x,red] plot ({\x},{(-7/5)*\x+21/10});
        \draw[scale=1, domain=0.9347:1.0518,variable=\x,red] plot ({\x},{(7/5)*\x-21/10});
        \draw[scale=1, domain=1.2101:1.2243,variable=\x,red] plot ({\x},{(-11)*\x+27/2});
        \draw[scale=1, domain=1.2101:1.2243,variable=\x,red] plot ({\x},{(11)*\x-27/2});
        \draw[scale=1, domain=-0.1887:-0.0326,variable=\x,red] plot ({\x},{(1/11)*\x+27/22});
        \draw[scale=1, domain=-0.1887:-0.0326,variable=\x,red] plot ({\x},{(-1/11)*\x-27/22});
        \draw[scale=1, domain=-0.7914:-0.6275,variable=\x,red] plot ({\x},{(5/7)*\x+3/2});
        \draw[scale=1, domain=-0.7914:-0.6275,variable=\x,red] plot ({\x},{(-5/7)*\x-3/2});
        \draw[scale=1, domain=-1.0518:-0.9347,variable=\x,red] plot ({\x},{(7/5)*\x+21/10});
        \draw[scale=1, domain=-1.0518:-0.9347,variable=\x,red] plot ({\x},{(-7/5)*\x-21/10});
        \draw[scale=1, domain=-1.2243:-1.2101,variable=\x,red] plot ({\x},{(-11)*\x-27/2});
        \draw[scale=1, domain=-1.2243:-1.2101,variable=\x,red] plot ({\x},{(11)*\x+27/2});
        
        \draw[scale=1, domain=-0.9628:0.9628,variable=\x,blue] plot ({\x},{0.75});
        \draw[scale=1, domain=-0.9628:0.9628,variable=\x,blue] plot ({\x},{-0.75});
        \draw[scale=1, domain=-0.9628:0.9628,variable=\y,blue] plot ({0.75}, {\y});
        \draw[scale=1, domain=-0.9628:0.9628,variable=\y,blue] plot ({-0.75}, {\y});
        \draw[scale=1, domain=-0.4841:0.4841,variable=\x,blue] plot ({\x},{1.125});
        \draw[scale=1, domain=-0.4841:0.4841,variable=\x,blue] plot ({\x},{-1.125});
        \draw[scale=1, domain=-0.4841:0.4841,variable=\y,blue] plot ({1.125}, {\y});
        \draw[scale=1, domain=-0.4841:0.4841,variable=\y,blue] plot ({-1.125}, {\y});
        \draw[scale=1, domain=-0.121:0.121,variable=\x,blue] plot ({\x},{1.2188});
        \draw[scale=1, domain=-0.121:0.121,variable=\x,blue] plot ({\x},{-1.2188});
        \draw[scale=1, domain=-0.121:0.121,variable=\y,blue] plot ({1.2188}, {\y});
        \draw[scale=1, domain=-0.121:0.121,variable=\y,blue] plot ({-1.2188}, {\y});
        \draw[scale=1, domain=1.028:1.222,variable=\x,blue] plot ({\x},{-3*\x+3.75});
        \draw[scale=1, domain=1.028:1.222,variable=\x,blue] plot ({\x},{3*\x-3.75});
        \draw[scale=1, domain=-1.222:-1.028,variable=\x,blue] plot ({\x},{-3*\x-3.75});
        \draw[scale=1, domain=-1.222:-1.028,variable=\x,blue] plot ({\x},{3*\x+3.75});
        \draw[scale=1, domain=0.094:0.665,variable=\x,blue] plot ({\x},{(1/3)*\x-1.25});
        \draw[scale=1, domain=0.094:0.665,variable=\x,blue] plot ({\x},{-(1/3)*\x+1.25});
        \draw[scale=1, domain=-0.665:-0.094,variable=\x,blue] plot ({\x},{-(1/3)*\x-1.25});
        \draw[scale=1, domain=-0.665:-0.094,variable=\x,blue] plot ({\x},{(1/3)*\x+1.25});
        
        \draw[very thin] (0,0) circle (1.2247);
		
        \node[scale=1.2] at (0, 0){$F$};
        \node[scale=1.2] at (0, 0.9375){$F_2^\vee$};
        \node[scale=1.2] at (0, -0.9375){$F_2$};
        \node[scale=1.2] at (0.9375, 0){$F_1^\vee$};
        \node[scale=1.2] at (-0.9375, 0){$F_1$};     
\end{tikzpicture}
\end{center}
\caption{A cross-section of the chambers of the movable cone of $F$, bounded by the positive cone. Prime exceptional walls are drawn in red, and the remaining walls are drawn in blue.}\label{fig:syznefcone}
\end{figure}
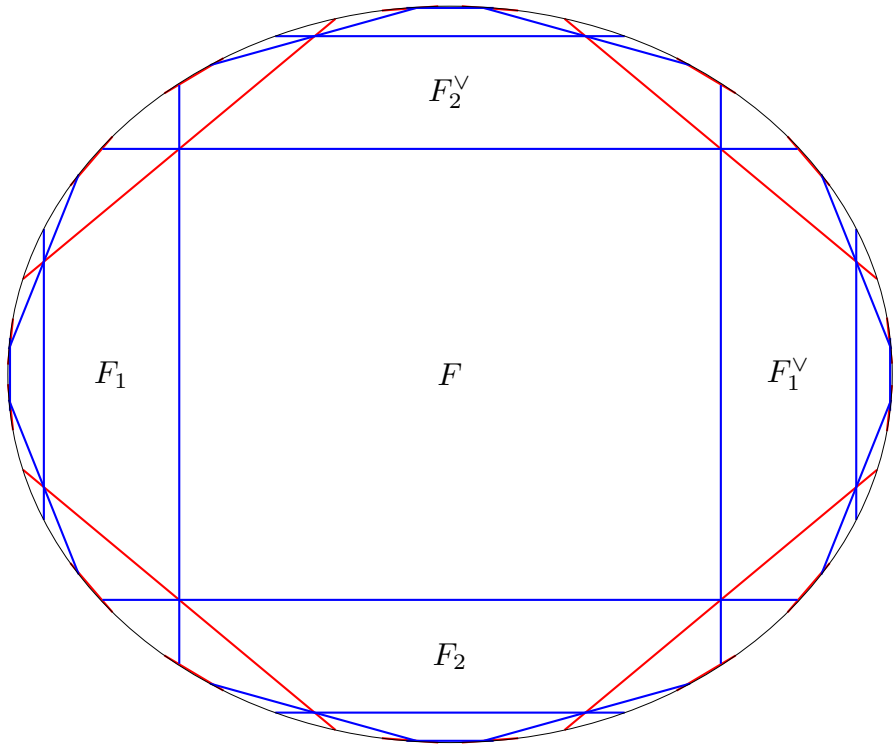

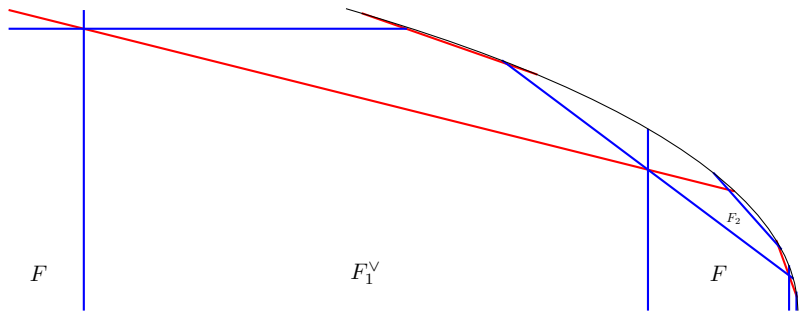
\begin{figure}
\begin{center}
\begin{tikzpicture}[xscale=20, yscale=5]
        \draw[scale=1, domain=0.7:1.183,variable=\x,red] plot ({\x},{-\x+1.5});

        \draw[scale=1, domain=1.2102:1.2243,variable=\x,red] plot ({\x},{-11*\x+(27/2)});

        \draw[scale=1, domain=0.9347:1.0518,variable=\x,red] plot ({\x},{(-7/5)*\x+21/10});

        \draw[scale=1, domain=0.7:0.965,variable=\x,blue] plot ({\x},{0.75});

        \draw[scale=1, domain=0:0.8,variable=\y,blue] plot ({0.75}, {\y});

        \draw[scale=1, domain=0:0.4841,variable=\y,blue] plot ({1.125}, {\y});

        \draw[scale=1, domain=0:0.121,variable=\y,blue] plot ({1.2188}, {\y});

        \draw[scale=1, domain=0:0.05058,variable=\y,blue] plot ({1.2237}, {\y});

        \draw[scale=1, domain=1.028:1.222,variable=\x,blue] plot ({\x},{-3*\x+3.75});
        
        \draw[scale=1, domain=1.1684:1.214,variable=\x,blue] plot ({\x},{-(36/8)*\x+(45/8)});

        \draw [very thin,domain=0:41] plot ({1.2247*cos(\x)}, {1.2247*sin(\x)});

        \node[scale=.85] at (0.72, .1){$F$};
        \node[scale=.85] at (0.9375, 0.1){$F_1^\vee$};
        \node[scale=.85] at (1.1719, .1){$F$};
        \node[scale=.5] at (1.182, 0.245){$F_2$};
        
\end{tikzpicture}
\end{center}
\caption{A detail of the figure above illustrating the chambers adjacent to $F_i^\vee$. Chambers are labeled by the isomorphism type of the model they represent. The involution $\iota_1^\vee$ exchanges the two chambers adjacent to $\Nef(F_1^\vee)$.}
\label{fig:nefzoom}
\end{figure}

We finish the section by demonstrating how to propagate all of the (infinitely many) walls of the movable cone. This is not necessary for enumerating the birational models of $F$ up to isomorphism, but we will obtain a description of the birational automorphism group of $F$ as a byproduct.

Let $\Gamma\subset\Orth(\NS(F))$ be the subgroup generated by $\iota_1^*,$ $(\iota_1^\vee)^*$, $\iota_2^*$, and $(\iota_2^\vee)^*$. Since these lattice automorphisms are induced by birational automorphisms of $F$, they preserve $\Mov(F)$ and act on the sets $\calW_{\flop}$ and $\calW_{\pex}$ of wall divisors (see Section~\ref{subsec: HK} for the definitions). In particular, they also act on the sets
\[
\
\Delta_{\flop}\coloneqq \{\rho\in\calW_{\flop}\;|\;\rho^\perp\cap\overline{\Mov(F)}\neq\varnothing\}
\]
and
\[
\
\Delta_{\pex}\coloneqq \{\rho\in\calW_{\pex}\;|\;\rho^\perp\cap\overline{\Mov(F)}\neq\varnothing\},
\]
which define walls between chambers in $\overline{\Mov(F)}$ and walls bounding $\overline{\Mov(F)}$, respectively.

\begin{lemma}\label{lemma:syzygetic-10s}
    $\Gamma$ acts freely on $\Delta_{\flop}$ with four orbits, represented by the classes $(1,\pm2,0)$ and $(1,0,\pm2)$.
\end{lemma}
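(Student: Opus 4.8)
The plan is to read off the $\Gamma$-action from the wall-and-chamber structure of $\Mov(F)$ by a ``distance to $\Nef(F)$'' reduction. Work with the graph $G$ whose vertices are the chambers of $\Mov(F)$ (these are the nef cones of the birational hyperk\"ahler models of $F$, by Theorem~\ref{thm: Mov and Amp}) and whose edges are the elements of $\Delta_\flop$, each separating exactly two chambers. Each generator of $\Gamma$ is induced by a birational automorphism, hence preserves $\Mov(F)$ (Theorem~\ref{thm:torelli}) and acts on $G$ as a graph automorphism permuting $\calW_\flop$ and $\calW_\pex$. By Lemma~\ref{lemma:nef}, $\Nef(F)$ has exactly the four edges to $\Nef(F_1),\Nef(F_1^\vee),\Nef(F_2),\Nef(F_2^\vee)$; by Lemma~\ref{lemma:nefFisyzygetic}, each of those four chambers has exactly one further edge, to $\iota_i^*\Nef(F)$ (resp.\ $(\iota_i^\vee)^*\Nef(F)$). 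Thus, if a chamber $C$ has graph-distance $d\ge 2$ from $\Nef(F)$, the last edge of a geodesic from $C$ joins $\Nef(F)$ to some neighbour, say $\Nef(F_1)$, and the previous edge must be the only other edge at $\Nef(F_1)$, namely to $\iota_1^*\Nef(F)$; hence $d_G(C,\iota_1^*\Nef(F))=d-2$, so $\iota_1^*C$ has distance $d-2$ from $\Nef(F)$. Inducting on the minimum of the two distances of the chambers bounding a given wall, every $\rho\in\Delta_\flop$ is $\Gamma$-equivalent to a face of one of the five ``central'' chambers $\Nef(F),\Nef(F_i),\Nef(F_i^\vee)$.

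Next I would identify the orbits. By Lemma~\ref{lemma:nefFisyzygetic} the faces of the central chambers are the eight classes $(1,\pm2,0),(1,0,\pm2)$ and $(3,\pm4,0),(3,0,\pm4)$; from the matrices in Lemma~\ref{lemma:syzygeticiotas} one computes $\iota_1^*(1,-2,0)=-(3,-4,0)$ and $(\iota_1^\vee)^*(1,2,0)=-(3,4,0)$, together with the analogous identities obtained by swapping $\lambda_1\leftrightarrow\lambda_2$, so the four ``outer'' walls $(3,\pm4,0)^\perp,(3,0,\pm4)^\perp$ are $\Gamma$-equivalent to faces of $\Nef(F)$. Hence $\Gamma$ has at most four orbits on $\Delta_\flop$, represented by $(1,\pm2,0)$ and $(1,0,\pm2)$.

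For distinctness and freeness, the crux is that $\Gamma$ acts simply transitively on the set of chambers all of whose codimension-one faces lie in $\calW_\flop$; by Lemmas~\ref{lemma:nef} and~\ref{lemma:nefFisyzygetic}, $\Nef(F)$ is of this kind while each $\Nef(F_i),\Nef(F_i^\vee)$ has two faces in $\calW_\pex$. Transitivity is the first paragraph: such a chamber reduces under $\Gamma$ to a central chamber of the same face-type, namely $\Nef(F)$. For the stabiliser I would argue that if $\varphi\in\Gamma$ fixes $\Nef(F)$, then its pullback preserves $\Amp(F)$, so $\varphi$ comes from an automorphism of $F$ by Remark~\ref{remark: when bir is reg}; since $\Bir(F)\hookrightarrow\Orth(\NS(F))$ (Remark~\ref{remark:bir embeds}, using $\Aut(X)=0$ from Proposition~\ref{prop:Aut(X)=0}), $\varphi$ is a lattice isometry fixing the square cone $\Nef(F)$, hence lies in the order-eight group generated by $\lambda_1\mapsto-\lambda_1$ and $\lambda_1\leftrightarrow\lambda_2$; as no nontrivial element of that group acts by $\pm\id$ on the index-two subgroup $H\subset D_{\NS(F)}$ of Lemma~\ref{lemma:actionondisc}, Lemma~\ref{lemma:actionondisc} forces $\varphi=\id$. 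Granting this: any $\rho\in\Delta_\flop$ is $\Gamma$-equivalent to a face of $\Nef(F)$, which bounds a unique chamber of the above face-type (namely $\Nef(F)$ itself); so $\rho^\perp$ bounds a unique such chamber, and a $\Gamma$-element fixing $\rho^\perp$ fixes that chamber, hence is the identity---this is freeness. Likewise, a $\Gamma$-element carrying one face of $\Nef(F)$ to another carries $\Nef(F)$ to the unique chamber of that face-type on the image face, namely $\Nef(F)$ again, so it is the identity and the four faces are in distinct orbits.

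I expect the main obstacle to be the combinatorial care required in the first paragraph: checking that $G$ is connected, that each $\iota_i^*$ genuinely acts on it as a graph automorphism, and that the distance induction is run correctly over both chambers bounding each wall. A secondary point needing care is pinning down the finite isometry group of the cone $\Nef(F)$ and its (nontrivial) action on $D_{\NS(F)}$, which is precisely what makes the stabiliser of $\Nef(F)$ in $\Gamma$ trivial.
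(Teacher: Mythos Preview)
Your proof is correct and takes a genuinely different route from the paper's. The paper argues by direct arithmetic reduction on coordinates: given $v=(a,b,c)\in\Delta_\flop$ with $|a|>1$, it uses the constraint $3a^2-2b^2-2c^2=-5$ together with the fact that $v^\perp$ must meet one of the $(-2)$-walls to force $|c|<|a|<|b|$ or $|b|<|a|<|c|$, and then checks by hand that \emph{exactly one} of the four generators $\iota_i^*,(\iota_i^\vee)^*$ decreases $|a|$. Freeness and the orbit count then fall out of the uniqueness of this reduction step.

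Your argument replaces this coordinate calculation by the chamber graph. Using Lemmas~\ref{lemma:nef} and~\ref{lemma:nefFisyzygetic}, you observe that $\Nef(F)$ has four flop-facets while each $\Nef(F_i),\Nef(F_i^\vee)$ has only two, and you run a graph-distance induction to bring every chamber into the five central ones. The real work is then shifted to showing $\Stab_\Gamma(\Nef(F))=1$, which you obtain from Lemma~\ref{lemma:actionondisc} by checking that no nontrivial element of the dihedral group $\langle R_2,R_3\rangle$ acts by $\pm\id$ on $H$. This is exactly the content of Lemma~\ref{lemma:isogeniespreservingF}, which in the paper is proved \emph{after} the present lemma; your derivation of it is independent, so there is no circularity, but you are effectively reordering the logic of the section. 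The ``unique hexagonal neighbour'' observation you use for freeness and orbit separation is a nice structural replacement for the paper's ``exactly one generator reduces $|a|$''.

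What each approach buys: the paper's coordinate argument is self-contained and directly produces the unique reduced word in the generators, which is what is invoked in the Remark preceding Corollary~\ref{coro:gensrelations} to read off the relations on $\Gamma$. Your approach makes the tree structure of the chamber graph transparent (simple transitivity on ``hexagonal'' chambers) and would adapt more readily to situations where the explicit arithmetic is less tractable; it also yields Corollary~\ref{coro:gensrelations} immediately, since a group acting simply transitively on the vertices of a $4$-regular tree with each generator an edge-involution is the free product of four copies of $\bZ/2$. One small point to tighten: your identification of edges of $G$ with elements of $\Delta_\flop$ tacitly uses that each wall $\rho^\perp$ meets $\Mov(F)$ in a single facet (equivalently, separates exactly two chambers); this follows once you know the facets of the central chambers from Lemmas~\ref{lemma:nef} and~\ref{lemma:nefFisyzygetic} terminate on $\calW_\pex$-walls, and then propagates by $\Gamma$-equivariance.
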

\begin{proof}
Suppose $v=(a,b,c)\in \calW_{\flop}$ is a class such that $v^\perp\cap \overline{\Mov(F)}\neq\varnothing,$ i.e $v^\perp\in \Delta_{\flop}$ is a wall divisor. If $a=1$, then $v$ is one of the four vectors listed above, thus we assume $a>1$. Note that $v^\perp$ intersects one of the walls $(1,1,1)^\perp$, $(1,1,-1)^\perp$, $(1,-1,-1)^\perp$, and $(1,-1,1)^\perp$, from which we deduce $|a|<|b|$ or $|a|<|c|$. Since also $3a^2-2b^2-2c^2=-5$, we have either $|c|<|a|<|b|$ or $|b|<|a|<|c|$. We act by $\Gamma$ and find:
    \begin{align*}
        \iota_1^*v&=(5a+4b,-6a-5b,-c)&
        \iota_2^*v&=(5a+4c,-b,-6a-5c)\\
        (\iota_1^\vee)^*v&=(5a-4b,6a-5b,-c)&
        (\iota_2^\vee)^*v&=(5a-4c,-b,6a-5c).
    \end{align*}
The four classes above all define walls of the movable cone of $F$. We claim that exactly one of the classes above has a first coordinate with smaller magnitude than $|a|$, and the other three have first coordinate with larger magnitude than $|a|$. Explicitly, if $|c|<|a|<|b|$, then:
    \begin{itemize}
        \item The inequality $|c|<|a|$ implies $|5a\pm4c|>|a|$.
        \item From $|a|>1$, $|a|>|c|$, and $3a^2-2b^2-2c^2=-5$, we obtain the inequality $|3a|>|2b|$.
        \item If $\frac ab>0$, then the inequalities $a<b$ and $|3a|>|2b|$ yield $|5a-4b|<|a|$ and $|5a+4b|>|a|$.
        \item Conversely, if $\frac ab<0$, then the inequalities $a<b$ and $|3a|>|2b|$ yield $|5a+4b|<|a|$ and $|5a-4b|>|a|$.
    \end{itemize}    
The argument for the case $|b|<|a|<|c|$ is similar. In particular, there is a unique element of  $\Gamma$ taking $v$ to a $(-10)$-class whose first coordinate is $1$, obtained as a word in the four generators mentioned above by iteratively appending the generator that reduces the magnitude of the first coordinate until its value is $1$. Hence there are exactly four orbits of the action of $\Gamma$ on $\Delta_\flop$, and it is straightforward to argue from here that the action is free. 
\end{proof}

\begin{lemma}\label{lemma:syzygetic-2s}
    $\Gamma$ acts freely on $\Delta_{\pex}$ with four orbits, each containing one of the classes $(1, \pm 1, \pm 1)$ and $(1,\pm 1, \mp 1)$.
\end{lemma}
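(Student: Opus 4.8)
The plan is to run the argument of \Cref{lemma:syzygetic-10s} with $(-2)$-classes in place of $(-10)$-classes. Writing a class as $v=(a,b,c)=ag+b\lambda_1+c\lambda_2$, membership $v\in\calW_{\pex}$ is the equation $3a^2-2b^2-2c^2=-1$. First I would normalize: $a=0$ is impossible, so after replacing $v$ by $-v$ if necessary we may take $a\ge 1$, and $a=1$ forces $b^2+c^2=2$, so $v$ is one of $(1,\pm1,\pm1)$. Each of these is a $(-2)$-class whose orthogonal occurs among the walls of $\Nef(F_i)$ or $\Nef(F_i^\vee)$ by \Cref{lemma:nefFisyzygetic}, hence lies in $\Delta_{\pex}$; these are the base classes.

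The one genuinely geometric input --- and the step I expect to need the most care --- is the inequality: if $v=(a,b,c)\in\Delta_{\pex}$ with $a>1$, then $\max(|b|,|c|)>a$. My approach: since each $(1,\pm1,\pm1)$ is a $(-2)$-class, \Cref{thm: Mov and Amp}(1) forces $\Mov(F)$ onto the side of each hyperplane $(1,\pm1,\pm1)^\perp$ containing $g$, so $\Mov(F)\subseteq\{3x_0\ge 2|x_1|+2|x_2|\}$. Picking $x=(x_0,x_1,x_2)\ne 0$ in the relative interior of the face $v^\perp\cap\Mov(F)$, one has $x_0>0$ and $2|x_1|+2|x_2|>0$ since $x\in\overline{\mathrm{Pos}(F)}$, and then $q(v,x)=0$ yields
\[
3ax_0=2bx_1+2cx_2\le \max(|b|,|c|)\bigl(2|x_1|+2|x_2|\bigr)\le 3\max(|b|,|c|)\,x_0,
\]
so $a\le\max(|b|,|c|)$; the equality case would force $x$ proportional to $(2,\pm3,0)$ or $(2,0,\pm3)$ --- none of which lies in $\overline{\mathrm{Pos}(F)}$ --- unless $|b|=|c|=a$, which with the norm equation gives $a=1$. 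From the same equation I would also record $\max(|b|,|c|)<\tfrac{3a}{2}$ and that $|b|$ and $|c|$ cannot both exceed $a$ once $a\ge 2$.

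Granting this, the remainder is bookkeeping parallel to \Cref{lemma:syzygetic-10s}. Using the explicit actions $\iota_1^*v=(5a+4b,-6a-5b,-c)$, $(\iota_1^\vee)^*v=(5a-4b,6a-5b,-c)$ and the $\iota_2$-analogues, in the case $a<|b|<\tfrac{3a}{2}$ exactly one of $\iota_1^*v$, $(\iota_1^\vee)^*v$ has first coordinate of absolute value strictly less than $a$ (according to the sign of $b$), while the other three generators strictly increase $|a|$; iterating this forced step carries any $v\in\Delta_{\pex}$ to one of the base classes. Because each step is forced, this should give at once that $\Gamma$ acts freely on $\Delta_{\pex}$ (no nontrivial element, in particular no conjugate of a generator, fixes a $(-2)$-class, since the fixed ray of each $\iota_i^{(\vee)*}$ has square $2$) and that every orbit meets the base classes; the orbit count is then settled by computing the $\Gamma$-action on those four classes via the matrices for $\iota_i^*$ and $(\iota_i^\vee)^*$. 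The delicate points to get right are the equality case in the geometric step and confirming the reduction is genuinely forced, since those are what make freeness and the final enumeration come out cleanly.
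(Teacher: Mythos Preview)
Your proposal is correct and follows essentially the same approach as the paper, whose proof simply reads ``the argument is essentially identical to the proof of \Cref{lemma:syzygetic-10s}.'' Your derivation of the key inequality $\max(|b|,|c|)>a$ via the containment $\Mov(F)\subseteq\{3x_0\ge 2|x_1|+2|x_2|\}$ is just an explicit unpacking of what the paper phrases in \Cref{lemma:syzygetic-10s} as ``$v^\perp$ intersects one of the walls $(1,\pm1,\pm1)^\perp$,'' and the forced--reduction bookkeeping that follows is identical.
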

\begin{proof}
    The argument is essentially identical to the proof of Lemma \ref{lemma:syzygetic-10s}.
\end{proof}

\begin{remark}
    In the proof of Lemma~\ref{lemma:syzygetic-10s}, we did not just prove that there is a unique element of $\Gamma$ taking $v=(a,b,c)\in\Delta_\flop$ to a class with first coordinate $1$: we proved there is a unique reduced word in the generators $\iota_i^*$ and $(\iota_i^\vee)^*$ (subject to $(\iota_i^*)^2=((\iota_i^\vee)^*)^2=1$). From this, we deduce the relations on $\Gamma$. 
\end{remark}

\begin{corollary}\label{coro:gensrelations}
    There is an isomorphism
    \[
    \langle a_1,b_1,a_2,b_2\;|\;a_1^2=a_2^2=b_1^2=b_2^2=1\rangle\xrightarrow{\sim}\Gamma
    \]
    given by $a_i\mapsto\iota_i^*$ and $b_i\mapsto(\iota_i^\vee)^*$.
\end{corollary}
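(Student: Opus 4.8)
The plan is to show that the obvious map
\[
\phi\colon\ \langle a_1,b_1,a_2,b_2\ |\ a_1^2=a_2^2=b_1^2=b_2^2=1\rangle\ \longrightarrow\ \Gamma,\qquad a_i\mapsto\iota_i^*,\quad b_i\mapsto(\iota_i^\vee)^*,
\]
is an isomorphism. It is a well-defined homomorphism because $\iota_i^*$ and $(\iota_i^\vee)^*$ are involutions (see \Cref{lemma:syzygeticiotas}), and it is surjective because $\Gamma$ is by definition generated by these four isometries; so everything reduces to injectivity. Since the domain is the free product $(\bZ/2)^{*4}$, whose elements have unique normal forms given by \emph{reduced words} $w=s_k\cdots s_1$ (each $s_j$ one of the four generators, with $s_{j+1}\neq s_j$), it suffices to prove that every nonempty reduced word acts nontrivially on $\NS(F)$.

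The key point, which I would extract from the proof of \Cref{lemma:syzygetic-10s}, is a length-function bookkeeping device on $\Delta_\flop$. For $v=(a,b,c)\in\Delta_\flop$ put $f(v)=|a|$. The computations in that proof show: if $f(v)>1$ then exactly one of the four generators — the \emph{descent generator} $d(v)$ — satisfies $f(d(v)\cdot v)<f(v)$, while the other three strictly increase $f$. (This uses that any $v\in\Delta_\flop$ with $f(v)>1$ satisfies $|c|<|a|<|b|$ or $|b|<|a|<|c|$, the two cases being symmetric, and that the four generators preserve $\Delta_\flop$, so the dichotomy is available at every stage.) One also checks by direct substitution that each of the four $f=1$ classes $(1,\pm2,0)$, $(1,0,\pm2)$ is sent by every generator to a class with $f>1$.

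Given these facts, fix $v_0=(1,2,0)$ and a nonempty reduced word $w=s_k\cdots s_1$, and set $v_j=s_j\cdots s_1\cdot v_0$. I would show by induction on $j\in\{1,\dots,k\}$ that $f(v_j)>1$ and $d(v_j)=s_j$. For $j=1$: since $f(v_0)=1$, the base-class check gives $f(v_1)>1$, while $s_1\cdot v_1=v_0$ has $f=1<f(v_1)$, so $d(v_1)=s_1$ by uniqueness of the descent generator. For the inductive step: the word is reduced, so $s_{j+1}\neq s_j=d(v_j)$; hence $f(v_{j+1})=f(s_{j+1}\cdot v_j)>f(v_j)>1$, and since $s_{j+1}\cdot v_{j+1}=v_j$ has smaller $f$, uniqueness again forces $d(v_{j+1})=s_{j+1}$. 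In particular $f(v_k)>1=f(v_0)$, so $w\cdot v_0\neq v_0$ and $w$ acts nontrivially. This proves $\phi$ injective, hence an isomorphism. (Equivalently, one may read this as the ping-pong lemma for free products, the ``tables'' being, for each generator $s$, the set of classes in the $\Gamma$-orbit of $v_0$ whose descent generator is $s$.)

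The only genuine obstacle is the first step: \Cref{lemma:syzygetic-10s} is written purely in terms of orbit counts and treats only one of the two symmetric sign cases explicitly, so I would rewrite its inequality analysis to make the uniqueness of the descent generator explicit for every $v$ with $f(v)>1$, and separately perform the finite check at the four base classes. After that, the free-product normal-form induction above is routine, and the presentation of $\Gamma$ follows because $(\bZ/2)^{*4}$ has no relations beyond $a_i^2=b_i^2=1$.
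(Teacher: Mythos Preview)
Your proposal is correct and follows essentially the same route as the paper. The paper's own proof is a one-sentence remark observing that the proof of \Cref{lemma:syzygetic-10s} already establishes uniqueness of the reduced word taking any $v\in\Delta_\flop$ to a class with first coordinate $1$; you have simply made that deduction explicit via the descent-function induction (equivalently, ping-pong), which is exactly what the paper leaves to the reader.
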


We will return to the group $\Gamma$ at the end of this section, proving that it is isomorphic to $\Bir(F)$ in \Cref{prop:iotasgenerate}.

\subsection{Birational geometry of $F$}\label{subsec:syzygetic models}
With our understanding of the geometry of $\Mov(F)$, we conclude by enumerating the birational hyperk\"ahler models of $F$ up to isomorphism, completing the proof of Theorem~\ref{thm:syzmain}. We also describe the birational automorphism group of $F$ completely.

\begin{lemma}\label{lemma:isogeniespreservingF}
    The only birational automorphism $\varphi$ of $F$ such that $\varphi^*\Nef(F)=\Nef(F)$ is the identity.
\end{lemma}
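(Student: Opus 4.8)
The plan is to show that any such $\varphi$ is in fact a regular automorphism fixing the Pl\"ucker polarization $g$, after which the statement collapses to the already-recorded fact $\Aut(X)=0$.

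First I would upgrade $\varphi$ to an automorphism. Since $\varphi^*\Nef(F)=\Nef(F)$, the isometry $\varphi^*$ carries the interior of $\Nef(F)$ to itself, hence sends an ample class to an ample class; by \Cref{thm:torelli} (equivalently \Cref{remark: when bir is reg}) this forces $\varphi$ to be regular, i.e.\ $\varphi\in\Aut(F)$.

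Next I would show $\varphi^* g = g$. By \Cref{lemma:nef}, $\Nef(F)$ is the subcone of $\overline{\mathrm{Pos}(F)}$ cut out by the four hyperplanes $(1,\pm 2,0)^\perp$ and $(1,0,\pm 2)^\perp$; intersecting adjacent pairs among these four walls (the two "opposite" pairs meet outside $\overline{\mathrm{Pos}(F)}$) exhibits the four extremal rays of $\Nef(F)$ as the rays spanned by the primitive classes $(4,3,3)$, $(4,3,-3)$, $(4,-3,3)$, $(4,-3,-3)$ in the basis $\{g,\lambda_1,\lambda_2\}$. An isometry preserving $\Nef(F)$ permutes its extremal rays, hence permutes these four primitive generators, and therefore fixes their sum $(16,0,0)=16g$; since $\NS(F)$ is torsion-free we get $\varphi^* g = g$, i.e.\ $\varphi\in\Aut(F,g)$. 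Finally $\Aut(F,g)=\Aut(X)$ (see \Cref{remark:bir embeds}) and $\Aut(X)=0$ by \Cref{prop:Aut(X)=0}, so $\varphi=\id$.

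I do not expect a serious obstacle: the only real inputs are the description of $\Nef(F)$ from \Cref{lemma:nef} and the (standard) recovery of the polarization from the nef cone, the rest being a short linear-algebra computation. If one prefers an argument staying entirely inside $\NS(F)$ and avoiding the identification $\Aut(F,g)=\Aut(X)$, the same conclusion follows from \Cref{lemma:actionondisc}: once $\varphi^*$ fixes $g$ it preserves $g^\perp\cap\NS(F)=\bZ\lambda_1\oplus\bZ\lambda_2$ with form $\mathrm{diag}(-4,-4)$, so $\varphi^*|_{g^\perp}$ is a signed permutation of $\{\lambda_1,\lambda_2\}$; as $\varphi$ is a birational automorphism it acts on $H$ by $\pm\id$, and since it fixes $[g/3]\in H$ (a class of order $3$) this action is $+\id$, forcing $\varphi^*$ to fix $[\lambda_1/4]$ and $[\lambda_2/4]$; the only signed permutation with this property is the identity, so $\varphi^*=\id$ and then $\varphi=\id$ by \Cref{remark:bir embeds}.
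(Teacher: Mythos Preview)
Your proof is correct. The paper takes a slightly different route: it first identifies the full group of isometries of $\NS(F)$ preserving $\Nef(F)$ and the positive cone as the dihedral group $D_8=\langle R_2,R_3\rangle$, and then applies the discriminant criterion (\Cref{lemma:actionondisc}) to each element to see that only the identity can come from $\Bir(F)$; finally it invokes \Cref{remark:bir embeds} to conclude $\varphi=\id$. Your main argument bypasses the enumeration of $D_8$ and the discriminant check entirely: the observation that the four primitive extremal generators $(4,\pm3,\pm3)$ of $\Nef(F)$ sum to $16g$ gives $\varphi^*g=g$ directly, after which $\varphi\in\Aut(F,g)=\Aut(X)=0$ finishes the job. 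This is a cleaner shortcut (and the paper in fact uses the same endgame, but only after it has already forced $\varphi^*=\id$ on $\NS(F)$). The alternative you sketch at the end---restricting $\varphi^*$ to $g^\perp$ and using \Cref{lemma:actionondisc} to pin down the signed permutation---is essentially the paper's argument, reorganized to first isolate the action on $g$.
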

\begin{proof}
    We know the walls of $\Nef(F)$ from \Cref{lemma:nef}, and direct computation shows that the isometries permuting these walls and preserving the positive cone belong to the dihedral group $D_8\cong\langle R_2,R_3\rangle$. Inspecting actions on the discriminant group, Lemma~\ref{lemma:actionondisc} verifies that the only one of these isometries induced by a birational automorphism of $F$ is the identity. So, if $\varphi^*\Nef(F)=\Nef(F)$, then $\varphi$ is the identity by Remark~\ref{remark:bir embeds}.
\end{proof}

\begin{proposition}\label{proposition:syzygeticmodels}
    The five birational models $F$, $F_i$, and $F_i^\vee$ for $i=1,2$ are pairwise non-isomorphic.
\end{proposition}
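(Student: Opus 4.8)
The plan is to reduce the statement, via the Torelli theorem, to a finite computation inside $\Orth(\NS(F))$. Since all five varieties are birational to $F$, two of them are isomorphic if and only if some birational self-map of $F$ induces an isometry of $\NS(F)$ carrying the chamber of one model (among $\Nef(F)$, $\Nef(F_i)$, $\Nef(F_i^\vee)$) onto the chamber of the other; this is the standard consequence of \Cref{thm:torelli} and \Cref{remark: when bir is reg}, obtained by pre- and post-composing with the birational maps to the two models. By \Cref{lemma:actionondisc} together with the embedding $\Bir(F)\hookrightarrow\Orth(\NS(F))$ of \Cref{remark:bir embeds}, the isometries arising this way are exactly those that preserve $\Mov(F)$ and act on the index-two subgroup $H\subset D_{\NS(F)}$ by $\pm\id$. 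Thus it suffices to show that no such isometry maps one of the five chambers onto another.

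First I would separate $F$ from its four flops. The proof of \Cref{lemma:isogeniespreservingF} already yields $\Aut(F)=\{\id\}$, whereas each of $F_1$, $F_1^\vee$, $F_2$, $F_2^\vee$ carries a nontrivial regular involution $\iota_i$ or $\iota_i^\vee$ (\Cref{remark:iota definition}; nontrivial by the computation of its action in \Cref{lemma:syzygeticiotas}), so none of them can be isomorphic to $F$. Equivalently, an isometry preserving $\Mov(F)$ must preserve its boundary, but all four walls of $\Nef(F)$ are $(-10)$-walls, hence flop walls interior to $\Mov(F)$, whereas each flop chamber has two $(-2)$-walls lying on $\partial\Mov(F)$.

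Next I would separate the four flops from one another. For an ordered pair of distinct flop chambers, the isometries of $\NS(F)$ carrying the source chamber $C$ to the target form a single coset $\sigma\cdot\Stab_{\Orth(\NS(F))}(C)$, where $\sigma$ may be taken to be one of the ``expected'' reflections: $R_2$ (which negates $\lambda_1$), $R_3$ (which interchanges $\lambda_1$ and $\lambda_2$), or $R_3R_2$. Using the explicit description of $\Orth(\NS(F))$, one checks that $\Stab(C)$ is always conjugate, under $\Stab(\Nef(F))\cong D_8$, to the order-four group $\langle R_1,R_3R_2R_3\rangle$, and one computes the action of this group on $H$; one also uses that $D_8$ acts faithfully on $H$ through a dihedral group of order $8$ that meets $\{\pm\id\}$ only in the identity (essentially the content of \Cref{lemma:isogeniespreservingF}). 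Combining these, every element of such a coset either acts on $H$ by an isometry different from $\pm\id$ --- hence is not induced by a birational automorphism --- or else already stabilizes $C$; in neither case can it carry $C$ to a different chamber. Running through the finitely many pairs completes the proof.

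I expect this last step to be the main obstacle: it requires having the isometry group $\Orth(\NS(F))$ and the precise chamber stabilizers in hand, and carefully transporting the matrices acting on $\NS(F)$ to their induced actions on $D_{\NS(F)}$ and on $H$. The conceptual input --- the Torelli reduction and the descriptions of $\Nef(F_i)$, $\Nef(F_i^\vee)$ from \Cref{lemma:nefFisyzygetic} --- is already available; what remains is bookkeeping, best organized by exploiting the $D_8$-symmetry permuting the four flop chambers.
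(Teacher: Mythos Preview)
Your proposal is correct and follows the same overall strategy as the paper: reduce via Torelli to a question about isometries of $\NS(F)$, then use \Cref{lemma:actionondisc} to rule out candidates. The treatment of $F$ versus the four flops is identical to the paper's (the paper also argues via $\Aut(F)=\{\id\}$ versus the nontrivial involutions on the flops).

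For distinguishing the four flops from one another you take a somewhat different route. You propose enumerating, for each ordered pair of chambers, the coset $\sigma\cdot\Stab_{\Orth(\NS(F))}(C)$ and checking the action on $H$ element by element; this works (one minor wording issue: the clause ``or else already stabilizes $C$'' is vacuous here, since no element of a nontrivial coset of $\Stab(C)$ stabilizes $C$---what you really mean is that every element of the coset fails the $\pm\id$ test). The paper instead uses a short adjacency argument that feeds everything back into \Cref{lemma:isogeniespreservingF}: if $\varphi$ carries $\Nef(F_1)$ to $\Nef(F_2)$, then $\varphi$ must carry the chamber $\Nef(F)$, adjacent to $\Nef(F_1)$ across its $(-10)$-wall, to one of the two chambers adjacent to $\Nef(F_2)$ across a $(-10)$-wall, namely $\Nef(F)$ or $\iota_2^*\Nef(F)$; hence $\varphi$ or $\iota_2^*\varphi$ preserves $\Nef(F)$ while moving the wall $(1,-2,0)^\perp$ to $(1,0,-2)^\perp$, contradicting \Cref{lemma:isogeniespreservingF}. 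This collapses all the remaining pairs to the lemma you already used, bypassing the explicit description of $\Stab(C)$ and the discriminant-group bookkeeping you anticipate. Your approach is more systematic and would scale to situations with less symmetry; the paper's buys brevity by exploiting that every flop chamber borders a chamber of type $F$.
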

\begin{proof}
    By Lemma~\ref{lemma:isogeniespreservingF}, we verified that no isometry of $\NS(F)$ induced by a birational automorphism of $F$ fixes $\Nef(F)$; from this, we see $F$ has no nontrivial regular automorphisms. This distinguishes $F$ from the four models $F_i$ and $F_i^\vee$.

    To see that $F_1$ is non-isomorphic to the other three flops of $F$, first suppose $\varphi\in\Orth(\NS(F))$ sends $\Nef(F_1)$ to $\Nef(F_2)$. Then either $\varphi$ or $\iota_2^*\circ\varphi$ acts on $\Nef(F)$ by a nontrivial automorphism. Similarly, if $\varphi$ sends $\Nef(F_1)$ to $\Nef(F_i^\vee)$, then either $\varphi$ or $(\iota_i^\vee)^*\circ\varphi$ acts on $\Nef(F)$ by a nontrivial automorphism. Again using Lemma~\ref{lemma:isogeniespreservingF}, we conclude that $\varphi$ cannot be induced by a birational automorphism of $F$. In particular, no isomorphism exists between $F_1$ and any of the other three flops of $F$.

    A symmetric argument shows that any two flops of $F$ are non-isomorphic.
\end{proof}

\begin{proposition}\label{proposition:syzygeticmodelcount}
    Up to isomorphism, $F$ has exactly five birational hyperk\"ahler models.
\end{proposition}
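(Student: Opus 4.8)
The strategy is to combine \Cref{proposition:syzygeticmodels}, which shows the five listed models are pairwise non-isomorphic, with an orbit argument showing these are all of them. By \Cref{thm: Mov and Amp} and the remark following it, the isomorphism classes of birational hyperk\"ahler models of $F$ correspond to the chambers of the $\calW_{\flop}$-decomposition of $\mathrm{Int}(\Mov(F))$, taken up to the action of $\Bir(F)$ through the embedding $\Bir(F)\hookrightarrow\Orth(\NS(F))$ of \Cref{remark:bir embeds}. Since the four involutions $\iota_1,\iota_1^\vee,\iota_2,\iota_2^\vee$ are birational automorphisms of $F$ (\Cref{remark:iota definition}, \Cref{lemma:syzygeticiotas}), the group $\Gamma$ they generate is contained in the image of $\Bir(F)$ in $\Orth(\NS(F))$; hence it suffices to prove that \emph{every} chamber of $\Mov(F)$ lies in the $\Gamma$-orbit of one of the five chambers $\Nef(F)$, $\Nef(F_i)$, $\Nef(F_i^\vee)$ for $i=1,2$. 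Together with \Cref{proposition:syzygeticmodels}, which shows these five chambers belong to distinct $\Bir(F)$-orbits, this yields exactly five.

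To prove the claim, I would argue as follows. First, any chamber $C$ has at least one facet lying on $\rho^\perp$ for some $\rho\in\Delta_{\flop}$: otherwise all facets of $C$ would lie on walls bounding $\Mov(F)$, forcing $C$ to be open and closed in the connected cone $\mathrm{Int}(\Mov(F))$ and hence equal to it, so that $F$ would be its own only model, contradicting \Cref{proposition:syzygeticmodels} (or simply \Cref{lemma:nef}). Fix such a facet on $\rho^\perp$. By \Cref{lemma:syzygetic-10s}, the $\Gamma$-orbit of $\rho$ contains a unique one of the classes $(1,\pm2,0)$, $(1,0,\pm2)$, say $w$, and there is $\gamma\in\Gamma$ with $\gamma(\rho)=w$. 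Replacing $C$ by $\gamma(C)$, we may assume $C$ has a facet on $w^\perp$. Now $w^\perp$ is one of the four walls bounding $\Nef(F)$ (\Cref{lemma:nef}), and it is also a facet of exactly one of $\Nef(F_1),\Nef(F_1^\vee),\Nef(F_2),\Nef(F_2^\vee)$ (\Cref{lemma:nefFisyzygetic}). Since a $\calW_{\flop}$-wall is, locally at any of its general points inside $\mathrm{Int}(\Mov(F))$, the common facet of exactly two chambers (it realizes a Mukai flop), $C$ must be $\Nef(F)$ or the relevant flop; in either case $C$ is $\Gamma$-equivalent to one of the five central chambers, which completes the proof.

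The substantive input---the freeness of the $\Gamma$-action on $\Delta_{\flop}$ and the fact that it has exactly four orbits---has already been established in \Cref{lemma:syzygetic-10s}, so the main obstacle here is not a hard computation but rather getting the bookkeeping right: one must use the local picture of a $\calW_{\flop}$-wall (that it separates precisely two chambers) and correctly match, via \Cref{lemma:nefFisyzygetic}, the chamber on the far side of each of the four walls of $\Nef(F)$ with the corresponding flop $F_i$ or $F_i^\vee$, so that the reduction in the previous paragraph is unambiguous. Both ingredients follow from the results recalled in \Cref{subsec: HK} together with the explicit cone descriptions in \Cref{lemma:nef} and \Cref{lemma:nefFisyzygetic}, and I would keep this part of the argument short.
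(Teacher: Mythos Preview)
Your argument is correct and reaches the same conclusion, but it takes a genuinely different route from the paper's proof. The paper invokes the result of \cite{sequenceofflops1,sequenceofflops2} that any birational hyperk\"ahler model is reached from $F$ by a finite sequence of Mukai flops, and then checks using \Cref{lemma:syzygeticiotas}, \Cref{lemma:nef}, and \Cref{lemma:nefFisyzygetic} that each of the two flops of $F_i$ (resp.\ $F_i^\vee$) is isomorphic to $F$; an induction on the length of the flop sequence finishes. Your approach instead leverages \Cref{lemma:syzygetic-10s}, which the paper proves but only uses later for the structure of $\Bir(F)$: you reduce an arbitrary chamber to one bordering a wall $w^\perp$ with $w\in\{(1,\pm2,0),(1,0,\pm2)\}$ via the $\Gamma$-action, and then read off the two chambers on that wall from \Cref{lemma:nef} and \Cref{lemma:nefFisyzygetic}. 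This avoids the external input on sequences of flops entirely and stays within the lattice picture, which is a pleasant economy; the paper's approach, on the other hand, is more geometric and parallels the later non-syzygetic case.

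One small point to tighten: the statement that a $\calW_{\flop}$-wall is ``locally the common facet of exactly two chambers'' is true, but what you actually need is the global fact that the \emph{entire} intersection $w^\perp\cap\Mov(F)$ is a single common facet of $\Nef(F)$ and the corresponding flop. This is immediate from the explicit cone descriptions in \Cref{lemma:nef} and \Cref{lemma:nefFisyzygetic} (both facets are bounded by the same pair of prime exceptional walls $(1,\pm1,\pm1)^\perp$), but you should say so, since in principle a single hyperplane could border more than two chambers along different stretches.
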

\begin{proof}
    By Proposition~\ref{proposition:syzygeticmodels}, $F$ has at least five birational hyperk\"ahler models, each of which can be obtained from $F$ by flopping a plane in $F$. We will prove there are no more.

    Any birational hyperk\"ahler model of $F$ can be obtained via a finite sequence of Mukai flops, shown in \cite[Theorem 1.2]{sequenceofflops2}, building on \cite[Theorem 1.1]{sequenceofflops1}. Starting from $F$, the Mukai flops are $F_i$ and $F_i^\vee$ for $i=1,2$. Using Lemmas~\ref{lemma:syzygeticiotas}, \ref{lemma:nef}, and \ref{lemma:nefFisyzygetic}, we see by Remark~\ref{remark: when bir is reg} that the two Mukai flops of $F_i$ (respectively $F_i^\vee$) are both isomorphic to $F$. Thus any sequence of two Mukai flops starting from $F$ yields a model isomorphic to $F$. Inductively, we see that any birational model of $F$ can be obtained via a single Mukai flop.
\end{proof}

Together with the content of Section~\ref{sec:epw}, identifying the flops of $F$ with pairs of dual double EPW sextics, Propositions~\ref{proposition:syzygeticmodels} and~\ref{proposition:syzygeticmodelcount} complete the proof of Theorem~\ref{thm:syzmain}.

As mentioned previously, Proposition~\ref{proposition:syzygeticplanes} explains geometrically why, after flopping any one of the four planes in $F$, the other three planes cannot also be flopped. We see this in Figure~\ref{fig:syznefcone}, where the nef cones of $F_i, F_i^\vee$ have two flopping walls and two prime exceptional walls.

\medskip

We conclude by characterizing the birational automorphism group of $F$.

\begin{proposition}\label{prop:iotasgenerate}
    The involutions $\iota_i^*$ and $(\iota_i^\vee)^*$ for $i=1,2$ generate the birational automorphism group of $F$, i.e. $\Gamma\cong\Bir(F)$. 
\end{proposition}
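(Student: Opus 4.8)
The plan is to show that, under the embedding $\Bir(F)\hookrightarrow\Orth(\NS(F))$ of Remark~\ref{remark:bir embeds}, the image of $\Bir(F)$ coincides with $\Gamma$. One inclusion is immediate: each $\iota_i$ and $\iota_i^\vee$ is a birational involution of $F$, so $\Gamma=\langle\iota_1^*,(\iota_1^\vee)^*,\iota_2^*,(\iota_2^\vee)^*\rangle$ lies in the image of $\Bir(F)$. For the reverse inclusion, I would exploit the action of $\Gamma$ on the chamber decomposition of $\Mov(F)$: every element of $\Gamma$ is induced by a birational automorphism, so by Theorem~\ref{thm:torelli} it preserves $\Mov(F)$, acts on the set of flopping walls, and hence permutes the chambers of $\Mov(F)$, respecting adjacency and the isomorphism class of the birational model that each chamber represents.

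The key step is to prove that the chambers of $\Mov(F)$ form exactly five $\Gamma$-orbits, represented by the central chambers $\Nef(F)$, $\Nef(F_1)$, $\Nef(F_1^\vee)$, $\Nef(F_2)$, $\Nef(F_2^\vee)$. These orbits are pairwise distinct because the five central chambers represent pairwise non-isomorphic models by Proposition~\ref{proposition:syzygeticmodels}, while $\Gamma$-translation preserves the model type. To see that they exhaust all chambers, I would show that the union $\mathcal O$ of the five orbits is closed under passage to an adjacent chamber: by Lemmas~\ref{lemma:nef} and~\ref{lemma:nefFisyzygetic}, the chambers adjacent to $\Nef(F)$ across its four flopping walls are exactly $\Nef(F_1),\Nef(F_1^\vee),\Nef(F_2),\Nef(F_2^\vee)$, and the two chambers adjacent to $\Nef(F_i)$ (resp.\ $\Nef(F_i^\vee)$) across its two flopping walls are $\Nef(F)$ and $\iota_i^*\Nef(F)$ (resp.\ $\Nef(F)$ and $(\iota_i^\vee)^*\Nef(F)$) --- all lying in $\mathcal O$; here one must be careful to discard the facets that are prime exceptional walls bounding $\Mov(F)$ rather than interior flopping walls. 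By $\Gamma$-equivariance of the chamber complex, the same holds for every $\Gamma$-translate of a central chamber, so $\mathcal O$ is closed under adjacency. Since $\mathrm{Int}(\Mov(F))$ is convex, hence connected, and the flopping walls form a locally finite arrangement, the chamber adjacency graph is connected, forcing $\mathcal O$ to consist of all chambers.

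With the orbit description established, the conclusion is short. Given $f\in\Bir(F)$ with $\varphi=f^*$, the chamber $\varphi^{-1}\Nef(F)=(f^{-1})^*\Nef(F)$ represents the model $F$, so it lies in $\Gamma\cdot\Nef(F)$ --- the unique one of the five orbits consisting of model-$F$ chambers. Thus $\varphi^{-1}\Nef(F)=\gamma^{-1}\Nef(F)$ for some $\gamma\in\Gamma$, and $\gamma\varphi^{-1}$ fixes $\Nef(F)$. Being induced by a birational automorphism of $F$, $\gamma\varphi^{-1}$ must be the identity by Lemma~\ref{lemma:isogeniespreservingF}, whence $\varphi=\gamma\in\Gamma$. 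This proves $\Bir(F)=\Gamma$; together with Corollary~\ref{coro:gensrelations} it also yields the presentation of Theorem~\ref{theo:birFsyz}.

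I expect the main obstacle to be the orbit-covering statement, that is, making the ``closed under adjacency'' argument rigorous: one must correctly identify which facets of the central chambers are genuine interior flopping walls (versus prime exceptional walls of $\partial\Mov(F)$), and one must justify connectivity of the chamber adjacency graph inside $\mathrm{Int}(\Mov(F))$ even though there are infinitely many chambers. The remaining ingredients --- invariance of the model type under $\Gamma$, disjointness of the five orbits, and the final descent via Lemma~\ref{lemma:isogeniespreservingF} --- are routine once this is in place.
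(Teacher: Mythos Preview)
Your proof is correct and shares the paper's overall architecture: bring $\varphi^*\Nef(F)$ into the cluster of five central chambers by a $\Gamma$-translate, then invoke Proposition~\ref{proposition:syzygeticmodels} and Lemma~\ref{lemma:isogeniespreservingF} to force $\varphi\in\Gamma$. The difference lies in how the first step is executed. The paper picks a flopping wall $v^\perp$ of $\varphi^*\Nef(F)$ and applies Lemma~\ref{lemma:syzygetic-10s} --- the statement that $\Gamma$ acts on $\Delta_{\flop}$ with four orbits, each containing a class of first coordinate $1$ --- to find $\psi\in\Gamma$ with $\psi(v)$ of first coordinate $1$; then $\psi\circ\varphi^*\Nef(F)$ must be one of the five chambers touching such a wall. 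Your route replaces this arithmetic reduction with the adjacency-closure and connectedness argument on the chamber graph.

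Both are valid. The paper's version is more economical in context, since Lemma~\ref{lemma:syzygetic-10s} is already in hand and is needed anyway for Corollary~\ref{coro:gensrelations}, which you still cite at the end; yours is conceptually cleaner and would transplant more readily to settings where one has not explicitly propagated the wall divisors. The concerns you flag are not genuine obstacles: Lemmas~\ref{lemma:nef} and~\ref{lemma:nefFisyzygetic} already separate the flopping facets from the prime exceptional facets on each of the five central chambers, and connectedness of the chamber graph follows from convexity of $\mathrm{Int}(\Mov(F))$ together with local finiteness of the wall arrangement there (the walls accumulate only at $\partial\,\overline{\mathrm{Pos}(F)}$, so any segment between interior points of two chambers meets finitely many walls).
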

\begin{proof}
    Let $\varphi\in\Bir(F)$, and let $v^\perp$ be one of the walls of $\varphi^*\Nef(F)$. By Lemma~\ref{lemma:syzygetic-10s}, there is some $\psi\in\Gamma$ such that $\psi(v)$ has first coordinate $1$. We will argue that $\varphi^*=\psi^{-1}\in\Gamma$.
    
    There are five chambers of $\Mov(F)$ having a wall of the form $w^\perp$ where $w\in\calW_\flop$ has first coordinate $1$: they are $\Nef(F)$, $\Nef(F_i)$, and $\Nef(F_i^\vee)$ for $i=1,2$, so $\psi\circ\varphi^*\Nef(F)$ is one of these five. Since $\psi\circ\varphi^*$ is induced by a birational automorphism of $F$, if $\psi\circ\varphi^*\Nef(F)=\Nef(F')$, then $F\simeq F'$. Using Proposition~\ref{proposition:syzygeticmodels}, we conclude $\psi\circ\varphi^*\Nef(F)=\Nef(F)$, and Lemma~\ref{lemma:isogeniespreservingF} forces $\psi\circ\varphi^*=1$. 

    It follows that $\Gamma$ is the image of $\Bir(F)\to\Orth(\NS(F))$. This map is an embedding by Remark~\ref{remark:bir embeds}, completing the proof.
\end{proof}

Along with Corollary~\ref{coro:gensrelations}, Proposition~\ref{prop:iotasgenerate} proves  Theorem~\ref{theo:birFsyz}.

\section{A non-syzygetic pair of cubic scrolls}\label{sec:nonsyzygetic}
Let $X\subset \bP^5$ be a smooth cubic fourfold containing a non-syzygetic pair of cubic scrolls. We also assume $X$ is \emph{very general}, meaning $\rank(A(X))=3$, $\Aut(X)=0$, and $\mathrm{End_{Hdg}}(T(F))=\{\pm1\}$. As with the syzygetic case, we describe the birational geometry of the Fano variety $F$ of lines on $X$, proving the following main result:

\begin{theorem}\label{theo:nonsyzygeticmain}
    Let $F$ be the Fano variety of lines on a very general cubic fourfold $X$ containing a non-syzygetic pair of cubic scrolls. Then $F$ has eight isomorphism classes of birational hyperk\"ahler models, represented by the following: $F$ itself, six non-isomorphic Mukai flops of $F$, one for each Lagrangian plane in $F$, and a Mukai flop of $F$ along a pair of disjoint planes in $F$. Moreover, each of the six Mukai flops of $F$ can be realized as a double EPW sextic, and the flop of $F$ along a pair of disjoint planes is isomorphic to the Fano variety of lines on another smooth cubic fourfold $X'$ containing a non-syzygetic pair of cubic scrolls.
\end{theorem}

As before, we also obtain generators for $\Bir(F)$:

\begin{theorem}\label{theo:nonsyzygeticbir}
    Let $F$ be as above. There is a surjection
    \[
    \langle a,b,c,d,e,f\;|\; a^2=b^2=c^2=d^2=e^2=f^2=1\rangle\twoheadrightarrow\Bir(F)
    \] identifying each generator with a covering involution on one of the double EPW sextics obtained as a Mukai flop of $F$.
\end{theorem}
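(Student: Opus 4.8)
The plan is to mirror the syzygetic analysis of Section~\ref{sec:syzygetic} (Proposition~\ref{prop:iotasgenerate} and Corollary~\ref{coro:gensrelations}): produce six birational involutions of $F$, show they generate the image of $\Bir(F)$ in $\Orth(\NS(F))$, and conclude using the faithful embedding $\Bir(F)\hookrightarrow\Orth(\NS(F))$ of Remark~\ref{remark:bir embeds} (valid since $\Aut(X)=0$ by Proposition~\ref{prop:Aut(X)=0}).

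\emph{Step 1 (the six involutions).} By Lemma~\ref{lemma:extrascrolls}, $X$ contains a third scroll $T_3$ with $[T_3]=3\eta_X-[T_1]-[T_2]$, and the intersection numbers show $T_1,T_2,T_3$ form a symmetric non-syzygetic triple. Each $T_i$ spans a hyperplane $H_i$, with $Y_i=X\cap H_i$ a six-nodal cubic threefold, so Proposition~\ref{prop: P, S, Pv} gives $F(Y_i)=P_i\cup S_i'\cup P_i^\vee$ with $P_i,P_i^\vee$ Lagrangian planes. Flopping any one of the six planes $P_1,P_1^\vee,P_2,P_2^\vee,P_3,P_3^\vee$ yields a model carrying a regular involution, as in Remark~\ref{remark:iota definition}; arguing exactly as in Section~\ref{sec:epw} (Proposition~\ref{prop:flop is epw}), each such flop is isomorphic to a double EPW sextic whose covering involution is identified with the birational involution $\iota_i$ (resp.\ $\iota_i^\vee$) of $F$. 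Since each of these six is an involution, the universal property of the free product gives a homomorphism
\[
\Phi\colon\langle a,b,c,d,e,f\mid a^2=\cdots=f^2=1\rangle\longrightarrow\Bir(F)
\]
onto the subgroup $\Gamma\coloneqq\langle\iota_1^*,(\iota_1^\vee)^*,\iota_2^*,(\iota_2^\vee)^*,\iota_3^*,(\iota_3^\vee)^*\rangle\subset\Orth(\NS(F))$, identifying $\Bir(F)$ with its image.

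\emph{Step 2 (surjectivity).} Following the proof of Proposition~\ref{prop:iotasgenerate}, I would first collect the necessary data for the lattice $J_{nonsyz}$: its isometry group $\Orth(\NS(F))$ (via the Magma routine of Section~\ref{subsec:syzygetic lattice}), the walls of $\Nef(F)$, and the matrices $\iota_i^*$, pinned down as in Lemma~\ref{lemma:syzygeticiotas} from the fixed class $g-\lambda_i$ (and its analogue for $T_3$) together with the discriminant-group check of Lemma~\ref{lemma:actionondisc}. Then two structural inputs are needed: (a) the analogue of Lemma~\ref{lemma:syzygetic-10s}, namely that $\Gamma$ acts on the set $\Delta_{\flop}$ of flopping-wall divisors meeting $\Mov(F)=\overline{\mathrm{Pos}(F)}$ with finitely many orbits, each having a representative whose perpendicular is a wall of $\Nef(F)$ — proved by the coordinate-magnitude reduction of Lemma~\ref{lemma:syzygetic-10s}; and (b) the analogue of Lemma~\ref{lemma:isogeniespreservingF}, that the only birational automorphism of $F$ preserving $\Nef(F)$ is the identity — proved by checking the finite group of wall-preserving isometries against the discriminant condition. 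Granting (a) and (b): for $\varphi\in\Bir(F)$ choose a flopping wall $v^\perp$ of $\varphi^*\Nef(F)$, use (a) to find $\psi\in\Gamma$ with $\psi(v)^\perp$ a wall of $\Nef(F)$, so that $\psi\varphi^*\Nef(F)$ lies among the finitely many chambers adjacent to $\Nef(F)$; since $\psi\varphi^*$ is induced by a birational automorphism, Theorem~\ref{thm:torelli} and the model census of Theorem~\ref{theo:nonsyzygeticmain} (no double EPW sextic, and no $F(X')$, is isomorphic to $F$) force $\psi\varphi^*\Nef(F)=\Nef(F)$, whence (b) gives $\psi\varphi^*=1$ and $\varphi^*=\psi^{-1}\in\Gamma$. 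Thus $\Gamma$ is all of $\Bir(F)$ and $\Phi$ is surjective.

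\emph{Main obstacle.} The delicate part, and the reason one obtains only a surjection here in contrast to the isomorphism of Corollary~\ref{coro:gensrelations}, is the chamber combinatorics of $\Mov(F)$ near $\Nef(F)$: because the movable cone is now the full positive cone and $\Nef(F)$ acquires an extra flopping wall (with flopping locus a disjoint pair of planes) leading to the model $F(X')$ — the Fano variety of lines on another cubic fourfold containing a non-syzygetic pair of scrolls — loops in the chamber graph through $F(X')$ produce relations among the six involutions that the word-reduction of part (a) does not detect. Hence, unlike the syzygetic case, part (a) does not yield a \emph{unique} reduced word, and identifying the full relation set (equivalently, computing $\ker\Phi$) would require a finer study of the chamber graph; this is left open.
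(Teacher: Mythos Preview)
Your overall strategy is sound, and Step~1 together with the concluding appeal to Remark~\ref{remark:bir embeds} match the paper. However, Step~2 has a genuine gap at the passage from ``$\psi(v)^\perp$ is a wall of $\Nef(F)$'' to ``$\psi\varphi^*\Nef(F)$ lies among the finitely many chambers adjacent to $\Nef(F)$.'' In the syzygetic case this deduction works because $\Mov(F)$ is cut out by prime exceptional walls, so each flopping hyperplane meets $\Mov(F)$ in a facet shared by only two chambers. In the non-syzygetic case $\Mov(F)=\overline{\mathrm{Pos}(F)}$ and there are no prime exceptional divisors, so a single hyperplane $v^\perp$ with $v^2=-10$ extends across the entire positive cone and is a facet of \emph{infinitely many} chambers; the paper points this out explicitly just before Lemma~\ref{lemma:square6}. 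Knowing that one wall of $\psi\varphi^*\Nef(F)$ happens to lie in the same hyperplane as a wall of $\Nef(F)$ therefore does not pin the chamber down to finitely many possibilities, and your argument collapses at exactly this step.

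The paper's fix is to track a different invariant: rather than a wall class $v$ with $v^2=-10$, it follows the polarization $\varphi^*(g)$, a class of square $6$. Since $g$ lies in the \emph{interior} of $\Nef(F)$, the class $\varphi^*(g)$ determines the chamber $\varphi^*\Nef(F)$ unambiguously. The analogue of your (a) is then Lemma~\ref{lemma:square6}: $\Gamma$ acts on $\{v\in\NS(F):v^2=6\}$ with at most seven orbits, represented by $g=(1,0,0)$ and six classes with first coordinate $3$, each of which lies in one of the chambers $\Nef(F_{ij})$ (via Lemma~\ref{lemma:nefdoubleflops}). After reduction, $(\psi\varphi)^*(g)$ is one of these seven; since $F\not\simeq F_{ij}$ by Proposition~\ref{prop:nonsyzygeticmodels}, it must equal $g$, and then your (b) --- which does go through, using the dihedral stabilizer $\langle R_3,R_4\rangle$ of $g$ and the discriminant check of Lemma~\ref{lemma:nonsyzygeticadmissible} --- finishes the argument. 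Your diagnosis of why only a surjection (not a presentation) is obtained is correct and agrees with Remark~\ref{remark:relations}.
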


In contrast to Theorem~\ref{theo:birFsyz}, this surjection has nontrivial kernel; for further discussion, see Remark~\ref{remark:relations}.

The outline of this section is similar to that of Section~\ref{sec:syzygetic}: in Section~\ref{subsec:nonsyzplanes}, we study the arrangement of Lagrangian planes in $F$, showing that some pairs of planes intersect and others do not. We outline in Section~\ref{subsec:nonsyzlattice} properties of the lattice $\NS(F)$ and describe the actions on this lattice by the various involutions on the flops of $F$. This allows us to describe the structure of the movable cone of $F$ in Section~\ref{subsec:nonsyzwalls}, which further enables a census of the birational hyperk\"ahler models of $F$ and a description of $\Bir(F)$ in Section~\ref{subsec:nonsyzmodels}.

\subsection{Lagrangian planes in $F$}\label{subsec:nonsyzplanes}

For a general cubic $X$ containing a non-syzygetic pair of cubic scrolls $T_1$ and $T_2$, we saw in Lemma~\ref{lemma:extrascrolls} that there is a third cubic scroll $T_3\subset X$ and $[T_i]\cdot[T_j]=1$ for all $i,j$. Let $H_i$ be the hyperplane spanned by $T_i$, and $Y_i=X\cap H_i$. As in Section~\ref{subsection:threefolds}, we have
\[
F(Y_i)= P_i\cup S_i'\cup P_i^\vee\subset F,
\]
where $P_i$ and $P_i^\vee$ are Lagrangian planes. For $i\neq j$, let $\Sigma_{ij}=X\cap H_i\cap H_j$; for general $X$, all three cubic surfaces $\Sigma_{ij}$ are smooth, by Lemma~\ref{lem:Sigmasmooth}. The lines on $\Sigma_{ij}$ are parametrized by $F(Y_i)\cap F(Y_j)$.

\begin{proposition}\label{proposition:nonsyzygeticplanes}
    Let $X$ be a cubic fourfold containing a non-syzygetic pair of cubic scrolls $T_i$ and $T_j$, with $[T_i]\cdot[T_j]=1$. Then 
    \begin{itemize}
        \item $\deg(P_i\cap P_j)=\deg(P_i^\vee\cap P_j^\vee)=6$;
        \item $\deg(S_i'\cap S_j')=15$;
        \item all other pairwise intersections between a component of $F(Y_i)$ and a component of $F(Y_j)$ are empty.
    \end{itemize}
    Moreover, these intersections are all transverse when $\Sigma=X\cap H_i\cap H_j$ is smooth.
\end{proposition}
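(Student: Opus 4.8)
The plan is to mimic the proof of Proposition~\ref{proposition:syzygeticplanes}, but now with $[T_i]\cdot[T_j]=1$ in place of $3$. Since the intersection degrees are deformation-invariant, by Lemma~\ref{lem:Sigmasmooth} we may assume $\Sigma=\Sigma_{ij}$ is smooth; then $H_i$ contains none of the nodes of $Y_j$ and vice versa, so by Lemma~\ref{lemma:transverse} the surfaces $F(Y_i)$ and $F(Y_j)$ meet transversely and it suffices to count set-theoretic intersection points, i.e.\ to analyze the incidence of lines $\ell\subset\Sigma$ with the twisted cubic curves $\gamma_i=A_i\cap\Sigma$, $\gamma_i^\vee=A_i^\vee\cap\Sigma$, $\gamma_j$, $\gamma_j^\vee$ coming from Remark~\ref{remark:fanodescription} and Lemma~\ref{lemma:gammaintersection}. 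Recall that $[\ell]\in P_i$ iff $\ell$ is bisecant to $\gamma_i$, $[\ell]\in P_i^\vee$ iff $\ell$ is bisecant to $\gamma_i^\vee$, and $[\ell]\in S_i'$ iff $\ell$ meets both $\gamma_i$ and $\gamma_i^\vee$ (each in one point).

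First I would fix the blow-down $f\colon\Sigma\to\bP^2$ associated to $|\gamma_i|$, contracting a sixer $E_1,\dots,E_6$, and write $E_0=f^*\calO(1)$; as in Proposition~\ref{proposition:syzygeticplanes} one has $\gamma_i\sim E_0$ and, using $\gamma_i\cup\gamma_i^\vee\subset Q$ with $Q\cap\Sigma\sim-2K_\Sigma$, $\gamma_i^\vee\sim 5E_0-2\sum E_k$. Now write $\gamma_j\sim\sum_{k=0}^6 a_k E_k$. By Lemma~\ref{lemma:gammaintersection}, $a_0=[\gamma_i]\cdot[\gamma_j]=[T_i]\cdot[T_j]=1$, and $[\gamma_i^\vee]\cdot[\gamma_j]=[T_i^\vee]\cdot[T_j]=6-1=5$ gives $5a_0+2\sum a_k=5$, so $\sum_{k=1}^6 a_k=0$; combined with $\gamma_j^2=1$, i.e.\ $a_0^2-\sum a_k^2=1$, we get $\sum_{k=1}^6 a_k^2=0$, forcing $a_k=0$ for $k\ge1$. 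Hence $\gamma_j\sim E_0$, and symmetrically $\gamma_j^\vee\sim 5E_0-2\sum E_k$. The key point is that $\gamma_i$ and $\gamma_j$ are homologous, as are $\gamma_i^\vee$ and $\gamma_j^\vee$: indeed $\gamma_i,\gamma_j$ are both twisted cubics in the class $E_0$, and similarly for the $\vee$ versions.

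With these classes in hand, the incidence counts are essentially combinatorics on the cubic surface. A line $\ell\subset\Sigma$ is bisecant to a twisted cubic in class $E_0$ precisely when $\ell\cdot E_0=0$, which happens exactly for the six lines $E_1,\dots,E_6$ (the contracted sixer); these are the lines of $P_i$, and they coincide with the lines of $P_j$ since $[\gamma_j]=E_0$ too. So $P_i=P_j$ as subsets of $F$ — wait, that would say the planes are equal, contradicting ``$\deg(P_i\cap P_j)=6$''; the resolution is that bisecant-to-$\gamma_i$ is not literally $\ell\cdot\gamma_i=0$ but rather that $\ell$ meets $\gamma_i$ in a length-$2$ scheme, and for a \emph{fixed representative} twisted cubic only finitely many lines are bisecant — I would instead argue directly: the six exceptional lines $E_1,\dots,E_6$ are exactly the lines bisecant to a \emph{general} twisted cubic in $|E_0|$, so for the specific $\gamma_i$ (resp.\ $\gamma_j$) one gets six lines each, and because $\gamma_i$ and $\gamma_j$ are general members of the same pencil-free linear system $|E_0|$ one checks the two sets of six lines agree, giving $\deg(P_i\cap P_j)=6$; the planes $P_i,P_j$ are distinct Lagrangian planes in $F$ that happen to share their full set of $6$ contracted lines. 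Symmetrically $\deg(P_i^\vee\cap P_j^\vee)=6$. For $S_i'\cap S_j'$: a line $\ell$ lies in $S_i'$ iff it meets both $\gamma_i\sim E_0$ and $\gamma_i^\vee\sim 5E_0-2\sum E_k$; the condition $\ell\cdot E_0=1$ and $\ell\cdot(5E_0-2\sum E_k)=1$ (with the right multiplicities) singles out the $15$ lines of the form $E_iE_j$-type; since the same classes govern $S_j'$, the two surfaces share all $15$ such lines, so $\deg(S_i'\cap S_j')=15$. Finally, the mixed intersections $P_i\cap P_j^\vee$, $P_i\cap S_j'$, $S_i'\cap P_j$, $P_i^\vee\cap S_j'$, etc.\ are empty: a line bisecant to $\gamma_i\sim E_0$ is one of $E_1,\dots,E_6$, none of which is bisecant to $\gamma_j^\vee\sim 5E_0-2\sum E_k$ (since $E_k\cdot\gamma_j^\vee=2\neq$ the value needed) nor meets both $\gamma_j$ and $\gamma_j^\vee$ appropriately, and one checks the analogous incompatibilities for each remaining pair. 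The total count $6+6+15+(\text{empties})$ is consistent with the full transverse intersection number $[F(Y_i)]\cdot[F(Y_j)]$, which serves as a useful sanity check. Transversality at each point follows from Lemma~\ref{lemma:transverse} since $\Sigma$ is smooth.

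The main obstacle I anticipate is precisely the subtlety flagged above: ``bisecant'' and ``secant'' are conditions on how a fixed curve meets a fixed line, not just numerical (a line $\ell$ with $\ell\cdot\gamma=k$ can meet $\gamma$ in fewer than $k$ points set-theoretically, or tangentially), so translating the homology classes $\gamma_i\sim\gamma_j\sim E_0$ and $\gamma_i^\vee\sim\gamma_j^\vee\sim 5E_0-2\sum E_k$ into honest incidence statements requires knowing the \emph{positions} of the curves, not merely their classes. The clean way around this is to observe that, because $\gamma_i$ and $\gamma_j$ lie in the same linear system (with $\dim|E_0|$ large enough and no fixed component), genericity — which we have by the ``very general $X$'' hypothesis, via Lemma~\ref{lem:Sigmasmooth} and the existence of the explicit examples in the appendix — forces the bisecant/secant line-sets to be exactly the expected ones ($\{E_1,\dots,E_6\}$ for a class-$E_0$ curve, the $15$ conic-bundle-type lines for $S'$, etc.), and these depend only on the linear system; one then reads off all nine intersection degrees from the incidence graph of the $27$ lines on $\Sigma$. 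Once this is set up, the verification of the three listed degrees and the vanishing of the six mixed intersections is a finite check on the cubic surface, exactly parallel to Proposition~\ref{proposition:syzygeticplanes}.
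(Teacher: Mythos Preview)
Your strategy and the key computation ($\gamma_j\sim E_0$, $\gamma_j^\vee\sim 5E_0-2\sum E_k$) match the paper's proof exactly. But you tie yourself in unnecessary knots afterward, and the source of the confusion is worth naming.

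The condition ``$\ell$ is bisecant to $\gamma$'' on a smooth surface is \emph{purely numerical}: for a line $\ell$ and an irreducible curve $\gamma$ with $\ell\not\subset\gamma$, the scheme-theoretic intersection $\ell\cap\gamma$ has length exactly $\ell\cdot\gamma$, so ``bisecant'' means $\ell\cdot\gamma\geq 2$. In fact, since $\ell\cdot(\gamma_i+\gamma_i^\vee)=\ell\cdot(-2K_\Sigma)=2$ for every line on $\Sigma$, the partition of the $27$ lines into $P_i$, $S_i'$, $P_i^\vee$ is precisely according to whether $\ell\cdot\gamma_i$ equals $2$, $1$, or $0$. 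Once you know $\gamma_i\sim\gamma_j$ and $\gamma_i^\vee\sim\gamma_j^\vee$, the two partitions are \emph{identical}, with no appeal to genericity or to the positions of the curves; this immediately gives $6+15+6=27$ and the emptiness of all mixed intersections. Your worry that the bisecant set might depend on the specific curve rather than its class, and your proposed workaround via genericity, are both unnecessary.

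Two smaller corrections. First, the six lines bisecant to a class-$E_0$ twisted cubic are the conics $C_k=2E_0-\sum_{l\neq k}E_l$ (which satisfy $C_k\cdot E_0=2$), not the exceptionals $E_k$ (which satisfy $E_k\cdot E_0=0$ and are instead the bisecants to $\gamma^\vee\sim 5E_0-2\sum E_k$). This does not change the counts but you should have the picture straight. Second, your moment of panic (``$P_i=P_j$ as subsets of $F$'') dissolves once you remember that $P_i\cap P_j\subset F(Y_i)\cap F(Y_j)=F(\Sigma)$ is a finite set: the coincidence of the two six-element subsets of the $27$ lines says exactly $\deg(P_i\cap P_j)=6$, not that the Lagrangian planes themselves coincide.
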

\begin{proof}
    As in the proof of Proposition~\ref{proposition:syzygeticplanes}, we use Lemmas~\ref{lemma:transverse} and~\ref{lemma:gammaintersection} to reduce the problem to studying incidence relations between the twisted cubics $\gamma_i$, $\gamma_j$, $\gamma_i^\vee$, $\gamma_j^\vee$, and the lines on $\Sigma_{ij}$. After finding that 
    \[
    \gamma_i\sim\gamma_j\sim E_0\;\;\text{and}\;\;\gamma_i^\vee\sim\gamma_j^\vee\sim 5E_0-2\sum_{k=1}^6E_k
    \]
    on $\Sigma_{ij}$, the result follows from the incidence relations for lines on $\Sigma_{ij}$. 
\end{proof}

In particular, there are six pairs of disjoint planes among the $P_i$ and $P_i^\vee$, namely, $P_i\cup P_j^\vee$ for $i\neq j$. This means that in addition to the six flops of $F$ along a plane $P_i$ or $P_i^\vee$, there are six models which can be obtained by flopping a pair of disjoint planes. Perhaps surprisingly, we will show in \Cref{lemma:Fijs} that all six models of the last type are isomorphic to one another.

\subsection{The lattice $\NS(F)$}\label{subsec:nonsyzlattice}

Again, let $\lambda_i=\alpha(T_i-\eta_X)$. Recall from Section~\ref{subsec: Fano prelims} that $\NS(F)$ with the BBF form is isomorphic to the lattice $J_{nonsyz}$ and has discriminant group $D_{\NS(F)}\cong\bZ/6\times\bZ/2\times\bZ/6$ with factors generated by $[\frac g6]$, $[\frac{\lambda_1}2]$, and $[\frac{\lambda_1}3+\frac{\lambda_2}6]$. It is easy to check that this quadratic form represents $-10$ but not $-2$.

The isometry group of $\NS(F)$ with basis $\{g,\lambda_1,\lambda_2\}$ is generated by $\pm1$ and the four transformations below. We calculated $\Aut(\NS(F))$ via \cite{Mertens} and the Magma package \texttt{AutHyp.m}.
%
\[
R_1=\begin{pmatrix*}[r] 5 & 4 & -2 \\ -6 & -5 & 2 \\ 0 & 0 & -1 \end{pmatrix*},\hspace{0.2cm}
R_2=\begin{pmatrix*}[r] 3 & 2 & -2\\ -2 & -1 & 2 \\ 2 & 2 & -1 \end{pmatrix*},
\]
\[
R_3=\begin{pmatrix*}[r] 1 & 0 & 0 \\ 0 & 0 & 1 \\ 0 & 1 & 0 \end{pmatrix*},\hspace{0.2cm}
R_4=\begin{pmatrix*}[r] 1 & 0 & 0 \\ 0 & 1 & -1 \\ 0 & 1 & 0 \end{pmatrix*}.
\]
Note that $R_1$, $R_2$, and $R_3$ are reflections whereas $R_4$ has order $6$. The transformations $R_3$ and $R_4$ generate the dihedral group $D_{12}$.

\begin{lemma}\label{lemma:nonsyzygeticadmissible}
    An isometry $\varphi\in \Orth(\NS(F))$ is induced by a birational automorphism of $F$ if and only if $\varphi$ preserves $\overline{\mathrm{Pos}(F)}$ and acts on the index two subgroup
    \[
    \left\langle\left[\frac{g}3\right]\right\rangle\times\left\langle\left[\frac{\lambda_1}2\right]\right\rangle\times\left\langle\left[\frac{\lambda_1}3+\frac{\lambda_2}6\right]\right\rangle
    \]
    of $D_{\NS(F)}$ by $\pm\id$.
\end{lemma}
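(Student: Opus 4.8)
The plan is to follow the proof of Lemma~\ref{lemma:actionondiscriminant} essentially verbatim, since the non-syzygetic situation is closer to the one-scroll case than to the syzygetic one. The first thing I would record is that $J_{nonsyz}$ represents $-10$ but not $-2$, so $\calW_{\pex}=\varnothing$ and hence $\Mov(F)=\overline{\mathrm{Pos}(F)}$ by Theorem~\ref{thm: Mov and Amp}(1); this is why the statement is phrased with $\overline{\mathrm{Pos}(F)}$, and it means that ``$\varphi$ preserves $\overline{\mathrm{Pos}(F)}$'' and ``$\varphi^*\Mov(F)=\Mov(F)$'' are the same condition here.

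For the forward direction I would argue: if $\varphi=g^*$ with $g\in\Bir(F)$, then Theorem~\ref{thm:torelli} forces $\varphi$ to preserve $\Mov(F)=\overline{\mathrm{Pos}(F)}$, and $g$ induces a Hodge isometry of $T(F)$, which is $\pm\id_{T(F)}$ because $F$ is very general, hence acts on $D_{T(F)}$ by $\pm\id$. Nikulin's theory of overlattices \cite[Propositions 1.4.1, 1.4.2]{nikulin} identifies the index-two subgroup $H\subset D_{\NS(F)}$ with $D_{T(F)}$ via an anti-isometry, so $\varphi$ acts on $H$ by $\pm\id$. For the converse, given $\varphi$ preserving $\overline{\mathrm{Pos}(F)}$ and acting on $H$ by $\pm\id$, I would glue $\varphi$ to $\pm\id_{T(F)}$ (matching the sign) along this anti-isometry to obtain an isometry $\widetilde\varphi$ of $H^2(F,\bZ)$; since $\widetilde\varphi$ restricts to $\pm\id$ on $T(F)$ it preserves the Hodge structure, and being an orientation-preserving isometry acting by $\pm\id$ on the discriminant group it is a monodromy operator by \cite[Lemma 9.2]{markman}. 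Then Theorem~\ref{thm:torelli} produces $g\in\Bir(F)$ with $g^*=\varphi$.

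The only real work — and the step I expect to be the main (indeed only) obstacle — is the discriminant-group bookkeeping: one must verify directly from the Gram matrix $J_{nonsyz}$ that the subgroup $H$ in the statement, namely the one generated by $[\frac g3]$, $[\frac{\lambda_1}2]$, and $[\frac{\lambda_1}3+\frac{\lambda_2}6]$ inside $D_{\NS(F)}\cong\bZ/6\oplus\bZ/2\oplus\bZ/6$, is exactly the index-two subgroup whose quadratic form is anti-isometric to $q_{T(F)}$ — equivalently, the orthogonal complement of the glue vector of the overlattice $H^2(F,\bZ)\supset\NS(F)\oplus T(F)$. Once this identification is pinned down, the fact that the only isometries of $H$ compatible with a Hodge isometry of $T(F)$ through the anti-isometry are $+\id$ and $-\id$ makes the criterion both necessary and sufficient, and the argument closes exactly as in Lemma~\ref{lemma:actionondiscriminant}.
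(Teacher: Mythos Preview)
Your proposal is correct and matches the paper's approach exactly: the paper's entire proof reads ``The proof is exactly the same as in Lemma~\ref{lemma:actionondiscriminant},'' and you have correctly identified why the non-syzygetic case reverts to the simpler setting of that lemma (namely, $\calW_{\pex}=\varnothing$ so $\Mov(F)=\overline{\mathrm{Pos}(F)}$). Your additional discussion of the discriminant-group bookkeeping is more detail than the paper provides, but it is accurate and in the same spirit.
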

\begin{proof}
    The proof is exactly the same as in Lemma~\ref{lemma:actionondiscriminant}.
\end{proof}

Let $F_i$ and $F_i^\vee$ denote the flops of $F$ along $P_i$ and $P_i^\vee$, respectively, for $i=1,2,3$. Recall from Remark~\ref{remark:iota definition} that each of these models admits a regular involution $\iota_i$ or $\iota_i^\vee$. These involutions can also be regarded as birational involutions on $F$.

\begin{lemma}\label{lemma:nonsyzygeticiotas}
    The birational involutions $\iota_i$ and $\iota_i^\vee$  act on $\NS(F)$ by $\iota_1^*=R_1$, $(\iota_1^\vee)^*=R_4^3R_1R_4^3$, $\iota_2^*=R_4^2R_1R_4^4$, $(\iota_2^\vee)^*=R_4^5R_1R_4$, $\iota_3^*=R_4^4R_1R_4^2$, and $(\iota_3^\vee)^*=R_4R_1R_4^5$.
\end{lemma}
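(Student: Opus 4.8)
The plan is to follow the blueprint of the syzygetic computation in Lemma~\ref{lemma:syzygeticiotas}, upgraded to exploit the order-six symmetry furnished by $R_4$. The first step is to pin down $\iota_1^*=R_1$ exactly as before: by \cite[Proposition~6.5]{flops} applied to the scroll $T_1$, the isometry $\iota_1^*$ fixes the class $g-\lambda_1$, which is a primitive positive class with $(g-\lambda_1)^2=2$. Its orthogonal complement in $\NS(F)$ is negative definite of rank $2$, so the $(-1)$-eigenspace of an involution fixing $g-\lambda_1$ is a sublattice of a fixed rank-two negative definite lattice; combined with integrality this leaves only finitely many candidate involutions, and a direct computation shows they are $\pm\mathrm{id}$ together with two further isometries differing by the sign of their action on $\langle\lambda_1,\lambda_2\rangle^{\perp g}$-type directions. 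The candidates preserving $g$ (hence $\Nef(F)$) are excluded because $\iota_1$ is regular only on $F_1$, not on $F$, and the remaining ambiguity is resolved by the action on the discriminant group via Lemma~\ref{lemma:nonsyzygeticadmissible}, leaving precisely $\iota_1^*=R_1$.

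Next I would record the action of $R_4$. It fixes $g$ and acts on $\langle\lambda_1,\lambda_2\rangle$ as an order-six rotation, with cycle
\[
\lambda_1\ \mapsto\ \lambda_1+\lambda_2\ \mapsto\ \lambda_2\ \mapsto\ -\lambda_1\ \mapsto\ -\lambda_1-\lambda_2\ \mapsto\ -\lambda_2\ \mapsto\ \lambda_1 .
\]
By \eqref{eqn:T3def}, together with $\alpha(T_i^\vee-\eta_X)=-\lambda_i$ and $\lambda_3=-\lambda_1-\lambda_2$, these six classes are exactly the classes $\alpha(T-\eta_X)$ as $T$ ranges over $T_1,T_2,T_3$ and their residuals $T_1^\vee,T_2^\vee,T_3^\vee$. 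In particular $R_4^2$ sends $\lambda_1$ to $\lambda_2$, $R_4^4$ sends $\lambda_1$ to $-\lambda_1-\lambda_2=\lambda_3$, and the odd powers interpolate with the residual classes $-\lambda_i$.

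The third step handles the remaining five involutions. Because the three scrolls $T_1,T_2,T_3$ (and their residuals) play symmetric roles in $X$, \cite[Proposition~6.5]{flops} applies to each of them, so $\iota_i^*$ fixes $g-\lambda_i$ and $(\iota_i^\vee)^*$ fixes $g-\alpha(T_i^\vee-\eta_X)=g+\lambda_i$. Each of these is a BBF-preserving involution of $\NS(F)$ that is not regular on $F$ and, being induced by a birational automorphism, acts by $\pm\mathrm{id}$ on the index-two subgroup of Lemma~\ref{lemma:nonsyzygeticadmissible} with the appropriate signs; running the same finite enumeration as in the first step identifies it uniquely. To match with the stated formulas it then suffices to observe that each word $R_4^{a}R_1R_4^{b}$ with $a+b\equiv 0\pmod 6$ is an involution (using $R_4^6=R_1^2=\mathrm{id}$), that $R_4^{a}R_1R_4^{b}$ fixes $v$ if and only if $R_1$ fixes $R_4^{b}v$, and hence — using the cycle above and the already-established fact that $R_1$ fixes $g-\lambda_1$ — that $R_4^3R_1R_4^3$ fixes $g+\lambda_1$, $R_4^2R_1R_4^4$ fixes $g-\lambda_2$, $R_4^5R_1R_4$ fixes $g+\lambda_2$, $R_4^4R_1R_4^2$ fixes $g+\lambda_1+\lambda_2$, and $R_4R_1R_4^5$ fixes $g-\lambda_1-\lambda_2$; one checks these are not regular on $F$ and carry the correct discriminant action, so they coincide with $(\iota_1^\vee)^*,\iota_2^*,(\iota_2^\vee)^*,\iota_3^*,(\iota_3^\vee)^*$ respectively.

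The main obstacle is the finite but delicate enumeration in the first step: one must verify that the three conditions ``BBF-preserving involution fixing $g-\lambda_1$'', ``not regular on $F$'' (which rules out $\pm\mathrm{id}$ and the candidates preserving $g$), and ``acts by $\pm\mathrm{id}$ on the relevant index-two subgroup of $D_{\NS(F)}$ per Lemma~\ref{lemma:nonsyzygeticadmissible}'' together single out exactly one isometry. A secondary bookkeeping point is to get right which power of $R_4$ carries $\lambda_1$ to $\lambda_i$ as opposed to $-\lambda_i$, so that the conjugates of $R_1$ are correctly paired with $\iota_i^*$ versus $(\iota_i^\vee)^*$; this is mechanical once the action of $R_4$ on the six scroll classes has been written down.
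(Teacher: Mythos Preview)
Your approach is essentially the paper's: for each involution, identify it as the unique BBF-isometry that is an involution, fixes the appropriate class $g\pm\lambda_i$ (from \cite[Proposition~6.5]{flops} and \eqref{eqn:T3def}), does not preserve $\Nef(F)$, and satisfies the discriminant constraint of Lemma~\ref{lemma:nonsyzygeticadmissible}. Your use of conjugation by powers of $R_4$ to transport the computation for $\iota_1^*$ to the other five and to match the stated formulas is a pleasant organizational device that the paper does not make explicit.

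There is one inaccuracy in your first step, where the enumeration of candidate involutions is transplanted from the syzygetic case. In the non-syzygetic lattice, $(g-\lambda_1)^\perp$ is isometric to $\langle-6\rangle\oplus\langle-6\rangle$ (an orthogonal $\bZ$-basis is $(1,-1,1)$ and $(-1,2,1)$), whose orthogonal group is dihedral of order $8$; hence there are \emph{six} involutions of $\NS(F)$ fixing $g-\lambda_1$, not four as you assert. Moreover, $-\id$ is not among them since it does not fix $g-\lambda_1$; rather, $R_1$ itself restricts to $-\id$ on $(g-\lambda_1)^\perp$. Two of the six candidates fix $g$ (namely $\id$ and $R_3R_4^2$) and are excluded by the non-regularity of $\iota_1$ on $F$; the discriminant condition then singles out $R_1$ from the remaining four. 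So your description ``$\pm\id$ together with two further isometries'' is incorrect, but once the enumeration is carried out correctly the argument proceeds exactly as you outline.
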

\begin{proof}
    The proof is similar to that of Lemma~\ref{lemma:syzygeticiotas}, and we omit most details. By \cite[Proposition 6.5]{flops} and \eqref{eqn:T3def}, 
    \begin{align*}
        &\iota_1^*(1,-1,0)=(1,-1,0) &
        &(\iota_1^\vee)^*(1,1,0)=(1,1,0)\\
        &\iota_2^*(1,0,-1)=(1,0,-1) &
        &(\iota_2^\vee)^*(1,0,1)=(1,0,1)\\
        &\iota_3^*(1,1,1)=(1,1,1) &
        &(\iota_3^\vee)^*(1,-1,-1)=(1,-1,-1).
    \end{align*}
    The only matrices satisfying these conditions, squaring to the identity, preserving the quadratic form, and acting by $\pm\id$ on the index two subgroup of the discriminant group from Lemma~\ref{lemma:nonsyzygeticadmissible} are the ones proposed.
\end{proof}

\subsection{Wall and chamber structure of $\Mov(F)$}\label{subsec:nonsyzwalls}

As in the syzygetic case, we enumerate birational models of $F$ by studying the geometry of the movable cone. Unlike before, there are no prime exceptional divisors, so $\Mov(F)=\overline{\mathrm{Pos}(F)}$. 

\begin{lemma}\label{lemma:nonsyzygeticnef}
    The nef cone of $F$ is bounded by the six walls $v^\perp$ where 
    \[
    v\in\{(1,2,0),(1,2,2),(1,0,2),(1,-2,0),(1,-2,-2),(1,0,-2)\}.
    \]
     These are the six classes with first coordinate $1$ that square to $-10$.
\end{lemma}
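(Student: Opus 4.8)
The plan is to verify the two assertions separately: first that the six listed vectors $v$ are exactly the classes with first coordinate $1$ and $v^2 = -10$, and then that the nef cone is bounded precisely by the walls $v^\perp$ for these six classes. For the first assertion, I would write $v = (1, b, c)$ and compute $v^2 = q(v,v)$ using the matrix $J_{nonsyz}$; since $g$ pairs trivially with $\lambda_1, \lambda_2$ and $g^2 = 6$, the condition $v^2 = -10$ becomes $6 - 4b^2 + 4bc - 4c^2 = -10$, i.e. $b^2 - bc + c^2 = 4$. This is a positive-definite binary quadratic form (discriminant $-3$, representing $4$), and a short finite check over small integers shows its only solutions are $(b,c) \in \{(2,0),(0,2),(2,2),(-2,0),(0,-2),(-2,-2)\}$ — exactly the six vectors listed. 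This step is routine arithmetic.

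For the second assertion, I would use \Cref{thm: Mov and Amp}: since the BBF form represents $-10$ but not $-2$, there are no prime exceptional divisors, so $\Mov(F) = \overline{\mathrm{Pos}(F)}$ and $\Amp(F)$ is the connected component of $\overline{\mathrm{Pos}(F)} \setminus \bigcup_{\rho \in \calW_{\flop}} \rho^\perp$ containing an ample class. So I need to show that the six walls $v^\perp$ above are precisely the walls of $\calW_{\flop}$ bounding the chamber containing the ample class. On one hand, each of the six vectors has $\mathrm{div}(v) = 2$ (this must be checked against $D_{\NS(F)} \cong \bZ/6 \oplus \bZ/2 \oplus \bZ/6$ using the given generators — a direct computation), so each lies in $\calW_{\flop}$; moreover, since the $P_i$ and $P_i^\vee$ are Lagrangian planes in $F$, each corresponds to an extremal contraction, and one checks (using $\lambda_i = \alpha(T_i - \eta_X)$ and \cite[Proposition 6.5]{flops}, as in Remark~\ref{remark:iota definition}) that these six walls are exactly the six walls $v^\perp$ of the nef cone with $v^2 = -10$ — so they do bound $\Nef(F)$. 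On the other hand, I must rule out any other $\rho \in \calW_{\flop}$ whose wall $\rho^\perp$ cuts into the cone bounded by these six. For this I would pass to an affine cross-section (e.g. the slice $a = $ const, as in the proof of Lemma~\ref{lemma:nef} in the syzygetic case): the cross-section of $\mathrm{Cone}$ of the six walls is a bounded polygon inscribed in the positive-cone circle, and any further $(-10)$-wall would have to pass through this polygon, forcing a bound on $|a|$; the estimate $3a^2 - (2b^2 - 2bc + 2c^2) = -5$ combined with the geometry of the polygon then forces $|a| = 1$, contradicting that we have already enumerated all such classes.

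The main obstacle I anticipate is the last part — ruling out additional flopping walls intersecting the central chamber. Unlike the diagonal form $J_{syz}$, the form $J_{nonsyz}$ has off-diagonal terms, so the cross-section geometry is slightly less transparent and the distance estimates take more care; one must be sure the six $(-10)$-walls genuinely close off a chamber (rather than, say, six walls all meeting the boundary circle but leaving an "open" direction). I expect this is handled exactly as in Lemma~\ref{lemma:nef}: fix a convenient cross-section, bound the distance from the origin to any $(-10)$-wall in terms of its first coordinate $a$, and observe that the bound forces $|a| \le 1$. Combined with the first assertion, this pins down the six walls. The paper's Figure for the non-syzygetic case (analogous to Figure~\ref{fig:syznefcone}) should serve as a visual check that the chamber is indeed bounded.
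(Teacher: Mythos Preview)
Your proposal is correct, but it takes a genuinely different route from the paper's own proof. The paper does \emph{not} repeat the cross-section distance estimate from Lemma~\ref{lemma:nef}. Instead, after noting (as you do) that each of the six walls induces a small contraction of $F$ and hence lies on $\partial\Nef(F)$, the paper invokes Proposition~\ref{proposition:nonsyzygeticplanes}: there are six pairs of \emph{disjoint} Lagrangian planes $P_i\cup P_j^\vee$, so each of the six edges of the hexagonal cone (the intersections of adjacent walls) corresponds to a simultaneous flop and therefore also lies on $\partial\Nef(F)$. Since $\Nef(F)$ is convex and its boundary contains all six faces and all six extremal rays of the hexagonal cone, one concludes $\Nef(F)$ equals that cone without any further wall enumeration.

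Your approach instead rules out extra walls by a lattice estimate in a cross-section, exactly parallel to Lemma~\ref{lemma:nef}. This works: writing $v=(a,b,c)$ with $v^2=-10$, one has $2(b^2-bc+c^2)=3a^2+5$ (so $a$ is odd), and using $b^2-bc+c^2\ge\frac34\max(b^2,c^2,(b-c)^2)$ one finds that for $|a|\ge3$ the line $v^\perp$ misses the hexagon in the slice, leaving only the six $|a|=1$ classes. The trade-off is clear: the paper's argument is shorter and leverages geometric input (disjointness of planes) that is needed elsewhere anyway; your argument is more self-contained and purely lattice-theoretic, at the cost of a slightly fiddlier estimate because $J_{nonsyz}$ is not diagonal. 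Either is a complete proof.
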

\begin{proof}
    Similar to the proof of Lemma~\ref{lemma:nef}, we know each of these walls induces a small contraction of $F$ so lies on the boundary of $\Nef(F)$. Moreover, there are six pairs of disjoint Lagrangian planes in $F$ by Proposition~\ref{proposition:nonsyzygeticplanes}; therefore, the six edges of the cone
    \[
    \Cone((1,2,0)^\perp,(1,2,2)^\perp,(1,0,2)^\perp,(1,-2,0)^\perp,(1,-2,-2)^\perp,(1,0,-2)^\perp)
    \]
    also lie on the boundary of $\Nef(F)$. It follows that no other walls cut into $\Nef(F)$.
\end{proof}

\begin{corollary}
    The six planes in $F$ coming from components of $F(Y_i)$ where $Y_i=X\cap H_i$ for $i=1,2,3$ are the only Lagrangian planes in $F$.
\end{corollary}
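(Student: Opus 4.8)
The plan is to read this off the wall--chamber description in \Cref{lemma:nonsyzygeticnef} via the standard dictionary between Lagrangian planes and flopping walls for hyperk\"ahler fourfolds of K3$^{[2]}$-type. If $P\cong\bP^2\subset F$ is a Lagrangian plane, then its normal bundle is $\Omega^1_{\bP^2}$, so the Mukai flop of $F$ along $P$ is again hyperk\"ahler; hence the class $\ell$ of a line in $P$ spans an extremal ray of the Mori cone of $F$, this ray is generated under the BBF form by a primitive $v_P\in\NS(F)$ with $v_P^2=-10$ and $\mathrm{div}(v_P)=2$, and $v_P^\perp$ supports a face of $\Nef(F)$. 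Since $\Mov(F)=\overline{\mathrm{Pos}(F)}$ in the non-syzygetic case, every such face is one of the six listed in \Cref{lemma:nonsyzygeticnef}, so $v_P$ is, up to sign, one of $(1,\pm2,0),(1,\pm2,\pm2),(1,0,\pm2)$, and $P$ is an irreducible component of the exceptional locus $\Sigma_{v}$ of the corresponding small contraction. Each $\Sigma_v$ is a disjoint union of Lagrangian planes (one over each point of the singular locus of the contracted variety), and $[T_1],[T_1^\vee],[T_2],[T_2^\vee],[T_3],[T_3^\vee]$ are exactly the six classes in $A(X)$ with the numerics of a cubic scroll (cf.\ \Cref{lemma:extrascrolls}), so that the planes $P_i,P_i^\vee$ coming from $F(Y_i)=P_i\cup S_i'\cup P_i^\vee$ already provide one component of $\Sigma_v$ for each of the six walls. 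Thus it remains to prove that each $\Sigma_v$ is a \emph{single} plane.

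For this I would argue geometrically. Given a Lagrangian plane $P\subset F$, the lines it parametrises form a two--dimensional family; since no point of a smooth cubic fourfold lies on a two--dimensional family of lines, and since they cannot sweep out a surface other than a plane in $X$ (which $X$ does not contain), these lines sweep out a threefold $S_P\subset X$. The key point is that $S_P$ is degenerate, i.e.\ $S_P=X\cap H$ for a hyperplane $H\subset\bP^5$; granting this, $P\subseteq F(Y)$ with $Y=X\cap H$, and since $F(Y)$ contains a plane, $Y$ is a cubic threefold with six nodes in general position, hence contains a cubic scroll. Because every cubic scroll homologous to a fixed class spans a fixed hyperplane and the only scroll classes are $[T_i],[T_i^\vee]$, this forces $H\in\{H_1,H_2,H_3\}$, so $Y=Y_i$ for some $i$; as $P$ is a plane it must be one of the two plane components $P_i,P_i^\vee$ of $F(Y_i)$. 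In particular $\Sigma_v\subseteq F(Y_i)$ equals $P_i$ or $P_i^\vee$, completing the proof.

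The main obstacle is precisely the degeneracy statement $S_P=X\cap H$: as $X$ has Picard rank one, $S_P\sim kH|_X$, and one must exclude $k\geq2$, i.e.\ a threefold of degree $3k\geq6$ in $X$ swept by a two--dimensional congruence of lines lying on $X$. I expect to obtain this either from the structure theory of line congruences in a smooth cubic fourfold, or---more in keeping with the rest of the paper---from the explicit small--resolution picture of \eqref{diag:smallresolution}: a Lagrangian $\bP^2$ lying on the wall attached to $[T_i]$ is swept by lines bisecant to the twisted cubic $C_{y_i}\subset Y_i$ of \Cref{remark:fanodescription}, whose secant variety is all of $H_i$, which simultaneously forces $S_P=Y_i$ and identifies $P$ with the single plane $\Bis(C_{y_i})=P_i$, leaving no extra component in $\Sigma_v$.
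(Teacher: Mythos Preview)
Your setup is correct: a Lagrangian plane gives a flopping wall of $\Nef(F)$, and by \Cref{lemma:nonsyzygeticnef} there are exactly six such walls, each already realised by one of the $P_i,P_i^\vee$. The paper treats the corollary as immediate from this, implicitly invoking the standard fact for $K3^{[2]}$-type that the small contraction attached to a class $v$ with $v^2=-10$ and $\mathrm{div}(v)=2$ has exceptional locus a \emph{single} Lagrangian $\bP^2$; this is part of the general classification of extremal contractions (e.g.\ via the Bridgeland--stability description, where such a wall corresponds to a spherical class in the Mukai lattice, or by the Hassett--Tschinkel analysis underlying \cite{BHT}). Granting this, the six walls and the six known planes are in bijection and nothing further is needed.

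Your attempt to replace that input by a direct geometric argument has a genuine gap, and it is precisely the one you flag: you need that the locus $S_P\subset X$ swept by the lines parametrised by an arbitrary Lagrangian plane $P$ is a hyperplane section. Neither proposed fix closes it. The first (``structure theory of line congruences'') is not an argument as stated. The second is circular: you want to deduce from the identification of the wall with the scroll class $[T_i]$ that the lines of $P$ are bisecant to the twisted cubic $C_{y_i}\subset Y_i$, but the identification $\Bis(C_{y_i})=P_i$ in \Cref{remark:fanodescription} takes place \emph{inside} $F(Y_i)$, and there is no a~priori reason the lines of an unknown plane $P$ on that wall lie in $Y_i$ at all---that is exactly the degeneracy statement $S_P\subset H_i$ you are trying to prove. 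Also note a smaller slip earlier in your outline: the claim ``every cubic scroll homologous to a fixed class spans a fixed hyperplane'' is what you want, not what you know; a priori a scroll class could be represented in more than one hyperplane section, and you only learn otherwise once you have bounded the Lagrangian planes. The cleanest repair is simply to cite the general uniqueness of the contracted $\bP^2$ for a $(-10)$-wall and drop the geometric detour.
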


Using the involutions on the six flops of $F$, we can also describe the nef cones of the flops:

\begin{lemma}\label{lemma:nefflops}
    The nef cones of flops of $F$ are
    \begin{align*}
    \Nef(F_1)&=\Cone((1,-2,0)^\perp,(1,0,2)^\perp,(3,-4,0)^\perp,(1,-2,-2)^\perp),\\
    \Nef(F_1^\vee)&=\Cone((1,2,0)^\perp,(1,0,-2)^\perp,(3,4,0)^\perp,(1,2,2)^\perp),\\
    \Nef(F_2)&=\Cone((1,0,-2)^\perp,(1,2,0)^\perp,(3,0,-4)^\perp,(1,-2,-2)^\perp),\\
    \Nef(F_2^\vee)&=\Cone((1,0,2)^\perp,(1,-2,0)^\perp,(3,0,4)^\perp,(1,2,2)^\perp),\\
    \Nef(F_3)&=\Cone((1,2,2)^\perp,(1,0,2)^\perp,(3,4,4)^\perp,(1,2,0)^\perp),\\
    \Nef(F_3^\vee)&=\Cone((1,-2,-2)^\perp,(1,-2,0)^\perp,(3,-4,-4)^\perp,(1,0,-2)^\perp).
    \end{align*}
\end{lemma}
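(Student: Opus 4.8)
The plan is to compute $\Nef(F_1)$ carefully and then obtain the other five descriptions by symmetry: the order-$6$ isometry $R_4$ cyclically permutes the six walls of $\Nef(F)$ found in Lemma~\ref{lemma:nonsyzygeticnef} and conjugates $\iota_1^*$ into the other five involutions of Lemma~\ref{lemma:nonsyzygeticiotas}, so that $\Nef(F_2)=R_4^2\Nef(F_1)$, $\Nef(F_1^\vee)=R_4^3\Nef(F_1)$, and so on; thus it suffices to treat one flop. For $F_1$ the argument mirrors the proof of Lemma~\ref{lemma:nefFisyzygetic}, with the one structural difference that here $\Mov(F)=\overline{\mathrm{Pos}(F)}$ has no prime exceptional walls, so I would use the hexagonal combinatorics of $\Nef(F)$ (rather than prime exceptional walls) to locate the relevant region.

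First I would record the combinatorial shape of $\Nef(F)$. Working in the compact cross-section $\{a=1\}$ of $\overline{\mathrm{Pos}(F)}$, a direct check shows the six walls $v^\perp$ with $v\in\{(1,2,0),(1,2,2),(1,0,2),(1,-2,0),(1,-2,-2),(1,0,-2)\}$ occur in that cyclic order, so in particular $(1,-2,0)^\perp$ is adjacent in $\Nef(F)$ to $(1,0,2)^\perp$ and $(1,-2,-2)^\perp$; moreover $R_4$ realizes this $6$-cycle. Next, by the description of $F(Y_1)$ recalled in Section~\ref{sec:onescroll} (following \cite{flops}), the Mukai flop of $F$ along $P_1$ shares with $\Nef(F)$ the flopping wall $(g-2\lambda_1)^\perp=(1,-2,0)^\perp$ and lies on the opposite side of it; using $\lambda_3=-\lambda_1-\lambda_2$ from \eqref{eqn:T3def}, the flopping walls for the other five planes are exactly the remaining five walls of $\Nef(F)$. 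Since $\iota_1$ is regular on $F_1$ (Remark~\ref{remark:iota definition}), $\iota_1^*=R_1$ preserves $\Nef(F_1)$, and $R_1(1,-2,0)^\perp=(3,-4,0)^\perp$ is therefore a second facet.

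To pin down the remaining two facets I would argue that the flopping-wall facet $(1,-2,0)^\perp\cap\Nef(F_1)$ coincides with $(1,-2,0)^\perp\cap\Nef(F)$, hence has the same two extremal rays, which by the cyclic order lie on $(1,0,2)^\perp$ and on $(1,-2,-2)^\perp$; thus $\Nef(F_1)$ has facets on these two hyperplanes as well, on the same sides as $\Nef(F)$ does. Tracking the four half-spaces (the flopping wall, its $\iota_1^*$-image, and the two neighbours) then shows $\Nef(F_1)\subseteq\mathcal{C}:=\Cone((1,-2,0)^\perp,(1,0,2)^\perp,(3,-4,0)^\perp,(1,-2,-2)^\perp)$, with each of the four hyperplanes meeting $\mathcal{C}$ in a facet. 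For the reverse inclusion it remains to check that no $\rho\in\calW_{\flop}$ has $\rho^\perp$ passing through the interior of $\mathcal{C}$: this is exactly the cross-section/distance estimate used in Lemma~\ref{lemma:nef}, which in the slice $\{a=1\}$ forces the first coordinate of any such $\rho$ to be small and thereby reduces to inspecting finitely many classes, all of which turn out to lie outside the interior of $\mathcal{C}$ or to be among the walls of $\Nef(F)$ or $\iota_1^*\Nef(F)$ already accounted for. Hence $\mathcal{C}$ lies in a single chamber, which shares the flopping facet with $\Nef(F)$ and is therefore $\Nef(F_1)$; combined with $\Nef(F_1)\subseteq\mathcal{C}$ this gives $\Nef(F_1)=\mathcal{C}$, and applying $R_4^k$ for $k=1,\dots,5$ produces the five other cones.

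I expect the only genuinely laborious step to be ruling out spurious walls of $\mathcal{C}$, but this is the same finite computation as in Lemma~\ref{lemma:nef}, merely carried out with the non-diagonal form $J_{nonsyz}$; everything else is bookkeeping, where the points requiring care are the consistency of the sign conventions distinguishing $P_i$ from $P_i^\vee$ (hence the sign in $g\mp2\lambda_i$) with Lemma~\ref{lemma:nonsyzygeticiotas}, and the correct tracking of the cyclic action of $R_4$ on the six flopping walls.
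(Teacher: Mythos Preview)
Your approach is correct and will work, but it differs from the paper's in one substantive way. Both arguments begin the same way: the flopping wall $(1,-2,0)^\perp$ and its $\iota_1^*$-image $(3,-4,0)^\perp$ are facets of $\Nef(F_1)$, and the adjacency with $\Nef(F)$ forces $\Nef(F_1)$ to lie on the same side of $(1,0,2)^\perp$ and $(1,-2,-2)^\perp$ as $\Nef(F)$, yielding the containment $\Nef(F_1)\subset\mathcal{C}$. Your use of the $R_4$-symmetry to propagate the result to the other five flops is a clean shortcut (the paper simply says the other cases are similar).

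The real divergence is in the reverse inclusion. You propose to rule out extra walls through the interior of $\mathcal{C}$ by a finite enumeration modeled on the syzygetic distance estimate of Lemma~\ref{lemma:nef}. This can be made to work, but the paper avoids it entirely by invoking Proposition~\ref{proposition:nonsyzygeticplanes}: since $P_1\cap P_2^\vee=P_1\cap P_3^\vee=\varnothing$, the strict transforms of $P_2^\vee$ and $P_3^\vee$ in $F_1$ can each be flopped, so the rays $\Span(2,-1,1)=(1,-2,0)^\perp\cap(1,0,2)^\perp$ and $\Span(2,-2,-1)=(1,-2,0)^\perp\cap(1,-2,-2)^\perp$ are genuine edges of $\Nef(F_1)$. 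Their $\iota_1^*$-images $\Span(4,-5,-1)$ and $\Span(4,-4,1)$ are then also edges, and these are exactly the four extremal rays of $\mathcal{C}$, forcing equality without any wall search. Note, incidentally, that in the non-syzygetic case the proof of Lemma~\ref{lemma:nonsyzygeticnef} itself already uses this disjoint-planes trick rather than a distance estimate, so the paper's argument for $\Nef(F_1)$ is really just the same idea carried one chamber over. Your approach trades this geometric input for a direct lattice computation; that is fine, but the ``genuinely laborious step'' you anticipate is precisely what Proposition~\ref{proposition:nonsyzygeticplanes} is there to eliminate.
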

\begin{proof}
    We give the argument for $\Nef(F_1)$, the other calculations being similar. Since $F_1$ is the flop of $F$ along $P_1$, we know $(1,-2,0)^\perp$ lies on the boundary of $\Nef(F_1)$. The involution $\iota_1$ is regular on $F_1$, so $\iota_1^*(1,-2,0)^\perp=(3,-4,0)^\perp$ also lies on the boundary of $\Nef(F_1)$ by Theorem~\ref{thm:torelli}. Moreover, the walls $(1,0,2)^\perp$ and $(1,-2,-2)^\perp$ meet each of $(1,-2,0)^\perp$ and $(3,-4,0)^\perp$, and $\iota_1^*(1,0,2)^\perp=(1,-2,-2)^\perp$, so 
    \[
    \Nef(F_1)\subset\Cone((1,-2,0)^\perp,(1,0,2)^\perp,(3,-4,0)^\perp,(1,-2,-2)^\perp).
    \]
    Since $P_1\cap P_i^\vee=\varnothing$ for $i=2,3$ by Proposition~\ref{proposition:nonsyzygeticplanes}, one can flop $F_1$ along the strict transform of $P_i^\vee$, so the lines
    \[
    (1,-2,0)^\perp\cap(1,0,2)^\perp=\Span(2,-1,1)
    \]
    and
    \[
    (1,-2,0)^\perp\cap(1,-2,-2)^\perp=\Span(2,-2,-1)
    \]
    lie on the boundary of $\Nef(F_1)$. Their images, under $\iota_1^*$,
    \[
    \Span(4,-5,-1)=(1,-2,-2)^\perp\cap(3,-4,0)^\perp
    \]
    and
    \[
    \Span(4,-4,1)=(1,0,2)^\perp\cap(3,-4,0)^\perp,
    \]
    must also lie on the boundary of $\Nef(F_1)$. It follows that 
    \[
    \Nef(F_1)=\Cone((1,-2,0)^\perp,(1,0,2)^\perp,(3,-4,0)^\perp,(1,-2,-2)^\perp),
    \]
    as claimed.
\end{proof}

For $1\le i,j\le3$, let $F_{ij}$ be the flop of $F$ along $P_i$ and $P_j^\vee$.

\begin{lemma}\label{lemma:nefdoubleflops}
    The nef cones of the $F_{ij}$ are
    \begin{align*}
    \Nef(F_{12})&=R_2\cdot\Nef(F), &
    \Nef(F_{21})&=\iota_2^*(\iota_3^\vee)^*\iota_1^*R_2\cdot\Nef(F),\\
    \Nef(F_{13})&=\iota_1^*R_2\cdot\Nef(F), &
    \Nef(F_{31})&=\iota_3^*(\iota_2^\vee)^*R_2\cdot\Nef(F),\\
    \Nef(F_{23})&=(\iota_3^\vee)^*\iota_1^*R_2\cdot\Nef(F), &
    \Nef(F_{32})&=(\iota_2^\vee)^*R_2\cdot\Nef(F).
    \end{align*}
\end{lemma}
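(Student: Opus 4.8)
The plan is to realize each $F_{ij}$ as a composition of two commuting Mukai flops of $F$ and track the corresponding chamber inside $\Mov(F)=\overline{\mathrm{Pos}(F)}$, bootstrapping everything from the single identity $\Nef(F_{12})=R_2\cdot\Nef(F)$. First I would use that $P_i$ and $P_j^\vee$ are disjoint for $i\neq j$ (\Cref{proposition:nonsyzygeticplanes}) to factor the flop of $F$ along $P_i\sqcup P_j^\vee$ as the Mukai flop of $F$ along $P_i$, which is $F_i$, followed by the Mukai flop of $F_i$ along the strict transform $\widetilde{P_j^\vee}$ of $P_j^\vee$. Since the first flop is an isomorphism on a neighborhood of the disjoint plane $P_j^\vee$, under the identification $\NS(F_i)=\NS(F)$ the small contraction of $F_i$ contracting $\widetilde{P_j^\vee}$ corresponds to the very same wall in $\calW_{\flop}$ as the contraction of $P_j^\vee$ on $F$. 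Comparing with \Cref{lemma:nefflops}, these walls are facets of $\Nef(F_i)$; for instance $(1,0,2)^\perp$ and $(1,-2,-2)^\perp$ are facets of $\Nef(F_1)$ governing $\widetilde{P_2^\vee}$ and $\widetilde{P_3^\vee}$ respectively. So $\Nef(F_{12})$ is exactly the chamber other than $\Nef(F_1)$ that shares the facet $\Nef(F_1)\cap(1,0,2)^\perp$ with $\Nef(F_1)$.

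To establish the base case I would argue directly. Because $R_2$ is a reflection in a vector of negative square, it lies in $\Orth^+(\NS(F))$, preserves $\overline{\mathrm{Pos}(F)}=\Mov(F)$, and permutes $\calW_{\flop}$, so $R_2\cdot\Nef(F)$ is again a chamber of $\Mov(F)$. Applying $R_2$ to the six generators of $\Nef(F)$ in \Cref{lemma:nonsyzygeticnef} and comparing with \Cref{lemma:nefflops}, one sees that $R_2\cdot\Nef(F)$ and $\Nef(F_1)$ share the full two-dimensional facet lying in $(1,0,2)^\perp$, while an interior point such as $R_2g$ lies on the opposite side of $(1,0,2)^\perp$ from the interior of $\Nef(F_1)$. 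As each facet of the chamber decomposition is shared by exactly two chambers, this forces $R_2\cdot\Nef(F)=\Nef(F_{12})$.

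Finally I would propagate to the remaining five identities using the involutions from \Cref{lemma:nonsyzygeticiotas}. Each $\iota_i$ is regular on $F_i$, so $\iota_i^*$ fixes $\Nef(F_i)$ and acts as the order-two rotation of this quadrilateral cone: it exchanges the two walls created by the $P_i$-flop and also exchanges the two remaining facets, which govern $\widetilde{P_j^\vee}$ and $\widetilde{P_k^\vee}$ for $\{j,k\}=\{1,2,3\}\setminus\{i\}$; hence $\iota_i^*\,\Nef(F_{ij})=\Nef(F_{ik})$. Symmetrically, $(\iota_i^\vee)^*$ is regular on $F_i^\vee$ and exchanges $\Nef(F_{ji})$ with $\Nef(F_{ki})$. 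Chaining $\Nef(F_{12})\xrightarrow{\iota_1^*}\Nef(F_{13})\xrightarrow{(\iota_3^\vee)^*}\Nef(F_{23})\xrightarrow{\iota_2^*}\Nef(F_{21})$ and $\Nef(F_{12})\xrightarrow{(\iota_2^\vee)^*}\Nef(F_{32})\xrightarrow{\iota_3^*}\Nef(F_{31})$, and substituting $\Nef(F_{12})=R_2\cdot\Nef(F)$, produces precisely the six stated formulas. (As a byproduct, feeding into \Cref{lemma:Fijs}, all six $F_{ij}$ are mutually isomorphic, since the intertwining maps lie in $\Bir(F)$ and match ample cones, cf.\ \Cref{remark: when bir is reg}.)

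The main obstacle is the base case: one must verify that $R_2\cdot\Nef(F)$ and $\Nef(F_1)$ genuinely share the two-dimensional facet in $(1,0,2)^\perp$, and not merely the hyperplane, and that they sit on opposite sides of it—and, upstream, that the wall of $\Nef(F_i)$ governing the second flop really is the one attached to $P_j^\vee$ on $F$, which rests on the Mukai flop along $P_i$ being an isomorphism away from the disjoint plane $P_j^\vee$. Once these are in hand, the rest is formal bookkeeping with the involutions.
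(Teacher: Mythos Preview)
Your proposal is correct and follows essentially the same strategy as the paper: identify $\Nef(F_{12})=R_2\cdot\Nef(F)$ by its position in the chamber decomposition, then propagate to the other five via the involutions $\iota_i^*$ and $(\iota_i^\vee)^*$ (which is exactly what the cumulative formulas in the statement encode). The only minor difference is in the base-case verification: the paper pins down $\Nef(F_{12})$ by enumerating the four chambers containing the codimension-two edge $\Span(2,-1,1)=(1,-2,0)^\perp\cap(1,0,2)^\perp$, whereas you check that $R_2\cdot\Nef(F)$ shares the full facet in $(1,0,2)^\perp$ with $\Nef(F_1)$ and lies on the opposite side---both arguments amount to locating the same fourth chamber around that edge.
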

\begin{proof}
    Since $R_2$ preserves the BBF form, it acts by an automorphism on $\Mov(F)$, sending chambers to chambers. It is straightforward to verify that the only four chambers of $\Mov(F)$ containing $(2,-1,1)$ are $\Nef(F)$, $\Nef(F_1)$, $\Nef(F_2^\vee)$, and $R_2\cdot\Nef(F)$. The edge $\Span(2,-1,1)$ of $\Nef(F)$ corresponds to simultaneously flopping $P_1$ and $P_2^\vee$, since
    \[
    (1,-2,0)^\perp\cap(1,0,2)^\perp=\Span(2,-1,1).
    \]
    Hence $\Nef(F_{12})$ contains $(2,-1,1)$, from which we deduce $\Nef(F_{12})=R_2\cdot\Nef(F)$. The other five verifications are similar.
\end{proof}

\begin{corollary}\label{coro:sixwalls}
    The model $F_{12}$ contains exactly six Lagrangian planes, six pairs of which are disjoint.
\end{corollary}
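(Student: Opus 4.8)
The plan is to transfer the structure of $\Nef(F)$, which we understand completely, to $\Nef(F_{12})$ using Lemma~\ref{lemma:nefdoubleflops}: we have $\Nef(F_{12})=R_2\cdot\Nef(F)$ with $R_2\in\Orth(\NS(F))$, so $R_2$ is a linear automorphism of $\NS(F)\otimes\bR$ carrying the chamber $\Nef(F)$ onto $\Nef(F_{12})$ and preserving the BBF form, hence the square and the divisibility of every primitive class. In particular it maps the faces of $\Nef(F)$ bijectively to the faces of $\Nef(F_{12})$, sending flopping walls (classes $\rho$ with $\rho^2=-10$, $\mathrm{div}(\rho)=2$) to flopping walls. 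Since $J_{nonsyz}$ represents $-10$ but not $-2$, there are no prime exceptional classes, so $\Mov(F)=\overline{\mathrm{Pos}(F)}$ and every facet of a chamber is a flopping wall; by Lemma~\ref{lemma:nonsyzygeticnef} the cone $\Nef(F)$ has exactly six facets, whence so does $\Nef(F_{12})$.

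From here I would invoke the correspondence between Lagrangian planes in a K3$^{[2]}$-type fourfold and flopping walls of its nef cone: by Theorem~\ref{thm: Mov and Amp} together with \cite{BHT} and \cite{markman}, crossing a flopping wall of the nef cone is a Mukai flop along a Lagrangian $\bP^2$, and conversely each Lagrangian plane produces such a wall; for $F$ this is exactly the bijection recorded in the corollary following Lemma~\ref{lemma:nonsyzygeticnef}, matching its six walls with its six Lagrangian planes. Applying $R_2$, the six flopping walls of $\Nef(F_{12})$ likewise match the Lagrangian planes of $F_{12}$, so $F_{12}$ has exactly six of them. For the disjoint-pair count, note that $\Nef(F)$, being a three-dimensional cone with six facets, is the cone over a hexagon and so has exactly six extremal rays, each the intersection $\rho_P^\perp\cap\rho_{P'}^\perp$ of two adjacent facets; by the proof of Lemma~\ref{lemma:nonsyzygeticnef} these six rays are precisely the ones coming from the six disjoint pairs $\{P,P'\}$ of Lagrangian planes of $F$ — the configuration visible around the ray $\Span(2,-1,1)$ in the proof of Lemma~\ref{lemma:nefdoubleflops}, where four chambers meet. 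Transporting along $R_2$, the six extremal rays of $\Nef(F_{12})$ correspond exactly to six disjoint pairs among the six Lagrangian planes of $F_{12}$, with the other $\binom{6}{2}-6=9$ pairs meeting.

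The delicate point — and the main obstacle — is not the lattice bookkeeping but the claims implicit in the middle paragraph: that the facet–plane correspondence has the correct multiplicity (one $\bP^2$ per wall, rather than a disjoint union), and that disjointness of two Lagrangian planes is faithfully recorded by adjacency of the corresponding facets of the nef cone. Both statements are already in hand for $F$ (from the corollary after Lemma~\ref{lemma:nonsyzygeticnef} and from Proposition~\ref{proposition:nonsyzygeticplanes}), so the content of the argument is that they are intrinsic to the pair $(\NS(F),\Mov(F))$ with its chamber decomposition, and hence invariant under the isometry $R_2$ — even though $R_2$ itself need not be induced by a birational automorphism of $F$. A purely geometric route, tracking the Lagrangian planes of $F$ through the birational map $F\dashrightarrow F_{12}$, is available but considerably less transparent, since the strict transforms of the four planes of $F$ not involved in the flop need not remain planes (they become cubic surfaces).
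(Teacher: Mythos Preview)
Your proposal is correct and follows essentially the same route as the paper: use $\Nef(F_{12})=R_2\cdot\Nef(F)$ from Lemma~\ref{lemma:nefdoubleflops} together with the hexagonal shape of $\Nef(F)$ from Lemma~\ref{lemma:nonsyzygeticnef}, then read off the planes from the six facets and the disjoint pairs from the six edges via the standard flopping-wall correspondence. The paper's proof is two sentences and simply asserts the facet--plane and edge--disjoint-pair correspondences; your version is more careful in making explicit that $R_2$, being an isometry, carries $(-10)$-walls to $(-10)$-walls and hence preserves the chamber combinatorics, and in flagging that the multiplicity and disjointness claims rest on general facts about K3$^{[2]}$-type fourfolds rather than on anything special to $F$.
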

\begin{proof}
    By Lemmas~\ref{lemma:nefdoubleflops} and~\ref{lemma:nonsyzygeticnef}, the nef cone of $F_{12}$ is a cone over a hexagon, the same as $\Nef(F)$. The six faces of this chamber correspond to Lagrangian planes in $F_{12}$; the six edges correspond to pairs of disjoint Lagrangian planes in $F_{12}$.
\end{proof}

We illustrate the movable cone as described so far in Figure~\ref{fig:nonsyznefcone}. In the next section, we will need to know the chambers bordering $\Nef(F_{12})$, enumerated by the following lemma and illustrated analogously for $\Nef(F_{32})$ in Figure~\ref{fig:nonsyznefconezoom}.

\begin{lemma}\label{lemma:adjacenttoFij}
    Let $\{i,j,k\}=\{1,2,3\}$. Then the chambers sharing a face with $\Nef(F_{ij})$ are $\Nef(F_i)$, $\Nef(F_j^\vee)$, $\iota_i^*\Nef(F_k^\vee)$, $(\iota_j^\vee)^*\Nef(F_k)$, $(\iota_j^\vee)^*\iota_k^*\Nef(F_i^\vee)$, and $\iota_i^*(\iota_k^\vee)^*\Nef(F_j)$.
\end{lemma}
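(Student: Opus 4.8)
The plan is to identify the chambers adjacent to $\Nef(F_{ij})$ by exploiting the fact, established in Corollary~\ref{coro:sixwalls}, that $\Nef(F_{ij})$ is a cone over a hexagon with exactly six facets, each corresponding to a Lagrangian plane in $F_{ij}$, and that flopping any one of these planes produces the chamber on the other side of that facet. Since it suffices to treat a single case by symmetry, I would focus on $F_{12}$, for which Lemma~\ref{lemma:nefdoubleflops} gives $\Nef(F_{12})=R_2\cdot\Nef(F)$. Applying $R_2$ to the six facets $v^\perp$ of $\Nef(F)$ listed in Lemma~\ref{lemma:nonsyzygeticnef} produces the six facets of $\Nef(F_{12})$ explicitly; for each facet one then checks which of the previously enumerated chambers ($\Nef(F)$, $\Nef(F_i)$, $\Nef(F_i^\vee)$, the $\Nef(F_{ij})$, and their images under the $\iota$'s) shares that facet.

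Concretely, two of the six facets of $\Nef(F_{12})$ are the flopping walls coming from the planes $P_1$ and $P_2^\vee$ that were flopped to pass from $F$ to $F_{12}$; flopping these back recovers the adjacent chambers $\Nef(F_1)$ and $\Nef(F_2^\vee)$. The remaining four facets correspond to the four ``new'' Lagrangian planes in $F_{12}$. To name the chambers beyond these four facets, I would use the involutions: $\iota_1$ is regular on $F_1$ and hence acts on the chamber structure fixing $\Nef(F_1)$, so the facet of $\Nef(F_{12})$ opposite to one of the $F_1$-side facets is carried by $\iota_1^*$ to a known chamber, and similarly $\iota_2^\vee$ is regular on $F_2^\vee$. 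Tracking these symmetries — using that $\iota_i^*$ and $(\iota_j^\vee)^*$ permute the chambers while fixing $\Nef(F_i)$ and $\Nef(F_j^\vee)$ respectively — lets one match each of the four remaining facets with $\iota_i^*\Nef(F_k^\vee)$, $(\iota_j^\vee)^*\Nef(F_k)$, $(\iota_j^\vee)^*\iota_k^*\Nef(F_i^\vee)$, and $\iota_i^*(\iota_k^\vee)^*\Nef(F_j)$, where $\{i,j,k\}=\{1,2,3\}$. Each such identification is ultimately a finite linear-algebra check: compute the wall of $\Nef(F_{ij})$, compute the claimed adjacent chamber via Lemmas~\ref{lemma:nonsyzygeticiotas}, \ref{lemma:nonsyzygeticnef}, and~\ref{lemma:nefflops}, and verify the two cones share exactly that wall. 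Since $\Mov(F)=\overline{\mathrm{Pos}(F)}$ has no prime exceptional walls, every facet of every chamber is a flopping wall, so Remark~\ref{remark: when bir is reg} and Theorem~\ref{thm: Mov and Amp} guarantee the adjacent chamber is indeed the nef cone of the corresponding flopped model.

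The main obstacle I expect is purely bookkeeping: correctly matching each of the four ``new'' facets with the right composite of involutions, and in particular pinning down the correct words $(\iota_j^\vee)^*\iota_k^*$ and $\iota_i^*(\iota_k^\vee)^*$ rather than some other composite that moves $\Nef(F_{ij})$ to the same place. One must be careful that the claimed adjacent chambers are genuinely distinct from one another and from $\Nef(F_1)$, $\Nef(F_2^\vee)$ — this follows from Proposition~\ref{proposition:nonsyzygeticplanes} and the injectivity of $\Bir(F)\hookrightarrow\Orth(\NS(F))$ (Remark~\ref{remark:bir embeds}), but it needs to be invoked. Once the case of $F_{12}$ is done, the general statement for $F_{ij}$ follows by applying the appropriate element of $\Gamma$ (or by the symmetry of the construction under relabeling the scrolls $T_1,T_2,T_3$), so no new work is required there. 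I would present the $F_{12}$ computation in a short table or inline list of the six facet normals together with their partner chambers, and then remark that the general formula is obtained by the evident relabeling.
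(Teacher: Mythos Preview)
Your proposal is correct and follows essentially the same approach as the paper's proof: both reduce the claim to a direct linear-algebra verification using Lemmas~\ref{lemma:nonsyzygeticiotas}, \ref{lemma:nefflops}, and~\ref{lemma:nefdoubleflops} to compute the walls of $\Nef(F_{ij})$ and of each candidate adjacent chamber, and then invoke Corollary~\ref{coro:sixwalls} to conclude that the six chambers found exhaust the neighbors. Your organization via the $R_2$-translate of $\Nef(F)$ and the symmetry under relabeling the scrolls is a reasonable way to structure the computation, though the paper simply states that one checks all $i,j,k$ directly; note that your distinctness concern is unnecessary, since two distinct convex chambers share at most one facet, so once you have matched six distinct facets you are done.
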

\begin{proof}
    This is verified by direct calculation: using Lemmas~\ref{lemma:nefdoubleflops} and~\ref{lemma:nefflops}, one can identify the walls of $\Nef(F_{ij})$ and $\Nef(F_i)$ for all $i,j,k$. Using Lemma~\ref{lemma:nonsyzygeticiotas}, one can check that each of the chambers above shares a face with $\Nef(F_{ij})$. By Corollary~\ref{coro:sixwalls}, these six faces cover the entire boundary of $\Nef(F_{ij})$.
\end{proof}

\begin{figure}
\begin{center}
\includegraphics[scale=0.7]{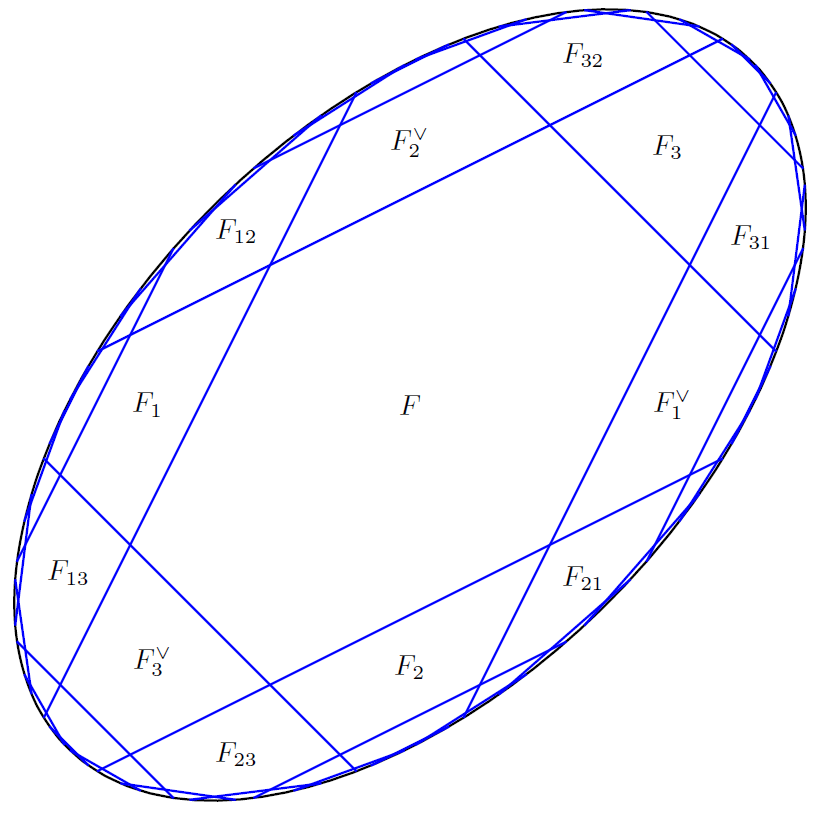}
\end{center}
\caption{A slice of the chambers of the movable cone of $F$, bounded by the positive cone.}\label{fig:nonsyznefcone}
\end{figure}

\begin{figure}
\begin{center}
\includegraphics[scale=0.58]{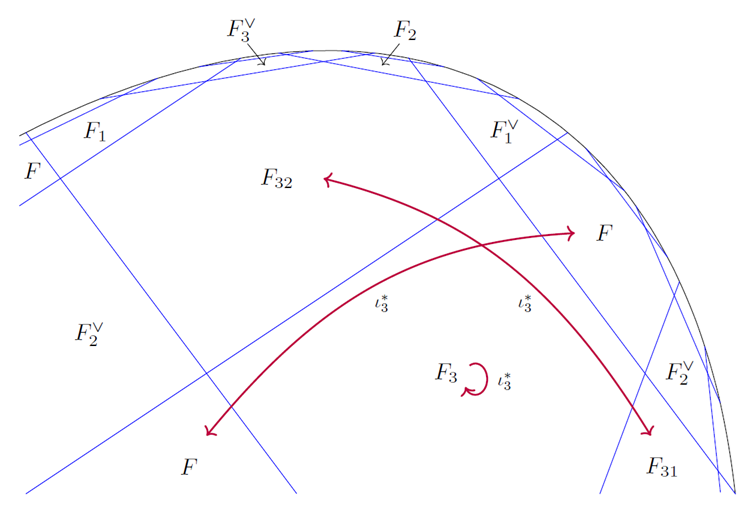}
\end{center}
\caption{A detail of the figure above illustrating the chambers adjacent to $F_3$ and $F_{32}$. Chambers are labeled by the isomorphism type of the model they represent. The arrows indicate the action of $\iota_3^*$ on the chambers.}\label{fig:nonsyznefconezoom}
\end{figure}

\subsection{Birational geometry of $F$}\label{subsec:nonsyzmodels}

We now classify birational hyperk\"ahler models of $F$ up to isomorphism, proving Theorem~\ref{theo:nonsyzygeticmain}.

\begin{lemma}\label{lemma:Fijs}
    The six birational models $F_{ij}$ for $1\le i,j\le3$ are all isomorphic.
\end{lemma}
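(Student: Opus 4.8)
The plan is to exploit Lemma~\ref{lemma:nefdoubleflops}, which already exhibits every nef cone $\Nef(F_{ij})$ as the image of the single chamber $\Nef(F_{12})=R_2\cdot\Nef(F)$ under a word in the isometries $\iota_k^*$ and $(\iota_k^\vee)^*$. Reading off those formulas, I would record the chamber relations
\[
\Nef(F_{13})=\iota_1^*\,\Nef(F_{12}),\qquad \Nef(F_{23})=(\iota_3^\vee)^*\,\Nef(F_{13}),\qquad \Nef(F_{21})=\iota_2^*\,\Nef(F_{23}),
\]
\[
\Nef(F_{32})=(\iota_2^\vee)^*\,\Nef(F_{12}),\qquad \Nef(F_{31})=\iota_3^*\,\Nef(F_{32}),
\]
so that all six chambers lie in a single orbit of the group generated by the $\iota_k^*$ and $(\iota_k^\vee)^*$, each of which is induced by a \emph{birational automorphism} of $F$ by Remark~\ref{remark:iota definition} and Lemma~\ref{lemma:nonsyzygeticiotas}.

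The principle I would then invoke is the following consequence of the Global Torelli theorem (Remark~\ref{remark: when bir is reg}): if $M$ and $M'$ are hyperk\"ahler manifolds birational to $F$, with the birational identifications realized by sequences of Mukai flops, so that $\NS(M)$ and $\NS(M')$ are canonically identified with $\NS(F)$ and their nef cones become chambers of $\Mov(F)$, and if some $\varphi\in\Bir(F)$ satisfies $\varphi^*\big(\Nef(M')\big)=\Nef(M)$, then $M\cong M'$. Indeed, the composite $M'\dashrightarrow F\xrightarrow{\ \varphi\ }F\dashrightarrow M$ is a birational map, an isomorphism in codimension one (being a composite of Mukai flops and a birational automorphism of a hyperk\"ahler manifold), whose induced action on $\NS$ is $\varphi^*$ under the identifications above; since $\varphi^*$ carries $\Amp(M')$ onto $\Amp(M)$, this composite pulls an ample class back to an ample class, hence is regular. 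Feeding the birational involutions $\iota_k$, $\iota_k^\vee$ into this criterion, the displayed relations yield $F_{13}\cong F_{12}$, $F_{23}\cong F_{13}$, $F_{21}\cong F_{23}$, $F_{32}\cong F_{12}$, and $F_{31}\cong F_{32}$, so all six $F_{ij}$ are isomorphic by transitivity.

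I do not expect a genuine obstacle here: the substantive input --- that each $F_{ij}$ exists as the Mukai flop of $F$ along the disjoint pair $P_i\cup P_j^\vee$, together with the precise shape of its nef cone --- is already supplied by Proposition~\ref{proposition:nonsyzygeticplanes} and Lemma~\ref{lemma:nefdoubleflops}. The one point deserving care is the identification of the pullback of the composite with $\varphi^*$ on $\NS$ in the criterion above, i.e. checking that conjugating $\varphi$ by the flop maps does not disturb the N\'eron--Severi identification; this is immediate because a Mukai flop acts as the identity once $\NS$ of the flopped model is identified with $\NS(F)$ via pullback. Beyond that, the proof is pure bookkeeping with the matrices of Lemma~\ref{lemma:nonsyzygeticiotas}.
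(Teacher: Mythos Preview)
Your proof is correct and follows essentially the same approach as the paper's: both read off from Lemma~\ref{lemma:nefdoubleflops} that the six chambers $\Nef(F_{ij})$ lie in a single orbit under the group generated by the $\iota_k^*$ and $(\iota_k^\vee)^*$, and then invoke the Global Torelli theorem (via Remark~\ref{remark: when bir is reg}) to conclude that birational automorphisms of $F$ carry chambers to chambers of isomorphic models. The paper compresses this into a single sentence, while you spell out the chamber relations and the regularity criterion in more detail, but the substance is identical.
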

\begin{proof}
    Since, by Theorem~\ref{thm:torelli}, $\iota_i^*$ and $(\iota_i^\vee)^*$ act on $\Mov(F)$ by exchanging chambers corresponding to the same isomorphism type, this follows from Lemma~\ref{lemma:nefdoubleflops}.
\end{proof}

\begin{proposition}\label{prop:nonsyzygeticmodels}
    The eight birational models $F$, $F_i$, $F_i^\vee$, and $F_{12}$ are pairwise non-isomorphic.
\end{proposition}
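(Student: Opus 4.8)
The plan is to distinguish the eight models pairwise by exploiting the combinatorial structure of the chambers inside $\Mov(F)$ together with \Cref{thm:torelli}, which says two chambers $\Nef(F')$ and $\Nef(F'')$ represent isomorphic models exactly when some isometry of $\NS(F)$ sending one chamber to the other lies in the image of $\Bir(F)\to\Orth(\NS(F))$, i.e.\ preserves $\overline{\mathrm{Pos}(F)}$ and acts by $\pm\id$ on the relevant index-two subgroup of $D_{\NS(F)}$ (\Cref{lemma:nonsyzygeticadmissible}). So the task is: for each of the $\binom{8}{2}$ pairs, either produce a distinguishing invariant, or show no admissible isometry carries one chamber to the other. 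First I would record the purely combinatorial invariants of each chamber computed from \Cref{lemma:nonsyzygeticnef}, \Cref{lemma:nefflops}, and \Cref{lemma:nefdoubleflops}: the number of flopping walls (walls $w^\perp$ with $w\in\calW_{\flop}$ meeting the interior of $\Mov(F)$ on both sides) versus boundary walls, and more refined data such as the combinatorial type of the chamber (hexagonal cone versus the four-faced cones of the $F_i,F_i^\vee$). This already separates $F$ (hexagonal, all six walls interior), $F_{12}$ (hexagonal by \Cref{coro:sixwalls}), and the six $F_i,F_i^\vee$ (each a cone on a quadrilateral with two flopping walls and two walls on the boundary $\partial\Mov(F)$).

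Next I would separate $F$ from $F_{12}$: the key point is that $F$ has no nontrivial regular automorphisms, which follows as in \Cref{lemma:isogeniespreservingF} — the isometries permuting the walls of $\Nef(F)$ and preserving $\overline{\mathrm{Pos}(F)}$ form the dihedral group $\langle R_3,R_4\rangle\cong D_{12}$, and checking actions on the discriminant group via \Cref{lemma:nonsyzygeticadmissible} shows only the identity is admissible; by \Cref{prop: monodromy}(3) and \Cref{remark:bir embeds} this means $\Aut(F)=\{\id\}$. For $F_{12}$, by contrast, the chamber $\Nef(F_{12})=R_2\cdot\Nef(F)$ is hexagonal and there is a nontrivial admissible isometry stabilizing it: concretely the conjugate by $R_2$ of one of the order-six or reflection symmetries of the hexagon that turns out to be admissible (equivalently, $F_{12}$ is the Fano variety of lines on another cubic fourfold containing a non-syzygetic pair of scrolls, as asserted in \Cref{theo:nonsyzygeticmain}, hence carries the induced involutions $\iota_i$), so $F\not\simeq F_{12}$. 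This same structural fact — that $F_{12}$ looks like an $F$ — is what forces the distinction rather than any crude wall count, so I must be careful to exhibit an explicit admissible symmetry of $\Nef(F_{12})$.

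Then I would separate the six flops $F_i,F_i^\vee$ from each other and from $\{F,F_{12}\}$. They are distinguished from $F$ and $F_{12}$ by the combinatorial type (quadrilateral vs.\ hexagon). To show $F_1\not\simeq F_2$, suppose $\varphi\in\Orth(\NS(F))$ sends $\Nef(F_1)$ to $\Nef(F_2)$; then since $\iota_2^*$ is an admissible isometry stabilizing $\Nef(F_2)$ and exchanging its two flopping walls with $\partial\Mov(F)$ appropriately, either $\varphi$ or $\iota_2^*\circ\varphi$ maps $\Nef(F_1)$ to $\Nef(F_2)$ carrying the unique wall shared with $\Nef(F)$ to the unique wall shared with $\Nef(F)$; composing with $\iota_1^*$ if needed we get an isometry fixing $\Nef(F)$, which by the previous paragraph must be the identity — contradiction, since $\Nef(F_1)\ne\Nef(F_2)$. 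The same argument handles every pair among $F_1,F_1^\vee,F_2,F_2^\vee,F_3,F_3^\vee$, using \Cref{lemma:nonsyzygeticiotas} to know the involutions explicitly, exactly as in \Cref{proposition:syzygeticmodels}. Finally, $F_i\not\simeq F_{12}$ and $F_i^\vee\not\simeq F_{12}$ follow from combinatorial type (quadrilateral vs.\ hexagon), and $F_i\not\simeq F$, $F_i^\vee\not\simeq F$ from combinatorial type as well; $F_{12}\not\simeq F$ was handled above.

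The main obstacle is the $F$ versus $F_{12}$ distinction: both chambers are combinatorially hexagonal cones, so the separation is not visible at the level of wall counts and must come from a genuine computation inside $\Orth(\NS(F))$ and its action on $D_{\NS(F)}\cong\bZ/6\oplus\bZ/2\oplus\bZ/6$. One must verify that no admissible isometry (preserving $\overline{\mathrm{Pos}(F)}$, acting by $\pm\id$ on the index-two subgroup of \Cref{lemma:nonsyzygeticadmissible}) carries $\Nef(F)$ to $R_2\cdot\Nef(F)=\Nef(F_{12})$; equivalently, that $R_2$ does not lie in the coset $\operatorname{im}(\Bir(F)\to\Orth(\NS(F)))\cdot\Stab$ of the stabilizer of $\Nef(F)$ — which is exactly the statement that $R_2$ fails to act by $\pm\id$ on that subgroup, a finite check I would carry out directly from the matrix for $R_2$. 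All the remaining pairwise non-isomorphisms then reduce, as above, to the single fact that $\Nef(F)$ has trivial admissible stabilizer.
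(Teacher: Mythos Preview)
Your proposal has two substantive gaps that the paper's proof avoids.

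First, your description of the quadrilateral chambers is borrowed from the syzygetic case and is wrong here: in the non-syzygetic case there are \emph{no} prime exceptional classes, so $\Mov(F)=\overline{\mathrm{Pos}(F)}$ and all four walls of each $\Nef(F_i)$ are flopping walls. There is no pair of ``boundary'' walls that an isometry must respect, and $\iota_i^*$ does not single out ``the two flopping walls'' as you claim. Consequently your reduction ``either $\varphi$ or $\iota_2^*\circ\varphi$ carries the $F$-adjacent wall to the $F$-adjacent wall, hence fixes $\Nef(F)$'' does not go through: an admissible $\varphi$ sending $\Nef(F_1)$ to $\Nef(F_2)$ could a priori send the $F$-adjacent wall of $\Nef(F_1)$ to one of the two walls of $\Nef(F_2)$ bordering an $F_{ij}$-chamber, and you cannot rule this out without already knowing $F\not\simeq F_{12}$.

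Second, your treatment of $F$ versus $F_{12}$ is flawed on both attempts. The claim that $\Aut(F_{12})\ne\{1\}$ is unjustified and in fact false: the involutions $\iota_i$ you invoke are \emph{birational} on $F_{12}$, not regular, exactly as they are on $F$ (indeed $F_{12}$ is itself a Fano variety of lines on a cubic with a non-syzygetic pair, so the situation is symmetric). And your fallback ``check that $R_2$ fails to act by $\pm\id$'' is insufficient: the isometries sending $\Nef(F)$ to $\Nef(F_{12})$ form the full coset $R_2\cdot\langle R_3,R_4\rangle$ of twelve elements, and you would need to verify that \emph{none} of them is admissible, not just $R_2$ itself.

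The paper circumvents both issues by reversing the order of the argument. It first distinguishes the six flops $F_i,F_i^\vee$ from one another by a direct computation: it records the four extremal rays of $\Nef(F_1)$, notes that an admissible $\varphi$ must satisfy a parity constraint (coming from $\varphi([\lambda_1/2])=[\lambda_1/2]$) on the image of a specific difference of vertices, and checks by inspection which target chambers meet that constraint; this reduces to a handful of explicit matrices which are then tested against \Cref{lemma:nonsyzygeticadmissible}. Only \emph{after} the six flops are separated does the paper treat $F$ versus $F_{12}$: an admissible $\varphi$ with $\varphi(\Nef(F))=\Nef(F_{12})$ must match up the six adjacent chambers, and by \Cref{lemma:adjacenttoFij} together with the already-established pairwise non-isomorphism of the flops, this forces $\varphi(\Nef(F_1))=\Nef(F_1)$ and $\varphi(\Nef(F_2^\vee))=\Nef(F_2^\vee)$, pinning $\varphi$ down to the single matrix $R_2$, whose inadmissibility is then a one-line check. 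Your approach is salvageable if you instead carry out the full twelve-element check on $R_2\cdot\langle R_3,R_4\rangle$ first and then use $F\not\simeq F_{12}$ to make your reduction for the flops valid, but as written the argument is circular.
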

\begin{proof}
    The idea of the proof is to show that no isometry of $\NS(F)$ taking a chamber of one of the models above to a chamber of another satisfies the conditions of Lemma~\ref{lemma:nonsyzygeticadmissible}. We begin with $F_1$. Since the chamber $\Nef(F_1)$ has four walls, whereas $\Nef(F)$ and $\Nef(F_{12})$ each have six walls, we know $F_1\not\simeq F$ and $F_1\not\simeq F_{12}$.

    Let $\varphi$ be an isometry sending $\Nef(F_1)$ to $\Nef(F_i)$ for $i\neq1$ or $\Nef(F_i^\vee)$ for $i=1,2,3$. The four edges of $\Nef(F_1)$, corresponding to simultaneous flops of disjoint planes, are rays spanned by the vectors
    \[
    v_1=(2,-1,1),\,v_2=(2,-2,-1),\,v_3=(4,-4,1),\text{ and }v_4=(4,-5,-1);
    \]
    let $w_i=\varphi(v_i)$, so the vectors $w_i$ span the edges of the chamber $\varphi(\Nef(F_1))$. Then
    \[
    \varphi(0,1,2)=\varphi(v_1-v_2)=w_1-w_2.
    \]
    For $\varphi$ to be induced by a birational automorphism of $F$, we must have $\varphi([\lambda_1/2])=[\lambda_1/2]$ in the discriminant group, by Lemma~\ref{lemma:nonsyzygeticadmissible}. Hence $\varphi(0,1,2)=(a,b,c)$ where $a$ and $c$ are even and $b$ is odd. By direct inspection, the only chamber other than $\Nef(F_1)$ having walls $w_1$, $w_2$ such that $w_1-w_2=(a,b,c)$ with $a$ and $c$ even and $b$ odd is $\Nef(F_1^\vee)$. Hence $F_1$ is not isomorphic to $F_i$ or $F_i^\vee$ for $i=2,3$.

    We now verify $F_1\not\simeq F_1^\vee$. There are eight possibilities for $w_1$ and $w_2$:
    \begin{multicols}{2}
    \begin{enumerate}
        \item[(a)] $w_1=(2,2,1)$, $w_2=(2,1,-1)$ 
        \item[(b)] $w_1=(2,1,-1)$, $w_2=(2,2,1)$ 
        \item[(c)] $w_1=(2,1,-1)$, $w_2=(4,4,-1)$ 
        \item[(d)] $w_1=(2,2,1)$, $w_2=(4,5,1)$ 
        \item[(e)] $w_1=(4,4,-1)$, $w_2=(4,5,1)$ 
        \item[(f)]$w_1=(4,5,1)$, $w_2=(4,4,-1)$ 
        \item[(g)] $w_1=(4,5,1)$, $w_2=(2,2,1)$ 
        \item[(h)] $w_1=(4,4,-1)$, $w_2=(2,1,-1)$. 
    \end{enumerate}
    \end{multicols}
    Since $\iota_1^\vee$ is regular on $F_1^\vee$, and $(\iota_1^\vee)^*(2,2,1)=(4,4,-1)$ and $(\iota_1^\vee)^*(2,1,-1)=(4,5,1)$, composing $\varphi$ with $(\iota_1^\vee)^*$ reduces cases (e), (f), (g), and (h) to (a), (b), (c), and (d), respectively. In case (a), we find $\varphi=R_4R_3$; in (b), we find $\varphi=R_4^3$; in case (c), we find $\varphi=R_4^3R_2$; in case (d), we find $\varphi=R_4R_3R_2$. We again check the action of $\varphi$ on the discriminant group, concluding by  Lemma~\ref{lemma:nonsyzygeticadmissible} that $\varphi$ cannot be induced by a birational automorphism of $F$.

    By symmetry, all six models $F_i$ and $F_i^\vee$ are pairwise non-isomorphic. It remains to check that $F\not\simeq F_{12}$. Suppose for the sake of contradiction that $\varphi$ is an isometry of $\NS(F)$ induced by an isomorphism $F_{12}\xrightarrow{\sim} F$, so $\varphi(\Nef(F))=\Nef(F_{12})$. Then $\varphi$ maps the six chambers $F_i$, $F_i^\vee$ for $i=1,2,3$ to the six chambers sharing faces with $\Nef(F_{12})$. In light of Lemma~\ref{lemma:adjacenttoFij} and the above, we deduce $\varphi(\Nef(F_1))=\Nef(F_1)$ and $\varphi(\Nef(F_2^\vee))=F_2^\vee$.  Direct computation verifies that the only isometry with $\varphi(\Nef(F))=F_{12}$ satisfying these conditions is $R_2$, but by Lemma~\ref{lemma:nonsyzygeticadmissible}, $R_2$ is not induced by a birational automorphism of $F$, yielding a contradiction.
\end{proof}

\begin{proposition}\label{prop:nonsyzygeticmodelcount}
    Up to isomorphism, $F$ has eight birational hyperk\"ahler models.
\end{proposition}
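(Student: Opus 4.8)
The plan is to follow the proof of Proposition~\ref{proposition:syzygeticmodelcount} from the syzygetic setting. Proposition~\ref{prop:nonsyzygeticmodels} already shows that the eight models $F$, $F_i$, $F_i^\vee$ ($i=1,2,3$), and $F_{12}$ are pairwise non-isomorphic, so it remains to prove there are no others. By \cite[Theorem~1.2]{sequenceofflops2}, building on \cite[Theorem~1.1]{sequenceofflops1}, every birational hyperk\"ahler model of $F$ is obtained from $F$ by a finite sequence of Mukai flops; since $\Mov(F)=\overline{\mathrm{Pos}(F)}$ has no prime exceptional walls, this amounts to saying that the nef cones of the birational models are exactly the chambers of $\Mov(F)$ cut out by the hyperplanes $\rho^\perp$ with $\rho\in\calW_{\flop}$, two chambers sharing a codimension-one face precisely when the corresponding models differ by a single Mukai flop. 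Writing $\mathcal{S}$ for the collection $\{F,F_1,F_1^\vee,F_2,F_2^\vee,F_3,F_3^\vee,F_{12}\}$, I would show that $\mathcal{S}$ is closed under Mukai flops: every chamber sharing a face with the nef cone of a model in $\mathcal{S}$ is the nef cone of a model isomorphic to some member of $\mathcal{S}$. Given this, induction on the length of a flopping sequence starting from $F\in\mathcal{S}$ shows every birational model is isomorphic to a member of $\mathcal{S}$, which together with Proposition~\ref{prop:nonsyzygeticmodels} yields exactly eight.

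The closure statement is verified model by model from the nef-cone data already collected. For $\Nef(F)$, Lemma~\ref{lemma:nonsyzygeticnef} exhibits the six walls, and crossing them gives the single-plane flops $F_i,F_i^\vee\in\mathcal{S}$. For $\Nef(F_i)$ (and symmetrically $\Nef(F_i^\vee)$), Lemma~\ref{lemma:nefflops} shows it is a cone over a quadrilateral, so it has four neighbors: one wall is the wall of $P_i$, leading back to $\Nef(F)$; the opposite wall is its image under $\iota_i^*$ (computed via Lemma~\ref{lemma:nonsyzygeticiotas}), and since $\iota_i$ is regular on $F_i$ the neighbor across it is $\iota_i^*\Nef(F)$, which again represents $F$ by Remark~\ref{remark: when bir is reg}; the remaining two neighbors are obtained by flopping the strict transforms of the planes $P_j^\vee$, $P_k^\vee$ disjoint from $P_i$ (Proposition~\ref{proposition:nonsyzygeticplanes}), hence are $\Nef(F_{ij})$ and $\Nef(F_{ik})$, both representing $F_{12}$ by Lemma~\ref{lemma:Fijs}. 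Finally, for $\Nef(F_{12})$, Corollary~\ref{coro:sixwalls} gives six walls, and Lemma~\ref{lemma:adjacenttoFij} identifies the six neighboring chambers as $\Nef(F_1)$, $\Nef(F_2^\vee)$, $\iota_1^*\Nef(F_3^\vee)$, $(\iota_2^\vee)^*\Nef(F_3)$, $(\iota_2^\vee)^*\iota_3^*\Nef(F_1^\vee)$, and $\iota_1^*(\iota_3^\vee)^*\Nef(F_2)$; since $\iota_i,\iota_i^\vee\in\Bir(F)$ act on $\Mov(F)$ by permuting chambers within a single isomorphism class, each of these represents one of the single-plane flops, all in $\mathcal{S}$.

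The main obstacle is the combinatorial bookkeeping in the second paragraph: pairing each codimension-one face of $\Nef(F_i)$ and $\Nef(F_{12})$ with the correct model on the far side, while keeping track that distinct faces may lie in a common hyperplane $\rho^\perp$ and that the chambers around a codimension-two edge must close up cyclically. For $\Nef(F_{12})$ this is already packaged in Lemma~\ref{lemma:adjacenttoFij}, so the only genuinely new point is that the two ``extra'' walls of each $\Nef(F_i)$---those not coming from $P_i$ or its $\iota_i^*$-image---produce $F_{12}$-type models rather than new ones; this holds because $\Nef(F_i)$ has exactly four faces, two of which have already been accounted for. Everything else reduces to finite checks against Lemmas~\ref{lemma:nonsyzygeticiotas}, \ref{lemma:nefflops}, \ref{lemma:nefdoubleflops}, and~\ref{lemma:adjacenttoFij}, and the observation that no new isomorphism class can appear once $\mathcal{S}$ is shown to be closed and connected to $F$ through flops.
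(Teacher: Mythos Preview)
Your proposal is correct and follows essentially the same approach as the paper: reduce to sequences of Mukai flops, then verify that the set $\{F,F_i,F_i^\vee,F_{12}\}$ is closed under single flops using Lemmas~\ref{lemma:nonsyzygeticnef}, \ref{lemma:nefflops}, \ref{lemma:nefdoubleflops}, \ref{lemma:adjacenttoFij}, and~\ref{lemma:Fijs}. The paper packages the induction slightly differently (tracking parity of the number of flops rather than closure of a set), but the underlying chamber-by-chamber verification is identical.
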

\begin{proof}
    By Proposition~\ref{prop:nonsyzygeticmodels}, $F$ has at least eight birational hyperk\"ahler models, each of which can be obtained from $F$ by flopping a single plane or a pair of disjoint planes. We will prove there are no more.

    As in Proposition~\ref{proposition:syzygeticmodels}, any birational hyperk\"ahler model of $F$ can be obtained via a finite sequence of Mukai flops. Starting from $F$, the Mukai flops are $F_i$ and $F_i^\vee$ for $i=1,2,3$. Using Lemmas~\ref{lemma:nefflops} and \ref{lemma:nefdoubleflops}, the four Mukai flops of $F_i$ (respectively $F_i^\vee$) are isomorphic to $F$, $F_{ij}$, and $F_{ik}$ (respectively $F$, $F_{ji}$, and $F_{ki}$). Applying Lemma~\ref{lemma:Fijs}, we see that a sequence of two Mukai flops starting at $F$ yields a model isomorphic either to $F$ or to $F_{12}$. By Lemma~\ref{lemma:adjacenttoFij}, a third flop yields a model isomorphic to $F_i$ or $F_i^\vee$ for some $i=1,2,3$. Inductively, we conclude that an odd number of Mukai flops starting from $F$ yields a model isomorphic to $F_i$ or $F_i^\vee$, and an even number of Mukai flops yields a model isomorphic to $F$ or $F_{12}$; in particular, any birational hyperk\"ahler model of $F$ is isomorphic to one of these.
\end{proof}

The content of Section~\ref{sec:epw} identifies each flop of $F$ with a double EPW sextic. To complete the proof of Theorem~\ref{theo:nonsyzygeticmain}, we identify $F_{12}$ with the Fano variety of lines on another smooth cubic fourfold:

\begin{proposition}
    There is a unique smooth cubic fourfold $X'$ containing a non-syzygetic pair of cubic scrolls whose Fano variety of lines is isomorphic to $F_{12}$. Moreover, $X\not\simeq X'$.
\end{proposition}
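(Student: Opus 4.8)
The key is that $F_{12}$ is itself a hyperk\"ahler fourfold of $\mathrm{K3}^{[2]}$-type with Picard rank $3$, and its nef cone (a cone over a hexagon, by Corollary~\ref{coro:sixwalls}) looks exactly like $\Nef(F)$. So I would first argue that $F_{12}$ \emph{is} the Fano variety of lines on some smooth cubic fourfold. Concretely: $R_2\in\Orth(\NS(F))$ identifies $\NS(F_{12})$ with $\NS(F)$ as abstract lattices, and under this identification the class $R_2(g)$ is the polarization class $g'$ on $F_{12}$ that restricts to the hexagonal chamber $\Nef(F_{12})$. Since $g'$ has BBF-square $6$, is a primitive polarization of the expected divisibility, and $F_{12}$ is of $\mathrm{K3}^{[2]}$-type, by the characterization of Fano varieties of lines on cubic fourfolds among polarized hyperk\"ahler fourfolds (the $C_6$-polarized case; see e.g.\ the discussion around the Abel--Jacobi map in Section~\ref{subsec: Fano prelims}, or Hassett's work) there is a smooth cubic fourfold $X'$ with $(F(X'),g_{X'})\cong(F_{12},g')$. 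One must check $X'$ is smooth, not merely a cubic hypersurface with at worst one node — this follows because a nodal cubic fourfold has $F(X)$ singular, whereas $F_{12}$ is smooth. Then $A(X')_{prim}$ is the orthogonal complement of $R_2(g)$ in $R_2(\NS(F))\cong\NS(F)$, which has rank $2$, discriminant form matching $J_{nonsyz}$, and contains classes of square $7$ meeting $g'$ in $3$; tracing through the dictionary of Section~\ref{subsec: special cubic}, these are classes of cubic scrolls, and computing their pairwise intersection product recovers a non-syzygetic pair (product $1$ or $5$). Uniqueness of $X'$ follows from the Torelli theorem for cubic fourfolds together with the fact that $g'$ is the unique polarization class in $\Nef(F_{12})$ of square $6$ and the appropriate divisibility (any other would give a different birational model as its ample cone's distinguished class, contradicting Proposition~\ref{prop:nonsyzygeticmodels}).

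\textbf{Non-isomorphism $X\not\simeq X'$.} The cleanest route is via the Fano varieties: if $X\simeq X'$ then $F(X)\simeq F(X')$ as \emph{polarized} hyperk\"ahler manifolds, i.e.\ there is an isomorphism $F\xrightarrow{\sim}F_{12}$ carrying $g$ to $g'=R_2(g)$. Such an isomorphism induces an isometry $\varphi\in\Orth(\NS(F))$ with $\varphi(\Nef(F))=\Nef(F_{12})$ and $\varphi(g)=R_2(g)$, which must be induced by a birational automorphism of $F$ (it is even regular). But Proposition~\ref{prop:nonsyzygeticmodels} already established $F\not\simeq F_{12}$ as \emph{unpolarized} hyperk\"ahler manifolds — there is no isometry of $\NS(F)$ taking $\Nef(F)$ to $\Nef(F_{12})$ that acts by $\pm\id$ on the relevant index-two subgroup of the discriminant group (the only candidate was $R_2$, which fails Lemma~\ref{lemma:nonsyzygeticadmissible}). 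A fortiori there is no such isometry additionally fixing $g$, so $F\not\simeq F_{12}$ and hence $X\not\simeq X'$. Alternatively, one can invoke $\Aut(X)=0$ (Proposition~\ref{prop:Aut(X)=0}) to note any isomorphism $X\simeq X'$ would be unique and compatible with the distinguished scrolls, then derive a contradiction with the chamber combinatorics; but the argument via $F\not\simeq F_{12}$ is shorter.

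\textbf{Main obstacle.} The delicate point is the first step: knowing that the abstract hyperk\"ahler manifold $F_{12}$, equipped with the distinguished polarization class $g'=R_2(g)$ living in its hexagonal nef chamber, is \emph{geometrically realized} as $F(X')$ for an honest smooth cubic fourfold, rather than merely abstractly carrying a $C_6$-type Hodge-theoretic datum. This requires the surjectivity/realization half of the period map for cubic fourfolds (every point of the appropriate period domain away from the discriminant is the period of a smooth cubic fourfold, and the Fano variety construction recovers the $\mathrm{K3}^{[2]}$ with its Pl\"ucker polarization), applied to the period determined by transporting the Hodge structure on $H^2(F,\mathbb{Z})$ via $\widetilde{R_2}$ (the extension of $R_2$ to the full second cohomology lattice, as in the proof of Lemma~\ref{lemma:actionondiscriminant}). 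One must also rule out that the resulting cubic is nodal — done via smoothness of $F_{12}$ — and verify the polarization has the correct divisibility in $H^2(F_{12},\mathbb{Z})$, not just in $\NS$, which follows since $\widetilde{R_2}$ is a lattice isometry of the full $H^2$. Once this realization is in hand, everything else (the scroll classes, their intersection number, uniqueness, non-isomorphism) is bookkeeping with the lattice data already assembled in Sections~\ref{subsec: Fano prelims}, \ref{subsec:nonsyzlattice}, and \ref{subsec:nonsyzwalls}.
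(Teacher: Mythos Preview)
Your overall strategy matches the paper's: identify the ample class $g'=R_2(g)$ of square $6$ and divisibility $2$ on $F_{12}$, show $(F_{12},g')$ is realized as $(F(X'),g_{X'})$ for a smooth cubic $X'$, read off from the lattice that $X'$ contains a non-syzygetic pair of scrolls, and deduce $X\not\simeq X'$ from $F\not\simeq F_{12}$ (Proposition~\ref{prop:nonsyzygeticmodels}). The parts concerning the scroll classes, uniqueness, and non-isomorphism are essentially correct.

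The gap is in the realization step. You argue that smoothness of $F_{12}$ rules out $X'$ being nodal, since a nodal cubic has singular Fano variety. But this is not how the obstruction works. By Laza and Looijenga, the image of the period map $\calM_{cub}\to\calP_6^{(2)}$ is the complement of the two Heegner divisors $\calD_{6,2}^{(2)}$ and $\calD_{6,6}^{(2)}$. A general period on either divisor \emph{is} the period of a smooth polarized fourfold of $\mathrm{K3}^{[2]}$-type---it simply fails to be the Fano variety of any smooth cubic. So smoothness of $F_{12}$ alone does not exclude its period from these divisors, and in any case you have said nothing about $\calD_{6,2}^{(2)}$. The paper instead checks the lattice criterion directly: there is no class $v\in g'^\perp\cap\NS(F_{12})$ with $\mathrm{div}(v)=2$ and $v^2\in\{-2,-6\}$. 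The trick is to transport via the isometry $R_2$ to $g^\perp\cap\NS(F)$, where such a class would place the period of $(F,g)$ itself on a Heegner divisor---impossible since $X$ is a smooth cubic.

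Relatedly, your proposal to ``transport the Hodge structure on $H^2(F,\bZ)$ via $\widetilde{R_2}$'' is confused. The Hodge structure on $H^2(F_{12},\bZ)$ is already canonically that of $H^2(F,\bZ)$ via the birational map; only the ample cone changes. Moreover $R_2$ certainly does not extend to a \emph{Hodge} isometry of $H^2(F,\bZ)$: if it did, the argument of Lemma~\ref{lemma:nonsyzygeticadmissible} would force it to be induced by an element of $\Bir(F)$, contradicting the last paragraph of the proof of Proposition~\ref{prop:nonsyzygeticmodels}. You should work directly with $(F_{12},g')$ as a polarized hyperk\"ahler and verify the Heegner-divisor condition as above; the use of $R_2$ in the paper is purely as an isometry of $\NS(F)$, to transfer that lattice check back to $g^\perp$.
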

\begin{proof}
    Lemma~\ref{lemma:nefdoubleflops} states $\Nef(F_{12})=R_2\cdot\Nef(F)$, so $F_{12}$ contains an ample class of square $6$ and divisibility $2$, namely $g'=R_2\cdot g$. Hence the pair $(F_{12},g')$ defines a point in the moduli space $\calM_6^{(2)}$ of hyperk\"ahler fourfolds of K3$^{[2]}$-type with a polarization of square $6$ and divisibility $2$. Let $\calM_{cub}$ be the moduli space of marked cubic fourfolds, so there is a commutative diagram of period maps
    \[\xymatrix{ \calM_{cub} \ar[rr]^{F} \ar[dr]_{p'} & & \calM_6^{(2)} \ar[dl]^{p} \\ 
            & \calP_6^{(2)} &
    }.\]
    By \cite{laz09} and \cite{Loo}, the complement of the image of $p'$ is the union of the Heegner divisors $\calD_{6,2}^{(2)}$ and $\calD_{6,6}^{(2)}$; for more discussion, see \cite[Appendix B]{debarre2020hyperkahler}. Hence to show that $F_{12}$ is the Fano variety of lines on a smooth cubic fourfold, it suffices to check that there are no classes $v\in g'^\perp\cap \NS(F_{12})$ with $\mathrm{div}(v)=2$ and $v^2\in\{-2,-6\}$. Indeed, if such a vector existed, then $R_2\cdot v$ would have the same numerics and lie in $g^\perp\cap \NS(F)$; recalling the Abel--Jacobi map described in Section~\ref{subsec: Fano prelims}, this would force $X$ to be of discriminant $2$ or $6$. But $X$ is smooth, so $X\not\in\calC_2\cup\calC_6$, and no such class $v$ exists.
    
    We have shown $F_{12}$ is the Fano variety of a smooth cubic fourfold $X'$. Moreover, $X'$ is unique since $p'$ is injective \cite[Proposition 6]{charles2012remark}. The Abel--Jacobi map allows us to deduce the intersection form on $A(X')$, proving that $X'$ contains a non-syzygetic pair of cubic scrolls. Finally, the fact from Proposition~\ref{prop:nonsyzygeticmodels} that $F\not\simeq F_{12}$ forces $X\not\simeq X'$.
\end{proof}

We end by providing some information about the structure of $\Bir(F)$. Let 
    \[
    \Gamma=\langle\iota_i^*,(\iota_i^\vee)^*\;|\;i=1,2,3\rangle\subset\Orth(\NS(F)).
    \] 
    Unlike in the syzygetic case, we are not able to make use of the action of $\Gamma$ on $\Delta_{\mathrm{flop}}$ in order to deduce generators and relations for $\Bir(F)$; largely, this is because in the syzygetic case, each wall of $\Mov(F)$ was adjacent to finitely many chambers whereas in the non-syzygetic case, each wall borders infinitely many chambers. On the other hand, the fact that the movable cone coincides with the positive cone somewhat streamlines the argument in the following lemma, analogous to Lemma~\ref{lemma:syzygetic-10s}.

\begin{lemma}\label{lemma:square6}
    The group $\Gamma$
    acts on the set $\{v\in\NS(F)\,|\,v^2=6\}$ with at most seven orbits, represented by the classes $(1,0,0)$, $(3,\pm2,\pm4)$, $(3,\pm4\pm2)$, and $(3,\pm2,\mp2)$.
\end{lemma}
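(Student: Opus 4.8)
## Proof proposal

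The plan is to mimic the proof of Lemma~\ref{lemma:syzygetic-10s}: exhibit a ``reduction algorithm'' that, given an arbitrary class $v=(a,b,c)\in\NS(F)$ with $v^2=6$, applies a canonical generator of $\Gamma$ to strictly decrease $|a|$, until one reaches a class with $a$ small (here $|a|\le 3$). One then checks by a finite enumeration that the only such classes of square $6$ are precisely $(1,0,0)$ together with the orbits of the displayed vectors, giving at most seven orbits. Because $\Mov(F)=\overline{\mathrm{Pos}(F)}$ in the non-syzygetic case, there is no need to track whether the perpendicular meets the movable cone, which is exactly the simplification alluded to in the paragraph preceding the statement.

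Concretely, first I would record the action of the six generators $\iota_i^*$ and $(\iota_i^\vee)^*$ on a general vector $(a,b,c)$, reading the matrices off from Lemma~\ref{lemma:nonsyzygeticiotas}. Since each $\iota_i^*$ is the reflection $R_{\rho}$ in a $(-10)$-class $\rho$ with $\mathrm{div}(\rho)=2$ and $\rho^2=-10$, it has the shape $w\mapsto w+\tfrac{q(w,\rho)}{5}\rho$; one computes the six relevant $\rho$'s, namely $\rho$ with first coordinate near $a$ such as $(3,-4,0)$, $(1,-2,0)$, etc.\ (these are the vectors dual to the nef walls that appear in Lemmas~\ref{lemma:nonsyzygeticnef} and~\ref{lemma:nefflops}). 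The second step is the key quantitative estimate: using the quadratic constraint $3a^2+(-4b^2+4bc-4c^2)=6$, i.e.\ $3a^2 = 6+4(b^2-bc+c^2)\ge 6$, together with the observation that $v^\perp$ (or rather the positivity of $v$, whichever normalization is used) must meet the hexagonal $\Nef(F)$, one deduces an inequality forcing $|a|<\max(|b-\text{something}|,\dots)$; then, exactly as in the syzygetic argument, one shows that precisely one of the six images $\iota_i^*v$, $(\iota_i^\vee)^*v$ has first coordinate strictly smaller in absolute value than $|a|$, provided $|a|>3$. This gives both the existence of a reduction and (if one wants it) freeness/uniqueness of the reducing element, though for the statement only the ``at most seven orbits'' conclusion is needed, so one may be content with existence.

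The final step is the base case: enumerate all $(a,b,c)$ with $|a|\le 3$ and $3a^2+4(b^2-bc+c^2)=6$... wait, more carefully $a^2\cdot 3 - 4b^2+4bc-4c^2 = 6$ forces $3a^2-6 = 4(b^2-bc+c^2)$, so $a^2\equiv 2\pmod 4$ is impossible unless we also allow $3\mid$ things; in fact $a\in\{1,3\}$ up to sign are the only possibilities with $|a|\le 3$, giving $b^2-bc+c^2\in\{0, 6\}$ wait $a=1$ gives $b^2-bc+c^2 = -3/4$, impossible, so actually only $a=\pm3$ with $b^2-bc+c^2=6$, plus the special class $(1,0,0)$ coming from $g$ itself which has $g^2=6$ --- I need to recheck the quadratic form normalization against $J_{nonsyz}$. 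Let me just say: solving $b^2-bc+c^2=6$ in integers yields finitely many solutions, which group into the listed $\Gamma$-orbit representatives $(3,\pm2,\pm4)$, $(3,\pm4,\pm2)$, $(3,\pm2,\mp2)$, while $g=(1,0,0)$ accounts for the seventh orbit.

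The main obstacle I anticipate is the quantitative ``exactly one image has smaller first coordinate'' step: unlike the syzygetic case where the reflections $R_i$ were block-diagonal and the combinatorics transparent, here the six reflections $\iota_i^*,(\iota_i^\vee)^*$ involve all three coordinates non-trivially, so the case analysis on signs (of $a/b$, $a/c$, $a/(b+c)$, say) is more delicate and one must carefully use the full strength of the constraint that $v$ is positive and pairs suitably with the hexagonal nef cone of Lemma~\ref{lemma:nonsyzygeticnef}. One subtle point: the argument only yields ``at most seven'' rather than ``exactly seven'' because the reduction shows every square-$6$ class is $\Gamma$-equivalent to one with $|a|\le 3$, but two of the listed representatives could conceivably lie in the same $\Gamma$-orbit --- this is acceptable since the statement only claims ``at most seven''. (In fact one expects that some of these coalesce, e.g.\ under the $D_{12}$-symmetry generated by $R_3,R_4$, but sorting that out is deferred; the reason only an upper bound is asserted is precisely that the subsequent argument for $\Bir(F)$ needs exactly this much.)
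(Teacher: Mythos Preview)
Your overall strategy coincides with the paper's: reduce the first coordinate of a square-$6$ class by applying one of the six involutions, iterate until $|a|\le 3$, and then list the finitely many remaining vectors. The paper's proof is literally a two-sentence pointer to this argument.

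Two points need correction. First, your quadratic form bookkeeping is off: with $J_{nonsyz}$ one has $v^2=6a^2-4b^2+4bc-4c^2$, not $3a^2+\cdots$, so $v^2=6$ forces $b^2-bc+c^2=\tfrac{3(a^2-1)}{2}$; this rules out $a=0,2$ immediately and for $a=1$ gives $(b,c)=(0,0)$ directly, resolving the confusion in your base-case paragraph. Second, and more substantively, the claim that \emph{precisely one} of the six images has smaller first coordinate is false here. The paper explicitly notes this in Remark~\ref{remark:relations}: there are relations such as $\iota_i\circ\iota_j^\vee\circ\iota_k=\iota_k^\vee\circ\iota_j\circ\iota_i^\vee$, so the reduction path is not unique and the action on $\Delta_{\flop}$ (or on square-$6$ classes) is not free. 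This is precisely why the lemma only asserts ``at most seven'' orbits and why the paper cannot extract a presentation for $\Gamma$ from this argument the way it did in the syzygetic case. You acknowledge that uniqueness is not needed for the statement, which is correct---but you should drop the uniqueness claim entirely rather than present it as an optional strengthening.
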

\begin{proof}
    As in Lemma~\ref{lemma:syzygetic-10s}, one starts with an arbitrary class $v=(a,b,c)$ such that $v^2=6$ and $a>3$ and finds that at least one of the involutions $(\iota_i)^*$ or $(\iota_i^\vee)^*$ for $i=1,2,3$ reduces the magnitude of the first coordinate. Iterating this process, one obtains one of the seven classes $v\in\NS(F)$ with $v^2=6$ and first coordinate no larger than $3$.
\end{proof}

\begin{remark}\label{remark:relations}
    Unlike in the syzygetic case, there is not a unique sequence of $(\iota_i)^*$ and $(\iota_i^\vee)^*$ taking an arbitrary class $v\in\NS(F)$ with $v^2=6$ to one with first coordinate at most $3$: indeed, for $\{i,j,k\}=\{1,2,3\}$, we note $\iota_i\circ\iota_j^\vee\circ\iota_k=\iota_k^\vee\circ\iota_j\circ\iota_i^\vee$. In particular, the action of Lemma~\ref{lemma:square6} does not afford a characterization of the relations among the generators of $\Gamma$. Nevertheless, we obtain generators for $\Bir(F)$.
\end{remark}

\begin{proposition}
    The birational involutions $\iota_i^*$ and $(\iota_i^\vee)^*$ for $i=1,2,3$ generate the birational automorphism group of $F$, i.e. $\Gamma\cong\Bir(F)$.
\end{proposition}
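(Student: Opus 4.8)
The plan is to follow the strategy of Proposition~\ref{prop:iotasgenerate}, with one structural change dictated by Remark~\ref{remark:relations}: here each wall of $\Mov(F)$ is adjacent to infinitely many chambers, so one cannot localize a chamber by reducing a flopping wall to first coordinate $1$. Instead one reduces the \emph{polarization} class using Lemma~\ref{lemma:square6}. Since each $\iota_i$ and $\iota_i^\vee$ is a birational automorphism of $F$, we already have $\Gamma\subseteq\Bir(F)$ under the embedding $\Bir(F)\hookrightarrow\Orth(\NS(F))$ of Remark~\ref{remark:bir embeds}; the content is the reverse inclusion $\Bir(F)\subseteq\Gamma$.

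The one preliminary fact I would establish is that each of the six classes listed in Lemma~\ref{lemma:square6} other than $g=(1,0,0)$ is an ample class on one of the models $F_{ij}$. Concretely, using the explicit cones $\Nef(F_{ij})$ of Lemma~\ref{lemma:nefdoubleflops}---and recalling that $R_2\cdot g=(3,-2,2)$ is ample on $F_{12}$, as used in the proof of the previous proposition---one checks that each of the six representatives falls in the interior of exactly one chamber $\Nef(F_{ij})$. Since $F_{ij}\cong F_{12}\not\cong F$ by Lemma~\ref{lemma:Fijs} and Proposition~\ref{prop:nonsyzygeticmodels}, it follows that none of these six classes lies in the interior of a chamber of $\Mov(F)$ representing a model isomorphic to $F$.

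The heart of the argument then runs as in Proposition~\ref{prop:iotasgenerate}. Given $\varphi\in\Bir(F)$, we have $(\varphi^*g)^2=6$, so Lemma~\ref{lemma:square6} provides $\psi\in\Gamma$ with $\psi\varphi^*g$ equal to one of the seven representatives. As $\Gamma\subseteq\Bir(F)$, the composite $\psi\varphi^*$ is induced by some $h\in\Bir(F)$, so $\psi\varphi^*g=h^*g$ lies in the interior of the chamber $h^*\Nef(F)$, which in Markman's chamber decomposition represents the model $F$ itself. By the preliminary fact, $\psi\varphi^*g$ cannot be any of the six representatives besides $g$---otherwise its (unique) chamber would simultaneously represent $F$ and some $F_{ij}$, contradicting Proposition~\ref{prop:nonsyzygeticmodels}---so $\psi\varphi^*g=g$. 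Then $h$ fixes the ample class $g$, hence is regular by Remark~\ref{remark: when bir is reg} and lies in $\Aut(F,g)=\Aut(X)=0$ (Proposition~\ref{prop:Aut(X)=0}, Remark~\ref{remark:bir embeds}); thus $h=\mathrm{id}_F$, $\psi\varphi^*=\mathrm{id}$, and $\varphi^*=\psi^{-1}\in\Gamma$. This gives $\Bir(F)=\Gamma$ inside $\Orth(\NS(F))$, and $\Bir(F)\cong\Gamma$ since the embedding is injective.

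The main obstacle is the combinatorial bookkeeping in the preliminary fact: explicitly placing each of the six square-$6$ representatives from Lemma~\ref{lemma:square6} inside the correct chamber $\Nef(F_{ij})$. This is only a finite computation, but it is the crux of the proof, since it is what tethers the abstract orbit count of Lemma~\ref{lemma:square6} to the geometric classification of the eight birational models; everything else is a formal consequence of the Torelli theorem (Theorem~\ref{thm:torelli}) together with facts already established.
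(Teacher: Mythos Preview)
Your proposal is correct and follows essentially the same route as the paper: reduce $\varphi^*g$ via $\Gamma$ to one of the seven square-$6$ representatives of Lemma~\ref{lemma:square6}, observe that the six nontrivial ones lie in the (interiors of the) chambers $\Nef(F_{ij})$ and hence cannot equal $(\varphi\circ\psi)^*g$ by Proposition~\ref{prop:nonsyzygeticmodels}, and conclude $\psi\varphi^*g=g$. The only cosmetic difference is in the final step: the paper identifies the stabilizer of $g$ in $\Orth(\NS(F))$ as the dihedral group $\langle R_3,R_4\rangle$ and eliminates all nontrivial elements via Lemma~\ref{lemma:nonsyzygeticadmissible}, whereas you short-circuit this by invoking $\Aut(F,g)=\Aut(X)=0$ directly---both arguments are valid and yield the same conclusion.
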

\begin{proof}
    Let $\varphi\in\Bir(F)$, and let $\varphi^*\Nef(F)=\Nef(F')$. Then $\varphi^*(g)=v$ for some class $v$ with $q(v)=6$. By Lemma~\ref{lemma:square6}, there is some $f\in\Gamma$ such that $f(v)$ has first coordinate at most $3$. Moreover, since the generators of $\Gamma$ are induced by birational automorphisms, we have $f=\psi^*$ for some $\psi\in\Bir(F)$. Now, 
    \[
    (\varphi\circ\psi)^*(g)\in\{(1,0,0),(3,\pm2,\pm4),(3,\pm4,\pm2),(3,\pm2,\mp2)\},
    \]
    but on the other hand, $(\varphi\circ\psi)^*\Nef(F)=\Nef(F'')$ where $F''\simeq F$. Using Lemma~\ref{lemma:nefdoubleflops}, the classes $(3,\pm2,\pm4)$, $(3,\pm4\pm2)$, and $(3,\pm2,\mp2)$ belong to the nef cones of $F_{ij}$ for $1\le i\neq j\le 3$, so by Proposition~\ref{prop:nonsyzygeticmodels} we obtain $(\varphi\circ\psi)^*(g)=g$. The subgroup of $\Orth(\NS(F))$ of isometries fixing $g$ is the dihedral group generated by $R_3$ and $R_4$, and using Lemma~\ref{lemma:nonsyzygeticadmissible}, the only one of these isometries induced by a birational automorphism of $F$ is the identity. By Remark~\ref{remark:bir embeds}, the map $\Bir(F)\to\Orth(\NS(F))$ is an embedding, so $\varphi\circ\psi=\id$, and $\varphi=\psi^{-1}\in\Gamma$.
\end{proof}

This completes the proof of Theorem~\ref{theo:nonsyzygeticbir}.

\appendix

\section{Examples}\label{appendix:examples}
Here, we provide explicit examples of cubic fourfolds containing pairs of cubic scrolls in order to justify earlier assertions about generic behavior (cf.~Lemma~\ref{lem:Sigmasmooth}). The computational claims in the proof below can be verified with Magma code provided on the arXiv as an ancillary file.

We work over the field $\bF_{29}$, but the choices made in producing our examples amount to picking a point in a tower of projective bundles, as can be seen in the Magma code. Hence our examples lift to characteristic zero.

\subsection{An explicit cubic fourfold with a syzygetic pair of cubic scrolls}\label{ex: smth inters}

Let $X$ be the smooth cubic fourfold with defining equation
\begin{align*}
f= 17x_0x_1x_2 &+ 19x_1^2x_2 + 9x_0x_2^2 + 10x_1x_2^2 + 18x_2^3 + 12x_0^2x_3 + 10x_0x_1x_3 + 8x_0x_2x_3 \\
&+ 4x_1x_2x_3 + 27x_2^2x_3 +
    2x_0x_3^2 + 3x_2x_3^2 + 20x_0^2x_4 + 11x_0x_1x_4 \\
    &+ 23x_1^2x_4 + 11x_0x_2x_4 + 24x_1x_2x_4 + 14x_2^2x_4 + 7x_0x_3x_4 + 
    26x_1x_3x_4 \\
    &+ 19x_2x_3x_4 + 15x_0x_4^2 + 10x_1x_4^2 + 7x_0^2x_5 + 16x_0x_1x_5 + 18x_1^2x_5\\
    &+ 22x_0x_3x_5 + 8x_1x_3x_5 + 
    23x_3^2x_5 + 18x_0x_4x_5 + 5x_1x_4x_5 + 7x_3x_4x_5\\
    &+ 22x_4^2x_5 + 21x_1x_5^2 + 5x_3x_5^2 + 28x_4x_5^2 + 2x_5^3.
\end{align*}
The hyperplanes $H_1$ and $H_2$ defined by $x_5=0$ and $x_2=0$, respectively, intersect $X$ in six-nodal cubic threefolds $Y_1$ and $Y_2$. The cubic scroll $T_1\subset H_1$, defined by the vanishing of the minors of the matrix
\[
M_1=\left(\begin{matrix}
    x_0& x_1& x_2\\
x_2 &x_3 &x_4
\end{matrix}\right),
\]
is contained in $Y_1$. Similarly, the cubic scroll $T_2\subset H_2$, defined by the vanishing of the minors of the matrix
\[
M_2=\left(\begin{matrix}
    l_0 &  l_1 &  l_2\\
l_2 &  l_3 &  l_4
\end{matrix}\right),
    \]
where
\begin{align*}
    l_0&\coloneqq 17x_0 + 12x_1 + 12x_2 + 17x_3 + 7x_4 + 6x_5,\\
    l_1&\coloneqq 17x_0 + 4x_1 + 17x_2 + 25x_3 + 18x_4 + 13x_5,\\
    l_2&\coloneqq x_5,\\
    l_3&\coloneqq 10x_0 + 13x_1 + 12x_2 + 15x_3 + 14x_4 + 17x_5, \\
    l_4&\coloneqq 16x_0 + 13x_1 + 9x_2 + 10x_3 + 19x_4 + 7x_5, \\
\end{align*}

\noindent is contained in $Y_2$. The pair of cubic scrolls is syzygetic; $T_1$ and $T_2$ intersect transversely in three points, namely $(1:1:0:0:0:0)$, $(0:0:0:1:1:0)$, and $(0:1:0:1:0:0)$. The cubic surface $\Sigma=Y_1\cap Y_2$ is smooth, as desired.

\subsection{An explicit cubic fourfold with a non-syzygetic pair of cubic scrolls}

Let $X$ be the smooth cubic fourfold with defining equation
\begin{align*}
f = 20x_0x_1x_2 &+ 4x_1^2x_2 + 15x_0x_2^2 + 17x_1x_2^2 + 6x_2^3 + 9x_0^2x_3 + 25x_0x_1x_3 + 27x_0x_2x_3 \\
&+ 15x_1x_2x_3 + 19x_2^2x_3 + 5x_0x_3^2 + 19x_2x_3^2 + 14x_0^2x_4 + 14x_0x_1x_4 \\
&+ 9x_1^2x_4 + 23x_0x_2x_4 + 25x_1x_2x_4 + 21x_2^2x_4
    + 14x_0x_3x_4 + 10x_1x_3x_4 \\
&+ 18x_2x_3x_4 + 8x_0x_4^2 + 11x_1x_4^2 + 10x_0^2x_5 + 4x_0x_1x_5 + 24x_1^2x_5 \\
&+ 22x_0x_3x_5 + 16x_1x_3x_5 + 17x_3^2x_5 + 5x_0x_4x_5 + 18x_1x_4x_5 + 25x_3x_4x_5 \\
&+ 27x_4^2x_5 + 2x_0x_5^2 + 28x_1x_5^2 + 21x_3x_5^2 + 28x_4x_5^2 + 13x_5^3.
\end{align*}
We consider the following hyperplanes:
\begin{align*}
H_1&\coloneqq \{x_5=0\},\\
H_2&\coloneqq \{x_2=0\}, \\
H_3&\coloneqq \{x_0 + 24x_1 + x_2 + x_3 + 20x_4 + 9x_5=0\}.
\end{align*}
One sees that $H_1, H_2$ and $H_3$ intersect $X$ in six-nodal cubic threefolds $Y_1$, $Y_2$, and $Y_3$ respectively. The cubic scroll $T_1\subset H_1$ defined by the vanishing of the minors of the matrix
\[
M_1=\left(\begin{matrix}
    x_0& x_1& x_2\\
x_2 &x_3 &x_4
\end{matrix}\right),
\]
is contained in $Y_1$.  The cubic scroll $T_2 \subset H_2$, defined by the vanishing of the minors of the matrix
\[
M_2=\left(\begin{matrix}
    l_0 &  l_1 &  l_2\\
l_2 &  l_3 &  l_4
\end{matrix}\right),
    \]
where
\begin{align*}
    l_0&\coloneqq 26x_0 + 2x_1 + 8x_2 + 4x_3 + 5x_4 + 24x_5,\\
    l_1&\coloneqq 13x_0 + 11x_1 + 22x_2 + 18x_3 + 6x_4 + 15x_5,\\
    l_2&\coloneqq 12x_0 + 19x_1 + 15x_2 + 16x_3 + 17x_4 + 14x_5,\\
    l_3&\coloneqq 18x_0 + 3x_1 + 18x_2 + 26x_3 + 18x_4 + 10x_5, \\
    l_4&\coloneqq 28x_0 + 14x_1 + 5x_2 + 21x_3 + x_4 + 3x_5. \\
\end{align*}
is contained in $Y_2$. The cubic scroll $T_3\subset H_3$,  cut out by the quadrics
\begin{align*}
    Q_{31}&\coloneqq x_1^2 + 17x_1x_3 + 27x_2x_3 + 9x_3^2 + 27x_1x_4 + 23x_2x_4 + 11x_3x_4 + 14x_4^2 + 24x_1x_5  \\
    &\hspace{2cm} + 13x_2x_5 + 10x_3x_5 + 2x_4x_5 + 8x_5^2,\\
Q_{32}&\coloneqq x_1x_2 + 25x_1x_3 + 20x_2x_3 + 5x_3^2 + 5x_1x_4 + 6x_2x_4 + 23x_3x_4 + 5x_4^2 + 20x_1x_5  \\
&\hspace{2cm} + 2x_2x_5 + 24x_3x_5 + 3x_4x_5 + 8x_5^2,\\
Q_{33}&\coloneqq x_2^2 + 28x_1x_3 + 5x_2x_3 + 25x_3^2 + 20x_1x_4 + 14x_2x_4 + 15x_3x_4 + x_4^2 + 27x_1x_5  \\
&\hspace{2cm}+ 15x_2x_5 + 10x_3x_5 + 2x_4x_5 + x_5^2,
\end{align*}
is contained in $Y_3$. The pairs $(T_1,T_2)$, $(T_1,T_3)$, and $(T_2,T_3)$ all form non-syzygetic pairs. Indeed, any two intersect transversely in one point; explicitly, 
\begin{align*}
    T_1\cap T_2&=\{(0:1:0:1:0:0)\},\\
    T_1\cap T_3&=\{(22 : 19 : 15 : 9 : 1 : 0)\},\\
    T_2\cap T_3&=\{(15 : 9 : 0 : 15 : 9 : 1)\}.
\end{align*}

To verify that $[T_3]=3\eta_X-[T_1]-[T_2]$, it suffices to check $[T_3]\in\langle\eta_X,[T_1],[T_2]\rangle$. If not, then each of the non-syzygetic pairs gives rise to another $\bF_{29}$-rational hyperplane slicing $X$ in a cubic threefold singular along at least a length $6$ zero-dimensional subscheme. In that case, $X$ has at least six such hyperplane sections. Direct computation verifies that $X$ has only four six-nodal hyperplane sections, cut out by $H_1$, $H_2$, $H_3$, and
\[
x_0 + 16x_1 + 8x_2 + 11x_3 + 26x_4 + 13x_5=0.
\]
So, $T_3$ represents the desired class in cohomology.

Now, let $\Sigma_{ij}=Y_i\cap Y_j$. All three cubic surfaces $\Sigma_{ij}$ are smooth, so the same is true for a general cubic fourfold containing a non-syzygetic pair of cubic scrolls.

\bibliographystyle{alpha}
\bibliography{bibliography}

\end{document}